\newtheorem{assumption}[theorem]{Assumption}
\let\originalparagraph\paragraph
\renewcommand{\paragraph}[2][.]{\originalparagraph{#2#1}}
\let\originalsubparagraph\subparagraph
\renewcommand{\subparagraph}[2][.]{\originalsubparagraph{#2#1}}
\newcommand{\reals}{{\mathbb R}}
\newcommand{\ones}{\operatorname{\mathbf 1}}
\newcommand{\idm}{\operatorname{I}}
\newcommand{\diag}{\operatorname{\bf diag}}
\DeclareMathOperator*{\argmax}{arg\,max}
\DeclareMathOperator*{\argmin}{arg\,min}
\newcommand{\state}{x}
\newcommand{\states}{{\bm \state}}
\newcommand{\ctrl}{u}
\newcommand{\ctrls}{{\bm \ctrl}}
\newcommand{\auxstate}{y}
\newcommand{\auxstates}{{\bm \auxstate}}
\newcommand{\auxxstate}{z}
\newcommand{\auxxstates}{{\bm \auxxstate}}
\newcommand{\auxctrl}{v}
\newcommand{\auxctrls}{{\bm \auxctrl}}
\newcommand{\auxxctrl}{w}
\newcommand{\auxxctrls}{{\bm \auxxctrl}}
\newcommand{\costate}{\lambda}
\newcommand{\costates}{{\bm \costate}}
\newcommand{\dualvar}{\mu}
\newcommand{\dualvars}{{\bm \mu}}
\newcommand{\horizon}{\tau}
\newcommand{\dimstate}{{n_x}}
\newcommand{\dimctrl}{{n_u}}
\newcommand{\microdimctrl}{{m_{\ctrl}}}
\newcommand{\dyn}{f}
\newcommand{\microdyn}{\phi}
\newcommand{\contdyn}{{\psi}}
\newcommand{\ksteps}{k}
\newcommand{\auxdyn}{\varphi}
\newcommand{\diffeostate}{a}
\newcommand{\diffeoctrl}{b}
\newcommand{\traj}{{\dyn^{[\horizon]}}}
\newcommand{\trajin}[1]{{\microdyn^{\{#1\}}}}
\newcommand{\microtraj}{{\microdyn^{[\ksteps]}}}
\newcommand{\augtraj}{g}
\newcommand{\Diffeoctrl}{B}
\newcommand{\Microdyn}{\Phi}
\newcommand{\concatdyn}{F}
\newcommand{\cost}{h}
\newcommand{\obj}{\mathcal{J}}
\newcommand{\initstate}{\bar \state_0}
\newcommand{\costogo}{c}
\renewcommand{\H}{P}
\newcommand{\h}{p}
\newcommand{\G}{Q}
\newcommand{\g}{q}
\newcommand{\A}{A}
\newcommand{\B}{B}
\newcommand{\J}{J}
\renewcommand{\j}{j}
\renewcommand{\P}{P}
\newcommand{\p}{p}
\newcommand{\Q}{Q}
\newcommand{\q}{q}
\newcommand{\R}{R}
\newcommand{\lin}{\ell}
\newcommand{\qua}{q}
\renewcommand{\next}{\operatorname{next}}
\newcommand{\var}{\ctrls}
\newcommand{\grad}{G}
\newcommand{\hess}{H}
\newcommand{\orderlin}{r}
\newcommand{\auxxxstate}{\zeta}
\newcommand{\auxxxctrl}{\omega}
\newcommand{\auxxxctrls}{{\bm \omega}}
\newcommand{\Oracle}{\operatorname{Oracle}}
\newcommand{\LQR}{\operatorname{LQR}}
\newcommand{\DDP}{\operatorname{DDP}}
\newcommand{\reg}{\nu}
\newcommand{\regscaled}{\bar \reg}
\newcommand{\newton}{ {\bm n}}
\newcommand{\lambdaquad}{{\uplambda}}
\newcommand{\deltaquad}{{\updelta}}
\newcommand{\lip}{l}
\newcommand{\smooth}{L}
\newcommand{\smoothess}{M}
\newcommand{\lipdynstate}{{\lip_{\dyn}^\state}}
\newcommand{\lipdynctrl}{{\lip_{\dyn}^\ctrl}}
\newcommand{\smoothdynstate}{{\smooth_{\dyn}^{\state\state}}}
\newcommand{\smoothdynctrl}{{\smooth_{\dyn}^{\ctrl\ctrl}}}
\newcommand{\smoothdynstatectrl}{{\smooth_{\dyn}^{\state\ctrl}}}
\newcommand{\lipdiffeoctrl}{{\lip_{\diffeoctrl}^\auxstate}}
\newcommand{\strgcvx}{\mu}
\newcommand{\simp}{\alpha}
\newcommand{\newsimp}{\beta}
\newcommand{\polysqrt}{\gamma}
\newcommand{\condnb}{\rho}
\newcommand{\localcondnb}{\varrho}
\newcommand{\concord}{\theta_\cost}
\newcommand{\localconcord}{\vartheta_\cost}
\newcommand{\scaling}{{\theta_\augtraj}}
\newcommand{\localscaling}{\vartheta_\augtraj}
\newcommand{\pl}{r}
\newcommand{\plcst}{\mu}
\newcommand{\plcstobj}{m}
\newcommand{\ddpbound}{\eta}
\newcommand{\condnbddp}{\chi}
\newcommand{\linesearch}{\operatorname{LineSearch}}
\newcommand{\cst}{\xi}
\newcommand{\ham}{H}
\newcommand{\trueham}{\bar H}
\newcommand{\intercost}{m}
\newcommand{\finalcost}{\cost_\horizon}
\newcommand{\fullcost}{h}
\newcommand{\fullaugtraj}{g}
\newcommand{\fullobj}{\mathcal{J}}
\newcommand{\polcst}{{\bm k}}
\newcommand{\rollout}{\mathrm{rollout}}
\newcommand{\coderef}{\url{https://github.com/vroulet/ilqc}}
\begin{document}

\title{
  On Global and Local Convergence \\
  of Iterative Linear Quadratic Optimization Algorithms \\
  for Discrete Time Nonlinear Control
}

\author{\name Vincent Roulet \email vroulet@google.com \\
\addr Google DeepMind \\
Seattle, WA, USA
\AND
\name Siddhartha Srinivasa \email siddh@cs.uw.edu \\
\addr Paul G. Allen School of Computer Science and Engineering\\
University of Washington \\
Seattle, WA, USA
\AND  
\name Maryam Fazel \email mfazel@uw.edu \\
\addr Department of Electrical and Computer Engineering \\
University of Washington \\
Seattle, WA, USA
\AND 
\name Zaid Harchaoui \email zaid@uw.edu \\
\addr Department of Statistics\\
University of Washington\\
Seattle, WA, USA
}

\editor{Martin Jaggi}

\maketitle

\faketableofcontents
\begin{abstract}
A classical approach for solving discrete time nonlinear control on a finite
horizon consists in repeatedly minimizing linear quadratic approximations of the
original problem around current candidate solutions. While widely popular in
many domains, such an approach has mainly been analyzed locally. We
provide detailed convergence guarantees to stationary points as well as local
linear convergence rates for the Iterative Linear Quadratic Regulator (ILQR)
algorithm and its Differential Dynamic Programming (DDP) variant. For problems
without costs on control variables, we observe that global convergence to minima
can be ensured provided that the linearized discrete time dynamics are
surjective, costs on the state variables are gradient dominated. We further
detail quadratic local convergence when the costs are self-concordant. We show
that surjectivity of the linearized dynamics hold for appropriate discretization
schemes given the existence of a feedback linearization scheme. We present
complexity bounds of algorithms based on linear quadratic approximations through
the lens of generalized Gauss-Newton methods. Our analysis uncovers several
convergence phases for regularized generalized Gauss-Newton algorithms. 
\end{abstract}

\begin{keywords}
Discrete Time Nonlinear Control, Generalized Gauss-Newton, Differential Dynamic Programming, Gradient Dominance, Feedback Linearization.
\end{keywords}

\section{Introduction}
We consider nonlinear control problems in discrete time of the form 
\begin{align}
	\min_{\substack{\ctrl_0,\ldots, \ctrl_{\horizon-1} \in \reals^\dimctrl\\ \state_0, \ldots, \state_\horizon \in \reals^\dimstate}} \quad 
	& \sum_{t=0}^{\horizon-1} \cost_t(\state_t, \ctrl_t) + \cost_\horizon(\state_\horizon)\label{eq:discrete_pb_ctrl_cost}\\
	\mbox{subject to} \quad & 
	\state_{t+1} = \dyn_t(\state_t, \ctrl_t) \quad \mbox{for} \ t \in \{0, \ldots, \horizon-1\}, 
	\qquad \state_0 = \initstate,\nonumber
\end{align}  
where at the time index $t$, $\state_t$ is the state of the system, $\ctrl_t$ is
the control applied to the system, $\dyn_t$ is the discretized nonlinear
dynamic, $\cost_t$ is the cost applied to the system state and the control
variable,  and $\initstate$ is a given fixed initial state.

Problems of the form~\eqref{eq:discrete_pb_ctrl_cost} have been tackled in
various ways, from direct approaches using nonlinear optimization
\citep{jacobson1970differential, bock1984multiple, de1988differential,
dunn1989efficient, wright1990solution, wright1991partitioned,
rao1998application, betts2010practical} to convex relaxations using semidefinite
optimization~\citep{boyd1997semidefinite}. Numerous packages exist for such
problems such as CasAdi~\citep{andersson2018cassadi},
Pyomo~\citep{bynum2021pyomo}, JumP~\citep{dunning2017jump},
IPOPT~\citep{wachter2006implementation}, or SNOPT~\citep{gill2005snopt},
Crocoddyl~\citep{jallet2023proxddp}, acados~\citep{verschueren2021acados}. A
popular approach proceeds by computing at each iteration the linear quadratic
regulator associated to a linear quadratic approximation of the problem around
the current candidate solutions~\citep{jacobson1970differential,li2004iterative,
sideris2005efficient, tassa2012synthesis}. The resulting feedback policy can
then be applied on the linearized dynamics as in the Iterative Linear Quadratic
Regulator (ILQR) algorithm~\citep[Section 8.8.5]{rawlings2017model},
\citep{li2004iterative, sideris2005efficient}. Alternatively, the feedback
policy can be applied on the original dynamics, as in the iterative Linear
Quadratic Regulator (iLQR) algorithm~\citep[Section
8.8.6]{rawlings2017model},~\citep{tassa2012synthesis}, akin to a Differential
Dynamic Programming (DDP) approach~\citep{mayne1966second}. To avoid confusion,
we name this second approach Iterative Dynamic Differentiable Programming
(IDDP).

\paragraph{Motivation}
Empirically, these approaches often exhibit fast convergence to efficient or
optimal controllers which explain their popularity in applied
control~\citep{tassa2012synthesis, giftthaler2018family}, and the renewed
interest for linear quadratic control in neuro-dynamic programming and
reinforcement learning~\citep{fazel2018global, recht2019tour,
kakade2020information, simchowitz2020naive, westenbroek2021stability}. The
empirical performance of the ILQR and IDDP algorithms are illustrated in
Figure~\ref{fig:conv}. The first problem considered in Figure~\ref{fig:conv}
consists in swinging up a pendulum to a vertical position in finite time, the
second problem consists in controlling a simple model of a car to be at
predefined positions at given times. The detailed experimental setting is
presented in Section~\ref{sec:exp}. Most importantly, the costs consists
in quadratic state costs  bounded below by 0, i.e., of the form
$\cost_t(\state_t, \ctrl_t) = (\state_t - \hat \state_t)^\top Q_t (\state_t -
\hat \state_t)$ for $Q_t$ positive definite and $\hat x_t$ a reference
state. In Figure~\ref{fig:conv}, we plot $c^{(k)}/c^{(0)}$ in log-scale, where
$c^{(k)} \geq 0$ denotes the total cost at iteration $k$ computed by means of a
gradient descent, an ILQR algorithm or an IDDP algorithm, and $c^{(0)}$ denotes
an initial cost given by initializing the control variables at $0$. We observe
that both the ILQR and the IDDP algorithms converge to an optimal cost, i.e.,
$c^{(k)} \rightarrow 0$. Moreover, both algorithms outperform a simple gradient
descent and appear to exhibit a fast convergence after some iterations. The
empirical behavior illustrated in Figure~\ref{fig:conv} does not hold for any
nonlinear control problem as illustrated in Appendix~\ref{app:exp_sup} with a
more realistic model of a car taken from~\citet{liniger2015optimization}. Yet,
the examples presented in Figure~\ref{fig:conv} are surprising from an
optimization viewpoint as the problems considered escape the usual paradigm of
convex or linear optimization. 

\begin{figure}
	\begin{center}
        $\vcenter{\hbox{\includegraphics[width=0.15\linewidth]{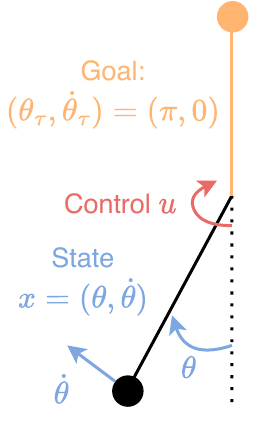}}}$\hspace*{0pt}
	$\vcenter{\hbox{\includegraphics[width=0.65\linewidth]{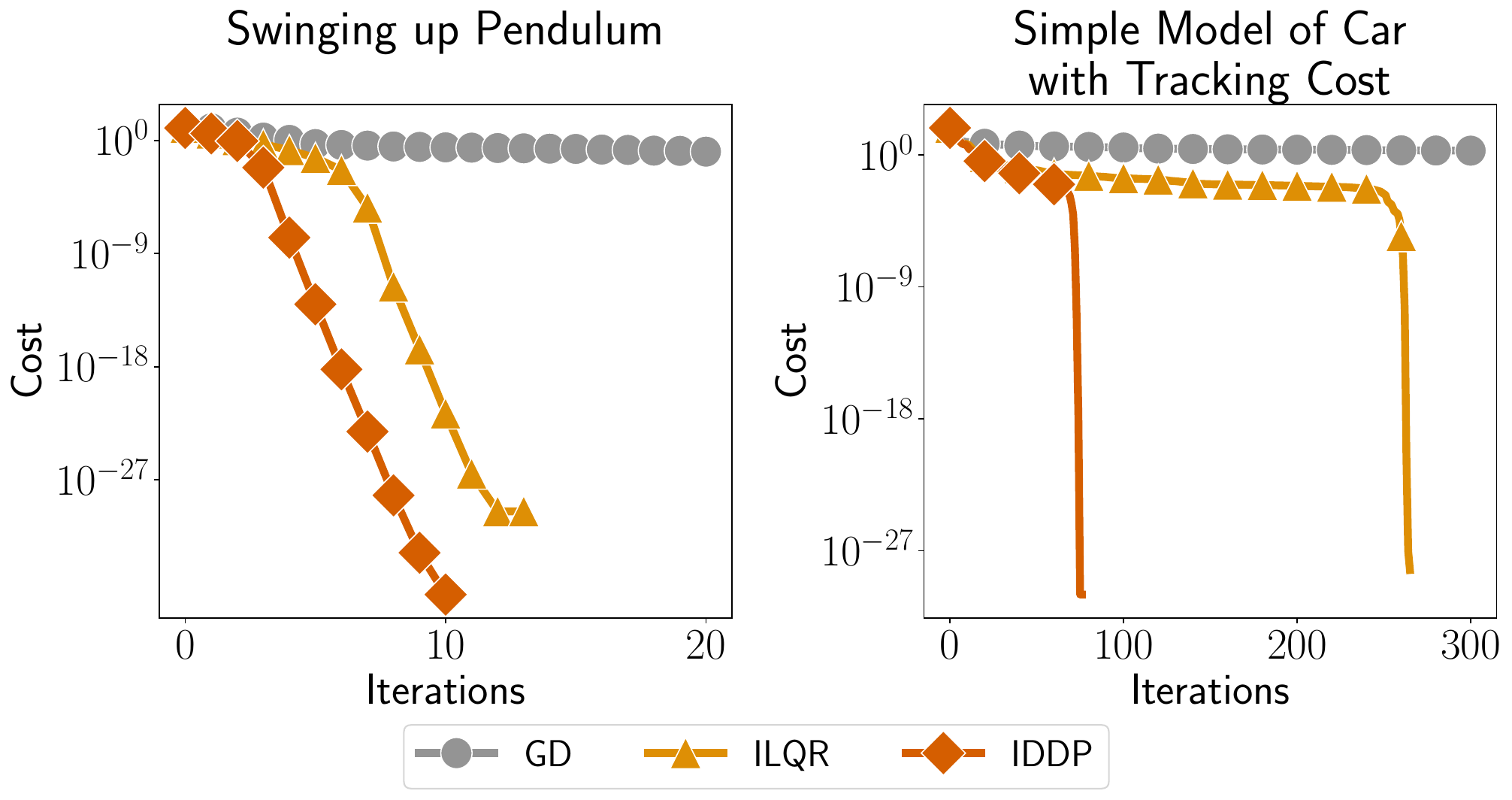}}}$\hspace*{0pt}
	$\vcenter{\hbox{\includegraphics[width=0.2\linewidth]{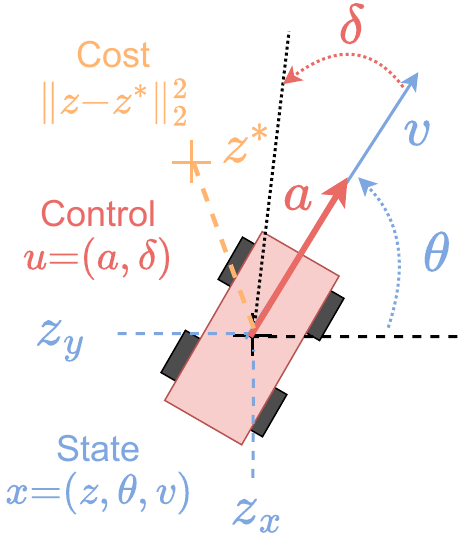}}}$
	\caption{Cost along iterations of ILQR, IDDP and Gradient Descent (GD) on two
	discrete time nonlinear control problems detailed in
	Section~\ref{sec:exp}.\label{fig:conv}}
	\end{center}
\end{figure}

The empirical efficiency of ILQR and IDDP on some nonlinear control problems, as
the ones illustrated in Figure~\ref{fig:conv}, motivates then the following
questions.
\begin{enumerate}[itemsep=0ex]
  \item \emph{What conditions on a discrete time nonlinear control problem ensure algorithms such as ILQR and IDDP converge to a globally optimal solution?}
  \item \emph{What convergence behaviors can we expect from these algorithms?}
\end{enumerate} 

We first present generic convergence results for the ILQR and IDDP algorithms on
problems~\eqref{eq:discrete_pb_ctrl_cost}. These results ensure global
convergence to stationary points and local convergence to minima in
Theorems~\ref{thm:gen_stat_cvg} and~\ref{thm:gen_local_cvg}. However, the
aforementioned convergence results do not explain the convergence to
\emph{global minima} observed in Figure~\ref{fig:conv}. 

We then turn our attention to nonlinear control problems without control costs and
with time-invariant dynamics $\dyn$, i.e., problems of the form
\begin{align}
	\min_{\substack{\ctrl_0,\ldots, \ctrl_{\horizon-1} \in \reals^\dimctrl\\ \state_0, \ldots, \state_\horizon \in \reals^\dimstate}} \quad & \sum_{t=1}^{\horizon} \cost_t(\state_t) \label{eq:discrete_pb} \\
	\mbox{subject to} \quad & \state_{t+1} = \dyn(\state_t, \ctrl_t) \quad \mbox{for} \ t \in \{0, \ldots, \horizon-1\}, \qquad \state_0 = \initstate.\nonumber
\end{align}  
Considering time-invariant dynamics make clearer the relationship with the
underlying continuous dynamical system. Generalizations to time-variant systems
are pointed out when applicable. However, problems of the
form~\eqref{eq:discrete_pb} conserve the main challenge  of generic discrete
time control problems~\eqref{eq:discrete_pb_ctrl_cost}, that is, the
nonlinearity of the dynamics, which prevent us from using classical results from
convex analysis even if the state costs $h_t$ are convex. The nonlinearity of the
dynamics distinguish problems~\eqref{eq:discrete_pb}  from the linear quadratic
settings studied by, e.g.,~\citet{fazel2018global, zhang2019policy,
zhang2020policy, sun2021learning, lin2021perturbation}, for which convergence to
global minima of policy methods have been shown by means of algebraic
considerations. The absence of costs on controls variables restrict the problem
class compared to problems of the form~\eqref{eq:discrete_pb_ctrl_cost}.
However, this also allows focusing on the properties of the dynamics to
understand the properties of non-convex problems~\eqref{eq:discrete_pb} and paves
the way to analyze generic problems of the
form~\eqref{eq:discrete_pb_ctrl_cost}.

\paragraph{Approach}
Our analysis stems from observing that, for strongly convex costs, convergence
to global minima of the ILQR or IDDP algorithms is ensured if the linearized
dynamics, i.e., the mappings $\auxctrl \mapsto \nabla_{\ctrl} \dyn(\state,
\ctrl)^\top \auxctrl$, are surjective, where $ \nabla_{\ctrl} \dyn(\state,
\ctrl)^\top \in \reals^{\dimstate \times \dimctrl}$ is the Jacobian of the
dynamic with respect to the control variable on a state $\state$ for a given
control $\ctrl$. To quantify the convergence of the ILQR and IDDP algorithms, we
consider the existence of a parameter $\sigma$ such that 
\begin{equation}\label{eq:global_conv_cond}
	\forall \state, \ctrl \in \reals^{\dimstate}\times  \reals^{\dimctrl},  \quad \sigma_{\min} (\nabla_\ctrl \dyn(\state, \ctrl)) \geq \sigma >0,
\end{equation}
where $\sigma_{\min}(\nabla_\ctrl \dyn(\state, \ctrl)) = \inf_{\lambda \in
\reals^\dimstate} \| \nabla_\ctrl \dyn(\state, \ctrl)\lambda\|_2 /\|\lambda\|_2$
is the minimal singular value of the transpose Jacobian of the discrete time dynamics
$\dyn$ w.r.t. the control variable.  Eq.~\eqref{eq:global_conv_cond} ensures the
injectivity of $\costate \mapsto \nabla_\ctrl \dyn(\state, \ctrl)\costate$ which
is equivalent to the surjectivity of $\auxctrl \mapsto\nabla_\ctrl \dyn(\state,
\ctrl)^\top\auxctrl$. Our main theorem is then stated below for strongly convex
costs provided adequate smoothness assumptions on the costs and the dynamics. 
\begin{theorem}\label{thm:intro} 
In problem~\eqref{eq:discrete_pb}, consider  costs $\cost_t$  that are strongly
convex with Lipschitz-continuous gradients and Lipschitz-continuous Hessians
and a dynamic $\dyn$ that is Lipschitz-continuous with Lipschitz-continuous
Jacobians. If the linearized dynamics are surjective, i.e., $\dyn$
satisfies~\eqref{eq:global_conv_cond}, then a regularized ILQR or IDDP
algorithm converges to a \emph{global minimum} with a \emph{local quadratic 
convergence rate}. 
\end{theorem}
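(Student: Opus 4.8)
The plan is to recast \eqref{eq:discrete_pb} as an unconstrained composite minimization and to read the regularized ILQR/IDDP iteration as a regularized generalized Gauss--Newton (prox-linear) step on it. Since $\state_0=\initstate$ is fixed, the dynamics constraints define the trajectory $\states=(\state_1,\dots,\state_\horizon)$ as a $C^2$ map $\traj$ of the controls $\ctrls=(\ctrl_0,\dots,\ctrl_{\horizon-1})$, and \eqref{eq:discrete_pb} becomes $\min_\ctrls\obj(\ctrls)$ with $\obj:=\cost\circ\traj$ and $\cost(\states):=\sum_{t=1}^{\horizon}\cost_t(\state_t)$. The outer map $\cost$ is a separable sum, hence $\plcst$-strongly convex with $\plcst=\min_t\plcst_t$, with Lipschitz gradient and Lipschitz Hessian; the inner map $\traj$ is $C^2$ with Jacobian and Jacobian-derivative bounded on sublevel sets, obtained by composing the assumed bounds on $\dyn$ along the horizon. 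In this notation the ILQR subproblem---linearize $\dyn$ (equivalently $\traj$) about $\ctrls^{(k)}$, keep a convex (possibly quadratized) model of the $\cost_t$, add a regularizer $\tfrac{\reg_k}{2}\norm{\ctrls-\ctrls^{(k)}}^2$---is exactly the minimization of a regularized convex model $\model_k$ of $\obj$ that uses only first-order information of $\dyn$, solvable in closed form by a Riccati recursion; this is a regularized generalized Gauss--Newton step. IDDP performs the same backward pass but rolls the resulting affine policy out on the true dynamics $\dyn$; its iterate differs from the ILQR one by a term of the same quadratic order as the model error, so the two are analyzed in parallel.

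The first substantive step is a lemma turning the pointwise surjectivity \eqref{eq:global_conv_cond} into uniform conditioning of the trajectory map: $\sigma_{\min}\!\big(\nabla\traj(\ctrls)^\top\big)\ge\bar\sigma>0$ for all $\ctrls$, with $\bar\sigma=\bar\sigma(\sigma,\lipdynstate)$. This follows by exhibiting a right inverse with controlled norm: given any state perturbation $\delta\states$, run the linearized recursion forward with $\delta\ctrl_t\in(\nabla_\ctrl\dyn(\state_t,\ctrl_t)^\top)^{+}\big(\delta\state_{t+1}-\nabla_\state\dyn(\state_t,\ctrl_t)^\top\delta\state_t\big)$, which is well defined since $\nabla_\ctrl\dyn^\top$ is surjective with smallest positive singular value $\ge\sigma$, and gives $\norm{\delta\ctrl_t}\le\sigma^{-1}(\norm{\delta\state_{t+1}}+\lipdynstate\norm{\delta\state_t})$, whence $\norm{\delta\ctrls}\le\bar\sigma^{-1}\norm{\delta\states}$ with $\bar\sigma=\sigma/\sqrt{2(1+(\lipdynstate)^2)}$. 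Combined with the chain rule $\nabla\obj(\ctrls)=\nabla\traj(\ctrls)\,\nabla\cost(\traj(\ctrls))$ this has two consequences. First, at any stationary point $\ctrls^\star$ the injectivity of $\nabla\traj(\ctrls^\star)$ forces $\nabla\cost(\traj(\ctrls^\star))=0$, i.e.\ $\traj(\ctrls^\star)=\argmin\cost=:\states^\star$: surjectivity makes the problem ``consistent''/zero-residual, and the optimal trajectory is the unconstrained cost minimizer. Second, chaining the Polyak--\L{}ojasiewicz inequality of the strongly convex $\cost$ with $\norm{\nabla\obj(\ctrls)}\ge\bar\sigma\,\norm{\nabla\cost(\traj(\ctrls))}$ yields a global gradient-dominance inequality for $\obj$, namely $\norm{\nabla\obj(\ctrls)}^2\ge 2\plcst\bar\sigma^2\big(\obj(\ctrls)-\inf\obj\big)$, even though $\obj$ is not coercive when $\dimctrl>\dimstate$.

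With these in hand the globalization is the standard prox-linear argument: using the Lipschitz bounds on $\nabla\traj$ and $\nabla\cost$, for $\reg_k$ above a threshold the model $\model_k$ majorizes $\obj$, so its minimizer $\ctrls^{(k+1)}$ enjoys a sufficient decrease $\obj(\ctrls^{(k)})-\obj(\ctrls^{(k+1)})\gtrsim\reg_k\norm{\ctrls^{(k+1)}-\ctrls^{(k)}}^2$ and a stationarity bound $\norm{\nabla\obj(\ctrls^{(k)})}\lesssim\reg_k\norm{\ctrls^{(k+1)}-\ctrls^{(k)}}+\norm{\ctrls^{(k+1)}-\ctrls^{(k)}}^2$; hence $\norm{\nabla\obj(\ctrls^{(k)})}\to0$, and by gradient dominance the objective converges linearly to $\inf\obj$ while the iterates are Cauchy, converging to some $\ctrls^\infty$ with $\nabla\obj(\ctrls^\infty)=0$, so $\traj(\ctrls^\infty)=\states^\star$ and $\obj(\ctrls^\infty)=\inf\obj=\min\cost$. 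This proves global convergence from any initialization and splits the run into the expected phases (an initial regime where $\reg_k$ is large, then the linear regime driven by gradient dominance). For the local rate one uses that the adaptive regularization drives $\reg_k\downarrow0$ near $\ctrls^\infty$, so the step becomes the pure Gauss--Newton step; near $\ctrls^\infty$ the solution set $\mathcal{S}=\{\ctrls:\traj(\ctrls)=\states^\star\}$ is a $C^1$ manifold ($\states^\star$ a regular value of $\traj$), $\obj$ grows quadratically away from $\mathcal{S}$ by strong convexity of $\cost$ and uniform surjectivity of $\nabla\traj$, and---crucially, because $\nabla\cost(\states^\star)=0$---the Gauss--Newton linearization error is $O(\dist(\ctrls^{(k)},\mathcal{S})^2)$; the classical composite-Gauss--Newton estimate, invoking the Lipschitz Hessian of $\cost$ and Lipschitz Jacobian of $\traj$, then gives $\dist(\ctrls^{(k+1)},\mathcal{S})\le C\,\dist(\ctrls^{(k)},\mathcal{S})^2$, i.e.\ local quadratic convergence, the vanishing $O(\dist^3)$ regularization term not spoiling it. The IDDP rollout perturbs the iterate by a term of the same quadratic order, so both the decrease and the quadratic-rate estimates transfer with adjusted constants.

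I expect two obstacles. The first is quantitative bookkeeping: propagating \eqref{eq:global_conv_cond} to a uniform $\bar\sigma$, tracking how $\bar\sigma$, the smoothness constants of $\cost$ and $\traj$, and the gradient-dominance constant combine, and designing/analyzing the regularization schedule so that the run provably transitions between phases. The second, and more delicate, is that the minimizer is not isolated when $\dimctrl>\dimstate$: the textbook local quadratic convergence of Gauss--Newton assumes an invertible Jacobian at an isolated minimizer, whereas here $\nabla\traj(\ctrls)^\top$ is only surjective and $\mathcal{S}$ is a manifold, so the local analysis must be carried out relative to $\dist(\cdot,\mathcal{S})$ and must lean on the ``consistent''/zero-residual structure (itself a consequence of surjectivity) to obtain the quadratic contraction---this, together with ruling out that non-coercivity of $\obj$ breaks convergence of the iterates, is where the argument needs the most care.
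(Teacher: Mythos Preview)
Your proposal is correct and tracks the paper's strategy closely: the compositional decomposition $\obj=\cost\circ\traj$, the identification of ILQR as regularized generalized Gauss--Newton, the propagation of~\eqref{eq:global_conv_cond} to a uniform lower bound on $\sigma_{\min}(\nabla_\ctrls\traj)$, gradient dominance for the global phase, an adaptive regularization scaling with $\|\nabla\cost(\traj(\ctrls))\|$, and IDDP treated as a quadratic perturbation of ILQR are all exactly the paper's moves. The one substantive difference is the local analysis. The paper does not work with $\dist(\ctrls,\mathcal{S})$; instead it tracks the Newton decrement $\lambda_\cost(\traj(\ctrls))$ and, using the push-through identity to rewrite $\nabla\traj(\ctrls)^\top\LQR_\reg(\obj)(\ctrls)$ as $-(\nabla^2\cost+\reg(\nabla\traj^\top\nabla\traj)^{-1})^{-1}\nabla\cost$, interprets the trajectory update as an approximate Newton step on $\cost$ in \emph{state} space, with self-concordance of $\cost$ (implied by strong convexity and Lipschitz Hessian) handling the bookkeeping. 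This sidesteps entirely the non-isolated-minimizer issue you flag, since the minimizer of $\cost$ is unique even when the control-space solution set is a manifold; your route via zero-residual Gauss--Newton and $\dist(\cdot,\mathcal{S})$ is also valid and makes the geometry more explicit, at the cost of the extra care you anticipate. A minor point: the paper obtains the sharper constant $\sigma_\dyn/(1+\lipdynstate)$ for the propagated surjectivity via the factorization $\nabla_\ctrls\traj=\nabla_\ctrls F(\idm-\nabla_\states F)^{-1}$ rather than your constructive right inverse.
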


Our analysis is based on  decomposing the problem at several scales. At the
scale of the trajectory, the objective can be seen as the composition of a total
cost function and a  function, which, given a sequence of controls, outputs the
corresponding trajectory. From an optimization viewpoint, the ILQR or the IDDP
algorithms, which use linear quadratic approximations of the objective,  amount
then to generalized Gauss-Newton algorithms~\citep{sideris2005efficient,
diehl2019local, messerer2021survey}. One contribution of this work is then to
detail the convergence rates of regularized generalized Gauss-Newton algorithms
for the composition of an outer strongly convex function and an inner function
with non-singular transpose Jacobians. 

Both algorithms take advantage of the dynamical structure of the problem to
implement a step of a Gauss-Newton algorithm. Similarly, the convergence
guarantees for the ILQR or IDDP algorithms can be detailed using the properties
of the problem at the scale of a single time step. In particular,
condition~\eqref{eq:global_conv_cond} entails a simple condition on the dynamic
to ensure convergence to global minima.

Finally, the dynamic itself can further be decomposed at the scale of the
discretization method used to define the discrete time control problem.
Condition~\eqref{eq:global_conv_cond} may then be ensured by considering a
multi-rate sampling method, i.e., sampling the control variables at a higher
rate than the sampling of the costs on the state variables. By combining all
aforementioned scales, we obtain worst-case convergence guarantees to global
optima for the ILQR and IDDP algorithms.

\paragraph{Outline} 
We start by presenting classical nonlinear control algorithms for
problem~\eqref{eq:discrete_pb}, i.e., the Iterative Linear Quadratic Regulator
(ILQR) and its variant IDDP, a.k.a. iLQR, in Section~\ref{ssec:ilqr} and
Section~\ref{ssec:ddp}, and cast them as closed-box oracles. We provide
convergence guarantees to stationary points of both algorithms for generic
problems of the form~\eqref{eq:discrete_pb_ctrl_cost}, as well as linear local
convergence guarantees, in Section~\ref{ssec:gen_cvg}. We analyze the properties
of problem~\eqref{eq:discrete_pb} with respect to the dynamics $\dyn$ in terms
of smoothness and surjectivity of the linearized dynamics in
Section~\ref{sec:prop}.  We further decompose the properties of the dynamic
$\dyn$ with respect to the underlying discretization scheme in
Section~\ref{sec:suff_cond}. We analyze the convergence of the ILQR and IDDP
algorithms in, respectively, Section~\ref{ssec:conv_ilqr},
Section~\ref{ssec:ddp_conv}. In particular, in Section~\ref{ssec:global_conv},
we demonstrate the convergence to global optima of the ILQR algorithm provided
that the costs are gradient dominated, the dynamics have surjective
linearizations~\eqref{eq:global_conv_cond} and both costs and dynamics are
smooth. We show the \emph{local quadratic convergence} of the ILQR algorithm
provided that the costs are self-concordant, the dynamics have surjective
linearizations~\eqref{eq:global_conv_cond} and both costs and dynamics satisfy
appropriate smoothness conditions in Section~\ref{ssec:local_conv}.
Theorem~\ref{thm:intro} is detailed for the ILQR algorithm in
Section~\ref{ssec:overall_conv} and convergence of the IDDP algorithm is
analyzed in Section~\ref{ssec:ddp_conv}. Numerical experiments are presented in
Section~\ref{sec:exp} to assess the theoretical findings. We discuss related
work in Section~\ref{sec:related_work}.

Additional numerical illustrations of the ILQR and IDDP algorithms can be found
in the companion paper~\citep{roulet2021techreport} and reproduced or further
explored by using the companion toolbox {\small
\url{https://github.com/vroulet/ilqc}}.

\paragraph{Summary of contributions}
For problems of the form~\eqref{eq:discrete_pb_ctrl_cost}, we demonstrate global
convergence to stationary points and local linear convergence to minima of both
ILQR and IDDP algorithms under usual regularity assumptions
(Theorems~\ref{thm:gen_stat_cvg},~\ref{thm:gen_local_cvg}). For problems of the
form~\eqref{eq:discrete_pb}, we make the following contributions.
\begin{enumerate}[nosep]
  \item We present sufficient conditions for convergence to a global minimum of
  the problem through the lens of a \emph{gradient-dominating property} of the objective.
  Namely, we show that a gradient-dominating property of the objective can be
  decomposed into the properties of the discrete time dynamic and ensured for
  appropriate discretization schemes (Lemma~\ref{lem:injectivity},
  Theorem~\ref{thm:suff_cond}).
	\item We prove that the ILQR algorithm \emph{converges to a global minimum}
	if the cost is smooth, gradient dominated, and if the dynamic is smooth with
	non-singular transpose Jacobians~\eqref{eq:global_conv_cond}
  (Theorem~\ref{thm:global_conv}).
	\item We prove that the ILQR algorithm \emph{converges locally with a
	quadratic rate} if the cost is smooth and self-concordant, and if the dynamic
	is smooth with non-singular transpose Jacobians~\eqref{eq:global_conv_cond}
  (Theorem~\ref{thm:conv_ggn}).
	\item We show and detail the \emph{global and local convergence} to minima of
	both ILQR and IDDP algorithms for smooth and strongly convex costs and smooth
	dynamic with non-singular transpose Jacobians~\eqref{eq:global_conv_cond}
	(Theorems~\ref{thm:global_local_conv} and~\ref{thm:ddp_conv}).
  \item Inspired from the theoretical findings, we also present a line-search
  variant of the ILQR algorithm that keep the same global and local convergence
  guarantees to minima, while not requiring any knowledge of problems
  constants (Corollary~\ref{cor:line_search}).
\end{enumerate}

\vspace{1em}
\paragraph{Notations}
For a sequence of vectors $\state_1, \ldots, \state_{\horizon} \in
\reals^\dimstate$, we denote by semicolons their concatenation s.t. $\states =
(\state_1;\ldots;\state_\horizon) \in \reals^{\horizon \dimstate}$. For a
multivariate function $f:\reals^d\rightarrow \reals^n$, we denote  
$\nabla f(x) = (\partial_{x_i} f_j(x))_{\substack{i\in \{1,\ldots, d\} j \in
\{1,\ldots, n\}}} \in \reals^{d \times n}$ the
transpose of the Jacobian of $f$ on $x$. For  $f:\reals^d \times \reals^p
\rightarrow  \reals^n$,  $x\in \reals^d$, $y\in \reals^p$, we denote $\nabla_x
f(x, y)= (\partial_{x_i} f_j(x, y) )_{\substack{i\in \{1,\ldots, d\} j \in
\{1,\ldots, n\}}} \in \reals^{d \times n}$ the partial transpose Jacobian of $f$ w.r.t.
$x$ on $(x, y)$. For  $f:\reals^d \rightarrow \reals$, we denote $f^* = \min_{x
\in \reals^d} f(x)$. For  $f:\reals^d \rightarrow \reals^n$,
$h:\reals^n\rightarrow \reals$, and  $x \in \reals^d$, we denote the linear
expansion of $f$ around $x$ and the quadratic expansion of $h$ around $x$ as,
respectively,
\[
\lin_f^x: y \rightarrow\nabla f(x)^\top y, \quad  \qua_h^x: y \rightarrow\nabla h(x)^\top y + 
\frac{1}{2} y^\top \nabla^2 h(x)y.
\]
For  $f:\reals^d\rightarrow \reals^n$, we denote 
$
\lip_f {=} \sup_{x, y\in \reals^d, x\neq y} \|f(x){-}f(y)\|_2 /\|x{-}y\|_2
$
the Lipschitz-continuity constant of $f$.
For  a matrix $A \in \reals^{d\times n}$, we denote by $\|A\|_2
=\sigma_{\max}(A) = \sup_{\lambda \in \reals^n} \|A\lambda\|_2 /\|\lambda\|_2 $
and  $\sigma_{\min}(A) = \inf_{\lambda \in \reals^n} \|A\lambda\|_2
/\|\lambda\|_2$  the largest and smallest singular values of $A$ respectively.

\section{Nonlinear Control Algorithms}\label{sec:algos}
The objective in~\eqref{eq:discrete_pb_ctrl_cost} only depends on the control variables
$\ctrls = (\ctrl_0;\ldots;\ctrl_{\horizon-1}) \in \reals^{\horizon\dimctrl}$ and
can be written as
\begin{align}
	\label{eq:obj_full}
	\obj(\ctrls) = & \quad \sum_{t=0}^{\horizon-1} \cost_t(\state_t, \ctrl_t) + \cost_\horizon(\state_\horizon)\\ 
	&  \mbox{s.t.} \ \ \state_{t+1} = \dyn_t(\state_t, \ctrl_t), \quad \mbox{for} \ t \in \{0, \ldots, \horizon-1\}, \qquad \state_0 = \initstate. \nonumber
\end{align}

Problem~\eqref{eq:discrete_pb_ctrl_cost} consists then in minimizing $\obj$. In
the following, we always \emph{assume that $\obj$ has at least one minimizer
$\ctrls^*$}. The classical ILQR \citep{li2004iterative, sideris2005efficient},
and IDDP algorithms~\citep{tassa2012synthesis} compute the next iterate as  
$
	\ctrls_{\next} = \ctrls + \Oracle_{\reg}(\obj)(\ctrls),
$
for given control variables $\ctrls$. Here, $\Oracle_{\reg}(\obj)$ is an oracle,
which, given a regularization parameter $\reg$ and control variables $\ctrls$,
outputs a  direction  $\Oracle_{\reg}(\obj)(\ctrls)$. The original ILQR or IDDP
algorithms did not incorporate an additional
regularization~\citep{li2004iterative, tassa2012synthesis}. Our implementation
is a variant that leads to non-asymptotic convergence guarantees of these
algorithms~\citep{roulet2019iterative}.

\subsection{Iterative Linear Quadratic Regulator}\label{ssec:ilqr}
Given control variables $\ctrls = (\ctrl_0;\ldots; \ctrl_{\horizon-1})$ with associated
trajectory $\state_1,\ldots,\state_\horizon$, and a regularization $\reg>0$, an
Iterative Linear Quadratic Regulator (ILQR) algorithm  computes the next command
by computing the Linear Quadratic Regulator (LQR)  associated with a  quadratic
approximation of the costs and a linear approximation of the dynamics around the
current trajectory. 

Formally, the  next iterate is computed as $\ctrls_{\next} =
\ctrls + \LQR_\reg(\obj)(\ctrls) $, where
\begin{align}
	\nonumber\LQR_\reg(\obj)(\ctrls) & =   
	\argmin_{\auxctrl_0,\ldots,\auxctrl_{\horizon-1} \in \reals^\dimctrl} 
	\sum_{t=0}^{\horizon-1}
  \begin{pmatrix}
    p_t \\
    q_t
  \end{pmatrix}^\top \begin{pmatrix}
    \auxstate_t \\
    \auxctrl_t 
  \end{pmatrix}
  + \frac{1}{2} \begin{pmatrix}
    \auxstate_t \\
    \auxctrl_t 
  \end{pmatrix}^\top 
  \begin{pmatrix}
    P_t & R_t \\
    R_t^\top & Q_t 
  \end{pmatrix}
  \begin{pmatrix}
    \auxstate_t \\
    \auxctrl_t 
  \end{pmatrix} + \frac{\nu}{2}\|\auxctrl_t\|_2^2\\
  & \hspace*{75pt}
  + p_\horizon^\top \auxstate_\horizon
  + \frac{1}{2} \auxstate_\horizon^\top P_\horizon \auxstate_\horizon \label{eq:lqr_pb} \\
	& \hspace{50pt} \mbox{s.t.}\quad \auxstate_{t+1} = \A_t \auxstate_t + \B_t \auxctrl_t , \ \mbox{for} \ t\in \{0, \ldots, \horizon-1\},  \auxstate_0 = 0, \nonumber \\
	\mbox{with} \quad 
  P_\horizon & = \nabla^2_{\state_\horizon\state_\horizon} \cost_\horizon(\state_\horizon), \ p_\horizon = \nabla_{\state_\horizon} \cost_\horizon(\state_\horizon), \nonumber\\
  \P_t & = \nabla_{\state_t\state_t}^2 \cost_t(\state_t, \ctrl_t), \ \p_t = \nabla_{\state_t} \cost_t(\state_t, \ctrl_t), \hspace{62pt} \mbox{for} \ t\in \{0, \ldots, \horizon-1\},\nonumber\\
  \Q_t & = \nabla_{\ctrl_t\ctrl_t}^2 \cost_t(\state_t, \ctrl_t), \ \q_t = \nabla_{\ctrl_t} \cost_t(\state_t, \ctrl_t), \hspace{62.5pt} \mbox{for} \ t\in \{0, \ldots, \horizon-1\},\nonumber\\
  R_t & = \nabla_{\ctrl_t\state_t}^2 \cost_t(\state_t, \ctrl_t) \hspace{160pt} \mbox{for} \ t\in \{0, \ldots, \horizon-1\},\nonumber\\
 \A_t & = \nabla_{\state_t} \dyn_t(\state_t, \ctrl_t)^\top, \  \B_t = \nabla_{\ctrl_t} \dyn_t(\state_t, \ctrl_t)^\top, \hspace{56pt} \mbox{for} \ t \in \{0, \ldots, \horizon-1\}.\nonumber
\end{align}
The minimum above is well-defined as long as either the costs are convex, or
the regularization $\reg$ is large enough. The implementation of the ILQR oracle
is presented in Algorithm~\ref{algo:lqr_ddp}. Its computational scheme is
illustrated in Figure~\ref{fig:ilqr_comput_scheme}. 

Problem~\eqref{eq:lqr_pb} is first instantiated in a \emph{forward pass} by
collecting all first order or second order information on the dynamics and the
costs necessary to pose problem~\eqref{eq:lqr_pb}.

Problem~\eqref{eq:lqr_pb} is then solved by dynamic
programming~\citep{bertsekas2017dynamic}. Namely, the cost-to-go
$\costogo_t(\auxstate_t)$ from a state $\auxstate_t$ at time $t$  is computed
recursively in a \emph{backward pass} as, starting from
$\costogo_\horizon(\auxstate_\horizon) = \frac{1}{2} \auxstate_\horizon^\top
\P_\horizon\auxstate_\horizon + \p_\horizon^\top \auxstate_\horizon$, 
\begin{align}\nonumber
\costogo_t( \auxstate_t) &= \min_{\auxctrl_t, \ldots \auxctrl_{\horizon-1} \in \reals^\dimctrl}	\sum_{s=t}^{\horizon-1}
\begin{pmatrix}
  p_s \\
  q_s
\end{pmatrix}^\top \begin{pmatrix}
  \auxstate_s \\
  \auxctrl_s 
\end{pmatrix}
+ \frac{1}{2} \begin{pmatrix}
  \auxstate_s \\
  \auxctrl_s
\end{pmatrix}^\top 
\begin{pmatrix}
  P_s & R_s \\
  R_s^\top & Q_s 
\end{pmatrix}
\begin{pmatrix}
  \auxstate_s \\
  \auxctrl_s 
\end{pmatrix} + \frac{\nu}{2}\|\auxctrl_s\|_2^2\\
& \hspace*{75pt}
+ p_\horizon^\top \auxstate_\horizon
+ \frac{1}{2} \auxstate_\horizon^\top P_\horizon \auxstate_\horizon  \nonumber\\
& \qquad \mbox{s.t.}\quad \auxstate_{s+1} = \A_s \auxstate_s + \B_s \auxctrl_s , \ \mbox{for} \ s\in \{t, \ldots, \horizon-1\}, \nonumber\\
& = \min_{\auxctrl_t \in \reals^\dimctrl}\left\{
  \begin{pmatrix}
    p_t \\
    q_t
  \end{pmatrix}^\top \begin{pmatrix}
    \auxstate_t \\
    \auxctrl_t 
  \end{pmatrix}
  + \frac{1}{2} \begin{pmatrix}
    \auxstate_t \\
    \auxctrl_t
  \end{pmatrix}^\top 
  \begin{pmatrix}
    P_t & R_t \\
    R_t^\top & Q_t
  \end{pmatrix}
  \begin{pmatrix}
    \auxstate_t \\
    \auxctrl_t 
  \end{pmatrix}
  + \frac{\nu}{2}\|\auxctrl_t\|_2^2
  + \costogo_{t+1}(\A_t \auxstate_t {+} \B_t \auxctrl_t)
  \right\}  \label{eq:costogo_subpb}\\
& = \frac{1}{2}\auxstate_t^\top J_t \auxstate_t + \auxstate_t^\top \j_t, \label{eq:costogo}
\end{align}
where $\J_t$, $\j_t$ are computed recursively in line~\ref{line:costogo} of
Algorithm~\ref{algo:lqr_ddp}. The optimal control at time $t$ from state
$\auxstate_t$ is then given by an affine policy
\begin{align}\label{eq:pol}
\pi_t(\auxstate_t) & = \argmin_{\auxctrl_t \in \reals^\dimctrl}\left\{
  \begin{pmatrix}
    p_t \\
    q_t
  \end{pmatrix}^\top \begin{pmatrix}
    \auxstate_t \\
    \auxctrl_t 
  \end{pmatrix}
  + \frac{1}{2} \begin{pmatrix}
    \auxstate_t \\
    \auxctrl_t
  \end{pmatrix}^\top 
  \begin{pmatrix}
    P_t & R_t \\
    R_t^\top & Q_t
  \end{pmatrix}
  \begin{pmatrix}
    \auxstate_t \\
    \auxctrl_t 
  \end{pmatrix}
  + \frac{\nu}{2}\|\auxctrl_t\|_2^2
  + \costogo_{t+1}(\A_t \auxstate_t {+} \B_t \auxctrl_t)
  \right\} \nonumber\\ 
  & = K_t \auxstate_t + k_t, 
\end{align}
where $K_t, k_t$ are computed in line~\ref{line:policy} of
Algorithm~\ref{algo:lqr_ddp}. The cost-to-go functions and policies are
well-defined as long as all costs $\cost_t$ are convex or if the regularization
$\reg$ is large enough (see e.g.~\citep{roulet2021techreport}).

The solution of the LQR problem~\eqref{eq:lqr_pb}, is given by
\emph{rolling-out} the policies along the linear trajectories
of~\eqref{eq:lqr_pb}. The oracle is $\LQR_\reg(\obj)(\ctrls) = (\auxctrl_0;
\ldots;\auxctrl_{\horizon-1})$, where, starting from $\auxstate_0 = 0$,
\[
\auxctrl_t = \pi_t(\auxstate_t), \quad \auxstate_{t+1} = \A_t \auxstate_t + \B_t \auxctrl_t \quad \mbox{for} \ t \in \{0, \ldots, \horizon-1\}.
\]

Solving~\eqref{eq:lqr_pb} by dynamic
programming comes at a linear cost with respect to the length of the trajectory.
Namely, in terms of elementary computations, the ILQR oracle  has a computational
cost 
\begin{equation}\label{eq:time_cost}
\mathcal{C}(\dimstate, \dimctrl, \horizon) = O(\horizon(\dimstate + \dimctrl )^3).
\end{equation} 
Note that, in nonlinear control problems, the state and control dimensions are
generally small. On the other hand, the horizon $\horizon$ may be large if, for
example, for a fixed continuous time horizon, a small discretization stepsize
was used to define~\eqref{eq:discrete_pb}. The ILQR algorithm keeps a
linear complexity with respect to the leading dimension $\horizon$ of the
problem. The linear quadratic problem~\eqref{eq:lqr_pb} can also be solved by
alternative linear algebra subroutines ranging from matrix-free solvers that
take advantage of differentiable programming framework, or by introducing
Lagrange multipliers. We refer to~\citet{wright1991partitioned}, for more
details.

Overall an ILQR algorithm  computes a sequence of iterates as
\begin{equation}\label{eq:ilqr_algo}\tag{ILQR}
	\ctrls^{(k+1)} = \ctrls^{(k)} + \LQR_{\reg_k}(\obj)(\ctrls^{(k)}),
\end{equation}
starting from control variables $\ctrls^{(0)}$, where $\reg_k$ are regularization
parameters that may depend on the current iterate and $\LQR_\reg$ is implemented
by Algorithm~\ref{algo:lqr_ddp}.

\begin{figure}
  \centering
  \includegraphics[width=\linewidth]{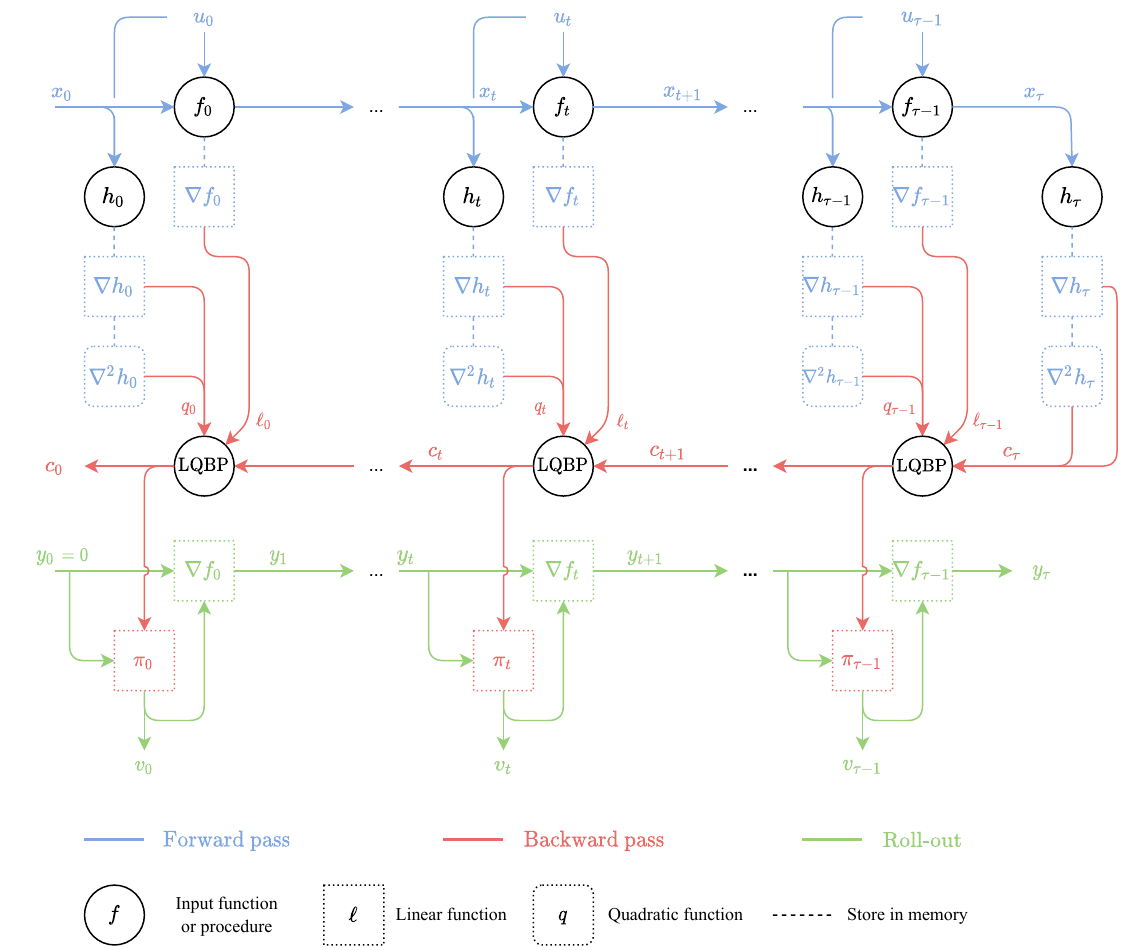}
  \caption{
  Computational scheme of the ILQR algorithm. The algorithm proceeds in three phases. 
  In the \emph{forward pass}, the first derivatives of the dynamics as well as the first and second
  derivatives of the costs are stored in memory (or the inputs are checkpointed to 
  access these derivatives). 
  During the \emph{backward pass} the cost-to-go functions are back-propagated at each 
  time step through matrix products and inversions, 
  denoted simply LQBP for linear-quadratic backpropagation.
  The policies computed in the backward pass are used in a final \emph{roll-out} phase through 
  the linearized dynamics to output a candidate sequence of control inputs.}
  \label{fig:ilqr_comput_scheme}
\end{figure}

\subsection{Iterative Differential Dynamic Programming}\label{ssec:ddp} The IDDP
algorithm is an instance of a Differential Dynamic Programming (DDP) approach. A
DDP approach considers computing approximate solutions of~\eqref{eq:discrete_pb}
around the current iterate by dynamic programming using approximations of the
dynamics and the costs. We refer the reader to,
e.g.,~\cite{jacobson1970differential,tassa2012synthesis, roulet2021techreport}
for a detailed presentation. The original DDP approach uses quadratic
approximations of the dynamics~\citep{jacobson1970differential}. Here, we focus
on the implementation using linear approximations of the dynamics and quadratic
approximations of the costs as used by, e.g.,~\cite{tassa2012synthesis}. In this
case,  a DDP approach amounts to computing the same policies $\pi_t$ as an ILQR
algorithm but rolling-out the policies along the original dynamics rather than
the linearized ones. 
 
Namely, the oracle output by IDDP is given as 
\begin{align}
	\DDP_\reg(\obj)(\ctrls) & 
 = ( \auxctrl_0; \ldots; \auxctrl_{\horizon-1}) \nonumber\\
\mbox{where}\  \auxctrl_t & = \pi_t(\auxstate_t), \ \auxstate_{t+1} 
= \dyn_t(\state_t +\auxstate_t, \ctrl_t +  \auxctrl_t) - \dyn_t(\state_t, \ctrl_t) \ \mbox{for} \ t \in \{0, \ldots, \horizon-1\}, 
\label{eq:ddp_oracle}
\end{align}
as presented in Algorithm~\ref{algo:lqr_ddp}.  The computational complexity of
this approach is the same as the one of the ILQR approach. By iterating the
above steps, starting from initial control variables $\ctrls^{(0)}$, we obtain
the iterative Linear Quadratic Regulator (IDDP) algorithm, which computes
iterates of the form
\begin{equation}\label{eq:iddp_algo}\tag{IDDP}
	\ctrls^{(k+1)} = \ctrls^{(k)} + \DDP_{\reg_k}(\obj)(\ctrls^{(k)}),
\end{equation}
where  the regularization parameters $\reg_k$ may depend on the current iterate
and $\DDP_\reg$ is implemented by Algorithm~\ref{algo:lqr_ddp}. \\

\begin{algorithm}[t]\caption{ILQR and IDDP steps  for
problem~\eqref{eq:discrete_pb}\label{algo:lqr_ddp}}
\begin{algorithmic}[1]
    \State {\bf Inputs:} Controls $\ctrls  = (\ctrl_0; \ldots;
		\ctrl_{\horizon-1})\in \reals^{\horizon\dimctrl}$, regularization $\reg>0$,
		initial state $\initstate \in \reals^{\dimstate}$, horizon $\horizon$,
		dynamic $\dyn:\reals^\dimstate \times \reals^\dimctrl \rightarrow
		\reals^\dimstate$, costs $(\cost_t)_{t=1}^{\horizon}$, oracle type $\Oracle
		\in \{\LQR, \DDP\}$. 
    \Statex \underline{Forward pass}  
    \Comment{ {\it \small instantiate problem~\eqref{eq:lqr_pb} for the given
		control variables}} 
    \State Initialize $\state_0 = \initstate$ 
    \For{$t=0, \ldots, \horizon-1$} 
    \State Compute $\state_{t+1} = \dyn(\state_t, \ctrl_t)$ and
		$\cost_t(\state_t, \ctrl_t)$
    \State Compute and store 
    \begin{gather*}
      A_t = \nabla_{\state_t} \dyn(\state_t, \ctrl_t)^\top, \
      B_t = \nabla_{\ctrl_t} \dyn(\state_t, \ctrl_t)^\top, \\ 
      p_t = \nabla_{\state_t} \cost_t(\state_t, \ctrl_t), \ 
      q_t = \nabla_{\ctrl_t} \cost_t(\state_t, \ctrl_t), \\
      P_t = \nabla^2_{\state_t\state_t} \cost_t(\state_t, \ctrl_t), \
      Q_t = \nabla^2_{\ctrl_t \ctrl_t} \cost_t(\state_t, \ctrl_t), \
      R_t = \nabla^2_{\state_t \ctrl_t} \cost_t(\state_t, \ctrl_t)   
    \end{gather*}
    \EndFor 
    \State Compute $h_\horizon(\state_\horizon)$,
    $p_\horizon = \nabla_{\state_\horizon} \cost_\horizon(\state_\horizon)$
    $P_\horizon = \nabla^2_{\state_\horizon\state_\horizon} \cost_\horizon(\state_\horizon)$
    \Statex \underline{Backward pass}  
    \Comment{ {\it \small compute optimal policies for
		problem~\eqref{eq:lqr_pb}}} 
    \State Initialize $J_\horizon = P_\horizon$, $j_\horizon= p_\horizon$
    \For{$t=\horizon -1,\ldots 0$}
    \State Compute the cost-to-go functions $\costogo_t: \auxstate_t \rightarrow
		\frac{1}{2} \auxstate_t^\top \J_t \auxstate_t + \j_t^\top \auxstate_t$
		defined in~\eqref{eq:costogo} as \label{line:costogo}
    \begin{align*}
      \J_t & = 
      \H_t
      + \A_t^\top  \J_{t+1}\A_t 
      - (\R_t + \A_t^\top  \J_{t+1}\B_t) (\G_t + \reg \idm + \B_t^\top \J_{t+1}\B_t)^{-1} (\R_t^\top + \B_t^\top  \J_{t+1} \A_t)\nonumber \\
      \j_t & = \h_t 
      + \A_t^\top \j_{t+1} - (\R _t+ \A_t^\top  \J_{t+1}\B_t) (\G_t + \reg \idm + \B_t^\top  \J_{t+1}\B_t)^{-1} (\g_t+ \B_t^\top  \j_{t+1})
    \end{align*}
    \State Store the policies $\pi_t: \auxstate_t \rightarrow K_t \auxstate_t +
		k_t$ defined in~\eqref{eq:pol} as \label{line:policy}
    \begin{align*}
      K_t  & = -(\G_t + \reg \idm + \B_t^\top\J_{t+1}\B_t)^{-1} (\R_t^\top + \B_t^\top \J_{t+1} \A_t),  \\
      k_t  & = -(\G_t + \reg \idm + \B_t^\top\J_{t+1}\B_t)^{-1} (\g_t + \B_t^\top \j_{t+1}) 
    \end{align*}
    \EndFor 
    \Statex \underline{Roll-out pass} 
    \Comment{{\it \small apply the computed policies along the linearized or the
		exact dynamics}} 
    \State Initialize $\auxstate_0 = 0$ \For{$t=0, \ldots, \horizon-1$} 
    \If{$\Oracle$ is $\LQR$} 
    \State Compute 
    $
    \auxctrl_t = \pi_t(\auxstate_t), \ \auxstate_{t+1} = \A_t \auxstate_t + \B_t \auxctrl_t
    $
    \ElsIf{$\Oracle$ is $\DDP$} 
    \State Compute 
    $ 
    \auxctrl_t = \pi_t(\auxstate_t), \ \auxstate_{t+1} 
    =\dyn(\state_t +\auxstate_t, \ctrl_t +  \auxctrl_t) - \dyn(\state_t, \ctrl_t)
    $ 
    \EndIf
    \EndFor 
    \State{\bf Output:} Control directions $\auxctrls
		=(\auxctrl_0;\ldots;\auxctrl_{\horizon-1})$
\end{algorithmic}
\end{algorithm}

\subsection{Generic Convergence Guarantees}\label{ssec:gen_cvg}
We start by presenting convergence guarantees of the ILQR and IDDP algorithm for
generic problems of the form~\eqref{eq:discrete_pb_ctrl_cost}. First, with an
appropriate choice of regularization both algorithms can converge globally to a
stationary point at a polynomial rate (Theorem~\ref{thm:gen_stat_cvg}). Such a
stationary point of $\obj$ satisfies naturally necessary optimality conditions
for problem~\eqref{eq:discrete_pb_ctrl_cost} as recalled
in Appendix~\ref{app:opt_cond}. Note that necessary optimality conditions in discrete
time control problem differ from their continuous time counterpart as discussed 
in detail in Appendix~\ref{app:opt_cond}.
\begin{theorem}\label{thm:gen_stat_cvg}
  For problem~\eqref{eq:discrete_pb_ctrl_cost}, assume that the dynamics $\dyn_t$ are
  Lipschitz continuous with Lipschitz continuous Jacobians and that the costs $\cost_t$ 
  are Lipschitz continuous with Lipschitz continuous gradients and Lipschitz continuous 
  Hessians. Then, provided that the regularization $\reg$ is larger than some $c_1>0$, 
  the iterates of the \ref{eq:ilqr_algo} or the \ref{eq:iddp_algo} algorithms satisfy
  \[
    \min_{k\in \{0, \ldots, K\}} \|\nabla \fullobj(\ctrls^{(k)})\|_2 
    \leq \sqrt{\frac{2(c_2 + \nu)
    \left(\fullobj(\ctrls^{(0)}) - \min_{\ctrls \in \reals^{\horizon \dimctrl}} \fullobj(\ctrls)\right)}{K+1}},
  \]
  for $c_1, c_2$ depending on the smoothness properties of the dynamics and the
  costs.  
\end{theorem}
\begin{proof}
  Detailed statements and proofs are presented in Lemma~\ref{lem:gen_stat_cvg_rggn} 
  and Lemma~\ref{lem:iddp_stat_cvg} for the \ref{eq:ilqr_algo} and the \ref{eq:iddp_algo} algorithms, respectively.
\end{proof}
We can also demonstrate local linear convergence of both algorithms towards 
a minimum under regular assumptions.
\begin{theorem}\label{thm:gen_local_cvg} For
  problem~\eqref{eq:discrete_pb_ctrl_cost}, assume that the dynamics $\dyn_t$
  and the costs $\cost_t$ are Lipschitz continuous with Lipschitz continuous
  Jacobians and Lipschitz continuous Hessians. Let $\ctrls^{(k)}$ denote the
  $k$\textsuperscript{th} iterate of the \ref{eq:ilqr_algo} or the
  \ref{eq:iddp_algo} algorithms. Assume $\ctrls^{(k)}$ to be close to a minimum
  $\ctrls^*$ of $\fullobj$ with positive definite Hessian. If the regularization $\reg$
  is larger than some $c_1>0$, then the iterations of the \ref{eq:ilqr_algo} or the
  \ref{eq:iddp_algo} algorithm converge linearly to $\ctrls^*$ as 
  \[
    \|\ctrls^{(k+1)} -\ctrls^*\|_2
    \leq  \left(1 - \frac{c_2}{\reg}\right)\|\ctrls^{(k)}-\ctrls^*\|_2,
  \]
  for $c_1, c_2$ depending on the smoothness properties of the dynamics and the
  costs.
\end{theorem}
\begin{proof}
  Detailed statements and proofs are presented in 
  Lemma~\ref{lem:local_gen_cvg_rggn} and Lemma~\ref{lem:iddp_local_gen_cvg} 
  for the \ref{eq:ilqr_algo} and the \ref{eq:iddp_algo} algorithms, respectively.
\end{proof}
\begin{remark}
    Compared to a Newton method that can converge locally at a quadratic
    rate on problems of the form~\eqref{eq:discrete_pb_ctrl_cost}
    ~\citep{nocedal2006numerical, de1988differential, dunn1989efficient},
    the ILQR and IDDP algorithms converge locally only at linear rate a 
    priori (see also~\citet{baumgartner2023local}).
    Similarly, the original Differential Dynamic Programming (DDP) approach of 
    \citet{jacobson1970differential} can converge locally at a quadratic
    rate \citep{murray1984differential, liao1991convergence, di2019newton}.
    However, the local linear convergence
    rates presented in Theorem~\ref{thm:gen_local_cvg} do not match the
    superlinear rates observed in practice in Figure~\ref{fig:conv}
    (see also \citet{roulet2021techreport}). Hence, we consider in the following
    additional properties of the problem that can uncover both the global convergence
    of the ILQR and IDDP algorithms as well as their fast local convergence.
\end{remark}

\section{Conditioning Analysis}\label{sec:cond}

To understand the convergence behavior of the \ref{eq:ilqr_algo} and \ref{eq:iddp_algo} algorithms displayed in Figure~\ref{fig:conv}, we consider a restricted class of control problems without control costs of the form~\eqref{eq:discrete_pb}.
Namely, from now on, we consider objectives of the form
\begin{align}
	\label{eq:obj}
	\obj(\ctrls) = & \quad \sum_{t=1}^\horizon \cost_t(\state_t) \\ 
	&  \mbox{s.t.} \ \ \state_{t+1} = \dyn(\state_t, \ctrl_t), \quad \mbox{for} \ t \in \{0, \ldots, \horizon-1\}, \qquad \state_0 = \initstate, \nonumber
\end{align}
for $\ctrls = (\ctrl_0;\ldots;\ctrl_{\horizon-1}) \in
\reals^{\horizon\dimctrl}$. Such objectives keep the main difficulty of
nonlinear control problems: for nonlinear dynamics $\dyn$, the overall objective
$\obj$ is non-convex such that convergence to global minima is a priori not
guaranteed by even a simple gradient descent. Nevertheless, by decomposing the
objective at the scale of the dynamics, and further decomposing the dynamics by
an appropriate discretization scheme, we can identify sufficient conditions for
convergence to global minima linked to usual notions in nonlinear control. We
can then further show the convergence of the \ref{eq:ilqr_algo} and
\ref{eq:iddp_algo} algorithms to a global minimum, and detail the several phases
of convergence
(Sections~\ref{ssec:intuition},~\ref{ssec:conv_ilqr},~\ref{ssec:ddp_conv}).

\subsection{Objective Decomposition}\label{sec:prop} 

The objective $\obj$, defined in~\eqref{eq:obj}, can be decomposed into (i) the
costs associated to a given trajectory, and (ii) the function that, given an
input command, outputs the corresponding trajectory, defined below. 
\begin{definition}\label{def:traj_func} We define the control \emph{of}
	$\horizon$ steps of a discrete time dynamic $\dyn:\reals^\dimstate \times
	\reals^\dimctrl \rightarrow \reals^\dimstate$ as the function $\traj:
	\reals^\dimstate \times \reals^{\horizon\dimctrl} \rightarrow \reals^{\horizon
	\dimstate}$, which, given an initial point $\state_0 \in \reals^{\dimstate}$
	and a command $\ctrls = (\ctrl_0;\ldots;\ctrl_{\horizon-1})\in
	\reals^{\horizon\dimctrl}$, outputs the corresponding trajectory $\state_1,
	\ldots, \state_\horizon$, i.e., 
	\begin{align}
		\label{eq:traj}
		\traj(\state_0, \ctrls) & = ( \state_1;\ldots;\state_\horizon) \\
		\mbox{s.t.} \quad \state_{t+1} &= \dyn(\state_t, \ctrl_t) \hspace{20pt} \mbox{for} \ t \in \{0,\ldots, \horizon-1\}. \nonumber
	\end{align}
\end{definition}
By defining the cost $\cost(\states)$ of a trajectory $\states=(\state_1,
\ldots, \state_\horizon)$ as the sum of the cost of the states,
problem~\eqref{eq:discrete_pb} amounts to solving
\begin{equation}\label{eq:comp_pb_ctrl}
\min_{\ctrls\in \reals^{\horizon\dimctrl}} \left\{\obj(\ctrls)=\cost (\traj(\initstate, \ctrls))\right\}, \mbox{for} \	\traj(\state_0, \ctrls) \ \mbox{given in~\eqref{eq:traj}}, \ \cost(\states) = \sum_{t=1}^\horizon \cost_t(\state_t).
\end{equation}
For convex costs $\cost$, if the dynamic $\dyn$ is linear, then  the function
$\traj$ is also  linear and the overall problem~\eqref{eq:comp_pb_ctrl} is then
convex, hence easily solvable from an optimization viewpoint using, e.g., a
gradient descent.  

For nonlinear dynamics, the problem is a priori not convex regardless of the
convexity of the costs. Yet, convergence guarantees to global minima of, e.g., first order
methods, may still be obtained by considering whether the objective satisfies a
gradient dominating property~\citep{polyak1964some, lojasiewicz1963topological},
i.e., whether there exists, for example $\plcstobj >0$, such that for any
$\ctrls \in \reals^{\horizon\dimctrl}$, $\|\nabla \obj(\ctrls)\|_2^2 \geq
\plcstobj \left(\obj(\ctrls)- \obj^*\right).
$
To focus on the properties on the nonlinear dynamic, we consider costs that are gradient dominated, e.g., such that for any $\states \in \reals^{\horizon\dimstate}$, we have $\|\nabla \cost(\states)\|_2^2 \geq \plcst(\cost(\states) - \cost^*)$ for some $\plcst>0$. 
In that case, a sufficient condition for the objective to satisfy a gradient
dominating property is that the control of $\horizon$ steps of the dynamic
satisfies $\sigma_{\min}(\nabla_\ctrls \traj(\initstate, \ctrls)) \geq \sigma
>0$ for any $\ctrls\in \reals^{\horizon\dimctrl}$, since then we have, for
$\states = \traj(\initstate, \ctrls)$,
\begin{equation}\label{eq:grad_dom_obj}
\|\nabla \obj(\ctrls)\|_2^2 = \|\nabla_\ctrls \traj(\initstate, \ctrls) \nabla \cost(\states)\|_2^2 \geq \sigma^2 \|\nabla \cost(\states)\|_2^2 \geq \sigma^2\plcst(\cost(\states) - \cost^*),
\end{equation}
where $\cost^* = \min_{\states \in \reals^{\horizon\dimctrl}} \cost(\states)$.
Since the set $\{\ctrls \in \reals^{\horizon\dimctrl}: \nabla \obj(\ctrls)=0\}$
is not  empty as we assumed that the problem has a minimizer, the above equation
implies that $\cost^* = \obj^*$ and so that the overall objective satisfies  a
gradient dominating property. We investigate then whether the control of
$\horizon$ steps of a dynamic $\dyn$ can satisfy the aforementioned condition by
considering the properties of the dynamic $\dyn$. 

The condition $\sigma_{\min}(\nabla_\ctrls \traj(\initstate, \ctrls)) >0$ can be
interpreted as the surjectivity of the linearized control of $\horizon$ steps,
i.e.,  
the mapping $\auxctrls = (\auxctrl_0;\ldots;\auxctrl_{\horizon-1}) \rightarrow
\nabla_\ctrls \traj(\state_0, \ctrls)^\top \auxctrls =
(\auxstate_1;\ldots;\auxstate_\horizon) $ which can  be decomposed as
\begin{align*}
	\auxstate_{t+1} = \nabla_{\state_t} \dyn(\state_t, \ctrl_t)^\top \auxstate_t + \nabla_{\ctrl_t} \dyn(\state_t, \ctrl_t)^\top \auxctrl_t  \quad\mbox{for} \ t \in \{0, \ldots, \horizon-1\}, \quad \auxstate_0 = 0.
\end{align*}
We recognize here the linearized trajectories that are at the heart of the ILQR
and IDDP algorithms. Our analysis stems from understanding that the surjectivity
of the linearization of the control of $\horizon$ steps, i.e,  $\auxctrls
\rightarrow  \nabla_\ctrls \traj(\state_0, \ctrls)^\top \auxctrls$, is inherited
from the surjectivity of the linearization of a single step of the discrete
dynamic, i.e., $\auxctrl \rightarrow  \nabla_{\ctrl} \dyn(\state, \ctrl)^\top
\auxctrl$ as formally stated in the following lemma. Note that
Lemma~\ref{lem:injectivity} and the subsequent analysis of the algorithms
presented in Section~\ref{sec:algos} can be extended to time-varying discrete time
dynamics as presented in Lemma~\ref{lem:injectivity_time_varying}.
\begin{lemma}\label{lem:injectivity} If the linearized dynamics, $\auxctrl
	\rightarrow \nabla_\ctrl \dyn(\state, \ctrl)^\top \auxctrl$, of a Lipschitz
	continuous discrete time dynamic $\dyn$ are surjective in the sense that there
	exists $\sigma_\dyn>0$ s.t. 
	\begin{equation}\label{eq:single_step_injectivity}
		\forall \state, \ctrl \in \reals^\dimstate \times \reals^\dimctrl, \quad 	\sigma_{\min} (\nabla_\ctrl \dyn(\state, \ctrl))\geq \sigma_\dyn >0,
	\end{equation}
	then the linearizations, $\auxctrls \rightarrow  \nabla_\ctrls \traj(\state_0,
	\ctrls)^\top \auxctrls$, of the control of $\horizon$ steps of the dynamic
	$\dyn$ is also surjective, namely,  
	\begin{equation}\label{eq:inj_traj_from_dyn}
			\forall \state_0, \ctrls \in \reals^\dimstate \times \reals^{\horizon\dimctrl}, \quad 	\sigma_{\min} (\nabla_\ctrl \traj(\state_0, \ctrls))\geq
			\sigma_\traj := \frac{\sigma_\dyn}{1+\lipdynstate} >0,
	\end{equation}
where $\lipdynstate = \sup_{\ctrl \in \reals^\dimctrl} \lip_{\dyn(\cdot,
\ctrl)}$ is the maximal Lipschitz-continuity constant of the functions
$\dyn(\cdot, \ctrl)$ for any $\ctrl \in \reals^{\dimctrl}$. 
\end{lemma}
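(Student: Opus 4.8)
The plan is to bound $\sigma_{\min}(\nabla_\ctrls \traj(\state_0,\ctrls))$ from below by explicitly analyzing the linearized map $\auxctrls \mapsto \nabla_\ctrls \traj(\state_0,\ctrls)^\top \auxctrls = (\auxstate_1;\ldots;\auxstate_\horizon)$ given recursively by $\auxstate_{t+1} = A_t^\top \auxstate_t + B_t^\top \auxctrl_t$ with $\auxstate_0 = 0$, where $A_t^\top = \nabla_{\state_t}\dyn(\state_t,\ctrl_t)^\top$ and $B_t^\top = \nabla_{\ctrl_t}\dyn(\state_t,\ctrl_t)^\top$. Since $\sigma_{\min}(\nabla_\ctrls \traj) = \inf_{\auxctrls \neq 0}\|\nabla_\ctrls \traj^\top \auxctrls\|_2 / \|\auxctrls\|_2$, it suffices to show that for every command perturbation $\auxctrls$, the resulting state trajectory $(\auxstate_1;\ldots;\auxstate_\horizon)$ has norm at least $\sigma_\traj \|\auxctrls\|_2$. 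The key point is a per-step lower bound: from $\auxstate_{t+1} = A_t^\top \auxstate_t + B_t^\top \auxctrl_t$ we get, by the reverse triangle inequality, $\|\auxstate_{t+1}\|_2 \geq \|B_t^\top \auxctrl_t\|_2 - \|A_t^\top \auxstate_t\|_2 \geq \sigma_\dyn \|\auxctrl_t\|_2 - \lipdynstate \|\auxstate_t\|_2$, using~\eqref{eq:single_step_injectivity} and the fact that $\|A_t\|_2 = \|\nabla_{\state_t}\dyn(\state_t,\ctrl_t)\|_2 \leq \lipdynstate$. Rearranging gives $\|\auxctrl_t\|_2 \leq \sigma_\dyn^{-1}(\|\auxstate_{t+1}\|_2 + \lipdynstate\|\auxstate_t\|_2)$.

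From here I would sum the squared control norms and relate them to the squared state norms. Summing the per-step bound, $\sigma_\dyn \sum_{t=0}^{\horizon-1}\|\auxctrl_t\|_2 \leq \sum_{t=0}^{\horizon-1}\|\auxstate_{t+1}\|_2 + \lipdynstate \sum_{t=0}^{\horizon-1}\|\auxstate_t\|_2 \leq (1+\lipdynstate)\sum_{t=1}^{\horizon}\|\auxstate_t\|_2$, where the last step uses $\auxstate_0 = 0$ so the two sums on the middle are both dominated by $\sum_{t=1}^\horizon \|\auxstate_t\|_2$ (shifting indices). This yields $\sum_{t=0}^{\horizon-1}\|\auxctrl_t\|_2 \leq \frac{1+\lipdynstate}{\sigma_\dyn}\sum_{t=1}^\horizon \|\auxstate_t\|_2$, i.e., an $\ell_1$-to-$\ell_1$ bound. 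Converting to $\ell_2$ norms via the standard inequalities $\|\cdot\|_2 \leq \|\cdot\|_1 \leq \sqrt{d}\,\|\cdot\|_2$ would introduce a spurious $\sqrt{\horizon}$ factor, which does not match the claimed constant. So instead I would work directly with squared $\ell_2$ norms: one wants $\sum_t \|\auxstate_t\|_2^2 \geq \sigma_\traj^2 \sum_t \|\auxctrl_t\|_2^2$, and this should follow from a more careful termwise argument — plausibly by observing that the recursion $\|\auxstate_{t+1}\|_2 + \lipdynstate \|\auxstate_t\|_2 \geq \sigma_\dyn\|\auxctrl_t\|_2$ can be handled without summing, e.g. by a telescoping/Cauchy–Schwarz argument or by treating the linear map as (block lower-triangular) and bounding its inverse's operator norm directly.

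The cleanest route, which I would actually pursue, is to exploit block-triangular structure: the map $\auxctrls \mapsto \auxstates$ has a block-lower-bidiagonal "inverse" in the sense that $B_t^\top \auxctrl_t = \auxstate_{t+1} - A_t^\top \auxstate_t$. Since each $B_t^\top$ has full row rank with $\sigma_{\min}(B_t^\top) \geq \sigma_\dyn$, we can recover $\auxctrl_t$ (or rather the component of it in the row space; the orthogonal complement is irrelevant because the oracle/problem only sees $B_t^\top \auxctrl_t$, but here $\auxctrls$ ranges over all of $\reals^{\horizon\dimctrl}$, so I should be careful — actually $\sigma_{\min}(\nabla_\ctrls\traj)$ genuinely requires full column rank of $\nabla_\ctrls\traj$, equivalently full row rank of $\nabla_\ctrls\traj^\top$, equivalently surjectivity, which is exactly what~\eqref{eq:single_step_injectivity} via $\sigma_{\min}(B_t)\geq\sigma_\dyn$ forces since $B_t \in \reals^{\dimctrl\times\dimstate}$ with $\dimctrl \geq \dimstate$). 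I expect the main obstacle to be getting the constant $\sigma_\dyn/(1+\lipdynstate)$ sharp rather than a horizon-dependent bound: the naive triangle-inequality-plus-summation approach loses a $\sqrt\horizon$, so the proof must either telescope cleverly or bound the operator norm of the linear solve operator $\auxstates \mapsto \auxctrls$ block by block, using that each block step contributes a factor controlled by $\sigma_\dyn^{-1}$ and $\lipdynstate$ in a way that does \emph{not} compound over $\horizon$. Verifying this non-compounding is the crux; everything else is bookkeeping with $\auxstate_0 = 0$ and the definitions of $\lip_{\dyn(\cdot,\ctrl)}$.
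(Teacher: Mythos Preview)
There is a genuine gap at the very first step. You write $\sigma_{\min}(\nabla_\ctrls \traj) = \inf_{\auxctrls \neq 0}\|\nabla_\ctrls \traj^\top \auxctrls\|_2 / \|\auxctrls\|_2$, but by the paper's convention $\sigma_{\min}(A)=\inf_\lambda \|A\lambda\|_2/\|\lambda\|_2$, so the quantity to bound is $\inf_{\dualvars\in\reals^{\horizon\dimstate}}\|\nabla_\ctrls \traj\, \dualvars\|_2/\|\dualvars\|_2$ --- the \emph{adjoint} (backward) map, not the forward one. The two coincide only when $\dimctrl=\dimstate$. This matters immediately for your per-step estimate: you claim $\|B_t^\top \auxctrl_t\|_2 \geq \sigma_\dyn\|\auxctrl_t\|_2$, but assumption~\eqref{eq:single_step_injectivity} says $\sigma_{\min}(B_t)\geq\sigma_\dyn$, i.e.\ $\|B_t\lambda\|_2\geq\sigma_\dyn\|\lambda\|_2$ for $\lambda\in\reals^{\dimstate}$, whereas $B_t^\top\in\reals^{\dimstate\times\dimctrl}$ has a nontrivial kernel whenever $\dimctrl>\dimstate$, so no lower bound on $\|B_t^\top \auxctrl\|_2$ in terms of $\|\auxctrl\|_2$ can hold. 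You half-notice this in your parenthetical but do not act on it; everything downstream --- the reverse triangle inequality, the $\ell_1$ summation, the worry about a spurious $\sqrt{\horizon}$ --- is built on the wrong direction.

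The paper's proof works directly with the adjoint. For $\dualvars=(\dualvar_1;\ldots;\dualvar_\horizon)\in\reals^{\horizon\dimstate}$ one has $\nabla_\ctrls\traj\,\dualvars = (\nabla_{\ctrl_0}\dyn\,\lambda_1;\ldots;\nabla_{\ctrl_{\horizon-1}}\dyn\,\lambda_\horizon)$ where $\lambda_t=\nabla_{\state_t}\dyn\,\lambda_{t+1}+\dualvar_t$, $\lambda_\horizon=\dualvar_\horizon$. Introducing $\concatdyn(\states,\ctrls)=(\dyn(\state_0,\ctrl_0);\ldots;\dyn(\state_{\horizon-1},\ctrl_{\horizon-1}))$, this is exactly the factorization $\nabla_\ctrls\traj = \nabla_\ctrls\concatdyn\,(\idm-\nabla_\states\concatdyn)^{-1}$, with $\nabla_\ctrls\concatdyn$ block-diagonal (blocks $\nabla_{\ctrl_t}\dyn$) and $\nabla_\states\concatdyn$ strictly upper block-triangular (so $\idm-\nabla_\states\concatdyn$ is invertible). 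The bound then drops out in one line: $\sigma_{\min}(\nabla_\ctrls\traj)\geq\sigma_{\min}(\nabla_\ctrls\concatdyn)/\sigma_{\max}(\idm-\nabla_\states\concatdyn)\geq\sigma_\dyn/(1+\lipdynstate)$. No recursion, no summation, and the horizon-independence you were struggling to obtain is automatic from this product structure.
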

\begin{proof}Fix $\state_0 \in \reals^\dimstate$. Given a sequence of controls
$\ctrls = (\ctrl_0;\ldots;\ctrl_{\horizon-1})\in \reals^{\horizon\dimctrl}$ with
corresponding trajectory  $\states=(\state_1;\ldots;\state_{\horizon})=
\traj(\state_0, \ctrls) \in \reals^{\horizon\dimstate}$, and $\dualvars =
(\dualvar_1;\ldots;\dualvar_\horizon) \in \reals^{\horizon\dimstate}$, the
Jacobian transpose vector product $\nabla_\ctrls \traj (\state_0, \ctrls)  \dualvars$  is
written
\begin{align*}
\nabla_\ctrls \traj (\state_0, \ctrls)  \dualvars & = (\nabla_{\ctrl_0}\dyn(\state_0, \ctrl_0) \lambda_1; \ldots; \nabla_{\ctrl_{\horizon-1}} \dyn(\state_{\horizon-1}, \ctrl_{\horizon-1})\lambda_\horizon) \\
\mbox{s.t.} \quad \lambda_t & = \nabla_{\state_t} \dyn(\state_t, \ctrl_t)  \lambda_{t+1} + \dualvar_t \quad \mbox{for} \ t\in \{1, \ldots, \horizon-1\}, \quad \lambda_\horizon = \dualvar_\horizon.
\end{align*}
For  $\states = (\state_1;\ldots;\state_{\horizon})$,  $\ctrls =
(\ctrl_0;\ldots;\ctrl_{\horizon-1})$, define  
$
\concatdyn(\states, \ctrls) = (\dyn(\state_0, \ctrl_0) ; \ldots; \dyn(\state_{\horizon-1}, \ctrl_{\horizon-1}))$. 
By using the upper block diagonal structure of $\nabla_\states
\concatdyn(\states, \ctrls)$, we have
\begin{align*}
(\idm - \nabla_\states \concatdyn(\states, \ctrls))^{-1}\dualvars & = (\lambda_1;\ldots;\lambda_\horizon)\\
\mbox{s.t.} \quad \lambda_t & = \nabla \dyn_{\state_t}(\state_t, \ctrl_t)  \lambda_{t+1} + \dualvar_t \quad \mbox{for} \ t\in \{1, \ldots, \horizon-1\}, \quad \lambda_\horizon = \dualvar_\horizon.
\end{align*}
The Jacobian transpose vector product can then be written compactly as
\begin{equation}
	\nonumber
	\nabla_\ctrls \traj (\state_0, \ctrls)  \dualvars = \nabla_{\ctrls} \concatdyn(\states, \ctrls) (\idm - \nabla_\states \concatdyn(\states, \ctrls))^{-1}\dualvars.
\end{equation}
Hence,  for  any command $\ctrls\in \reals^{\horizon\dimctrl}$ and any $\state_0
\in \reals^\dimstate$,
\begin{align*}
		\sigma_{\min} (\nabla_\ctrls \traj(\state_0, \ctrls)) \geq \frac{\sigma_{\min}(\nabla_\ctrls \concatdyn(\states, \ctrls))}{\sigma_{\max}( \idm - \nabla_\states \concatdyn(\states, \ctrls))} \geq \frac{	\sigma_{\dyn} }{ 1+\lipdynstate} .
\end{align*}
\end{proof}

Similarly, the smoothness properties of the control $\traj$ corresponding to
dynamics $\dyn$ can be expressed in terms of the smoothness properties of the
dynamics $\dyn$ as shown in the following lemma. 
\begin{lemma}
  If  $\dyn$ is
Lipschitz continuous with Lipschitz continuous Jacobians, then the function
$\ctrls\rightarrow \traj(\state_0, \ctrls)$ is $\lip_\traj$-Lipschitz-continuous
and has $\smooth_\traj$-Lipschitz-continuous Jacobians with 
\begin{equation}\label{eq:smooth_traj_from_dyn}
	\lip_\traj \leq \lipdynctrl S, \qquad 
	\smooth_\traj \leq S(\smoothdynstate \lip_\traj^2 + 2 \smoothdynstatectrl\lip_\traj + \smoothdynctrl)  \qquad 
	S =\sum_{t=0}^{\horizon-1} (\lipdynstate)^t,
\end{equation}
where the constants $\lipdynctrl = \sup_{\state \in \reals^\dimstate}
\lip_{\dyn(\state, \cdot)}$, $\smoothdynstate = \sup_{\ctrl\in \reals^\dimctrl}
\lip_{\nabla_\state \dyn(\cdot, \ctrl)} $, $\smoothdynctrl = \sup_{\state\in
\reals^\dimstate} \lip_{\nabla_\ctrl \dyn(\state, \cdot)} $,
$\smoothdynstatectrl = \sup_{\state\in \reals^\dimstate} \lip_{\nabla_\ctrl
\dyn(\cdot, \ctrl)}$ are maximal Lipschitz continuity constants of partial
functions or Jacobians of the dynamics. 
\end{lemma}
\begin{proof}
  This is a direct corollary of the time-varying version presented in Lemma~\ref{lem:smooth_traj_from_dyn_time_varying}.
\end{proof}

At first glance the Lipschitz continuity constant of the function $\traj$ and
its Jacobians appear to depend exponentially on the horizon $\tau$ through the
constant $S$ defined above. However, recall that problems of the
form~\eqref{eq:discrete_pb} stem from the discretization of a continuous problem
on a finite time interval $[0, T]$. The Lipschitz continuity constant of the
discretized dynamics depend then on the discretization step $\Delta$, which
depends itself on the discrete time horizon $\tau$ as $\Delta= T/\horizon$.
Hence, the dependency of the smoothness constants of the problem may not depend
exponentially on $\horizon$. 

For example, if the continuous time dynamics of the problem are given by a
function $\mathrm{f}$ and an Euler discretization scheme is used, then the
discretized dynamic take the form $\dyn(\state_t, \ctrl_t) = \state_t + \Delta
\mathrm{f}(\state_t, \ctrl_t)$ with $\Delta = T/\horizon$ and the Lipschitz
continuity parameter of the discretized dynamics is then $\lipdynstate \leq 1 +
\Delta \lip_{\mathrm{f}}^\state $. Hence, the constant $S$ defined above can be
upper bounded as $S \leq \sum_{t=0}^{\horizon-1} (1+ \lip_\mathrm{f}^\state
T/\horizon)^t \leq  (\exp(T\lip_\mathrm{f}^\state ) -1)
\horizon/(T\lip_\mathrm{f}^\state )$ and since $\lipdynctrl, \smoothdynstate,
\smoothdynstatectrl, \smoothdynctrl$ are all proportional to $\Delta = T/\tau$,
the smoothness constants derived in~\eqref{eq:smooth_traj_from_dyn} are
independent of $\horizon$ in this case and only depends on the length $T$ of the
continuous time problem.  

\subsection{Dynamic Decomposition}\label{sec:suff_cond} We have isolated
condition~\eqref{eq:global_conv_cond} as a sufficient condition to ensure
convergence of, e.g., a gradient descent, to global minima. It remains to consider whether this
assumption can be satisfied on concrete examples. Note that
assumption~\eqref{eq:global_conv_cond} requires $\dimctrl \geq \dimstate$. While
the underlying continuous control problem may have less control variables than
state variables, by considering multiple steps of a simple Euler discretization
method, we may still ensure the validity of~\eqref{eq:global_conv_cond} as
illustrated in Example~\ref{exm:suff_cond_pendulum}.

\begin{example}\label{exm:suff_cond_pendulum}
Consider the continuous time evolution of a pendulum
\begin{align*}
    \dot \theta(t) & = \omega(t), \\
    ml^2 \dot \omega(t) & = p(\theta(t), \omega(t), \ctrl(t)) 
    \coloneqq -m l g \sin \theta(t) - \mu \omega(t) + \ctrl(t),
\end{align*}
where $\theta$ is the angle with the vertical axis, $\omega$ 
is the angular speed, $\ctrl$ is a torque 
applied to the pendulum which defines the control we have on the system, and
$m$, $l$, $\mu$, $g$ are physical constants of the problem described in
Section~\ref{sec:exp}. The state is defined by
$\state = (\theta, \omega)$. Using a simple Euler scheme, the 
discretized dynamics cannot satisfy \eqref{eq:single_step_injectivity}, since
we would have only one variable $u_t$ to control two elements, 
$\theta_t, \omega_t$, at each time step $t$.

On the other hand, one can consider a two-step discretization scheme such that
the controls are divided in two variables $u_t = (v_t, v_{t+1/2})$. The 
dynamics read then
\begin{align*}
    \theta_{t+1/2} & = \theta_t + \Delta \omega_t, \hspace*{73.5pt}
    \theta_{t+1} = \theta_{t+1/2} + \Delta \omega_{t+1/2},
    \\
    ml^2 \omega_{t+1/2} & = 
    \omega_t + \Delta p(\theta_t, \omega_t, v_t), \quad
    ml^2 \omega_{t+1} = 
    \omega_{t+1/2} + \Delta p(\theta_{t+1/2}, \omega_{t+1/2}, v_{t+1/2}),
\end{align*}
where $\Delta$ is some discretization step. Intuitively, the variable $v_{t+1/2}$
fully controls $\omega_{t+1}$, while the variable $v_t$ fully controls
$\theta_{t+1}$. One can verify that the Jacobian of the discretized dynamics 
$x_{t+1} = f(x_t, u_t)$ for $u_t = (v_t, v_{t+1/2})$ are then surjective which then
ensure the surjectivity of the overall control of the pendulum in $\tau$ steps and
the efficiency of the ILQR and IDDP algorithms as observed in Figure~\ref{fig:conv}
and further justify in Section~\ref{sec:cvg}.
\end{example}

Formally, in this section, we assume that the discrete time dynamic $\dyn$ can
be further decomposed as the control \emph{in} $\ksteps$ steps of some
elementary discrete time dynamic $\microdyn$ as defined below. Concretely,
$\microdyn$ may correspond to a single Euler discretization step of some
continuous time dynamic. The discrete time dynamic $\dyn$ amounts then to
$\ksteps$ steps of such Euler discretization scheme and can be formulated as
$\dyn(\state_t, \ctrl_t)= \trajin{\ksteps}(\state_t, \ctrl_t)$, for some $k\geq
0$. On the other hand, we consider the costs to be computed only at the scale of
the dynamic $\dyn$, i.e., the sampling of the costs and the sampling of the
dynamics differ, hence the terminology multi-rate sampling. 
\begin{definition}\label{def:traj_func_ksteps} We define the control \emph{in}
	$\ksteps$ steps of a discrete time dynamic $\microdyn:\reals^\dimstate \times
	\reals^\microdimctrl \rightarrow \reals^\dimstate$ as the function
	$\trajin{\ksteps}: \reals^\dimstate \times \reals^{\ksteps\microdimctrl}
	\rightarrow \reals^{\dimstate}$, which, given a  state $\auxstate_0$ and a
	sequence of controls $\auxctrls = (\auxctrl_0;\ldots;\auxctrl_{\ksteps-1})$,
	outputs the state computed after $k$ steps, i.e.,
	\begin{align}
		\trajin{\ksteps}(\auxstate_0, \auxctrls) & =\auxstate_\ksteps \\
		\mbox{s.t.} \quad \auxstate_{s+1} &= \microdyn(\auxstate_s, \auxctrl_s) \quad \mbox{for} \ s \in \{0,\ldots, \ksteps-1\}. \nonumber
	\end{align}
\end{definition}
Our overall approach is illustrated in Figure~\ref{fig:zooming}. 
\begin{figure}[!t]
  \centering
  \includegraphics[width=0.9\linewidth]{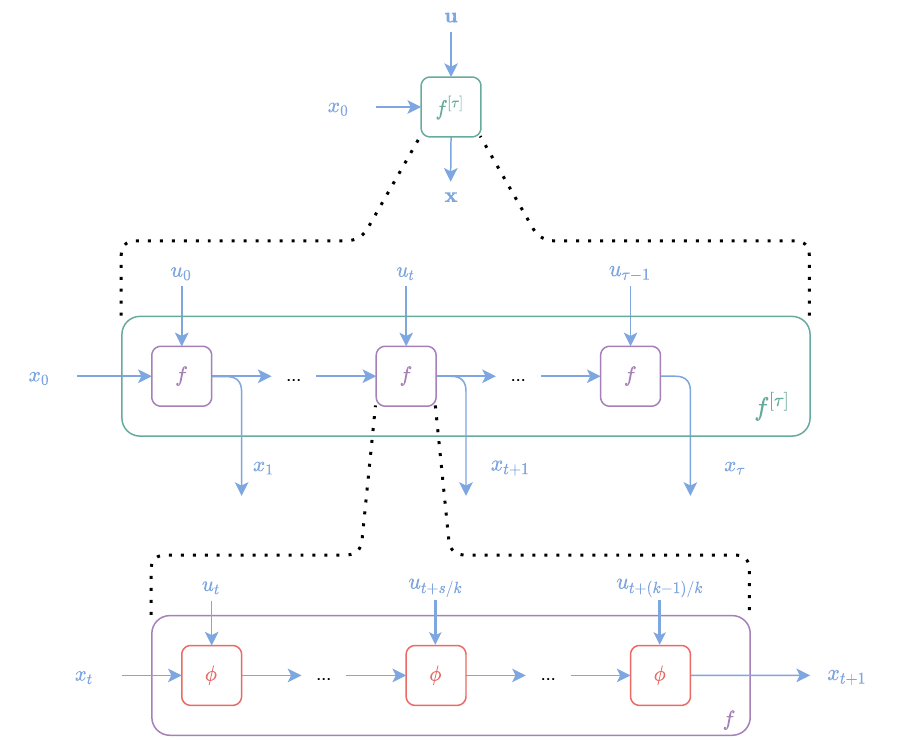}
  \caption{
    Zooming into the properties of the dynamics.
    The overall objective can be split at the scale of each step of the dynamics. Each step can be further decomposed at the scale of the discretization scheme to link properties of the underlying dynamic to global properties of the objective.\label{fig:zooming}
  }
\end{figure}
Our goal is then to know whether, by considering enough steps of $\microdyn$, we
can ensure the surjectivity of the linearized dynamic $\auxxctrls \mapsto
\nabla_\auxctrls \trajin{\ksteps}(\auxstate_0, \auxctrls)^\top \auxxctrls$. To
build some intuition, consider a system driven by its acceleration such that the
state of the system is determined by the position and the velocity
($\dimstate=2$) and the control is a scalar force ($\microdimctrl=1$)
determining the acceleration, hence controlling effectively the speed at each
time-step. For such system, the state of the system cannot be fully determined
in one step of an Euler discretization scheme, as only the velocity is affected
by the control. However, in two steps we can control both the position and the
velocity, hence we may satisfy assumption~\eqref{eq:global_conv_cond} as illustrated
in Example~\ref{exm:suff_cond_pendulum}. To
formalize and generalize this intuition, we  
consider the availability of a feedback linearization scheme as defined below
(adapted from~\citet{aranda1996linearization}). A brief exposition of static
feedback linearization schemes in continuous time and the associated Brunovsky's
form are presented in Appendix~\ref{app:static_lin}.
\begin{definition}\label{def:lin_feedback} A discrete time system defined by
	$\auxstate_{t+1} = \microdyn(\auxstate_t, \auxctrl_t)$ with $\auxstate_t \in
	\reals^\dimstate, \auxctrl_t \in \reals^\microdimctrl$ can be linearized by
	static feedback if there exist some diffeomorphisms
	$\diffeostate:\reals^{\dimstate}\rightarrow \reals^\dimstate$ and
	$\diffeoctrl(\auxstate, \cdot) :\reals^\microdimctrl \rightarrow
	\reals^\microdimctrl$ such that the reparameterization of the system as
	$\auxxstate_t = \diffeostate(\auxstate_t)$, $\auxxctrl_t =
	\diffeoctrl(\auxctrl_t, \auxstate_t)$ is linear. Namely,  there exists
	$\microdimctrl$ indexes $\orderlin_1, \ldots, \orderlin_{\microdimctrl}$ with
	$\sum_{j=1}^{\microdimctrl} \orderlin_j = \dimstate$ such that $\auxxstate$
	can be decomposed as $\auxxstate_t = (\auxxxstate_{t,
	1};\ldots;\auxxxstate_{t, \microdimctrl})$ with $\auxxxstate_{t, j} \in
	\reals^{\orderlin_j}$ decomposed as
	\begin{align}\nonumber
		\auxxxstate_{t+1, j}^{(i)} = \auxxxstate_{t, j}^{(i+1)} \  \mbox{for all}\   i  \in \{1, \ldots, \orderlin_j-1\}, \quad
		\auxxxstate_{t+1}^{(\orderlin_j)} = \auxxctrl_t^{(j)}, \ \mbox{for all} \ j \in \{1, \ldots, \microdimctrl\},
	\end{align}
where $\auxxxstate^{(i)}$ denotes the $i$\textsuperscript{th} coordinate of
$\auxxxstate$. 
\end{definition} 
For single-input system ($\microdimctrl=1$, $\orderlin = \dimstate$), the
reparameterized system takes the canonical Brunovsky
form~\citep{brunovsky1970classification}
\begin{align}\label{eq:brunovsky}
	\auxxstate_{t+1}^{(i)} = \auxxstate_t^{(i+1)} \  \mbox{for all}  \ i  \in \{1, \ldots, \dimstate-1\}, \quad
	\auxxstate_{t+1}^{(\dimstate)} = \auxxctrl_t,
\end{align}
i.e., $\auxxstate_{t+1} = D \auxxstate_t + \auxxctrl_t e$, where  $D =
\sum_{i=1}^{\dimstate-1} e_ie_{i+1}^\top$ is the upper shift matrix in
$\reals^\dimstate$ with $e_i$ the $i$\textsuperscript{th} canonical vector in
$\reals^\dimstate$, such that $(D\auxxstate)^{(i)} = \auxxstate^{(i+1)}$, and
$e= e_\dimstate$.

As a concrete example, consider the Euler discretization with stepsize
$\Delta>0$ of a single input continuous time system driven by its
$\dimstate$\textsuperscript{th} derivative through a  differentiable function
$\contdyn$, that is, 
\begin{equation}\label{eq:ode_single_input}
	\auxstate_{t+1}^{(i)} = \auxstate_t^{(i)} + \Delta \auxstate_t^{(i+1)}, \ \mbox{for all }  i  \in \{1, \ldots, \dimstate-1\}, \quad
	\auxstate_{t+1}^{(\dimstate)} = \auxstate_{t}^{(\dimstate)} + \Delta\contdyn(\auxstate_t,  \auxctrl_t).
\end{equation}
As shown in Lemma~\ref{lem:ode} in Appendix~\ref{app:static_lin}, such system can
easily be reparameterized in Brunovsky's form~\eqref{eq:brunovsky} and
provided that $|\partial_\auxctrl \contdyn(\auxstate,  \auxctrl)|>0$ for all
$\auxstate \in \reals^\dimstate, \auxctrl \in \reals$, we can have access to a
feedback linearization scheme, i.e., we can reparameterize the system in a
linear form using diffeomorphisms. 

The canonical representation~\eqref{eq:brunovsky} clarifies why the surjectivity
of the linearized dynamics may hold by taking enough steps as it is clear that,
in the representation~\eqref{eq:brunovsky}, by controlling the system in
$\dimstate$ steps we directly control the output. Namely, we have that
$\auxxstate_{\dimstate}^{(i)} = \auxxctrl_{i-1}$ for all $i \in \{1, \ldots,
\dimstate\}$.  
So for the system~\eqref{eq:brunovsky}, considering $\dimstate$ steps  ensures
condition~\eqref{eq:global_conv_cond}. The following theorem shows that this
property is kept when considering the original system.

\begin{theorem}\label{thm:suff_cond} If a  discrete time system $\auxstate_{t+1}
	= \microdyn(\auxstate_t, \auxctrl_t)$  is linearizable by static feedback as
	defined in Def.~\ref{def:lin_feedback}, then $ \trajin{\ksteps}$, the control
	in $\ksteps \geq \orderlin= \max\{\orderlin_1, \ldots,
	\orderlin_{\microdimctrl}\}$ steps of $\microdyn$,  has surjective
	linearizations, i.e., it satisfies $\sigma_{\min}( \nabla_\auxctrls
	\trajin{\ksteps}(\auxstate_0, \auxctrls) )>0 $ for any $ \auxstate_0 \in
	\reals^\dimstate, \auxctrls \in \reals^{\ksteps\microdimctrl}$.
	
	Quantitatively, if  the system defined by $\auxstate_{t+1} =
	\microdyn(\auxstate_t, \auxctrl_t)$  is linearizable by static feedback with
	transformations $\diffeostate$  and $\diffeoctrl$ that are Lipschitz
	continuous and  such that
	\[
	\inf_{\auxstate \in \reals^\dimstate} \sigma_{\min} (\nabla \diffeostate(\auxstate)) \geq \sigma_\diffeostate >0, \quad 
	\inf_{\auxstate \in \reals^\dimstate,\auxctrl \in \reals^\microdimctrl} \sigma_{\min}(\nabla_{\auxctrl} \diffeoctrl(\auxstate, \auxctrl)) \geq \sigma_{\diffeoctrl} >0,
	\]
	then  the control in $\ksteps \geq \orderlin$ steps of the dynamic $\microdyn$
	satisfies, for  $\lipdiffeoctrl = \sup_{\auxctrl \in \reals^\microdimctrl}
	\lip_{\diffeoctrl(\cdot, \auxctrl)}$,
	\[
	\inf_{\auxstate_0 \in \reals^\dimstate,\auxctrls \in \reals^{\ksteps\microdimctrl}} \sigma_{\min}( \nabla_\auxctrls \trajin{\ksteps}(\auxstate_0, \auxctrls) ) \geq \frac{\sigma_\diffeoctrl}{\lip_\diffeostate} \frac{1}{1+ (\orderlin-1) \lipdiffeoctrl/\sigma_\diffeostate } >0.
	\]
\end{theorem}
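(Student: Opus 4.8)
The plan is to pull $\trajin{\ksteps}$ back through the feedback linearization: reparameterizing the state by $\diffeostate$ and the control by $\diffeoctrl$ turns $\microdyn$ into a (block) Brunovsky system, for which surjectivity of the linearized $\ksteps$-step map with $\ksteps\ge\orderlin$ is essentially transparent, and the only thing to track is the distortion coming from the state-dependence of $\diffeoctrl$. Concretely, fix $\auxstate_0$ and $\auxctrls=(\auxctrl_0;\dots;\auxctrl_{\ksteps-1})$, let $\auxstate_1,\dots,\auxstate_\ksteps$ be the resulting trajectory, and set $\auxxstate_s=\diffeostate(\auxstate_s)$, $\auxxctrl_s=\diffeoctrl(\auxctrl_s,\auxstate_s)$, so that $\diffeostate(\microdyn(\auxstate,\auxctrl))=\Lambda\,\diffeostate(\auxstate)+\mathcal B\,\diffeoctrl(\auxctrl,\auxstate)$ holds for all $\auxstate,\auxctrl$, where $\Lambda$ and $\mathcal B$ are the block-shift and block-input matrices read off from Def.~\ref{def:lin_feedback}; note $\|\Lambda\|_2\le1$, $\Lambda^{\orderlin}=0$ and $\mathcal B^\top\mathcal B=\idm_{\microdimctrl}$. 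Differentiating this identity separately in $\auxstate$ and in $\auxctrl$, substituting the resulting expressions for $\nabla_{\auxstate}\microdyn$ and $\nabla_{\auxctrl}\microdyn$ into the adjoint (costate) expansion of $\nabla_{\auxctrl}\trajin{\ksteps}$ along the trajectory (as in Lemma~\ref{lem:injectivity}), and using that the factors $(\nabla\diffeostate(\auxstate_{s+1}))^{-1}$ telescope, I would obtain the closed form
\[
\nabla_{\auxctrl_t}\trajin{\ksteps}(\auxstate_0,\auxctrls)=\nabla_{\auxctrl}\diffeoctrl(\auxctrl_t,\auxstate_t)\,\mathcal B^\top\Big(\textstyle\prod_{s=t+1}^{\ksteps-1}(\Lambda^\top+M_s)\Big)(\nabla\diffeostate(\auxstate_\ksteps))^{-1},\quad M_s:=(\nabla\diffeostate(\auxstate_s))^{-1}\nabla_{\auxstate}\diffeoctrl(\auxctrl_s,\auxstate_s)\,\mathcal B^\top .
\]
Stacking over $t$, this reads $\nabla_{\auxctrl}\trajin{\ksteps}=\mathcal D\,\mathcal M\,(\nabla\diffeostate(\auxstate_\ksteps))^{-1}$ with $\mathcal D$ block-diagonal with blocks $\nabla_{\auxctrl}\diffeoctrl(\auxctrl_t,\auxstate_t)$, and $\mathcal M$ the matrix with $t$-th block $\mathcal B^\top\prod_{s=t+1}^{\ksteps-1}(\Lambda^\top+M_s)$.

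Next I would use $\sigma_{\min}(ABC)\ge\sigma_{\min}(A)\sigma_{\min}(B)\sigma_{\min}(C)$, together with $\sigma_{\min}(\mathcal D)\ge\sigma_{\diffeoctrl}$, $\sigma_{\min}\big((\nabla\diffeostate(\auxstate_\ksteps))^{-1}\big)=1/\sigma_{\max}(\nabla\diffeostate(\auxstate_\ksteps))\ge1/\lip_{\diffeostate}$, and $\|M_s\|_2\le\lipdiffeoctrl/\sigma_{\diffeostate}$, to reduce the claim to the single estimate $\sigma_{\min}(\mathcal M)\ge\big(1+(\orderlin-1)\lipdiffeoctrl/\sigma_{\diffeostate}\big)^{-1}$; the asserted pointwise surjectivity then follows from the same computation with the (strictly positive) pointwise values of these quantities.

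For $\sigma_{\min}(\mathcal M)$, since deleting rows only decreases $\sigma_{\min}$, I would keep only the last $\orderlin$ blocks $t=\ksteps-\orderlin,\dots,\ksteps-1$. Writing $\costate_{\ksteps-1}:=\costate$ and $\costate_{t-1}:=(\Lambda^\top+M_t)\costate_t$, I need $\sum_{t=\ksteps-\orderlin}^{\ksteps-1}\|\mathcal B^\top\costate_t\|_2^2\ge c^2\|\costate\|_2^2$. The Brunovsky structure yields the reconstruction identity $\sum_{j=0}^{\orderlin-1}\Lambda^j\mathcal B\mathcal B^\top(\Lambda^\top)^j=\idm_{\dimstate}$, in which the subspaces $\operatorname{range}(\Lambda^j\mathcal B)$ are mutually orthogonal. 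Applying it to $\costate_{\ksteps-1}$ and expanding $(\Lambda^\top)^j\costate_{\ksteps-1}=\costate_{\ksteps-1-j}+\deltaquad_j$ with $\deltaquad_0=0$, $\deltaquad_{j+1}=\Lambda^\top\deltaquad_j-M_{\ksteps-1-j}\costate_{\ksteps-1-j}$, the crucial observation is that $M_t\costate_t=(\nabla\diffeostate(\auxstate_t))^{-1}\nabla_{\auxstate}\diffeoctrl(\auxctrl_t,\auxstate_t)\big(\mathcal B^\top\costate_t\big)$, so $\|M_t\costate_t\|_2\le(\lipdiffeoctrl/\sigma_{\diffeostate})\,\|\mathcal B^\top\costate_t\|_2$ — each error term is controlled by a summand that is already present in the quantity we are lower-bounding. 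Propagating this through the (at most $\orderlin-1$) steps of the $\deltaquad$-recursion and recombining the $\orderlin$ blocks in $\ell_2$ using the range-orthogonality, I would arrive at $\|\costate\|_2\le\big(1+(\orderlin-1)\lipdiffeoctrl/\sigma_{\diffeostate}\big)\big(\sum_t\|\mathcal B^\top\costate_t\|_2^2\big)^{1/2}$, which is the desired bound.

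The main obstacle is exactly this last estimate. A naive treatment of $\mathcal M$ as a perturbation of the unperturbed Brunovsky map produces either an exponential factor $(1+\lipdiffeoctrl/\sigma_{\diffeostate})^{\orderlin}$ or spurious $\sqrt{\orderlin}$ factors; getting the clean constant linear in $\orderlin$ requires both (i) noticing that the per-step error $M_t\costate_t$ is proportional to $\mathcal B^\top\costate_t$ rather than to $\|\costate_t\|_2$, and (ii) exploiting the orthogonality of the subspaces $\operatorname{range}(\Lambda^j\mathcal B)$ so that the $\orderlin$ contributions recombine without loss. The remaining ingredients — differentiating the linearization identity, the telescoping, submultiplicativity of $\sigma_{\min}$, and the elementary norm bounds on $\mathcal D$, $M_s$ and $\nabla\diffeostate$ — are routine.
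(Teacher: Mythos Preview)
Your approach is correct and reaches the same constant, but by a different route than the paper. The paper works in Kronecker-product notation: it writes $\nabla_{\auxctrls}\trajin{\ksteps}(\auxstate_0,\auxctrls)\nabla\diffeostate(\auxstate_\ksteps)$ as a product of block matrices, applies the Sherman--Morrison--Woodbury identity and the nilpotency of the shift to obtain a closed form $V\big(\idm-E(\sum_{i=1}^{\ksteps-1}D^i\otimes(D^\top)^{i-1})A^{-1}Y\big)^{-1}$, and then bounds the perturbation term by a simple triangle inequality over the $\orderlin-1$ summands, each of norm at most $\lipdiffeoctrl/\sigma_\diffeostate$. Your argument instead stays in the adjoint picture: you telescope the $(\nabla\diffeostate(\auxstate_s))^{-1}$ factors to get the explicit product form for each block of $\mathcal M$, restrict to the last $\orderlin$ blocks, and use the Brunovsky reconstruction identity $\sum_j\Lambda^j\mathcal B\mathcal B^\top(\Lambda^\top)^j=\idm$ together with the key observation $M_t\costate_t=W_t(\mathcal B^\top\costate_t)$ to reduce the question to bounding $\|\idm+\Gamma\|_2$ for a block strictly lower-triangular $\Gamma$ whose blocks have norm at most $\lipdiffeoctrl/\sigma_\diffeostate$.

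Two remarks on your write-up. First, the step you flag as the main obstacle does go through cleanly once you note that the relation between the ``unperturbed'' projections $p_j=\mathcal B^\top(\Lambda^\top)^j\costate$ and the observed ones $c_j=\mathcal B^\top\costate_{\ksteps-1-j}$ is exactly $p=(\idm+\Gamma)c$ with $\Gamma_{jl}=-\mathcal B^\top(\Lambda^\top)^{j-1-l}W_{\ksteps-1-l}$; since each block row and each block column of $\Gamma$ contains at most $\orderlin-1$ nonzero blocks of norm $\le\lipdiffeoctrl/\sigma_\diffeostate$, the standard bound $\|\Gamma\|_2\le(\orderlin-1)\lipdiffeoctrl/\sigma_\diffeostate$ follows (e.g.\ via $\|\cdot\|_2\le\sqrt{\|\cdot\|_1\|\cdot\|_\infty}$ at the scalar-block level). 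This is the piece you should make explicit; without it the ``propagating and recombining'' sentence looks like it might lose a $\sqrt{\orderlin}$ factor. Second, be careful with the blanket inequality $\sigma_{\min}(ABC)\ge\sigma_{\min}(A)\sigma_{\min}(B)\sigma_{\min}(C)$: it is false in general, but it does hold here because $\mathcal D$ and $(\nabla\diffeostate(\auxstate_\ksteps))^{-1}$ are square invertible, so the chain $\|\mathcal D\mathcal M C\costate\|\ge\sigma_{\min}(\mathcal D)\|\mathcal M C\costate\|\ge\sigma_{\min}(\mathcal D)\sigma_{\min}(\mathcal M)\|C\costate\|$ is legitimate. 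What your route buys is a more transparent, recursion-based derivation that avoids the Kronecker/Woodbury machinery; what the paper's route buys is an explicit algebraic inverse that makes the $(\orderlin-1)$ count immediate.
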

\begin{proof}
	We present the main steps of the proof, additional technical details are
	provided in the Appendix~\ref{app:comput}. We detail first the {single-input
	case} described in~\eqref{eq:brunovsky}, i.e., $\microdimctrl = 1$ and
	$\orderlin =\dimstate$. Moreover, we consider first $\ksteps=\dimstate$. Let
	$\auxctrls = (\auxctrl_0; \ldots; \auxctrl_{\ksteps-1}) \in \reals^\ksteps$
	and denote $\auxstate_{\ksteps} = \trajin{\ksteps}(\auxstate_0, \auxctrls)$.
	In the reparameterization of the system in the form~\eqref{eq:brunovsky}, we
	have that $\auxxstate_{\ksteps}^{(i)} = \auxxctrl_{i -1}$ for all $i \in \{1,
	\ldots, \dimstate\}$. By defining, for  $\auxstate_0$ fixed, $\auxstates =
	(\auxstate_1;\ldots;\auxstate_{\ksteps})$ and  $\auxctrls =
	(\auxctrl_0;\ldots;\auxctrl_{\ksteps-1})$, the function
	$
	\Diffeoctrl(\auxstates, \auxctrls) = 
	(\diffeoctrl(\auxstate_0, \auxctrl_0) ; \ldots; \diffeoctrl(\auxstate_{\ksteps-1}, \auxctrl_{\ksteps-1})) \in \reals^\dimstate$, 
	we have  
	$\trajin{\ksteps}(\auxstate_0, \auxctrls) = \diffeostate^{-1}
	(\Diffeoctrl(\microtraj(\auxstate_0, \auxctrls), \auxctrls))$, where
	$\microtraj(\auxstate_0, \auxctrls)$ denotes the control \emph{of} $\ksteps$
	steps of  $\microdyn$. Hence, denoting $\auxstates =
	(\auxstate_1;\ldots;\auxstate_\ksteps) = \microtraj(\auxstate_0, \auxctrls)$,
	we have
	\[
	\nabla_\auxctrls \trajin{\ksteps}(\auxstate_0, \auxctrls) = 
	\left(
	\nabla_\auxctrls \Diffeoctrl(\auxstates, \auxctrls) 
	+  \nabla_\auxctrls \microtraj(\auxstate_0, \auxctrls) 
	\nabla_\auxstates \Diffeoctrl(\auxstates, \auxctrls) 
	\right)
	\nabla \diffeostate(\auxstate_k)^{-1}.
	\]
	Since $\nabla_\auxstates \Diffeoctrl(\auxstates, \auxctrls)$ is strictly upper
	block triangular, $\nabla_\auxctrls \microtraj(\auxstate_0, \auxctrls)$
	is upper  block triangular, $\nabla_\auxctrls \Diffeoctrl(\auxstates,
	\auxctrls)$ is diagonal with non-zero  entries, we  have that
	$\nabla_\auxctrls \trajin{\ksteps}(\auxstate_0, \auxctrls)$ is invertible
	
	Now, consider $k>\dimstate$ and $s=\ksteps-\dimstate>0$. Denote
	$\auxctrl_{a:b} = (\auxctrl_a;\ldots;\auxctrl_b)$ for $a<b$. Let $\auxstate_0
	\in \reals^\dimstate$ and $\auxctrls =
	(\auxctrl_0;\ldots;\auxctrl_{\ksteps-1})\in \reals^{\ksteps}$. We have
	$\trajin{\ksteps}(\auxstate_0, \auxctrls) = \trajin{\dimstate}(
	\trajin{s}(\auxstate_0, \auxctrl_{0:s-1}), \auxctrl_{s:\ksteps-1})$. Hence,
	denoting $\auxstate_s =  \trajin{s}(\auxstate_0, \auxctrl_{0:s-1})$, we have 
	\begin{equation}\label{eq:decomp_grad_ksteps}
		\nabla_{\auxctrls} \trajin{\ksteps}(\auxstate_0, \auxctrls) 
		= \left(\begin{matrix}
			\nabla_{\auxctrl_{0:s-1}} \trajin{s}(\auxstate_0, \auxctrl_{0:s-1})
			\nabla_{\auxstate_s} \trajin{\dimstate}(\auxstate_s,  \auxctrl_{s:\ksteps-1}) \\
			\nabla_{\auxctrl_{s:\ksteps-1}}\trajin{\dimstate}(\auxstate_s, \auxctrl_{s:\ksteps-1})
		\end{matrix}\right).
	\end{equation}
	The function $\auxstate_s,  \auxctrl_{s:\ksteps-1}\rightarrow
	\trajin{\dimstate}(\auxstate_s, \auxctrl_{s:\ksteps-1})$ amounts to the
	control of $\microdyn$ in $\ksteps-s=\dimstate$ steps. Hence,  the matrix
	$\nabla_{\auxctrl_{s:\ksteps-1}}\trajin{\dimstate}(\auxstate_s,
	\auxctrl_{s:\ksteps-1})$ is invertible, so $\nabla_{\auxctrls}
	\trajin{\ksteps}(\auxstate_0, \auxctrls) $ has full column rank. Overall, we
	showed the first part of the claim, i.e., that $\sigma_{\min}(
	\nabla_\auxctrls \trajin{\ksteps}(\auxstate_0, \auxctrls) )>0 $ for any $
	\auxstate_0 \in \reals^\dimstate, \auxctrls \in
	\reals^{\ksteps\microdimctrl}$, provided that $\ksteps \geq \dimstate$. 
	
	We consider now deriving quantitative bounds. We  focus on the single-input
	case and start with $\ksteps=\dimstate$. Define, for  $\auxstate_0$ fixed,
	$\auxstates = (\auxstate_1;\ldots;\auxstate_{\ksteps})$, $\auxctrls =
	(\auxctrl_0;\ldots;\auxctrl_{\ksteps-1})$, the function $\Microdyn(\auxstates,
	\auxctrls) = (\microdyn(\auxstate_0, \auxctrl_0) ; \ldots;
	\microdyn(\auxstate_{\ksteps-1}, \auxctrl_{\ksteps-1}))$. By decomposing
	$\nabla_\auxctrls \microtraj(\auxstate_0, \auxctrls)$ as in
	Lemma~\ref{lem:injectivity}, we get 
	\begin{align*}
		\nabla_\auxctrls \trajin{\ksteps}(\auxstate_0, \auxctrls) & = 
		\left(\nabla_\auxctrls \Diffeoctrl(\auxstates, \auxctrls) 
		+  \nabla_\auxctrls \Microdyn(\auxstates, \auxctrls)
		(\idm - \nabla_\auxstates \Microdyn(\auxstates, \auxctrls))^{-1} 
		\nabla_\auxstates \Diffeoctrl(\auxstates, \auxctrls)\right) 
		\nabla \diffeostate(\auxstate_k)^{-1}.
	\end{align*}
	Given the feedback linearization scheme, the discrete time dynamic $\microdyn$
	can be rewritten as $\auxstate_{t+1} = \microdyn(\auxstate_t, \auxctrl_t) =
	\diffeostate^{-1}(D \diffeostate(\auxstate_t) + \diffeoctrl(\auxstate_t,
	\auxctrl_t) e)$, where $D$ is the upper shift matrix in $\reals^\dimstate$ and
	$e=e_\dimstate$ is the $\dimstate$\textsuperscript{th} canonical vector in
	$\reals^\dimstate$. Hence, we have for $t \in \{0, \ldots, \ksteps-1\}$, 
	\begin{align*}
		\nabla_{\auxctrl_t} \microdyn(\auxstate_t, \auxctrl_t) 
		& = \partial_{\auxctrl_t} \diffeoctrl(\auxstate_t, \auxctrl_t) e^\top \nabla \diffeostate(\auxstate_{t+1})^{-1} \\
		\nabla_{\auxstate_t} \microdyn(\auxstate_t, \auxctrl_t) & = \left(\nabla\diffeostate(\auxstate_t) D^\top  + \nabla_{\auxstate_t} \diffeoctrl(\auxstate_t, \auxctrl_t) e^\top\right)\nabla \diffeostate(\auxstate_{t+1})^{-1}.
	\end{align*}
In the sequel, we denote  the Kronecker product by $\otimes$ and for  $R_1,
	\ldots, R_n \in \reals^{p\times q}$ we denote by $\diag((R_i)_{i=1}^n)  =
	\sum_{i=1}^{n} e_ie_i^\top {\otimes} R_i \in \reals^{np \times nq}$ the block
	diagonal matrix with blocks $R_1, \ldots, R_n$, for $e_i $ the
	$i$\textsuperscript{th} canonical vector in $\reals^n$. Since
	$\nabla_{\auxctrls} \Microdyn(\auxstates, \auxctrls) =
	\diag((\nabla_{\auxctrl_t} \microdyn(\auxstate_t,
	\auxctrl_t))_{t=0}^{\ksteps-1} )$, $\nabla_\auxstates  \Microdyn(\auxstates,
	\auxctrls) = (D \otimes \idm ) \diag(\nabla_{\auxstate_t}
	\microdyn(\auxstate_t, \auxctrl_t)_{t=0}^{\ksteps-1})$, we have that (see
	Appendix~\ref{app:comput} for more details)
	\begin{align}
		\nabla_{\auxctrls} \Microdyn(\auxstates, \auxctrls) &= \diag((\partial_{\auxctrl_t} \diffeoctrl(\auxstate_t, \auxctrl_t) )_{t=0}^{\ksteps-1})
		(\idm \otimes e^\top )
		\diag((\nabla \diffeostate(\auxstate_{t+1})^{-1})_{t=0}^{\ksteps-1})\nonumber \\
		\nabla_\auxstates  \Microdyn(\auxstates, \auxctrls) & =
		(D \otimes \idm )
		\diag((\nabla \diffeostate(\auxstate_t))_{t=0}^{\ksteps-1})
		(\idm \otimes D^\top)
		\diag((\nabla \diffeostate(\auxstate_{t+1})^{-1})_{t=0}^{\ksteps-1}) \nonumber\\
		& \quad + (D \otimes \idm )
		\diag((\nabla_{\auxstate_t} \diffeoctrl(\auxstate_t, \auxctrl_t))_{t=0}^{\ksteps-1})
		(\idm \otimes e^\top)
		\diag((\nabla \diffeostate(\auxstate_{t+1})^{-1})_{t=0}^{\ksteps-1}),\label{eq:decomp_grad_suff_cond}
	\end{align}
	and,  similarly, $\nabla_\auxctrls \Diffeoctrl(\auxstates, \auxctrls) =
	\diag((\partial_{\auxctrl_t} \diffeoctrl(\auxstate_t, \auxctrl_t)
	)_{t=0}^{\ksteps-1})$, \hspace{-1pt} $\nabla_\auxstates
	\Diffeoctrl(\auxstates, \auxctrls) = (D \otimes \idm )
	\diag((\nabla_{\auxstate_t} \diffeoctrl(\auxstate_t,
	\auxctrl_t))_{t=0}^{\ksteps-1})$. Denoting $A = \diag((\nabla
	\diffeostate(\auxstate_{t}))_{t=0}^{\ksteps-1})$,  $C = 	\diag((\nabla
	\diffeostate(\auxstate_{t+1}))_{t=0}^{\ksteps-1})$,  $V =
	\diag((\partial_{\auxctrl_t} \diffeoctrl(\auxstate_t, \auxctrl_t)
	)_{t=0}^{\ksteps-1})$, $Y =  \diag((\nabla_{\auxstate_t}
	\diffeoctrl(\auxstate_t, \auxctrl_t))_{t=0}^{\ksteps-1})$, $E = 	\idm \otimes
	e^\top$, $F=D \otimes \idm $ and $G = \idm \otimes D^\top$, we get that 
	\begin{align}
		\nabla_\auxctrls \trajin{\ksteps}(\auxstate_0, \auxctrls) \nabla \diffeostate(\auxstate_k) & = V(\idm + EC^{-1}(\idm  - FAGC^{-1}  - FYEC^{-1})^{-1}FY) \nonumber\\
		& \stackrel{(i)}= V(\idm -EC^{-1}(\idm  - FAGC^{-1})^{-1}FY )^{-1} , \nonumber\\
		& \stackrel{(ii)}= V(\idm -E(\idm  - FG)^{-1}F A^{-1}Y )^{-1}, \nonumber\\ 
		& \stackrel{(iii)}= V\left(\idm -E\left(\sum_{i=1}^{\ksteps-1} D^i\otimes(D^\top)^{i-1}\right)A^{-1} Y \right)^{-1}. \label{eq:final_decomp_suff_cond}
	\end{align}
	Above, in $(i)$ we used the Sherman-Morrison-Woodbury identity, in $(ii)$ we
	used that  $FA =CF$ and  $FA^{-1} = C^{-1}F$ (see Appendix~\ref{app:comput}),
	in $(iii)$ we used that $FG = D\otimes D^\top$ is nilpotent of order
	$\ksteps=\dimstate$ since $D^\ksteps= 0$ (see Appendix~\ref{app:comput}). The
	result follows for $\ksteps =\dimstate$ from the assumptions of Lipschitz
	continuity and non-singularity of the transpose Jacobians of the diffeomorphisms, and
	from the fact that $\|E\|_2\leq 1$ and $\|D\|_2\leq 1$. For $\ksteps
	>\dimstate$, we have from~\eqref{eq:decomp_grad_ksteps}, that for any $\lambda
	\in \reals^\dimstate$,  $\|\nabla_{\auxctrl} \trajin{\ksteps}(\auxstate_0,
	\auxctrls) \lambda\|_2 \geq
	\|\nabla_{\auxctrl_{s:\ksteps-1}}\trajin{\dimstate}(\auxstate_s,
	\auxctrl_{s:\ksteps-1})\lambda\|_2$, hence the result follows. 
	
	For multi-input systems, let $\orderlin= \max\{\orderlin_1,\ldots,
	\orderlin_{\microdimctrl}\}$. One easily verifies that for any $\ksteps \geq
	\orderlin$, the system in its linear representation can be written as
	$\auxxstate_\ksteps = M \auxxctrls$ for $\auxxctrls =
	(\auxxctrl_0;\ldots;\auxxctrl_{\ksteps-1})$ with $\sigma_{\min}(M^\top) =1$.
	The first part of the claim follows then as in single input case. For the
	second part, the system can be decomposed by blocks and treated as in the
	single-input case, see Appendix~\ref{app:comput} for more details. 
\end{proof}
Overall, Theorem~\ref{thm:suff_cond} shows that for, e.g., a dynamical system
driven by its $k$\textsuperscript{th} derivative as
in~\eqref{eq:ode_single_input}, by considering a dynamic $\dyn$ defined by $k$
steps of an Euler discretization of the system,
condition~\eqref{eq:global_conv_cond} can be ensured, which in turns can ensure
a gradient dominating property for the objective. The ILQR and IDDP algorithms
are not just gradient descent algorithms. It remains now to exploit
assumption~\eqref{eq:global_conv_cond} to uncover the efficiency of the ILQR or
IDDP algorithms. 

\section{Convergence Analysis}\label{sec:cvg}
To analyze the convergence of the ILQR and the IDDP algorithms, we consider
problem~\eqref{eq:discrete_pb} at the scale of the whole trajectory and analyze
problem~\eqref{eq:discrete_pb} as a compositional problem of the form
\begin{equation}\label{eq:total_cost}
\min_{\ctrls\in \reals^{\horizon\dimctrl}} \left\{\obj(\ctrls)=\cost (\augtraj(\ctrls))\right\}, \ \mbox{where} \ \augtraj(\ctrls)=\traj(\initstate, \ctrls) \ \mbox{and} \ \cost(\states) = \sum_{t=1}^\horizon \cost_t(\state_t).
\end{equation}
Note however that the dynamical structure of the problem revealed at the state
scale is essential to the implementation of the ILQR and IDDP algorithms. We
state our assumptions for convergence at the state scale and translate
them at the trajectory scale. A table of all constants introduced for the
convergence analysis with their respective units is provided in
Appendix~\ref{app:index} for ease of reference.
\begin{assumption}\label{asm:conv} We consider convex costs $\cost_t$ that have
	$\smooth_\cost$-Lipschitz-continuous gradients and
	$\smoothess_\cost$-Lipschitz-continuous Hessians for all $t \in \{1, \ldots,
	\horizon\}$. We consider the dynamics to be Lipschitz-continuous with
	Lipschitz continuous Jacobians and
	satisfying~\eqref{eq:single_step_injectivity}.
	
	In consequence, the total cost~$\cost$ defined in~\eqref{eq:total_cost} is
	convex,  has $\smooth_\cost$-Lipschitz-continuous gradients and
	$\smoothess_\cost$-Lipschitz-continuous Hessians. The function $\augtraj$
	defined in~\eqref{eq:total_cost} is $\lip_{\augtraj}$-Lipschitz-continuous
	with $\smooth_\augtraj$-Lipschitz-continuous Jacobians satisfying
	\begin{equation}\label{eq:inj_traj}
		\forall \ctrls \in\reals^{\horizon \dimctrl}, \quad 	\sigma_{\min} (\nabla \augtraj(\ctrls))\geq \sigma_\augtraj >0,
	\end{equation}
	where $\lip_\augtraj = \lip_\traj$, $\smooth_\augtraj = \smooth_\traj$ are
	given in ~\eqref{eq:smooth_traj_from_dyn} and $\sigma_\augtraj = \sigma_\traj$
	is given in~\eqref{eq:inj_traj_from_dyn}.
\end{assumption}

\subsection{Convergence Proof Sketches}\label{ssec:intuition}
\paragraph{The ILQR algorithm is a generalized Gauss-Newton algorithm}
From a high-level perspective, the  ILQR algorithm consists in linearizing the
function $\augtraj:\ctrls\rightarrow \traj(\initstate, \ctrls)$ that
encapsulates the dynamics, taking a quadratic approximation of the costs~$\cost$
around the current trajectory $\states =\augtraj(\ctrls) $ and minimizing the
resulting approximation with an additional regularization. Formally, as
previously observed by~\cite{sideris2005efficient, roulet2019iterative}, the
ILQR algorithm is then computing
\begin{align}
\LQR_\reg(\obj)(\ctrls) & = \argmin_{\auxctrls \in \reals^{\horizon\dimctrl}} \qua_{\cost}^{\augtraj(\ctrls)}(\lin_{\augtraj}^{\ctrls}(\auxctrls)) + \frac{\reg}{2} \|\auxctrls\|_2^2\nonumber \\
& = -(\nabla \augtraj(\ctrls)\nabla^2 \cost(\augtraj(\ctrls)) \nabla \augtraj(\ctrls)^\top +\reg \idm)^{-1}  \nabla \augtraj(\ctrls)\nabla \cost(\augtraj(\ctrls)),\label{eq:rggn_oracle}
\end{align}
where $\lin_\augtraj^{\ctrls}$ and $\qua_\cost^{\augtraj(\ctrls)}$ are the
linear and quadratic approximations of, respectively, the control in $\horizon$
steps around $\ctrls$ and the total costs around $\augtraj(\ctrls)$ as defined
in the notations. Equation~\ref{eq:rggn_oracle} clearly reveals that the ILQR
algorithm amounts to a regularized generalized Gauss-Newton
algorithm~\citep{diehl2019local} implemented by a dynamic programming procedure
exploiting the structure of the problem. 

\paragraph{Proof sketch of global convergence}
By choosing a large enough regularization, the updates of the \ref{eq:ilqr_algo}
algorithm approach the ones of a gradient descent as we have from the expression
of $\LQR_{\reg}$ in~\eqref{eq:rggn_oracle} that for $\reg \gg 1$, $\var +
\LQR_{\reg}(\obj(\var)) \approx \var - \reg^{-1} \nabla \obj(\var)$. This
suggests that the~\ref{eq:ilqr_algo} algorithm can converge globally to a global
minimum, just as a gradient descent given a gradient dominating
property~\eqref{eq:grad_dom_obj}~\citep{polyak1964some, bolte2017error}. 

Formally, to ensure global convergence, we consider taking a regularization
$\reg$ that may depend on the current command $\ctrls \in
\reals^{\horizon\dimctrl}$, s.t. for $\auxctrls = \LQR_{\reg}(\obj)(\ctrls)$, 
\begin{equation}
		\obj\left(\ctrls +\auxctrls\right) \leq	\cost\circ\augtraj(\ctrls) + \qua_{\cost}^{\augtraj(\ctrls)}\circ \lin_\augtraj^\ctrls(\auxctrls) 
	+ \frac{\reg}{2} \|\auxctrls\|_2^2 = \obj(\ctrls) +\frac{1}{2}\nabla \obj(\ctrls)^\top\auxctrls. \label{eq:suff_cond_descent}
\end{equation}
Given the analytic form of $\auxctrls = \LQR_{\reg}(\obj)(\ctrls)$
in~\eqref{eq:rggn_oracle}, the above condition ensures that $\obj\left(\ctrls
+\auxctrls\right) -\obj(\ctrls)  \leq  -\alpha \|\nabla
\cost(\augtraj(\ctrls))\|_2^2$, for some constant $\alpha$ that depends on the
regularization $\reg$ and the properties of the objective. Hence, if $\cost$
satisfies a gradient dominating property, i.e., there exists $\plcst>0, \pl \in
[1/2,1)$ s.t. $\|\nabla \cost(\states)\|_2^2 \geq \plcst^\pl (\cost(\states)
-\cost^* )^{\pl}$ for any $\states\in \reals^{\horizon \dimstate}$, global
convergence to a global minimum can be ensured given a constant regularization.
For example, if $\pl=1/2$, by taking a constant regularization
ensuring~\eqref{eq:suff_cond_descent}, we get a global linear convergence rate. 

We further show that a regularization ensuring~\eqref{eq:suff_cond_descent} can
be chosen to scale as a function of $\|\nabla \cost(\augtraj(\ctrls))\|_2$,
which helps decompose the computational complexity in (i) the complexity of
solving $\min_{\states\in \reals^{\horizon \dimstate}}\cost(\states)$ given an
assumption on its gradient dominance and the smoothness properties of the costs,
(ii) a term that depends on the initial gap and condition numbers associated to
the approximation of a gradient descent by a Gauss-Newton method though the
smoothness properties of the cost, the dynamics and the surjectivity of the
dynamics.

\paragraph{Proof sketch of local convergence}
The rate of convergence sketched above can be refined by analyzing the
local behavior of the algorithm around a solution. Namely, if $\augtraj$
satisfies~\eqref{eq:inj_traj}, then the matrix $\nabla \augtraj(\ctrls)^\top
\nabla \augtraj(\ctrls)$ is invertible. Denoting  $\states = \augtraj(\ctrls)$,
$\grad =  \nabla \augtraj(\ctrls) $ and $\hess = \nabla^2 \cost(\states)$, we
then have by standard linear algebra manipulations, that the oracle returned by
the ILQR algorithm satisfies
\begin{align*}
\LQR_\reg(\obj)(\ctrls)  & = - (GHG^\top + \reg\idm)^{-1} G  \nabla \cost(\states) \\
& = -\grad (\hess\grad^\top \grad+\reg \idm)^{-1}  \nabla \cost(\states) \tag{\textit{Push-through identity}}\\
& =  -\grad  (\grad^\top \grad)^{-1}(\hess +\reg  (\grad^\top \grad)^{-1})^{-1}  \nabla \cost(\states). \tag{$G^\top G$ \textit{ invertible}}
\end{align*}
Consider then the trajectory associated to a single step of ILQR, i.e., for
$\auxctrls =\LQR_\reg(\obj)(\ctrls)$,
\begin{align*}
\auxstates = \augtraj(\ctrls + \auxctrls )  & \approx \augtraj(\ctrls) + \nabla \augtraj(\ctrls)^\top  \auxctrls  = \states  -( \nabla^2 \cost(\states) +\reg (\nabla \augtraj(\ctrls)^\top \nabla \augtraj(\ctrls))^{-1})^{-1}  \nabla \cost(\states).
\end{align*}
For $\reg \ll 1$, the difference of the trajectories $\auxstates-\states$  is
close to a Newton direction on the total costs~$\cost$. In other words, the ILQR
algorithm may be analyzed as an approximate Newton method on the total costs. In
particular, this suggests  that the algorithm can have a local quadratic
convergence rate if (i) the costs satisfy the assumptions required for a Newton
method to converge locally quadratically, such as self-concordance, (ii) the
regularization decreases fast enough. 

\paragraph{Proof sketch of total complexity}
If the costs are strongly convex then they satisfy a gradient dominating property
and are self-concordant.  To satisfy condition~\eqref{eq:suff_cond_descent}, the
regularization can then be chosen to be proportional to the norm of the gradient
of the costs at the current iterate, i.e., $\nu_k = \bar \nu_k \|\nabla
\cost(\augtraj(\var^{(k)}))\|_2$ for $\bar \nu_k$ bounded above by a constant
which ensures that $\nu_k$ tends to 0 with the iterations $k$. By satisfying
condition~\eqref{eq:suff_cond_descent}, we can ensure global convergence, while
by having $\nu_k \rightarrow 0$, we can ensure local quadratic convergence. 

\paragraph{Proof sketch of convergence of the IDDP algorithm}
The \ref{eq:iddp_algo} algorithm cannot be simply analyzed as an instance of a
classical optimization algorithm. However, a careful analysis of the difference
in the updates of the \ref{eq:ilqr_algo} and~\ref{eq:iddp_algo} algorithms for
strongly convex costs reveal that the difference in their oracles can be bounded
as  $\|\DDP_\reg(\obj)(\ctrls) - \LQR_{\reg}(\obj)(\ctrls)\|_2 \leq
\ddpbound\|\LQR_\reg(\obj)(\ctrls)\|_2^2$ for some constant $\ddpbound$
independent of $\ctrls$ and $\reg$. This observation enables us to derive an
appropriate rule for selecting the regularization for the \ref{eq:iddp_algo}
algorithm and to ensure that the quadratic local convergence is maintained since
the approximation error of $\LQR$ by $\DDP$ is quadratic. 

\begin{remark}
  If the function $g$ is surjective and satisfies
  Assumption~\eqref{eq:inj_traj}, then local quadratic convergence of e.g. a
  Gauss-Newton method or a Levenberg-Marquardt method (for $h$ quadratic) is
  known, see \citet[Chapter 9.2.2]{bjorck2024numerical},
  \citet{bergou2020convergence}. For $h$ non-quadratic, local quadratic
  convergence of generalized Gauss-Newton methods has also been shown in some
  special cases by~\citet[Section 2.2]{messerer2021survey}. Compared to these
  results, we consider deriving a convergence rate decomposed into a first slow
  convergence phase and a fast local convergence phase. Moreover, we consider
  quantitative bounds involving the constants in~\eqref{eq:inj_traj} and
  additional assumptions (self-concordance or gradient dominant assumptions).
\end{remark}

\subsection{Convergence Analysis of ILQR}\label{ssec:conv_ilqr}
\subsubsection{Global Convergence Rate to Global Minima}\label{ssec:global_conv}
We start by analyzing the ILQR algorithm provided that the costs satisfy a
sufficient condition for convergence to global minima, namely gradient
dominance, a.k.a. a Polyak-{\L}ojasiewicz
inequality~\citep{lojasiewicz1963topological, polyak1964some, bolte2017error}.
We consider convergence in objective values $\obj(\ctrls)$ for
problem~\eqref{eq:discrete_pb}, and analyze the number of iterations $k$ to
reach an accuracy $\varepsilon$, that is, such that $\obj(\ctrls^{(k)}) -
\min_{\ctrls \in \reals^{\horizon\dimctrl}} \obj(\ctrls) \leq \varepsilon$.

\begin{theorem}\label{thm:global_conv}
  
  Given Assumption~\ref{asm:conv}, the sufficient decrease
	condition~\eqref{eq:suff_cond_descent} is satisfied for a regularization
	\[
	\reg(\ctrls) =\frac{\smooth_\augtraj \|\nabla \cost(\augtraj(\ctrls))\|_2}{2} \polysqrt\left(\frac{ \smooth_\augtraj\|\nabla \cost(\augtraj(\ctrls))\|_2}{4 \lip_\augtraj^2 \smooth_\cost(\newsimp +1)}\right),
	\]
	where $\polysqrt(x) = 1 + \sqrt{1 + 1/x}$ and $\newsimp = \smoothess_\cost
	\lip_\augtraj^2/(3 \smooth_\augtraj \smooth_\cost)$. In addition to
	Assumption~\ref{asm:conv}, consider that the costs are dominated by their
	gradients, i.e., there exists $\pl \in [1/2, 1)$ and $ \plcst>0$ such that
  $
      \|\nabla \cost_t(\state) \|_2 \geq \plcst^\pl (\cost_t(\state) - \cost_t^*)^\pl
  $
  for all $\state \in \reals^{\dimstate}, t\in \{1, \ldots, \horizon\}$. The total
	 cost satisfies then, for  $\plcst_\cost = \plcst/\horizon^{(2\pl-1)/2\pl},$
		\begin{equation}\label{eq:pl}
	\forall \states \in \reals^{\horizon \dimstate}, \quad 	\|\nabla \cost(\states) \|_2 \geq \plcst_\cost^\pl (\cost(\states) - \cost^*)^\pl. 
	\end{equation}
	If $\pl=1/2$, given regularizations $\reg_k = \reg(\ctrls^{(k)})$, the number
	of iterations of the \ref{eq:ilqr_algo} algorithm to converge to an accuracy
	$\varepsilon$ in objective values for problem~\eqref{eq:discrete_pb}, is
	at most
	\begin{align*}
		k & \leq 4 \scaling \sqrt{\delta_0}\polysqrt\left(\frac{\scaling\sqrt{\delta_0}}{\simp}\right) + 2\condnb_\cost \ln\left(\frac{\delta_0}{\varepsilon}\right),
	\end{align*}
  and, if $1/2<\pl<1$, the number of iterations to converge to an accuracy
	$\varepsilon$, is at most
	\begin{align*}
		k \leq
		\frac{2}{2\pl -1} \frac{\condnb_\cost}{\varepsilon^{2\pl-1}}  + \frac{2}{1-\pl}\scaling\delta_0^{1-\pl} 
		+\sqrt{2\scaling\simp}\frac{1}{1-3\pl/2}
		\left(\varepsilon^{1-3\pl/2} - \left(\frac{\simp}{\scaling}\right)^{1/\pl-3/2}\right),
	\end{align*}
	with $\condnb_\cost = \smooth_\cost/\plcst_\cost^{2\pl}$, $\condnb_\augtraj =
	\lip_{\augtraj}/\sigma_{\augtraj}$, $\concord =
	\smoothess_\cost/(2\plcst_\cost^{3\pl})$, $\scaling =
	\smooth_\augtraj/(\sigma_\augtraj^2\plcst_\cost^\pl)$, $\simp =
	4\condnb_\augtraj^2 \condnb_\cost ( \newsimp + 1)$, $\delta_0 =
	\obj(\ctrls^{(0)}) - \obj^*$ and the case $\pl=2/3$ is to be understood
	limit-wise.
\end{theorem}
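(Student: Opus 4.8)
The statement bundles three things: the explicit regularization for which the sufficient-decrease condition~\eqref{eq:suff_cond_descent} holds, the inheritance~\eqref{eq:pl} of gradient dominance by the total cost, and the two iteration-complexity bounds; I would treat them in that order and then read off the bounds from a scalar recursion on $\delta_k := \obj(\ctrls^{(k)}) - \obj^*$. Inheritance~\eqref{eq:pl} is immediate: since $\nabla\cost(\states) = (\nabla\cost_1(\state_1);\dots;\nabla\cost_\horizon(\state_\horizon))$ and $\cost^* = \sum_t\cost_t^*$, the stagewise bound gives $\|\nabla\cost(\states)\|_2^2 \ge \plcst^{2\pl}\sum_{t=1}^\horizon(\cost_t(\state_t) - \cost_t^*)^{2\pl}$, and convexity of $x\mapsto x^{2\pl}$ (valid since $2\pl\ge1$) with the power-mean inequality gives $\sum_t a_t^{2\pl} \ge \horizon^{1-2\pl}(\sum_t a_t)^{2\pl}$ for $a_t = \cost_t(\state_t) - \cost_t^*\ge0$, which after taking square roots is~\eqref{eq:pl} with $\plcst_\cost = \plcst\horizon^{-(2\pl-1)/(2\pl)}$. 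This also yields the preliminary fact $\obj^* = \cost^*$ (the ``beginning of the proof'' referenced in the text): by the chain rule $\nabla\obj(\ctrls) = \nabla\augtraj(\ctrls)\nabla\cost(\augtraj(\ctrls))$, so $\|\nabla\obj(\ctrls)\|_2^2 \ge \sigma_\augtraj^2\plcst_\cost^{2\pl}(\obj(\ctrls) - \cost^*)^{2\pl}$ by~\eqref{eq:inj_traj} and~\eqref{eq:pl}; evaluating at a minimizer of $\obj$ (assumed to exist) forces $\obj^*\le\cost^*$, while $\obj\ge\cost^*$ is trivial, so $\delta_k = \cost(\states_k) - \cost^*$ throughout.

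\textbf{The regularization / sufficient decrease.} With $\states = \augtraj(\ctrls)$ and $g = \nabla\cost(\states)$, I would expand $\obj(\ctrls+\auxctrls) = \cost(\augtraj(\ctrls+\auxctrls))$ around $\ctrls$: writing $\tilde\states = \states + \lin_\augtraj^\ctrls(\auxctrls)$ for the linearized trajectory, $\smooth_\augtraj$-Lipschitz continuity of $\nabla\augtraj$ controls $\|\augtraj(\ctrls+\auxctrls) - \tilde\states\|_2 \le \tfrac{\smooth_\augtraj}{2}\|\auxctrls\|_2^2$, convexity of $\cost$ bounds $\cost(\augtraj(\ctrls+\auxctrls)) - \cost(\tilde\states)$ by $\langle\nabla\cost(\augtraj(\ctrls+\auxctrls)),\augtraj(\ctrls+\auxctrls)-\tilde\states\rangle$ (with $\|\nabla\cost(\cdot)\|_2 \le \|g\|_2 + \smooth_\cost\lip_\augtraj\|\auxctrls\|_2$ on the way), and $\smoothess_\cost$-Lipschitz continuity of $\nabla^2\cost$ bounds $\cost(\tilde\states)$ by $\cost(\states) + \qua_\cost^\states(\lin_\augtraj^\ctrls(\auxctrls)) + \tfrac{\smoothess_\cost}{6}\|\lin_\augtraj^\ctrls(\auxctrls)\|_2^3$. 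Collecting and using $\|\lin_\augtraj^\ctrls(\auxctrls)\|_2\le\lip_\augtraj\|\auxctrls\|_2$, the excess of $\obj(\ctrls+\auxctrls)$ over $\cost(\states) + \qua_\cost^{\augtraj(\ctrls)}\circ\lin_\augtraj^\ctrls(\auxctrls)$ is a polynomial in $\|\auxctrls\|_2$ with quadratic coefficient $\tfrac{\smooth_\augtraj}{2}\|g\|_2$ and cubic coefficient $\lip_\augtraj(\tfrac{\smoothess_\cost\lip_\cost^2}{6} + \tfrac{\smooth_\cost\smooth_\augtraj}{2})$ (the leading term is kept sharp in $\|g\|_2$ because it drives $\reg(\ctrls)\to0$ near the solution; the lower-order ones are bounded using $\lip_\cost$ as a uniform gradient-norm bound). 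From the closed form $\auxctrls = -(\nabla\augtraj(\ctrls)\nabla^2\cost(\states)\nabla\augtraj(\ctrls)^\top + \reg\idm)^{-1}\nabla\augtraj(\ctrls)g$ and positive semidefiniteness of the Gauss--Newton matrix, $\|\auxctrls\|_2 \le \lip_\augtraj\|g\|_2/\reg$; substituting this in the cubic term reduces ``excess $\le\tfrac{\reg}{2}\|\auxctrls\|_2^2$'' to a quadratic inequality $\reg^2 - \smooth_\augtraj\|g\|_2\,\reg - (\mathrm{const})\lip_\augtraj\|g\|_2 \ge 0$, whose least root is the stated $\reg(\ctrls) = \tfrac{\smooth_\augtraj\|g\|_2}{2}\polysqrt(\cdot)$ with $\polysqrt(x) = 1+\sqrt{1+1/x}$.

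\textbf{From one step to the complexity.} Since $\auxctrls^{(k)}$ minimizes the regularized model, the right-hand side of~\eqref{eq:suff_cond_descent} equals $\obj(\ctrls^{(k)}) + \tfrac12\nabla\obj(\ctrls^{(k)})^\top\auxctrls^{(k)}$, and the push-through identity rewrites $\nabla\obj(\ctrls^{(k)})^\top\auxctrls^{(k)} = -g_k^\top(\hess_k + \reg_k(\grad_k^\top\grad_k)^{-1})^{-1}g_k$; bounding the middle matrix above by $(\smooth_\cost + \reg_k/\sigma_\augtraj^2)\idm$ (using $\grad_k^\top\grad_k \succeq \sigma_\augtraj^2\idm$, which needs $\dimctrl\ge\dimstate$) gives
\[
\delta_{k+1} - \delta_k \;\le\; -\,\frac{\sigma_\augtraj^2\|g_k\|_2^2}{2(\sigma_\augtraj^2\smooth_\cost + \reg_k)}.
\]
Lower-bounding the numerator by $\sigma_\augtraj^2\plcst_\cost^{2\pl}\delta_k^{2\pl}$ via~\eqref{eq:pl}, and upper-bounding $\reg_k$ by $\smooth_\augtraj\|g_k\|_2 + \lip_\augtraj(D\smooth_\augtraj\|g_k\|_2)^{1/2}$ (with $D = \smoothess_\cost\lip_\cost^2/(3\smooth_\augtraj)+\smooth_\cost$, using $\polysqrt(x)\le2+x^{-1/2}$) so that the denominator is a constant plus terms $\propto\|g_k\|_2\propto\delta_k^\pl$ and $\propto\|g_k\|_2^{1/2}\propto\delta_k^{\pl/2}$ (each monotone in $\|g_k\|_2$, hence bounded along the trajectory by the corresponding power of $\delta_k$), the recursion splits into three phases according to which denominator term dominates; in each it has the canonical form $\delta_{k+1}\le\delta_k - c\,\delta_k^\beta$ with $(\beta,c) \asymp (2\pl,\,1/\condnb_\cost)$, $(\pl,\,1/\scaling)$, $(3\pl/2,\,1/\sqrt{\scaling\simp})$ respectively, $\simp$ being (up to a constant) the value of $\|g_k\|_2$ at which the last two denominator terms coincide. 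The usual comparison lemmas — geometric decay when $\beta=2\pl=1$, and $\delta_k^{1-\beta}$ moving linearly in $k$ at rate $|1-\beta|c$ otherwise — bound the length of each phase, and summing them gives the two estimates: the constant phase yields $2\condnb_\cost\ln(\delta_0/\varepsilon)$ resp.\ $\tfrac{2}{2\pl-1}\condnb_\cost\varepsilon^{1-2\pl}$; the $\delta_k^\pl$ phase yields $4\scaling\sqrt{\delta_0}$ resp.\ $\tfrac{2}{1-\pl}\scaling\delta_0^{1-\pl}$; the $\delta_k^{3\pl/2}$ phase yields the remaining term with prefactor $\sqrt{2\scaling\simp}$ and threshold $(\simp/\scaling)^{1/\pl-3/2}$; for $\pl=1/2$ the last two phases are merged by solving directly the quadratic in $\reg_k$, producing the $\polysqrt(\scaling\sqrt{\delta_0}/\simp)$ factor, and $\pl=2/3$ is the degenerate exponent $1-3\pl/2=0$ where a power becomes a logarithm via $\lim_{a\to0}(x^a-y^a)/a=\ln(x/y)$. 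The only real difficulty is exactly this bookkeeping — choosing in the expansion which terms to keep sharp so that the quadratic for $\reg(\ctrls)$ is the stated one, and threading the three phases through the precise constants $\condnb_\cost,\scaling,\simp$ and their transition thresholds — the underlying mechanism (Gauss--Newton sufficient decrease $+$ gradient dominance $\Rightarrow$ global rate) being standard.
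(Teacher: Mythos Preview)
Your plan is close to the paper's, and the first two parts---inheritance of gradient dominance via the power-mean inequality, and construction of $\reg(\ctrls)$ from the cubic error bound by solving a quadratic in $\reg$---match it essentially line for line. The gap is in the passage from the per-step decrease to a recursion in $\delta_k$. You write that after lower-bounding the numerator by $\sigma_\augtraj^2\plcst_\cost^{2\pl}\delta_k^{2\pl}$ the denominator terms are ``$\propto\|g_k\|_2\propto\delta_k^\pl$'' and ``monotone in $\|g_k\|_2$, hence bounded along the trajectory by the corresponding power of $\delta_k$''. But gradient dominance gives only a \emph{lower} bound $\|g_k\|_2\ge(\plcst_\cost\delta_k)^\pl$; a monotone-increasing denominator cannot be \emph{upper}-bounded from a lower bound on its argument, so treating numerator and denominator separately does not produce a recursion in $\delta_k$ alone (and there is no matching upper bound $\|g_k\|_2\lesssim\delta_k^\pl$ for $\pl\neq 1/2$). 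The paper avoids this by observing that the whole ratio $f_1(x)=b_1x^2/(\sqrt{b_2x^2+b_3x}+b_4x+b_5)$ is increasing in $x=\|g_k\|_2$ (quadratic numerator, at most linear denominator), so the lower bound on $\|g_k\|_2$ gives directly $\delta_{k+1}-\delta_k\le -f_1((\plcst_\cost\delta_k)^\pl)$ with $(\plcst_\cost\delta_k)^\pl$ substituted \emph{everywhere}.

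There is also a methodological difference in how the recursion is summed. You split into three phases by dominant denominator term and invoke comparison lemmas per phase; this gives the right orders but loses constants at each transition, and your account of how the $\polysqrt(\scaling\sqrt{\delta_0}/\simp)$ factor emerges for $\pl=1/2$ is hand-waved. The paper instead writes the recursion as $f_2'(\delta_k)(\delta_{k+1}-\delta_k)\le-1$ with $f_2'(\delta)=1/f_1((\plcst_\cost\delta)^\pl)=2\condnb_\cost\delta^{-2\pl}+\scaling\delta^{-\pl}\polysqrt(\scaling\delta^\pl/\simp)$, notes that $f_2$ is concave on $\reals_+$ so that $f_2(\delta_{k+1})-f_2(\delta_k)\le f_2'(\delta_k)(\delta_{k+1}-\delta_k)\le-1$, and concludes $k\le f_2(\delta_0)-f_2(\varepsilon)$ in one stroke. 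For $\pl=1/2$ an explicit antiderivative of $\delta^{-1/2}\polysqrt(\sqrt{\delta}/a)$ is found (namely $a\ln(2a\sqrt{\delta}\polysqrt(\sqrt{\delta}/a)+a^2)+2\sqrt{\delta}\polysqrt(\sqrt{\delta}/a)$), which is where the $\polysqrt$ in the final bound actually comes from; for $\pl>1/2$ the integral is bounded by splitting the square-root integrand at the threshold $(\simp/\scaling)^{1/\pl}$, which is your phase transition in cleaner form.
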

Before presenting the proof, a few remarks are in order. 
\begin{remark}
	Consider the case $\pl=1/2$ in Theorem~\ref{thm:global_conv}. The constants
	appearing in the bound are (i) the condition number $\condnb_\cost =
	\smooth_\cost /\plcst_\cost$ of the total cost~$\cost$, (ii) the condition
	number $\condnb_\augtraj = \lip_\augtraj/\sigma_\augtraj$ of the Jacobian of
	$\augtraj$, $\nabla \augtraj(\ctrls)$, (iii) a constant $\concord
	=\smoothess_\cost/(2\plcst_\cost^{3/2})$ that can be interpreted as a bound on
	the self-concordance parameter of the cost~$\cost$ if the total costs are
	strongly convex, (iv) a constant $\scaling =
	\smooth_\augtraj/(\sigma_\augtraj^2\sqrt{\plcst_\cost})$ whose dimension is
	the same as $\concord$, i.e., the inverse of the squared root of the
	objective. Finally, the terms $\newsimp$ and $\simp$ are additional dimension
	independent constants that act as additional condition numbers.
 \end{remark}
\begin{remark}\label{rmk:slow_cvg_phase}
	The rate of convergence in Theorem~\ref{thm:global_conv} for $r=1/2$ is
	composed of $(i)$ a term $\condnb_\cost
	\ln\left({\delta_0}/{\varepsilon}\right)$ that is the linear complexity
	associated to the computation of $\min_{\states \in \reals^{\horizon\dimctrl}
	} \cost(\states)$ by a gradient descent on  a function $\cost$ that has
	Lipschitz-continuous gradients with a gradient dominance property and (ii) a
	term $4 \scaling
	\sqrt{\delta_0}\polysqrt\left({\scaling\sqrt{\delta_0}}/{\simp}\right)$ that
	depends on the initial gap and appropriate condition numbers on the problem.
	To understand the effect of this second term, consider computing the value of
	the gap $\delta_j$ after $j$ iterations such that the complexity of reducing
	the gap further by a factor $1/e \approx1/2$ is dominated by the logarithmic
	term such that we enter a linear phase of convergence. Formally, after $j$
	iterations of the algorithm, the remaining number of iterations to reduce the
	gap further by a factor $1/e$, i.e., reach an accuracy
	$\varepsilon=\delta_j/e$, is $ 4\scaling
	\sqrt{\delta_j}\polysqrt\left({\scaling\sqrt{\delta_j}}/{\simp}\right)+2
	\condnb_\cost $.  To neglect the first term in favor of the second term we
	need $\tilde \polysqrt(\scaling \sqrt{\delta_j}/\simp) \leq
	\condnb_\cost/(2\simp)\leq 1$ for $\tilde{\polysqrt}(x)= x + \sqrt{x^2+x}$,
	which is satisfied for $\delta_j \leq c^2/\scaling^2$ with
	$c=\condnb_\cost/(16\condnb_\augtraj^2(1+\beta))$. So up to a multiplicative
	factor $c$, the parameter $1/\scaling^2$ plays the role of a gap determining a
	linear convergence phase. 
\end{remark}
\begin{remark}
	For $\smooth_\augtraj=0$, the terms depending on $\delta_0$ uniquely vanish
	since $\scaling = 0$ in this case. We then get  the classical rates when
	minimizing a function $\cost$ that satisfy~\eqref{eq:pl} with a first-order
	method. The rates can be improved by analyzing the local behavior of the
	algorithm to take advantage of the quadratic approximations of the total
	costs~$\cost$ as shown in Section~\ref{ssec:local_conv}.
\end{remark}
\begin{proof}[Proof of Theorem~\ref{thm:global_conv}]
	The validity of the gradient dominating property for the total costs is
	presented in Lemma~\ref{lem:plcst_decomp} in Appendix~\ref{app:lemmas}. Note
	that if $\cost$ satisfies~\eqref{eq:pl} and $\augtraj$
	satisfies~\eqref{eq:inj_traj}, then for any $\ctrls \in
	\reals^{\horizon\dimctrl}$, we have $\|\nabla (\cost\circ \augtraj)
	(\ctrls)\|_2 \geq \sigma_\augtraj \plcst_\cost^\pl (\cost(\augtraj(\ctrls)) -
	\cost^*)^\pl$. Hence, for $\ctrls^* \in \argmin_{\ctrls \in
	\reals^{\horizon\dimctrl}} \obj(\ctrls)$ with $\obj=\cost \circ \augtraj$, we
	get $0 = \|\nabla \obj(\ctrls^*) \|_2 \geq  \sigma_\augtraj \plcst_\cost^\pl
	(\cost(\augtraj(\ctrls^*)) - \cost^*)^\pl \geq 0$, such that we have $\obj^* =
	\cost^*$.
	
	We have from Lemma~\ref{lem:bound_approx_self_concord} that for any $\ctrls,
	\auxctrls \in \reals^{\horizon\dimctrl}$, denoting $a_0 = \smoothess_\cost
	\lip_{\augtraj}^3/3 {+} \smooth_\augtraj\smooth_\cost \lip_\augtraj$, 
	\[
	|(\cost \circ \augtraj)(\ctrls {+} \auxctrls) 
	{-} ( \cost \circ \augtraj)(\ctrls) 
	{-} \qua_{\cost}^{\augtraj(\ctrls)}\circ \lin_\augtraj^\ctrls(\auxctrls) | 
	\leq\frac{\smooth_\augtraj \|\nabla \cost(\augtraj(\ctrls))\|_2 {+}  a_0\|\auxctrls\|_2}{2}\|\auxctrls\|_2^2.
	\]
	Since  $\|\LQR_\reg(\obj)(\ctrls) \|_2 \leq \lip_{\augtraj} \|\nabla
	\cost(\augtraj(\ctrls))\|_2/\reg$, condition~\eqref{eq:suff_cond_descent} is
	satisfied for $\reg>0$ s.t.
	$
	a_1 + a_2/\reg \leq \reg,
	$
	where $a_1= \smooth_\augtraj \|\nabla \cost(\augtraj(\ctrls))\|_2 $ $a_2 = a_0
	\lip_\augtraj\|\nabla \cost(\augtraj(\ctrls))\|_2$. Therefore, denoting
	$\polysqrt(x) = 1+ \sqrt{1+1/x}$, condition~\eqref{eq:suff_cond_descent} is
	satisfied for any 
	\begin{align*}
	\reg\geq \reg(\ctrls) = \frac{a_1{+} \sqrt{a_1^2{+ }4a_2}}{2} & =
\frac{\smooth_\augtraj \|\nabla\cost(\augtraj(\ctrls))\|_2}{2}\polysqrt\left(\frac{\smooth_\augtraj^2\|\nabla \cost(\augtraj(\ctrls))\|_2}{4a_0 \lip_\augtraj}\right),
	\end{align*}
with $a_0 = \lip_{\augtraj}\smooth_\augtraj\smooth_\cost(\newsimp+1)$ for
	$\newsimp =  \smoothess_\cost \lip_\augtraj^2/(3 \smooth_\augtraj
	\smooth_\cost)$. We have then for $\auxctrls =
	\LQR_{\reg(\ctrls)}(\obj)(\ctrls)$, $\grad = \nabla \augtraj(\ctrls)$, $\hess
	= \nabla^2\cost(\augtraj(\ctrls))$, since
	condition~\eqref{eq:suff_cond_descent} is satisfied,
	\begin{align}
		\obj(\ctrls + \auxctrls) - \obj(\ctrls) & \leq -\frac{1}{2} \nabla \cost(\augtraj(\ctrls))^\top \grad^\top (\grad\hess\grad^\top + \reg(\ctrls) \idm)^{-1} \grad \nabla \cost(\augtraj(\ctrls)) \nonumber \\
		& = -\frac{1}{2} \nabla \cost(\augtraj(\ctrls))^\top  (\hess+ \reg(\ctrls)( \grad^\top \grad)^{-1})^{-1}  \nabla \cost(\augtraj(\ctrls)) \nonumber\\
		& \leq -\frac{1}{2} \frac{\sigma_\augtraj^2}{\sigma_\augtraj^2 \smooth_\cost + \reg(\ctrls)} \|\nabla \cost(\augtraj(\ctrls))\|_2^2 \leq - \frac{b_1x^2}{\sqrt{b_2 x^2 +b_3x}  + b_4x +b_5}, \label{eq:decrease_iter}
	\end{align}
	where $x = \|\nabla\cost(\augtraj(\ctrls))\|_2$, $b_1 = \sigma_\augtraj^2$,
	$b_2 = \smooth_\augtraj^2$, $b_3 = 4a_0 \lip_\augtraj$, $b_4 =
	\smooth_\augtraj$, $b_5 = 2 \sigma_{\augtraj}^2 \smooth_\cost$.
	
	The function $f_1: x \rightarrow b_1x^2/(\sqrt{b_2 x^2 +b_3x}  + b_4x +b_5)$
	is increasing for $x \geq 0$. Hence, denoting $\delta =
	\cost(\augtraj(\ctrls)) - \cost^* = \obj(\ctrls) - \obj^* $, we have
	$f_1(\|\nabla \cost(\augtraj(\ctrls))\|_2) \geq f_1((\plcst_\cost\delta)^\pl)$
	by  assumption~\eqref{eq:pl}. Denoting $\delta_k = \obj(\ctrls^{(k)}) -
	\obj^*$ for $k$ the iteration of the ILQR algorithm, we then have 
	$
	f_2'(\delta_k)(\delta_{k+1} - \delta_k) \leq - 1,
	$
	with  
	\begin{align*}
		f'_2(\delta)  = \frac{1}{f_1((\plcst_\cost \delta)^\pl)}
		& = \frac{2\condnb_\cost}{\delta^{2\pl}} + \frac{\scaling}{ \delta^\pl} +  \frac{\scaling\sqrt{\delta^{2\pl} + \simp\delta^\pl/\scaling}}{\delta^{2\pl}} 
		=  \frac{2\condnb_\cost}{\delta^{2\pl}}  + \frac{\scaling\polysqrt(\scaling\delta^r/\simp)}{\delta^r},
	\end{align*}
	with $\condnb_\cost = \smooth_\cost/\plcst_\cost^{2\pl}$, $\condnb_\augtraj =
	\lip_{\augtraj}/\sigma_{\augtraj}$, $\concord =
	\smoothess_\cost/(2\plcst_\cost^{3\pl})$, $\scaling = \smooth_\augtraj
	/(\sigma_\augtraj^2\plcst_\cost^\pl)$, $\simp = 4\condnb_\augtraj^2
	\condnb_\cost ( \newsimp + 1)$, $\newsimp =  \smoothess_\cost
	\lip_\augtraj^2/(3 \smooth_\augtraj \smooth_\cost).$

	Since $f_2$ is concave on $\reals^+$, we deduce that 
	$
	f_2(\delta_{k+1}) - f_2(\delta_k) \leq -1
	$ and so $f_2(\delta_k) \leq -k + f_2(\delta_0)$. Note that $f_2$ is strictly decreasing,  so we get that, for the algorithm to reach an accuracy $\varepsilon$, we need at most $k \leq f_2(\delta_0) -f_2(\varepsilon)$ iterations.  
	
	If $\pl=1/2$, one can verify that $\delta \rightarrow
	a\ln(2a\sqrt{\delta}\polysqrt(\sqrt{\delta}/a)+a^2)+2\sqrt{\delta}\polysqrt(\sqrt{\delta}/a)$
	is an antiderivative of $\delta\rightarrow
	\polysqrt(\sqrt{\delta}/a)/\sqrt{\delta}$ for any $a>0$. Hence, for $\pl=1/2$,
	the number of iterations to converge to an accuracy $\varepsilon$ is at most
	\begin{align*}
		k & \leq 
		2\condnb_\cost \ln\left(\frac{\delta_0}{\varepsilon}\right) {+ }
		2\scaling\left(\sqrt{\delta_0}\polysqrt\left(\frac{\scaling\sqrt{\delta_0}}{\simp}\right) {-} \sqrt{\varepsilon}\polysqrt\left(\frac{\scaling\sqrt{\varepsilon}}{\simp}\right)\right)
\\
	& \quad 	 {+} 
		\simp
		\ln\left(\frac{2\scaling\sqrt{\delta_0}\polysqrt(\scaling\sqrt{\delta_0}/\simp) {+} \simp}{2\scaling\sqrt{\varepsilon}\polysqrt(\scaling\sqrt{\varepsilon}/\simp) {+} \simp}\right)
		\\
		& \leq 2\condnb_\cost \ln\left(\frac{\delta_0}{\varepsilon}\right) + 2 \scaling \sqrt{\delta_0}\polysqrt\left(\frac{\scaling\sqrt{\delta_0}}{\simp}\right)
		+ \simp 
		\ln\left(1{+}  2\frac{\scaling\sqrt{\delta_0}}{\simp}\polysqrt\left(\frac{\scaling\sqrt{\delta_0}}{\simp}\right)\right).
	\end{align*}
 By using that $\ln(1+x) \leq x$ for $x > -1$, we get the claimed bound in this
 case. 
	
	If $1/2<\pl<1$, by integrating $f_2$,  the number of iterations  to converge
	to an accuracy $\varepsilon$ is at most
	\begin{align*}
		k \leq 
		\frac{2\condnb_\cost}{2\pl -1} \left(\frac{1}{\varepsilon^{2\pl-1}} - \frac{1}{\delta_0^{2\pl -1}}\right) 
		+ \frac{\scaling}{(1-\pl)}\left(\delta_0^{1-\pl} - \varepsilon^{1-\pl}\right) 
		+ \int_{\varepsilon}^{\delta_0} \frac{\scaling\sqrt{x^{2\pl} + \simp x^\pl/\scaling}}{ x^{2\pl}}dx.
	\end{align*}
	The bound follows in this case by using that, for $1/2<\pl<1$, and $a>0$,
	\begin{align*}
	\int_{\varepsilon}^{\delta_0} \frac{\sqrt{x^{2\pl} + a x^\pl}}{ x^{2\pl}}dx 
	& \leq \int_\varepsilon^{a^{1/\pl}} \frac{\sqrt{2a}}{x^{3\pl/2}}dx + \int_{a^{1/\pl}}^{\delta_0} \frac{1}{x^\pl} dx.
	\end{align*}
\end{proof}

\subsubsection{Local Convergence Rate to Minima}\label{ssec:local_conv} As we analyze the
ILQR algorithm locally as an approximate Newton method on the costs, we use the
notations and assumptions used to analyze  a Newton method. Namely, we assume
the costs $\cost_t$ to be strictly convex, and we define the norm induced by the
Hessian at a point $\states\in \reals^{\horizon\dimstate}$ and its dual norm as,
respectively, for $\auxstates \in \reals^{\horizon\dimstate}$, 
\[
\|\auxstates\|_\states = \sqrt{\auxstates^\top \nabla ^2\cost(\states) \auxstates}, \quad 
\|\auxstates\|_\states^* = \sqrt{\auxstates^\top \nabla ^2\cost(\states)^{-1} \auxstates}.
\]
For a matrix $A \in \reals^{\horizon\dimstate \times \horizon\dimctrl}$, we
denote $\|A\|_\states = \|\nabla ^2\cost(\states)^{1/2} A\|_2$ the norm induced
by the local geometry of $\cost$ w.r.t. the Euclidean norm. Finally, we denote
the Newton decrement of the cost function, as, for $\states \in
\reals^{\horizon\dimctrl}$, 
\[
\lambda_\cost(\states) = \sqrt{\nabla \cost(\states)^\top \nabla^2 \cost(\states)^{-1} \nabla \cost(\states)}.
\] 
To analyze the local convergence of the ILQR algorithm we consider the costs to
be self-concordant~\citep[Definition 5.1.1]{nesterov2018lectures}. In addition,
we consider smoothness properties of the function $\augtraj$ with respect to the
geometry induced by the Hessian of the costs as presented in the assumptions
below.  
\begin{assumption}\label{asm:self_concord} We consider that the costs $\cost_t$
	and so the total cost $\cost$ are  strictly convex and  the following
	constants, defined for $\augtraj: \ctrls\rightarrow \traj(\initstate, \ctrls)$
	with $\traj$ the control in $\horizon$ steps of the dynamic $\dyn$ defined
	in~\eqref{eq:traj}, are finite
	\begin{align*}
	\lip & =\sup_{\substack{\ctrls, \auxctrls \in \reals^{\horizon\dimctrl}\\ \auxctrls \neq 0}}
	\frac{\|\augtraj(\ctrls+\auxctrls) -\augtraj(\ctrls)\|_{\augtraj(\ctrls)}}{\|\auxctrls\|_2}, \quad   
	\smooth = \sup_{\substack{\ctrls, \auxctrls \in \reals^{\horizon\dimctrl} \\\auxctrls \neq 0}}
	\frac{\|\nabla \augtraj(\ctrls+\auxctrls)^\top -\nabla \augtraj(\ctrls)^\top\|_{\augtraj(\ctrls)}}{\|\auxctrls\|_2} \\
	\localconcord & = \sup_{\substack{\states, \auxstates_1, \auxstates_2, \auxstates_3 \in \reals^{\horizon\dimstate}
	\\\auxstates_1\neq 0, \auxstates_2 \neq 0, \auxstates_3 \neq 0}}
	\frac{|\nabla^3 \cost(\states)[\auxstates_1, \auxstates_2, \auxstates_3]|}
	{2\|\auxstates_1\|_\states \|\auxstates_2\|_\states\|\auxstates_3\|_\states}, \quad 	
	\sigma  = \inf_{\substack{\ctrls\in \reals^{\horizon\dimctrl}, \dualvars \in \reals^{\horizon\dimstate}\\ \dualvars\neq 0}} 
	\frac{\|\nabla \augtraj(\ctrls) \dualvars\|_2}{\|\dualvars\|_{\augtraj(\ctrls)}^*}.
	\end{align*}
In consequence, $\cost$ is $\localconcord$-self concordant~\cite[Definition
5.1.1, Lemma 5.1.2]{nesterov2018lectures}, and we have that $\sigma \leq
\sigma_{\min}(\nabla \augtraj(\ctrls) \nabla^2\cost(\augtraj(\ctrls))^{1/2})$, $
\sigma_{\max}(\nabla \augtraj(\ctrls) \nabla^2\cost(\augtraj(\ctrls))^{1/2})
\leq \lip$,  for any $\ctrls \in \reals^{\horizon\dimctrl}$.
\end{assumption}
In terms of the dynamic and the individual costs,
Assumption~\ref{asm:self_concord} is satisfied if $\cost_t$ is strongly convex
for all $t$ such that the total costs $\cost$ are strongly convex and if
Assumption~\ref{asm:conv} is also satisfied. In that case, we have
	\begin{align}\label{eq:constants_strgly_cvx}
		\lip \leq \sqrt{\smooth_\cost} \lip_\augtraj, \quad 
		\smooth \leq \sqrt{\smooth_\cost} \smooth_\augtraj, \quad 
		2\localconcord \leq {\smoothess_\cost}/{\strgcvx_\cost^{3/2}}, \quad
		\sigma \geq \sqrt{\strgcvx_\cost} \sigma_\augtraj.
	\end{align}
Given Assumption~\ref{asm:self_concord} and equipped with a stepsize
proportional to the Newton decrement, we can show a local quadratic convergence
rate of the ILQR algorithm.

\begin{theorem}\label{thm:conv_ggn} Given Assumption~\ref{asm:self_concord},
	consider the \ref{eq:ilqr_algo} algorithm for  problem~\eqref{eq:discrete_pb}
	with regularizations of the form $\reg_k = \regscaled
	\lambda_\cost(\augtraj(\ctrls^{(k)}))$ for some $\regscaled \geq 0$. For $
	k\geq 0$ such that
	\begin{equation}\label{eq:cond_reg_quad_conv}
		\lambda_\cost(\augtraj(\ctrls^{(k)}))	 
		<\lambdaquad =\frac{1}{\max\{4\localconcord + 3\localscaling + 2\regscaled /\sigma^{2}, 2\localcondnb \localconcord\}},
	\end{equation}
	where $\localcondnb  = \lip/\sigma$ and $\localscaling = \smooth/\sigma^2$, we
	have 
$
	\lambda_\cost(\augtraj(\ctrls^{(k+1)})) 
	\leq  \lambdaquad^{-1} \lambda_\cost(\augtraj(\ctrls^{(k)}))^2,
$
	and the ILQR algorithm converges quadratically to the minimum value of
	problem~\eqref{eq:discrete_pb}. 
\end{theorem}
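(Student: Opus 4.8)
The plan is to analyze one ILQR step through the state-space picture and track the Newton decrement $\lambda_\cost$ of the self-concordant total cost $\cost$ along the iterates $\states_k := \augtraj(\ctrls^{(k)})$. Fix $k$ and write $\states = \states_k$, $g = \nabla\cost(\states)$, $\hess = \nabla^2\cost(\states)$, $\grad = \nabla\augtraj(\ctrls^{(k)})$, $\lambda = \lambda_\cost(\states)$ and $\reg = \regscaled\lambda$. By \eqref{eq:inj_traj} the Jacobian $\grad$ has full column rank, so $\grad^\top\grad$ is invertible, and the push-through identity used in \eqref{eq:rggn_oracle} rewrites the ILQR step as $\auxctrls = \LQR_\reg(\obj)(\ctrls^{(k)}) = -\grad(\grad^\top\grad)^{-1}\hess_\reg^{-1}g$ with $\hess_\reg := \hess + \reg(\grad^\top\grad)^{-1}$; in particular $\grad^\top\auxctrls = -\hess_\reg^{-1}g$ is a damped Newton direction for $\cost$ in state space. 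Two operator inequalities drive everything: the definition of $\sigma$ in Hypothesis~\ref{asm:self_concord} gives $\grad^\top\grad \succeq \sigma^2\hess^{-1}$, hence $(\grad^\top\grad)^{-1}\preceq\sigma^{-2}\hess$ and $\hess\preceq\hess_\reg\preceq(1+\regscaled\lambda/\sigma^2)\hess$. From these one obtains the step-size bound $\|\auxctrls\|_2\le\lambda/\sigma$ (using $\|\grad(\grad^\top\grad)^{-1}v\|_2^2 = v^\top(\grad^\top\grad)^{-1}v \le \sigma^{-2}\|v\|_\states^2$ together with $\|\hess_\reg^{-1}g\|_\states\le\lambda$) and the regularization-bias bound $\|(\id-\hess\hess_\reg^{-1})g\|_\states^* = \|\reg(\grad^\top\grad)^{-1}\hess_\reg^{-1}g\|_\states^* \le (\regscaled/\sigma^2)\lambda^2$.

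Next I would control the actual state step $d := \states_{k+1}-\states_k = \grad^\top\auxctrls + e$, where $e = \augtraj(\ctrls^{(k)}+\auxctrls)-\augtraj(\ctrls^{(k)})-\grad^\top\auxctrls$ is the linearization error. Integrating the $\smooth$-Lipschitz bound on $\nabla\augtraj^\top$ from Hypothesis~\ref{asm:self_concord} gives $\|e\|_\states\le\tfrac{\smooth}{2}\|\auxctrls\|_2^2\le\tfrac{\localscaling}{2}\lambda^2$, while the $\lip$-Lipschitz bound on $\augtraj$ gives $\|d\|_\states\le\lip\|\auxctrls\|_2\le\localcondnb\lambda$. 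Thus $\states_{k+1} = \states_k-\hess_\reg^{-1}g+e$ is a regularized Newton step on $\cost$ plus a quadratically small ($O(\lambda^2)$) perturbation, and condition \eqref{eq:cond_reg_quad_conv}, which forces $\localconcord\|d\|_\states\le\localconcord\localcondnb\lambda<\tfrac12$, places $\states_{k+1}$ well inside the self-concordance neighborhood of $\states_k$.

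Then I would run the standard self-concordant Newton calculus with $\bar\hess := \int_0^1\nabla^2\cost(\states_k+td)\,dt$: self-concordance yields $\hess\preceq(1-\localconcord\|d\|_\states)^{-1}\bar\hess$, $\nabla^2\cost(\states_{k+1})\succeq(1-\localconcord\|d\|_\states)^2\hess$, and $\|\hess^{-1/2}(\hess-\bar\hess)\hess^{-1/2}\|_2\le\localconcord\|d\|_\states/(1-\localconcord\|d\|_\states)$. Expanding $\nabla\cost(\states_{k+1}) = g+\bar\hess d = (\hess-\bar\hess)\hess_\reg^{-1}g + (\id-\hess\hess_\reg^{-1})g + \bar\hess e$ and bounding the three summands in $\|\cdot\|_\states^*$ by $\tfrac{\localconcord\|d\|_\states}{1-\localconcord\|d\|_\states}\lambda$, $(\regscaled/\sigma^2)\lambda^2$, and $\tfrac{1}{1-\localconcord\|d\|_\states}\tfrac{\localscaling}{2}\lambda^2$ respectively, then passing from $\|\cdot\|_\states^*$ to $\|\cdot\|_{\states_{k+1}}^*$ at the cost of a factor $(1-\localconcord\|d\|_\states)^{-1}$, and finally substituting $\|d\|_\states\le\localcondnb\lambda$ in the self-concordance factors and $\|d\|_\states\le\lambda+\tfrac{\localscaling}{2}\lambda^2$ in the Newton term, one collapses the constants to $\lambda_\cost(\states_{k+1})\le(4\localconcord+3\localscaling+2\regscaled/\sigma^2)\,\lambda^2\le\lambdaquad^{-1}\lambda_\cost(\states_k)^2$, which is the claimed one-step contraction. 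Since $\lambda_\cost(\states_k)<\lambdaquad$ then gives $\lambda_\cost(\states_{k+1})<\lambda_\cost(\states_k)$, the region \eqref{eq:cond_reg_quad_conv} is invariant, $\lambda_\cost(\states_k)\to0$ quadratically, and for the strictly convex self-concordant $\cost$ this forces $\cost(\states_k)\to\cost^*$; as $\obj=\cost\circ\augtraj\ge\cost^*$ always, $\obj^*=\cost^*$ and $\obj(\ctrls^{(k)})\to\obj^*$.

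The main obstacle is purely bookkeeping: making the accumulated constants collapse exactly into $\lambdaquad^{-1}=\max\{4\localconcord+3\localscaling+2\regscaled/\sigma^2,\ 2\localcondnb\localconcord\}$ requires using the two estimates of $\|d\|_\states$ in the right places --- the clean Lipschitz bound $\localcondnb\lambda$ to tame every factor $(1-\localconcord\|d\|_\states)^{-1}$ (this is why the branch $2\localcondnb\localconcord$ is needed), and the sharper bound $\lambda+O(\lambda^2)$ so that the dominant Newton term retains its $\localconcord\lambda^2$ scaling rather than an inflated $\localcondnb\localconcord\lambda^2$ --- and invoking the self-concordant Hessian-comparison estimates in the normalization fixed by Hypothesis~\ref{asm:self_concord}. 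Everything else (the push-through rewriting and the $\sigma$-, $\lip$-, $\smooth$-based norm estimates) is routine once the state-space viewpoint is in place.
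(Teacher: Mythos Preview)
Your plan is correct and close in spirit to the paper's proof, but the decomposition you choose is organised differently. The paper inserts the intermediate ``linearised state'' $\states + \grad^\top\auxctrls$ and splits
\[
\lambda_\cost(\augtraj(\ctrls+\auxctrls))\le
\underbrace{\|\nabla\cost(\augtraj(\ctrls+\auxctrls))-\nabla\cost(\states+\grad^\top\auxctrls)\|_{\augtraj(\ctrls+\auxctrls)}^*}_{A}
+\underbrace{\|\nabla\cost(\states+\grad^\top\auxctrls)\|_{\augtraj(\ctrls+\auxctrls)}^*}_{B},
\]
bounding $A$ via Lemma~\ref{lem:smooth_self_concord} and the linearisation error $e$, and bounding $B$ by first changing norms back to $\states$ (factor $\le 2$ from self-concordance, using $\|\augtraj(\ctrls+\auxctrls)-\states\|_\states\le\localcondnb\lambda$) and then splitting $\nabla\cost(\states+\newton)$, with $\newton=\grad^\top\auxctrls=-\hess_\reg^{-1}g$, into the standard self-concordant Newton residual $\|\nabla\cost(\states+\newton)-g-\hess\newton\|_\states^*$ and the regularisation bias $\reg\|(\grad^\top\grad)^{-1}\newton\|_\states^*$. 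Your route avoids the intermediate point: you write $\nabla\cost(\states_{k+1})=g+\bar\hess d$ with the integrated Hessian along the \emph{actual} state increment $d$, and decompose directly into $(\hess-\bar\hess)\hess_\reg^{-1}g$, $(\id-\hess\hess_\reg^{-1})g$, and $\bar\hess e$. The three resulting terms match one-to-one with the paper's (Newton curvature error, regularisation bias, linearisation error), but in your version the curvature error is taken along $d$ rather than along $\newton$, which is why you must feed in two different estimates of $\|d\|_\states$---the sharp $\lambda+\tfrac{\localscaling}{2}\lambda^2$ for the curvature term and the cruder $\localcondnb\lambda$ for the self-concordance factors---whereas the paper gets this separation for free from $\|\newton\|_\states\le\lambda$ versus $\|\augtraj(\ctrls+\auxctrls)-\states\|_\states\le\localcondnb\lambda$. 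Both organisations are legitimate; yours is slightly more compact and, amusingly, the cubic term $2\localconcord\localscaling\lambda^3$ you pick up (and then absorb using $2\localconcord\lambda<1$) is exactly what produces the $3\localscaling$ of the statement, whereas the paper's own calculation in the proof lands at $2\localscaling$.
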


\begin{remark}
If $\cost$ is a quadratic, such that the algorithm reduces to a Gauss-Newton
algorithm and $\localconcord= 0$, the radius of quadratic convergence reduces to
$
\lambdaquad =  1/(3\localscaling + 2\regscaled).
$
If in addition, no regularization is in effect, the radius of quadratic
convergence reduces to $\lambdaquad =1/{3\localscaling}$, which can be expressed
as $1/(3\scaling\sqrt{\condnb_\cost})$ if the total cost is  $\strgcvx_\cost$
strongly convex with $\scaling, \condnb_\cost$ defined as in
Theorem~\ref{thm:global_conv} and $\sigma, \smooth$ expressed using~\eqref{eq:constants_strgly_cvx}. So up to 3$\sqrt{\condnb_\cost}$, the
parameter $1/\scaling$ acts again as a radius of fast convergence as in
Theorem~\ref{thm:global_conv}.
\end{remark}
\begin{remark}
For better readability, we simplified the expression of the radius of
convergence.  A closer look at the proof shows that a non-zero regularization
may lead to a larger radius of convergence than no regularization. 
\end{remark}
\begin{proof}[Proof of Theorem~\ref{thm:conv_ggn}]	
	Let $\ctrls \in \reals^{\horizon\dimctrl}$ $\grad = \nabla \augtraj(\ctrls)$,
	$\hess = \nabla^2 \cost(\augtraj(\ctrls))$,  
	$\auxctrls = \LQR_\reg(\obj)(\ctrls)$ with $\reg = \regscaled
	\lambda_\cost(\augtraj(\ctrls))$. Assume that  
	\[\lambda_\cost(\augtraj(\ctrls))  \leq 1/\max\{
	 \sqrt{2\localconcord\localscaling}c_1  , 2\localconcord\localcondnb c_2,
	 2\localconcord c_2 \},\] where   
	$c_1 {=} \max\{1{-} \regscaled /(\sqrt{2\localconcord\smooth}\lip), 0\}$, $c_2
	{= }\max\{1{-}\regscaled/(2\lip^2\localconcord), 0\}$, $\localcondnb {=
	}\lip/\sigma$, $\localscaling {=} \smooth/\sigma^2$. We have 
	\begin{align}
		\lambda_\cost(\augtraj(\ctrls {+}\auxctrls))
		& {\leq} \underbrace{\|\nabla \cost(\augtraj(\ctrls{+}\auxctrls)) {-} \nabla \cost(\augtraj(\ctrls) {+} \grad^\top \auxctrls ) \|_{\augtraj(\ctrls{+}\auxctrls)}^*}_A 
		+ \underbrace{\| \nabla \cost(\augtraj(\ctrls) {+} \grad^\top \auxctrls ) \|_{\augtraj(\ctrls{+}\auxctrls)}^*}_B.  \label{eq:initial_split}
	\end{align}
	\paragraph{Bounding $A$ in~\eqref{eq:initial_split}}
	
	By definition of $\lip$ in Assumption~\ref{asm:self_concord} and
	Lemma~\ref{lem:bound_ggn_oracle_self_concord}, we have
	\begin{equation}\label{eq:bound_v}
	\|\augtraj(\ctrls+\auxctrls)- \augtraj(\ctrls)\|_{\augtraj(\ctrls)}\leq \lip \|\auxctrls\|_2, \qquad   \|\auxctrls\|_2 \leq \frac{\lip\lambda_\cost(\augtraj(\ctrls))}{\lip \sigma  +\regscaled \lambda_\cost(\augtraj(\ctrls))} .
	\end{equation}
	One easily verifies that $x/(1+ax) \leq c$ if $0\leq x \leq c/\max\{1-ca, 0\}$
	for any $a, c>0$. So for $\lambda_\cost(\augtraj(\ctrls))) \leq
	1/(2\localconcord\localcondnb c_2)$, we have $	\|\augtraj(\ctrls+\auxctrls)-
	\augtraj(\ctrls)\|_{\augtraj(\ctrls)}\leq 1/(2\localconcord)$. Hence, using
	that $\cost$ is $\localconcord$-self-concordant, Theorem 5.1.7 of
	\citet{nesterov2018lectures} applies and by using the definition of $\smooth$
	in Assumption~\ref{asm:self_concord}, we have 
	\begin{align*}
		\|\augtraj(\ctrls{+}\auxctrls) {-} \augtraj(\ctrls) {-} \grad^\top \auxctrls \|_{\augtraj(\ctrls{+}\auxctrls)} & 
		\leq \frac{1}{1{-}\localconcord 	\|\augtraj(\ctrls{+}\auxctrls){-} \augtraj(\ctrls)\|_{\augtraj(\ctrls)}}\|\augtraj(\ctrls{+}\auxctrls) {-} \augtraj(\ctrls) {-} \grad^\top \auxctrls \|_{\augtraj(\ctrls)} \\ 
		&\leq 2  \left\|\int_{0}^{1} \nabla \augtraj(\ctrls{+}t\auxctrls)^\top \auxctrls dt - \nabla\augtraj(\ctrls)^\top \auxctrls\right\|_{\augtraj(\ctrls)} = \smooth \|\auxctrls\|_2^2.
	\end{align*}
	Using~\eqref{eq:bound_v}, for $	\lambda_\cost(\augtraj(\ctrls)) \leq 1/(
	\sqrt{2\localconcord\localscaling}c_1)$, we get
	$
	{\|\augtraj(\ctrls{+}\auxctrls) {-} \augtraj(\ctrls) {-} \grad^\top \auxctrls \|_{\augtraj(\ctrls{+}\auxctrls)} {\leq} 1/(2\localconcord)}.
	$
	Since the total cost $\cost$ is $\localconcord$-self-concordant, we can then
	use Lemma~\ref{lem:smooth_self_concord}  to obtain
	\begin{align}
		A & \leq \frac{1}{1- \localconcord\|\augtraj(\ctrls+\auxctrls) - \augtraj(\ctrls) - \grad^\top \auxctrls \|_{\augtraj(\ctrls+\auxctrls)} }\|\augtraj(\ctrls+\auxctrls) - \augtraj(\ctrls) - \grad^\top \auxctrls \|_{\augtraj(\ctrls+\auxctrls)}  \nonumber
		\\
		& \leq \frac{2\smooth \lip^2 \lambda_\cost(\augtraj(\ctrls))^2 }{(\lip\sigma + \regscaled  \lambda_\cost(\augtraj(\ctrls)))^2}. \label{eq:bound_A}
	\end{align}
	
	\paragraph{Bounding B in~\eqref{eq:initial_split}} Recall that for
	$\lambda_\cost(\augtraj(\ctrls))) \leq  1/(2\localconcord\localcondnb c_2)$,
	we have $	\|\augtraj(\ctrls+\auxctrls)-
	\augtraj(\ctrls)\|_{\augtraj(\ctrls)}\leq 1/(2\localconcord)$. Since $\cost$
	is $\localconcord$-self-concordant, we have then~\cite[Theorem
	5.1.7]{nesterov2018lectures},
	\begin{align}
		B & \leq \frac{1}{1{-}\localconcord\|\augtraj(\ctrls{+}\auxctrls){-}\augtraj(\ctrls)\|_{\augtraj(\ctrls)}} \| \nabla \cost(\augtraj(\ctrls) {+} \grad^\top v ) \|_{\augtraj(\ctrls)}^* \leq 2\| \nabla \cost(\augtraj(\ctrls) {+} \grad^\top v ) \|_{\augtraj(\ctrls)}^*. \label{eq:bound_B_init}
	\end{align}
	Denote $\reg = \regscaled\lambda_\cost(\augtraj(\ctrls))$ and define $\newton
	=-(\hess + \reg( \grad^\top \grad)^{-1})^{-1} \nabla \cost(\augtraj(\ctrls))
	$. Using that
	\[
\auxctrls =-\grad(\grad^\top \grad)^{-1} (\hess +\reg( \grad^\top \grad)^{-1})^{-1} \nabla \cost(\augtraj(\ctrls)),
	\]
	and denoting $\states = \augtraj(\ctrls)$, we have then
	\begin{align}
		\| \nabla \cost(\augtraj(\ctrls) + \grad^\top \auxctrls ) \|_{\augtraj(\ctrls)}^* & = \|\nabla \cost(\states+\newton) - \nabla \cost(\states) - (\hess+ \reg( \grad^\top \grad)^{-1})\newton\|_{\states}^* \nonumber \\
		& \leq \|\nabla \cost(\states+\newton) - \nabla \cost(\states) - \hess \newton \|_{\states}^* + \reg\|(\grad^\top \grad)^{-1} \newton \|_{\states}^*.  \label{eq:B_split}
	\end{align}
	The first term can be bounded as in the proof of local convergence of a Newton
method~\cite[Theorem 5.2.2]{nesterov2018lectures}. Namely, we have
		\begin{align*}
			\|\nabla \cost(\states+\newton) - \nabla \cost(\states) - \hess \newton \|_{\states}^* = \|\int_{0}^{1}(\nabla^2 \cost(\states+t\newton)-\nabla^2\cost(\states))\newton dt\|_{\states}^*.
		\end{align*}
	Since $\sigma_{\max}(\nabla \augtraj(\ctrls)
	\nabla^2\cost(\augtraj(\ctrls))^{1/2})  \leq \lip $, we have
		\[
		\|\newton\|_{\states} = \|(\idm {+} \reg \hess^{-1/2}( \grad^\top \grad)^{-1} \hess^{-1/2})^{-1} \hess^{-1/2}\nabla \cost(\augtraj(\ctrls))\|_{2} 
		\leq  \frac{\lambda_\cost(\augtraj(\ctrls))	}{1+  \regscaled\lip^{-2}\lambda_\cost(\augtraj(\ctrls))}.
		\]
		So if $\lambda_\cost(\augtraj(\ctrls)) \leq 1 /(2\localconcord c_2)$, we get
		$\|\newton\|_\states \leq 1/(2\localconcord)$ and, since $\cost$ is
		self-concordant, by Corollary 5.1.5 of \citet{nesterov2018lectures}, we
		have, denoting $J = \int_{0}^{1}(\nabla^2
		\cost(\states+t\newton)-\nabla^2\cost(\states))dt$, 
		\[
		(-\|\newton\|_{\states}\localconcord + \|\newton\|_{\states}^2\localconcord^2/3 ) \nabla^2 \cost(\states)\preceq  J\preceq \frac{\|\newton\|_{\states}\localconcord}{1-\|\newton\|_{\states}\localconcord} \nabla^2 \cost(\states).
		\]
		Moreover, since  $\|\newton\|_\states  <1/(2\localconcord)$, we have
		$\|\newton\|_{\states}\localconcord-
		\|\newton\|_{\states}^2\localconcord^2/3  \leq
		\frac{\|\newton\|_{\states}\localconcord}{1-\|\newton\|_{\states}\localconcord}$.
		Hence, we get
	\begin{equation}\label{eq:bound_alanewton}
		\|\nabla \cost(\states+\newton) - \nabla \cost(\states) - \hess \newton \|_{\states}^* 
		\leq \frac{\|\newton\|_{\states}^2\localconcord}{1-\|\newton\|_{\states}\localconcord}
		\leq\frac{2\lambda_\cost(\augtraj(\ctrls))^2\localconcord}{(1+  \regscaled\lip^{-2}\lambda_\cost(\augtraj(\ctrls)))^2 }.
	\end{equation}
	On the other hand, since $\sigma \leq \sigma_{\min}(\nabla \augtraj(\ctrls)
	\nabla^2\cost(\augtraj(\ctrls))^{1/2})$,  we have 
	\begin{equation}\label{eq:bound_newton_step}
\|(\grad^\top \grad)^{-1} \newton\|_{\augtraj(\ctrls)}^* 
= \|(\hess^{1/2}\grad^\top \grad \hess^{1/2} +\reg \idm)^{-1} \hess^{-1/2} \nabla \cost(\augtraj(\ctrls))\|_{2}
\leq  
\frac{	\lambda_\cost(\augtraj(\ctrls))}{\sigma^2 + \regscaled  \lambda_\cost(\augtraj(\ctrls))}  .
	\end{equation}
	So combining~\eqref{eq:bound_newton_step} and~\eqref{eq:bound_alanewton}
	into~\eqref{eq:B_split} and then~\eqref{eq:bound_B_init} we get
	\begin{equation}\label{eq:bound_B}
		B \leq 2\left(\frac{2\localconcord}{(1+ \regscaled\lip^{-2}  \lambda_\cost(\augtraj(\ctrls)))^2} + \frac{\regscaled}{\sigma^2 + \regscaled  \lambda_\cost(\augtraj(\ctrls))}\right) \lambda_\cost(\augtraj(\ctrls))^2.
	\end{equation}
	\paragraph{Local quadratic convergence rate}
	By combining~\eqref{eq:bound_A} and~\eqref{eq:bound_B}
	into~\eqref{eq:initial_split}, we get, as long as
	$\lambda_\cost(\augtraj(\ctrls))  \leq 1/\max\{
	\sqrt{2\localconcord\localscaling}c_1  , 2\localconcord\localcondnb c_2,
	2\localconcord c_2 \}$,
	\begin{align}\nonumber
		\lambda_\cost(\augtraj(\ctrls{+}\auxctrls))&  \leq \left( 
		\frac{2\smooth \lip^2 }{(\lip\sigma {+} \regscaled  \lambda_\cost(\augtraj(\ctrls)))^2}
		{+} \frac{4\localconcord}{(1{+} \regscaled \lip^{-2} \lambda_\cost(\augtraj(\ctrls)))^2} {+} \frac{2\regscaled}{\sigma^2 {+} \regscaled  \lambda_\cost(\augtraj(\ctrls))}
		\right) \lambda_\cost(\augtraj(\ctrls))^2.
	\end{align}
	Note that $c_1, c_2 \leq 1$ and that  $2\localscaling + 4\localconcord+
	2\regscaled /\sigma^{2} \geq \max\{2\localconcord,
	\sqrt{2\localconcord\localscaling}  \}$, using the arithmetic-geometric mean
	inequality.  
	Hence, for  
	\[
	\lambda_\cost(\augtraj(\ctrls))<\lambdaquad =1/ \max\{2\localscaling + 4\localconcord+ 2\regscaled/\sigma^{2},  2\localconcord\localcondnb\},
	\]
	we get
$
		\lambda_\cost(\augtraj(\ctrls+\auxctrls))\leq \bar\lambda^{-1} \lambda_\cost(\augtraj(\ctrls))^2 <\lambda_\cost(\augtraj(\ctrls)),
$
	that is, we reach the region of quadratic convergence for $\augtraj(\ctrls)$.
\end{proof}

\subsubsection{Total Complexity}\label{ssec:overall_conv} Given Assumption~\ref{asm:conv}, if the total
cost  is strongly convex then it satisfies the condition of
Theorem~\ref{thm:global_conv} and Assumption~\ref{asm:self_concord} is satisfied
with the estimates given in~\eqref{eq:constants_strgly_cvx}. We can then bound
the number of iterations to local quadratic convergence and obtain the total
complexity bound in this case. The following theorem is the detailed version of
Theorem~\ref{thm:intro}.

\begin{theorem}\label{thm:global_local_conv} Consider the costs $\cost_t$ to be
	$\strgcvx_\cost$-strongly convex and Assumption~\ref{asm:conv} to be
	satisfied. Then condition~\eqref{eq:suff_cond_descent} is satisfied for a
	regularization
	\[
	\reg(\ctrls)=\left(1+\frac{\simp}{2(1 + \scaling \|\nabla \cost(\augtraj(\ctrls))\|_2/(\sqrt{\strgcvx_\cost} \condnb_\augtraj))}\right)\smooth_\augtraj \|\nabla \cost(\augtraj(\ctrls))\|_2
	\] 
	With such regularization, the number of iterations of the \ref{eq:ilqr_algo}
	algorithm to reach an accuracy $\varepsilon$ is at most
	\begin{align}\label{eq:main_cplxity}
		 k(\delta_0, \varepsilon) & = 
		   4\scaling( \sqrt{\delta_0}  - \sqrt{\varepsilon}) 
		   + 2\condnb_\cost \ln\left(\frac{\delta_0}{ \varepsilon}\right) 
		+ 2 \simp \ln\left(\frac{\scaling\sqrt{\delta_0} + \condnb_\augtraj}{\scaling\sqrt{\varepsilon} + \condnb_\augtraj}\right),
	\end{align}
	where $\condnb_\cost = \smooth_\cost/\plcst_\cost$, $\condnb_\augtraj =
	\lip_{\augtraj}/\sigma_{\augtraj}$, $\scaling =
	\smooth_\augtraj/(\sigma_\augtraj^2\sqrt{\plcst_\cost})$, $\concord =
	\smoothess_\cost/(2\plcst_\cost^{3/2})$, $\simp = 4\condnb_\augtraj^2
	\condnb_\cost ( \newsimp + 1)$, $\newsimp =  \smoothess_\cost
	\lip_\augtraj^2/(3 \smooth_\augtraj \smooth_\cost)$, and $\delta_0 =
	\obj(\ctrls^{(0)})-\obj^*$. 
	
	If in addition the target accuracy $\varepsilon$ is smaller than $ \deltaquad
	= 1/(32 \condnb_\cost (\concord(1+ \sqrt{\condnb_\cost}\condnb_\augtraj^3/3) +
	\sqrt{\condnb_\cost} \scaling (1+ \condnb_\augtraj\condnb_\cost))^2)$ which
	determines a quadratic convergence phase,  the number of iterations of an ILQR
	algorithm to reach the accuracy $\varepsilon$ is at most  $k(\delta_0,
	\deltaquad) + O(\ln\ln(\varepsilon^{-1}))$.
	
	The total computational complexity of the algorithm in terms of basic
	operations is then of the order of $(k(\delta_0, \deltaquad) +
	O(\ln\ln(\varepsilon^{-1}))) \times \mathcal{C}(\dimstate, \dimctrl,
	\horizon)$ with  $\mathcal{C}(\dimstate, \dimctrl, \horizon)$ defined
	in~\eqref{eq:time_cost}.
\end{theorem}
\begin{remark}
	The rate of convergence can now be separated between three phases, (i) the
	number of iterations to reach some linear convergence determined by the first
	term in the complexity bound~\eqref{eq:main_cplxity}, (ii) the number of
	iterations to reach the quadratic convergence rate that is captured by the
	logarithmic terms in the complexity bound~\eqref{eq:main_cplxity}, (iii) the
	quadratic convergence phase once $\delta_k$ is smaller than the gap of local
	quadratic convergence $\deltaquad$.
\end{remark}
\begin{proof}[Proof of Theorem~\ref{thm:global_local_conv}]
	By using the strong convexity of the costs $\cost$, we can refine the choice
	of the regularization to ensure~\eqref{eq:suff_cond_descent}. The validity of
	the proposed regularization to ensure condition~\eqref{eq:suff_cond_descent}
	is shown in Lemma~\ref{lem:reg_strg_cvx} in Appendix~\ref{app:lemmas}. With
	the proposed regularization, Lemma~\ref{lem:detailed_cplxity_strgcvx}  in
	Appendix~\ref{app:lemmas} shows, following the same reasoning as in the proof
	of Theorem~\ref{thm:global_conv}, that the number of iterations of the ILQR
	algorithm needed to reach an accuracy $\varepsilon$ is bounded by 
	\begin{align}\label{eq:global_strg_cvx_iter}
	k &	\leq   
	2\condnb_\cost \ln\left(\frac{\delta_0}{\varepsilon}\right) 
	+ 4\scaling \left(\sqrt{\delta_0} - \sqrt{\varepsilon }\right) 
	+ 2 \simp \ln\left(\frac{\scaling\sqrt{\delta_0} + \condnb_\augtraj}{\scaling\sqrt{\varepsilon} + \condnb_\augtraj}\right),
	\end{align}
	with $\condnb_\cost$, $\condnb_\augtraj$, $\concord$, $\scaling$, $\simp$
	defined as in Theorem~\ref{thm:global_conv}.

		For the local convergence,  the constants in Theorem~\ref{thm:conv_ggn} can
		be expressed in terms of the constants in Theorem~\ref{thm:global_conv} as   
		$\sigma = \sqrt{\strgcvx_\cost}\sigma_{\augtraj}, \localconcord = \concord,
		\localscaling =  \sqrt{\condnb_\cost} \scaling,  \localcondnb =
		\sqrt{\condnb_\cost} \condnb_\augtraj $. From the proof of
		Theorem~\ref{thm:conv_ggn},  if $\lambda_\cost(\augtraj(\ctrls^{(k)}))  \leq
		1/\max\{ \sqrt{2\localconcord \localscaling}  ,  2\localconcord\localcondnb
		,  2\localconcord\}$, then
		 	\begin{align}\nonumber
		 	\lambda_\cost(\augtraj(\ctrls^{(k+1)}))&  \leq \left( 
		 	2 \localscaling
		 	+ 4\localconcord + \frac{2\regscaled_k}{\sigma^2}
		 	\right) \lambda_\cost(\augtraj(\ctrls^{(k)}))^2,
		 \end{align}
		 where  $\regscaled_k = \reg(\ctrls^{(k)}) /
		 \lambda_\cost(\augtraj(\ctrls^{(k)})) \leq \sqrt{\smooth_\cost}
		 (\smooth_\augtraj  + 2 \lip_{\augtraj}( \smoothess_\cost
		 \lip_{\augtraj}^2/3 {+} \smooth_\augtraj\smooth_\cost )  /(\sigma_\augtraj
		 \strgcvx_\cost))$. Define then
		 \[
		 \lambdaquad = \frac{1}{4(\concord(1+ \sqrt{\condnb_\cost}\condnb_\augtraj^3/3) + \sqrt{\condnb_\cost} \scaling (1+ \condnb_\augtraj\condnb_\cost))}.
		 \]
		 We have that $\lambdaquad \leq1/\max\{ \sqrt{2\localconcord \localscaling}
		 ,  2\localconcord\localcondnb ,  2\localconcord\}$. So, if
		 $\lambda_\cost(\augtraj(\ctrls^{(k)}))\leq \lambdaquad$, quadratic
		 convergence is ensured. 
		 
		 It remains to link the objective gap to the Newton decrement. By
		  considering a gradient step with step-size $1/\smooth_\cost$, we have
		  $\|\nabla \cost(\states) \|^2 \leq 2 \smooth_\cost (\cost(\states)
		  -\cost^*)$ for any $\states$, hence $\lambda_\cost(\states) \leq
		  \sqrt{2\condnb_\cost  (\cost(\states) -\cost^*)}$. So, the number of
		  iterations to reach quadratic convergence is bounded by the number of
		  iterations to get an accuracy $\deltaquad =\lambdaquad^2
		  /(2\condnb_\cost)$. Once quadratic convergence is reached the remaining
		  number of iterations is of the order of $O(\ln\ln\varepsilon^{-1})$. 
	 \end{proof}
 Theorem~\ref{thm:global_local_conv} presents an ideal implementation of the
 ILQR algorithm given the knowledge of all constants to define the
 regularizations. This ideal implementation informs us on an appropriate line
 search strategy for the regularization, namely searching over $\bar \reg$ for
 regularizations of the form $\reg_k =\regscaled \|\nabla
 \cost(\augtraj(\ctrls^{(k)}))\|_2$.  We present in
 Algorithm~\ref{algo:RGGN_linesearch} an implementation of the ILQR algorithm with
 an adequate line-search procedure that is guaranteed to terminate and maintain
 the complexity bounds presented in Theorem~\ref{thm:global_local_conv} as
 formally stated in Corollary~\ref{cor:line_search}, whose proof is given in
 Appendix~\ref{app:lemmas}.
 
 \begin{restatable}{corollary}{linesearch}\label{cor:line_search} Consider the
 	assumptions  and notations of Theorem~\ref{thm:global_local_conv} on
 	problem~\eqref{eq:discrete_pb} and Algorithm~\ref{algo:RGGN_linesearch} with an
 	initial scaled regularization guess $\bar \reg_{-1} \leq \left(1+ {\simp}/{(2
 	+ 2\scaling \sqrt{\delta_0}/\condnb_\augtraj)}\right)\smooth_\augtraj$. 
 	The total number of calls to ILQR oracles  of Algorithm~\ref{algo:RGGN_linesearch}
 	to reach an accuracy~$\varepsilon$ is at most
 	$
 	2 k(\delta_0, \deltaquad') + \ln\ln(\varepsilon^{-1}) + \left\lceil\log_2\left( (1+\simp/2) \smooth_\augtraj /\bar \reg_{-1}\right)\right\rceil,
 	$
 	where $k(\delta_0, \deltaquad')$ is defined as in
 	Theorem~\ref{thm:global_local_conv} and $\deltaquad' =  1/(32 \condnb_\cost
 	(\concord(1+ 2\sqrt{\condnb_\cost}\condnb_\augtraj^3/3) + \sqrt{\condnb_\cost}
 	\scaling (1+ 2 \condnb_\augtraj\condnb_\cost))^2)$ is a gap of quadratic
 	convergence for Algorithm~\ref{algo:RGGN_linesearch}.
 \end{restatable}

\begin{algorithm}\caption{ILQR with Line-Search \label{algo:RGGN_linesearch}}
	\begin{algorithmic}
	\State {\bf Inputs:} Initial point $\ctrls^{(0)}$, initial scaled regularization
	$\bar \reg_{-1} > 0$, costs and dynamics summarized as $\cost$ and $\augtraj$
	as in~\eqref{eq:total_cost}, $\LQR_{\reg}(\obj)$ oracle for $\obj = \cost
	\circ \augtraj$. 

	\For{$k=0, \ldots $} \State Set $\bar \reg_k = \bar \reg_{k-1}$, $\reg_k =
	\bar \reg_k \|\nabla \cost(\augtraj(\ctrls^{(k)}))\|_2$
	
	\State Compute $ \ctrls^{(k+1)} = \ctrls^{(k)} +
	\LQR_{\reg_k}(\obj)(\ctrls^{(k)})$ \While{ $\obj\left(\ctrls^{(k+1)}\right) >
	\obj(\ctrls^{(k)}) + \nabla \obj(\ctrls)^\top(\ctrls^{(k+1)}-\ctrls^{(k)} )/2$
	} \State Set $\bar \reg_k \leftarrow 2 \bar \reg_k$, $\reg_k \leftarrow  \bar
	\reg_k \|\nabla \cost(\augtraj(\ctrls^{(k)}))\|_2 $
	
	\State Set $ \ctrls^{(k+1)} \leftarrow \ctrls^{(k)} +
	\LQR_{\reg_k}(\obj)(\ctrls^{(k)})$ \EndWhile \EndFor
	\end{algorithmic}
\end{algorithm}

\subsection{Convergence Analysis of IDDP}\label{ssec:ddp_conv} The IDDP
algorithm departs from the implementation of usual optimization algorithms for
compositional problems as it cannot be formulated as the minimization of an
approximation of the objective but rather as an approximate minimization of the
objective by dynamic programming, see e.g. \citet{roulet2021techreport} for a
detailed overview. Its analysis can nevertheless be pursued by analogy of its
implementation with the ILQR algorithm. Namely, the technical
Lemmas~\ref{lem:approx_ilqrs} and~\ref{lem:bounded_policies_restricted_case} in
Appendix~\ref{app:conv_iddp} decompose the implementation of the IDDP algorithm
into the dynamical structure of the problem to quantify an approximation bound
between the oracles returned by the ILQR and IDDP algorithm of the form
$\|\DDP_\reg(\obj)(\ctrls) - \LQR_{\reg}(\obj)(\ctrls)\|_2 \leq
\ddpbound\|\LQR_\reg(\obj)(\ctrls)\|_2^2$ for some constant $\ddpbound$
independent of $\ctrls$ and $\reg$, provided that the costs are strongly convex.

Equipped with this approximation bound, we consider selecting the
regularization of the IDDP algorithm  such that 
\begin{equation}\label{eq:ddp_suff_cond}
\obj(\ctrls + \DDP_\reg(\obj) (\ctrls))\leq  \obj(\ctrls)  + \frac{1}{2}\nabla \obj(\ctrls) ^\top \LQR_\reg(\obj)(\ctrls),
\end{equation}
i.e., we use the same criterion as for the ILQR
algorithm~\eqref{eq:suff_cond_descent} to ensure a sufficient decrease. This
choice of regularization is motivated by the  implementation of the ILQR and
IDDP algorithms which both compute $ \frac{1}{2}\nabla \obj(\ctrls) ^\top
\LQR_\reg(\obj)(\ctrls)$ by dynamic programming;
see~\citet{roulet2021techreport} for more details. For strongly convex costs,
the rule provided in~\eqref{eq:ddp_suff_cond} to select the stepsize together
with the quadratic approximation bound between the oracles of the ILQR and IDDP
algorithm enable us to state a global convergence result for the IDDP algorithm.

\begin{theorem}\label{thm:ddp_conv} Consider the costs to be
	 $\strgcvx_\cost$-strongly convex and Assumption~\ref{asm:conv} to be
	 satisfied. Then the constant $\ddpbound {=} \sup_{\ctrls \in
	 \reals^{\horizon\dimctrl}, \reg>0} \|\DDP_\reg(\obj)(\ctrls) -
	 \LQR_{\reg}(\obj)(\ctrls)\|_2 /\|\LQR_\reg(\obj)(\ctrls)\|_2^2$ is finite.
	 Condition~\eqref{eq:ddp_suff_cond} is satisfied for a regularization
	 \[
	 \reg(\ctrls) = \smooth_\augtraj\cst\|\nabla \cost(\augtraj(\ctrls))\|_2 + \condnb_\cost\sigma_\augtraj^2 \scaling^2 \condnbddp^2 \|\nabla \cost(\augtraj(\ctrls))\|_2^2,
	 \]
	 where $\cst = (1+\condnb_\cost \condnb_\augtraj) (1 + 2\condnbddp) +
	 \condnb_\augtraj^3(2\concord)/(3\scaling)$,  $\condnbddp = \lip_\augtraj
	 \ddpbound/\smooth_\augtraj$ and $\condnb_\cost$, $\condnb_\augtraj$,
	 $\concord$, $\scaling$ are defined in Theorem~\ref{thm:global_local_conv}. 
	 
	 With such regularization, the number of iterations of the \ref{eq:iddp_algo}
	 algorithm to reach an accuracy $\varepsilon$ is at most
	 \[
	 k \leq 2\condnb_\cost \ln\left(\frac{\delta_0}{\deltaquad}\right)+ 4 \scaling\cst( \sqrt{\delta_0} - \sqrt{\deltaquad}) + 2\condnb_\cost \scaling^2 \condnbddp^2 (\delta_0- \deltaquad) + O(\ln\ln(\varepsilon^{-1})),
	 \]
	 where $\deltaquad = 1/(32\condnb_\cost(\scaling\sqrt{\condnb_\cost}(2+ 2 \cst
	 + \sqrt{\condnb_\cost}\condnbddp) + 4\concord)^2)$ is  the value of the gap
	 determining the quadratic convergence phase.
\end{theorem}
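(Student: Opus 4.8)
The plan is to reduce the convergence analysis of \ref{eq:iddp_algo} to that of \ref{eq:ilqr_algo} by viewing the IDDP oracle as the ILQR oracle plus a perturbation that is \emph{quadratic} in the size of the ILQR step, and then re-running the arguments of Theorems~\ref{thm:global_conv}, \ref{thm:conv_ggn} and~\ref{thm:global_local_conv} with this quadratic perturbation absorbed into the regularization. The first task is to show $\ddpbound$ is finite. By Algo.~\ref{algo:lqr_ddp}, the $\LQR$ and $\DDP$ oracles share the same affine policies $\pi_t$ and differ only in the roll-out pass, where $\DDP$ replaces the linearized increment $\A_t\auxstate_t+\B_t\auxctrl_t$ by the exact increment $\dyn(\state_t+\auxstate_t,\ctrl_t+\auxctrl_t)-\dyn(\state_t,\ctrl_t)$. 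Since $\dyn$ has Lipschitz-continuous gradients, the discrepancy at each step is a second-order Taylor remainder, hence bounded by a constant times $\|(\auxstate_t;\auxctrl_t)\|_2^2$; propagating it along the horizon with the Lipschitz continuity of $\dyn$ and of the policies yields $\|\DDP_\reg(\obj)(\ctrls)-\LQR_\reg(\obj)(\ctrls)\|_2\le\ddpbound\|\LQR_\reg(\obj)(\ctrls)\|_2^2$. The point requiring care (see below) is that $\ddpbound$ must be uniform in $\reg>0$; this uses that the rolled-out control and its states stay bounded uniformly in $\reg$, which follows from $\|\LQR_\reg(\obj)(\ctrls)\|_2\le\lip_\augtraj\|\nabla\cost(\augtraj(\ctrls))\|_2/(\sigma_\augtraj^2\strgcvx_\cost)$ --- a consequence of the push-through identity, of surjectivity~\eqref{eq:inj_traj}, and of strong convexity. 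The explicit bound on $\ddpbound$ in terms of the problem constants is carried out in~\citep{roulet2021techreport}.

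Next I would derive the sufficient-decrease condition~\eqref{eq:ddp_suff_cond} and the displayed regularization. Writing $\ctrls+\DDP_\reg(\obj)(\ctrls)=(\ctrls+\LQR_\reg(\obj)(\ctrls))+(\DDP_\reg(\obj)(\ctrls)-\LQR_\reg(\obj)(\ctrls))$ and expanding $\obj=\cost\circ\augtraj$ around $\ctrls$ with its Lipschitz-continuous gradient, the ILQR part contributes, as in the proof of Theorem~\ref{thm:global_local_conv}, a quantity controlled by $\tfrac12\nabla\obj(\ctrls)^\top\LQR_\reg(\obj)(\ctrls)$ once $\reg$ exceeds the Theorem~\ref{thm:global_local_conv} regularization, while the cross term and the quadratic remainder produced by the perturbation are, using $\|\LQR_\reg(\obj)(\ctrls)\|_2\le\lip_\augtraj\|\nabla\cost(\augtraj(\ctrls))\|_2/\reg$ and the bound of the previous paragraph, of order $\|\nabla\cost(\augtraj(\ctrls))\|_2^3/\reg$ and $\|\nabla\cost(\augtraj(\ctrls))\|_2^4/\reg^2$ respectively. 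Requiring these to be dominated by $\tfrac12\nabla\obj(\ctrls)^\top\LQR_\reg(\obj)(\ctrls)$, which is of order $-\|\nabla\cost(\augtraj(\ctrls))\|_2^2/(\sigma_\augtraj^2\smooth_\cost+\reg)$, gives a quadratic-in-$\reg$ inequality whose smallest root is dominated by the displayed $\reg(\ctrls)$: the linear-in-gradient part is the ILQR regularization rescaled by the condition numbers collected in $\cst$ (the factor $1+2\condnbddp$ accounting for the linear piece of the perturbation), and the new term $\condnb_\cost\sigma_\augtraj^2\scaling^2\condnbddp^2\|\nabla\cost(\augtraj(\ctrls))\|_2^2$, quadratic in the gradient, is precisely what absorbs the genuinely quadratic perturbation.

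With~\eqref{eq:ddp_suff_cond} in force, the global rate follows the pattern of Theorems~\ref{thm:global_conv} and~\ref{thm:global_local_conv}: the analytic form of $\nabla\obj(\ctrls)^\top\LQR_\reg(\obj)(\ctrls)$ gives the per-iteration decrease $\obj(\ctrls^{(k+1)})-\obj(\ctrls^{(k)})\le-\tfrac12\sigma_\augtraj^2\|\nabla\cost(\augtraj(\ctrls^{(k)}))\|_2^2/(\sigma_\augtraj^2\smooth_\cost+\reg(\ctrls^{(k)}))$ as in~\eqref{eq:decrease_iter}, and substituting $\reg(\ctrls^{(k)})$ together with the Polyak--{\L}ojasiewicz inequality~\eqref{eq:pl} for the strongly convex cost ($\pl=1/2$) yields a differential inequality $f_2'(\delta_k)(\delta_{k+1}-\delta_k)\le-1$ whose derivative now carries, besides the $\delta^{-1}$ and $\delta^{-1/2}$ summands of the ILQR analysis (the latter scaled by $\cst$), a \emph{constant} summand $2\condnb_\cost\scaling^2\condnbddp^2$ originating from the quadratic part of $\reg$. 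Integrating from $\delta_0$ down to the threshold $\deltaquad$ produces the three displayed terms, and the $O(\ln\ln\varepsilon)$ tail is the quadratic phase. For that phase I would apply Theorem~\ref{thm:conv_ggn} to the ILQR step $\ctrls+\LQR_\reg(\obj)(\ctrls)$ --- legitimate because $\regscaled_k=\reg(\ctrls^{(k)})/\lambda_\cost(\augtraj(\ctrls^{(k)}))$ is bounded, the quadratic part of $\reg$ being of order $\lambda_\cost(\augtraj(\ctrls^{(k)}))^2$, which is where the $\sqrt{\condnb_\cost}\,\condnbddp$ contribution to $\deltaquad$ enters through the $2\regscaled_k/\sigma^2$ term of~\eqref{eq:cond_reg_quad_conv} --- and then add the change of the Newton decrement induced by the perturbation, which by Lipschitz continuity of $\lambda_\cost$ in the self-concordant geometry together with $\|\DDP_\reg(\obj)(\ctrls)-\LQR_\reg(\obj)(\ctrls)\|_2\le\ddpbound\|\LQR_\reg(\obj)(\ctrls)\|_2^2$ and $\|\LQR_\reg(\obj)(\ctrls)\|_2=O(\lambda_\cost(\augtraj(\ctrls)))$ near the solution is again $O(\lambda_\cost(\augtraj(\ctrls))^2)$; converting the Newton-decrement threshold into a gap threshold via $\lambda_\cost(\states)\le\sqrt{2\condnb_\cost(\cost(\states)-\cost^*)}$ gives the stated $\deltaquad$.

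The main obstacle is the first step: proving $\ddpbound$ finite \emph{uniformly} over $\reg>0$, despite the individual feedback gains $K_t$ possibly blowing up as $\reg\to0$. The resolution is to bound the rolled-out quantities (controls and states) rather than the gains, using the push-through representation of the $\LQR$ oracle together with global strong convexity and surjectivity~\eqref{eq:inj_traj}; once $\ddpbound$ is under control, the remaining steps are bookkeeping mirroring the ILQR analysis with the quadratic perturbation folded into the regularization.
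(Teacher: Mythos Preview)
Your proposal is correct and mirrors the paper's approach: bound $\|\DDP_\reg(\obj)(\ctrls)-\LQR_\reg(\obj)(\ctrls)\|_2$ quadratically in $\|\LQR_\reg(\obj)(\ctrls)\|_2$, absorb the perturbation into the regularization to recover the sufficient decrease~\eqref{eq:ddp_suff_cond}, and then re-run the global and local ILQR analyses with the extra terms. One correction to your last paragraph: the feedback gains $K_t$ do \emph{not} blow up as $\reg\to 0$ under strong convexity---Lemma~\ref{lem:policy_bounds} gives $\|K\|_2\le\condnb_\cost\lipdynstate/\sigma_\dyn$ uniformly in $\reg$---so the uniformity of $\ddpbound$ is not the obstacle you describe; the paper's actual argument (Lemma~\ref{lem:bound_ddp_lqr}) writes both oracles as $(\idm-K\grad^\top)^{-1}k$ and $(\idm-KD^\top)^{-1}k$ for a mean-value matrix $D$, and bounds $\|\grad-D\|_2$ directly, rather than propagating Taylor remainders step by step along the horizon.
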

\begin{remark}
	The complexity bounds for the IDDP algorithm in Theorem~\ref{thm:ddp_conv}
	take then the same form as the complexity bounds obtained for the ILQR
	algorithm in Theorem~\ref{thm:global_local_conv} up to some additional
	multiplicative factors. Our proof is built on considering IDDP to approximate
	ILQR. In practice, IDDP appears more efficient than ILQR as illustrated in
	Figure~\ref{fig:conv} and other works~\citep{liao1992advantages,
	roulet2021techreport} and alternative proofs may better explain this
	phenomenon. On the other hand, our implementation and analysis provide
	theoretical convergence guarantees.
\end{remark}

\begin{proof}[Proof of Theorem~\ref{thm:ddp_conv}]
	We sketch the proof of the first part of the claim, whose technical details
	can be found in Lemma~\ref{lem:approx_ilqrs}. Given a command $\ctrls=
	(\ctrl_0;\ldots;\ctrl_{\horizon-1})$ with associated trajectory $\states =
	\augtraj(\ctrls) = (\state_1;\ldots;\state_\horizon)$, denote $\pi_t:
	\auxstate_t \rightarrow K_t \auxstate_t + k_t$ for $t\in \{0, \ldots,
	\horizon-1\}$ the affine policies computed in Algorithm~\ref{algo:lqr_ddp}  and
	define for $\auxstates = (\auxstate_1;\ldots;\auxstate_\horizon)$,
	$\pi(\auxstates) = (\pi_0(0);
	\pi_1(\auxstate_1);\ldots;\pi_{\horizon-1}(\auxstate_{\horizon-1}))$. Denoting
	then $ \auxctrls =	\LQR_\reg(\obj)(\ctrls)$, $\auxxctrls =
	\DDP_\reg(\obj)(\ctrls) $,  we have, after close inspection of the roll-outs, 
	\[
	\auxctrls =\pi(\nabla \augtraj(\ctrls)^\top \auxctrls), \qquad \auxxctrls = \pi( \augtraj(\ctrls + \auxxctrls) - \augtraj(\ctrls)).
	\]
	
	Denoting, for $e_i$ the i\textsuperscript{th} canonical vector in
	$\reals^\horizon$, $K = \sum_{i=2}^\horizon e_ie_{i-1}^\top \otimes K_{i-1}
	\in \reals^{\horizon\dimctrl \times \horizon\dimstate}$, $k = (k_0;\ldots;
	k_{\horizon-1})$, $\grad = \nabla \augtraj(\ctrls)$, we get that $\auxctrls =
	k + K \grad^\top \auxctrls$. Since $\grad^\top$ is lower block triangular and
	$K$ is strictly lower block triangular, $K \grad^\top$ is strictly lower block
	triangular and so $\idm- K \grad^\top$ is invertible. Therefore, we can
	express the LQR oracle as $\auxctrls = (\idm- K \grad^\top)^{-1} k$. For the
	IDDP oracle, a similar expression can be found by using the mean value theorem
	as formally shown in Lemma~\ref{lem:approx_ilqrs}. Informally, there exists a
	matrix $D$ which can be thought as $\nabla \augtraj(\ctrls+ \zeta)$ for some
	$\|\zeta\|_2 \leq \|\auxxctrls\|_2$ such that $\auxxctrls = (\idm- K
	D^\top)^{-1} k$. The difference $\auxctrls-\auxxctrls$ can be bounded by $c_0
	\|k\|_2 \|C^\top-D^\top\|_2$ for some constant $c_0$  and $\|C^\top-D^\top\|_2$
	can be bounded as $c_1 \|\auxxctrls\|_2$ such that we get in total a quadratic
	error bound in $\|k\|_2$ which can  be converted in a quadratic bound in terms
	of $\|\auxctrls\|_2$.
	
	For $\ctrls \in\real^{\horizon\dimctrl}$ denote  $ \auxctrls =
	\LQR_\reg(\obj)(\ctrls)$, $\auxxctrls = \DDP_\reg(\obj)(\ctrls) $. By
	definition of $\auxctrls$, condition~\eqref{eq:ddp_suff_cond} is satisfied if 
	\[
	\obj(\ctrls+\auxxctrls)  \leq \obj(\ctrls) 
	+ \qua_\cost^{\augtraj(\ctrls)}\circ\lin_\augtraj^\ctrls(\auxctrls) + \frac{\reg}{2} \|\auxctrls\|_2^2.
	\]
	We proceed by first observing that, by Lipschitz continuity of the gradients
	of $\cost$,
\[
	\obj(\ctrls{+}\auxxctrls) 
	{-} \obj(\ctrls{+}\auxctrls)  \leq 
	\nabla \cost(\augtraj(\ctrls {+} \auxctrls))^\top (\augtraj(\ctrls {+} \auxxctrls) {- }\augtraj(\ctrls {+} \auxctrls ) )
	+ \smooth_\cost\|\augtraj(\ctrls {+} \auxxctrls) {-} \augtraj(\ctrls {+} \auxctrls)\|_2^2/2, 
\]
and
$
\|\nabla  \cost(\augtraj(\ctrls{+}\auxctrls)) \|_2 \leq \|\nabla \cost( \augtraj(\ctrls))\|_2 + \smooth_\cost\| \augtraj(\ctrls {+} \auxctrls) {-} \augtraj(\ctrls)\|_2.
$
Hence,  using the Lipschitz continuity of $\augtraj$ and  the definition of
$\ddpbound$, we have 
\[
	\obj(\ctrls{+}\auxxctrls) 
	{-} \obj(\ctrls{+}\auxctrls) \leq  (\|\nabla \cost(\augtraj(\ctrls))\|_2 
	+ \smooth_\cost\lip_{\augtraj}\|\auxctrls\|_2)\lip_\augtraj \ddpbound \|\auxctrls\|_2^2
	+  \smooth_\cost\lip_\augtraj^2 \ddpbound^2 \|\auxctrls\|_2^4/2.
\]
On the other hand, the term $\obj(\ctrls{+}\auxctrls) {-} \obj(\ctrls)  
{-}\qua_\cost^{\augtraj(\ctrls)}\circ\lin_\augtraj^\ctrls(\auxctrls)$ can be
bounded using Lemma~\ref{lem:bound_approx_self_concord}. Hence, using that
$\|\auxctrls\|_2 \leq \|\nabla \cost(\augtraj(\ctrls))\|_2
/(\strgcvx_\cost\sigma_\augtraj)$ (see the first paragraph of the proof of
Theorem~\ref{thm:global_local_conv}), we get that
condition~\eqref{eq:ddp_suff_cond} is satisfied for 
\[
\reg(\ctrls) = \smooth_\augtraj\cst\|\nabla \cost(\augtraj(\ctrls))\|_2 + \condnb_\cost\sigma_\augtraj^2 \scaling^2 \condnbddp^2 \|\nabla \cost(\augtraj(\ctrls))\|_2^2,
\]
for $\cst = (1+\condnb_\cost \condnb_\augtraj) (1 + 2\condnbddp) +
\condnb_\augtraj^3(2\concord)/(3\scaling)$,  $\condnbddp = \lip_\augtraj
\ddpbound/\smooth_\augtraj$, where $\condnb_\cost$, $\condnb_\augtraj$,
$\concord$, $\scaling$ are defined in Theorem~\ref{thm:global_local_conv}. 

With such regularization choice, the convergence of the IDDP method follows from
the proof of Theorem~\ref{thm:global_conv} by using that
condition~\eqref{eq:ddp_suff_cond} is satisfied. Namely, we get that the number
of iterations of an IDDP algorithm with regularizations $\reg_k =
\reg(\ctrls^{(k)})$ to ensure an objective less than $\varepsilon$ is at most
(see Appendix~\ref{app:comput} for the detailed derivation)
\begin{equation}\label{eq:conv_ddp_iter}
k  \leq 2\condnb_\cost \ln\left(\delta_0/\varepsilon\right)+ 4 \scaling\cst( \sqrt{\delta_0} - \sqrt{\varepsilon}) + 2\condnb_\cost \scaling^2 \condnbddp^2 (\delta_0- \varepsilon).
\end{equation}

For the local convergence, define $\lip, \sigma, \smooth, \localconcord,
\localscaling$ as in the proof of Theorem~\ref{thm:global_local_conv}. We have 
\[
\lambda_\cost(\augtraj(\ctrls+ \auxxctrls)) \leq 
\|\nabla \cost(\augtraj(\ctrls+ \auxxctrls)) - \nabla \cost(\augtraj(\ctrls+ \auxctrls))\|_{\augtraj(\ctrls+ \auxxctrls)}^* 
+ \|\nabla \cost(\augtraj(\ctrls + \auxctrls))\|_{\augtraj(\ctrls+\auxxctrls)}^*.
\]
If $\lambda_\cost(\augtraj(\ctrls)) \leq \sigma / \sqrt{2\localconcord \lip
\ddpbound}$, then, 
\[
\|\augtraj(\ctrls+ \auxxctrls) - \augtraj(\ctrls+\auxctrls)\|_{\augtraj(\ctrls+\auxxctrls)} 
\leq \lip \|\auxctrls- \auxxctrls\|_2 
\leq \lip\ddpbound \|\auxctrls\|_2^2 
\leq \lip\ddpbound \lambda_{\cost}(\augtraj(\ctrls))^2/\sigma^2 
\leq 1/(2\localconcord),
\] 
where we used that $\|\auxctrls\|_2 \leq \lambda_\cost(\augtraj(\ctrls))/\sigma$
as shown in the second paragraph of the proof of Theorem~\ref{thm:conv_ggn}.
Hence, using Lemma~\ref{lem:smooth_self_concord}, we have that $\|\nabla
\cost(\augtraj(\ctrls+ \auxxctrls)) - \nabla \cost(\augtraj(\ctrls+
\auxctrls))\|_{\augtraj(\ctrls+ \auxxctrls)}^* \leq 2 \lip \ddpbound
\lambda_\cost(\augtraj(\ctrls))^2/\sigma^2$ and using Theorem 5.1.7
of~\citet{nesterov2018lectures}, we have that $\|\nabla \cost(\augtraj(\ctrls +
\auxctrls))\|_{\augtraj(\ctrls+\auxxctrls)}^* \leq 2 \|\nabla
\cost(\augtraj(\ctrls+\auxctrls))\|_{\augtraj(\ctrls+\auxctrls)}^*$. We conclude
that if  $\lambda_\cost(\augtraj(\ctrls)) \leq 1 / \sqrt{2\localconcord
\localscaling\condnbddp}$, 
\[
\lambda_\cost(\augtraj(\ctrls+ \auxxctrls)) \leq 2\condnbddp \localscaling\lambda_\cost(\augtraj(\ctrls))^2 + 2 \lambda_\cost(\augtraj(\ctrls+\auxctrls)).
\]
Hence, using the bound derived in Theorem~\ref{thm:conv_ggn} for
$\lambda_\cost(\augtraj(\ctrls+\auxctrls))$, we conclude that for  
\[
\lambda_\cost(\augtraj(\ctrls)) \leq 
1/\max\{ \sqrt{2\localconcord \localscaling}  , \sqrt{2\localconcord \localscaling\condnbddp},  2\localcondnb\localconcord ,   2\localconcord\},
\]
we have that
\begin{align*}
\lambda_\cost(\augtraj(\ctrls+\auxxctrls)) & 
\leq \left(2(2+\condnbddp) \localscaling + 8\localconcord + 4 \regscaled \sigma^{-2}\right)\lambda_\cost(\augtraj(\ctrls))^2\\
& \leq \left(2\scaling \sqrt{\condnb_\cost} (2+ 2 \cst + \condnbddp)  + 8\concord + 4\condnb_\cost^2\condnbddp^2 \scaling^2 \lambda_\cost(\augtraj(\ctrls))\right)\lambda_\cost(\augtraj(\ctrls))^2,
\end{align*}
where  we used that $\regscaled
=\reg(\ctrls)/\lambda_\cost(\augtraj(\ctrls))\leq
\smooth_\augtraj\sqrt{\smooth_\cost} \cst +\smooth_\cost \condnb_\cost
\sigma_\augtraj^2 \scaling^2 \condnbddp^2 \lambda_\cost(\augtraj(\ctrls))$.
Denote  
\[
\lambdaquad = 1/(4(\scaling\sqrt{\condnb_\cost}(2+ 2 \cst + \sqrt{\condnb_\cost}\condnbddp) + 4\concord)),
\]
s.t.  $\lambdaquad \leq 1/\max\{ \sqrt{2\localconcord \localscaling}  ,
\sqrt{2\localconcord \localscaling\condnbddp},  2\localcondnb\localconcord ,
2\localconcord\}$. For $\lambda_\cost(\augtraj(\ctrls)) <\lambdaquad$, quadratic
convergence is ensured, i.e., $\lambda_\cost(\augtraj(\ctrls+\auxxctrls)) \leq
\lambdaquad^{-1} \lambda_\cost(\augtraj(\ctrls)) ^2 <
\lambda_\cost(\augtraj(\ctrls)) $. The conclusion follows as in the proof of
Theorem~\ref{thm:global_local_conv}.
\end{proof}

\section{Numerical Evaluations}\label{sec:exp}
We illustrate numerically the theoretical findings to examine their relevance.
In all experiments, we implemented gradient descent (GD), ILQR, IDDP, with a
line-search on either the stepsize for GD or the scaled regularization for ILQR
and IDDP as in Algorithm~\ref{algo:RGGN_linesearch}. The algorithms are run at
double precision. They stop if (i) the norm of the gradient of the objective is
smaller than $10^{-16}$, (ii) the linesearch does not find a valid stepize
bigger than $10^{-24}$, (iii) the relative change in costs ($|c_k -
c_{k-1}|/|c_k|$) is smaller than $10^{-24}$. The code is publicly
available at {\small \coderef}.
A tutorial notebook is available at 
{\small \url{https://github.com/vroulet/ilqc/ilqc.ipynb}}

\subsection{Settings Considered}
We consider two simple synthetic control environments: swinging up a pendulum,
and controlling a simplified model of a car. Experiments on a more realistic
model of a car are presented in Appendix~\ref{app:exp_sup}. In all experiments we consider only
a cost on the state variables, i.e., $h_t(\state_t, \ctrl_t) = h_t(\state_t)$.
See \citet{roulet2021techreport} for additional experiments with costs on the
control variables and other settings.

\paragraph{Swinging up pendulum}
We consider swinging up a pendulum vertically through the control of a torque.
The state $\state=(\theta,\omega)$ consists in the angle $\theta$ with the
vertical axis and the angular speed $\omega$ as illustrated in
Fig~\ref{fig:conv}. The dynamics in continuous time are
\begin{align}\label{eq:pendulum_cont_dyn}
    \dot \theta(t) & = \omega(t), \qquad
    ml^2 \dot \omega(t) = -m l g \sin \theta(t) - \mu \omega(t) + \ctrl(t),
\end{align}
where $m=1$ is the mass of the blob, $l=1$ is the length of the blob, $\mu=0.01$ is a
friction coefficient, $g=10$ is the gravitational constant. The system is
controlled through a torque, $\ctrl(t)$, applied to the pendulum. 
We use an Euler discretization scheme~\citep[Chapter 4]{gautschi2011numerical}
for the continuous dynamics~\eqref{eq:pendulum_cont_dyn} with a discretization
step $\Delta = T/\horizon$ for a total time $T=2$ and a number of discretization
steps $\horizon=100$.

For Figure~\ref{fig:conv}, we consider a single cost on the last state. Namely,
the objective is to swing up the pendulum to be vertical with
\[
h_\horizon(\state_\horizon) 
= (\theta_\horizon - \pi)^2 + \omega_\horizon^2, \qquad
h_t(\state_t)= 0 \ \mbox{for} \ t \in \{1, \ldots, \horizon-1\},
\]
for $\state_\horizon = (\theta_\horizon, \omega_\horizon)$. In other words, we
target $\theta(T) = \pi, \omega(T) = 0$ for some time horizon $T$, given
$\theta(0)=0$, $\omega(0) = 0$. In some experiments below, we consider
variations of the costs, such as considering a cost for each time step or a
subsampled cost.

\paragraph{Simple model of a car with tracking costs}
We consider a simple model of the car, illustrated in Figure~\ref{fig:conv}.
The state consists in $\state = (z_x, z_y, \theta, v)$, where $z = (z_x, z_y)$
is the position of the car, $\theta$ is the angle between the orientation of the
car and the horizontal axis, a.k.a., the yaw, and $v$ is the longitudinal speed.
The controls $\ctrl = (a, \delta)$ consist of the longitudinal acceleration $a$
of the car, and the steering angle $\delta$. For a car of length $l=1$, the
continuous time dynamics of this simplified model of the car are
\begin{align*}
\begin{array}{lll}
  \dot z_x(t) = v(t) \cos \theta(t) & & \dot \theta(t)  = {v(t) \tan \delta(t)}/{l} \\
  \dot z_y(t) = v(t) \sin \theta(t) & & \dot v(t) = a(t).
\end{array}
\end{align*}
We use a Runge-Kutta method of order 4~\citep[Chapter 4]{gautschi2011numerical}, a discretization step $\Delta = T/\tau$ for a total time $T=2$, and
a number of discretization steps $\horizon=25$.

The objective consists in minimizing the distance between the position of the
car and a reference position on a track. We define a reference track $z^*(t)$ as
a continuous spline using a simple track presented by~\citet[Figure
13]{roulet2021techreport}. The discrete time reference positions are defined as
$z_t^* = z^*(\Delta t)$. The costs consist then
\[
  h_t(\state_t)
  = \|z_t - z_t^*\|_2^2 \quad \mbox{for} \ t \in \{1, \ldots, \horizon\}.
\]
For Figure~\ref{fig:conv}, we consider a subsampled cost equivalent to consider a
multistep discretization strategy detailed in Section~\ref{sec:suff_cond}.
Namely, we subsample the cost every $k=3$ steps such that the costs are then 
\begin{equation}\label{eq:cost_car_subsampled}
  h_t(\state_t)
= \begin{cases}
  \|z_t - z_t^*\|_2^2 & \mbox{if} \ t \bmod{k} = 0 \\
  0 & \mbox{otherwise}
\end{cases}
\end{equation}
with $\Delta = T/(k\tau)$. Below, we consider also costs on every time-step,
i.e., $k=1$.

\subsection{Evaluations}

\paragraph{Costs along iterations for the pendulum}

\begin{figure}[t]
  \begin{center}
    \includegraphics[width=0.7\linewidth]{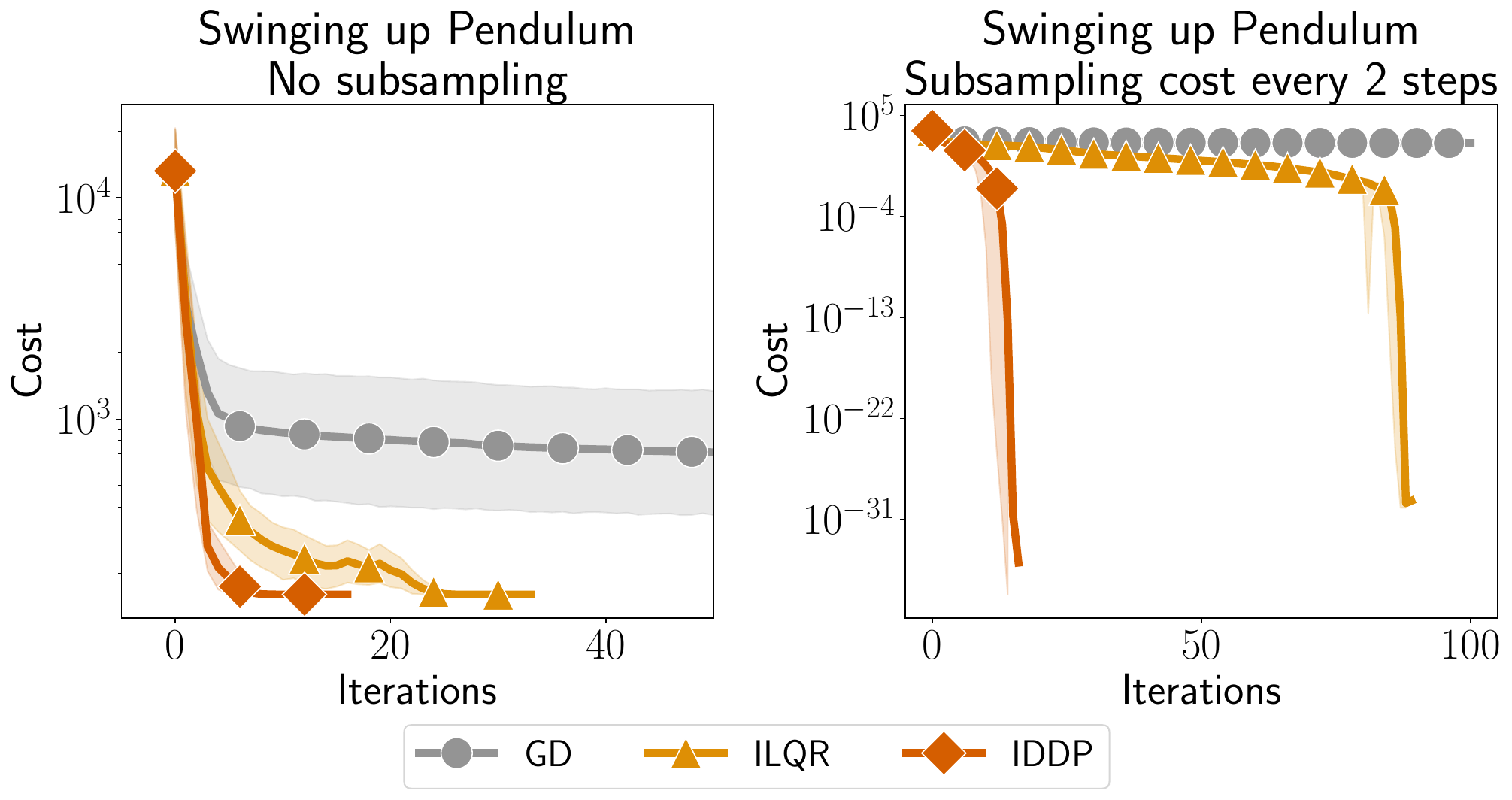}
    \caption{
      Cost along iterations of ILQR, IDDP and Gradient Descent (GD) on the
      pendulum problem using a cost at each time step or every two time steps.
      Shaded areas correspond to a 95\% confidence intervals over 10 random
      initializations of the control sequences. \label{fig:pend_subsampled}
    }
  \end{center}
\end{figure}

For a single final cost, the problem of swinging up the pendulum is equivalent
to minimizing the composition of a strongly convex cost with the control in
$\tau$ steps of the discrete dynamics of the pendulum. With an Euler
discretization of the continuous dynamics of the
pendulum~\eqref{eq:pendulum_cont_dyn}, one easily observes that the control in
any $k\geq 2$ steps of the discrete dynamics has surjective linearizations as
outlined in Section~\ref{sec:cond}. Hence, with a single final quadratic cost,
this problem falls under the assumptions of Section~\ref{sec:cvg}. The
convergence of both ILQR and IDDP algorithms towards a global minimum cost,
namely a null cost, is observed in Figure~\ref{fig:conv}.

In Figure~\ref{fig:pend_subsampled}, we consider a cost every $k$ steps, that is 
\begin{align*}
h_t(\state_t) & = \begin{cases}
(\theta_t - \pi)^2 + \omega_t^2 & \mbox{if} \ t \bmod{k} = 0 \\
0 & \mbox{otherwise}
\end{cases} \quad \mbox{for} \ t \in \{1, \ldots, \horizon\},
\end{align*}
for $k\in \{1, 2\}$. We also consider $10$ random initial sequence of control
variables, i.e., $u_t^{(0)} \sim \mathcal{N}(0, \sigma)$, for
$\sigma=1/\Delta=100$, $t\in \{0, \ldots, \horizon-1\}$.

By taking $k=2$, we observe that ILQR and IDDP both converge to a $0$ cost,
hence a global minimum, across random initializations. As mentioned above, by
taking $k>1$ convergence to a global minimal cost is predicted by the theory in
Section~\ref{sec:cond} and~\ref{sec:cvg}.

For $k=1$, none algorithm converges to 0. However, this does not mean that they
do not converge to a global minimum. In fact, one observes that across random
initializations, both ILQR and IDDP converge to the same cost. Namely, the
standard deviation of the minimum cost computed by these algorithms across
random initializations is $10^{-14}$. This suggests a global convergence
behavior to a same minimum. While the theory developed in Section~\ref{sec:cond}
and~\ref{sec:cvg} explains the behavior for $k=2$, the results for $k=1$ suggest
that convergence to a global minimum may be ensured beyond the sufficient
condition~\eqref{eq:global_conv_cond}. Note that global convergence of ILQR and
IDDP to, e.g., stationary points, can be verified on generic
problems~\eqref{eq:discrete_pb_ctrl_cost} (Section~\ref{sec:algos}). Such global
convergence properties are not sufficient to ensure convergence to global
minima. Convergence to global minima require additional properties of the
problem itself.

\paragraph{Costs along iteration for the simple model of a car}

\begin{figure}[t]
  \begin{center}
    \includegraphics[width=0.7\linewidth]{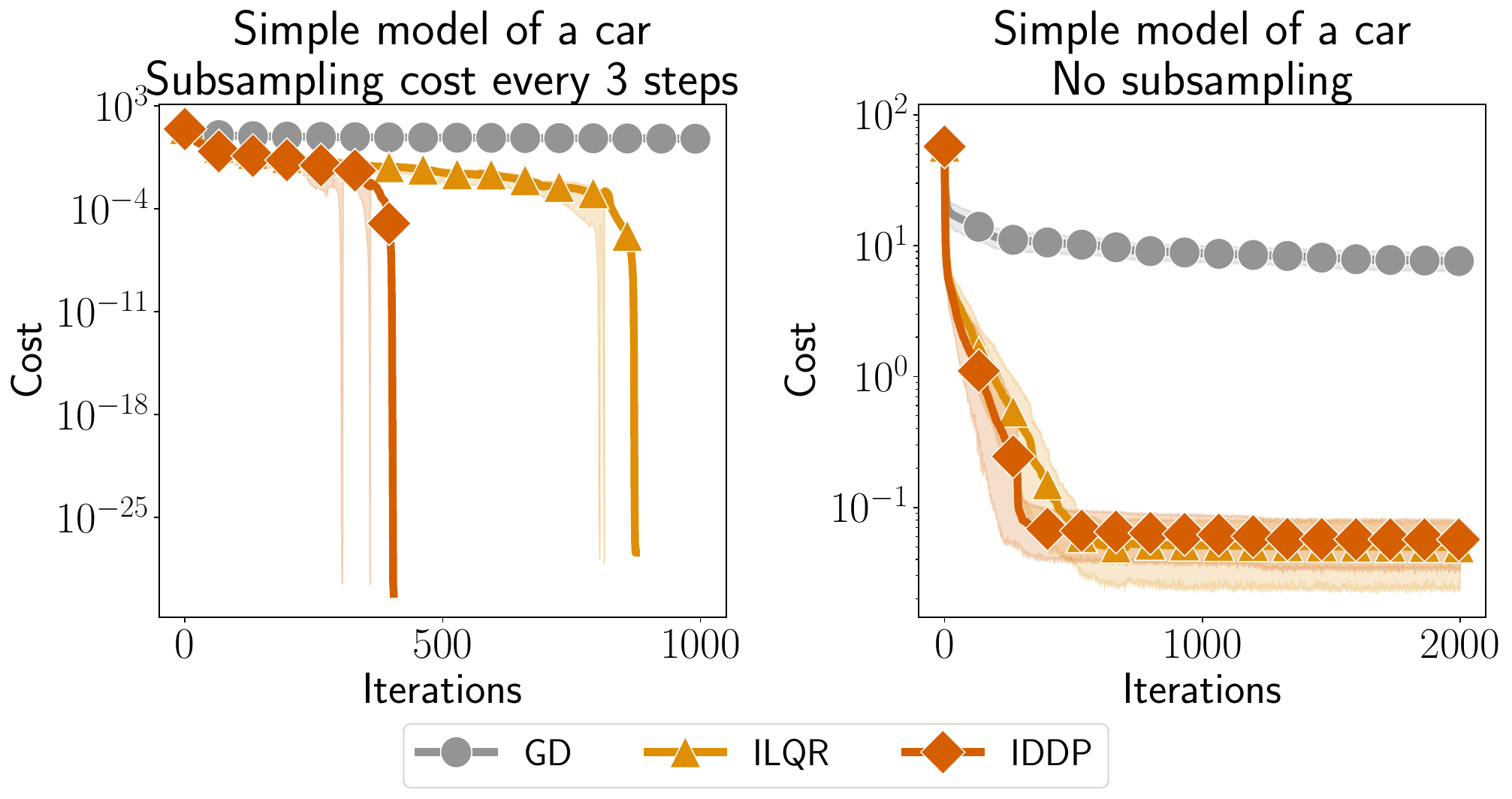}
    \caption{
        Cost along iterations of ILQR, IDDP and Gradient Descent (GD) on
        the car problem using a cost at each time step or every three time steps
        with varying initial controls. Shaded areas correspond to a 95\% confidence
        intervals over 10 random initializations of the control sequences.
        \label{fig:car_subsampled}
    }
  \end{center}
\end{figure}

In Figure~\ref{fig:conv}, we considered a subsampled cost, such that a sufficient
condition for convergence to global minima outlined in
Section~\ref{sec:suff_cond} may be satisfied. We observe in Figure~\ref{fig:conv}
convergence to a global minimal cost, namely a null cost, for both ILQR and IDDP
algorithms.

In Figure~\ref{fig:car_subsampled}, we consider a cost at each time step (no
subsampling of the costs, i.e., $k=1$ in~\eqref{eq:cost_car_subsampled}) with
$10$ random initial control sequences, i.e., $u_t^{(0)} \sim \mathcal{N}(0,
\sigma)$ for $\sigma = 2/\Delta = 25$, $t \in \{0, \ldots, \horizon-1\}$. We
also repeat the experiment with costs subsampled every $3$ time steps with the
same random initializations schemes. 

For subsampled costs, i.e., $k=3$ in~\eqref{eq:cost_car_subsampled}, we observe
convergence to global minimal costs (null costs) for both IDDP and ILQR
algorithms across random initializations.

For non-subsampled costs, i.e., $k=1$ in ~\eqref{eq:cost_car_subsampled}, the
costs do not converge to 0. Contrarily to the pendulum case, we observed a
discrepancy in the minimal cost reached after $2000$ iterations. ILQR and IDDP
reach, on average across initializations, costs of, respectively, $4.61 \cdot
10^{-2}$ and $5.68\cdot 10^{-2}$ with standard deviations across initializations
of, respectively, $3.93\cdot 10^{-2}$ and $3.75 \cdot 10^{-2}$.

Finally, IDDP converges faster than ILQR in all pendulum examples and in the
example of the car with subsampled costs. A similar observation was also made
by~\citet{liao1991convergence} and in the companion
paper~\citep{roulet2021techreport}.

\begin{figure}[t]
  \begin{center}
    \includegraphics[width=0.7\linewidth]{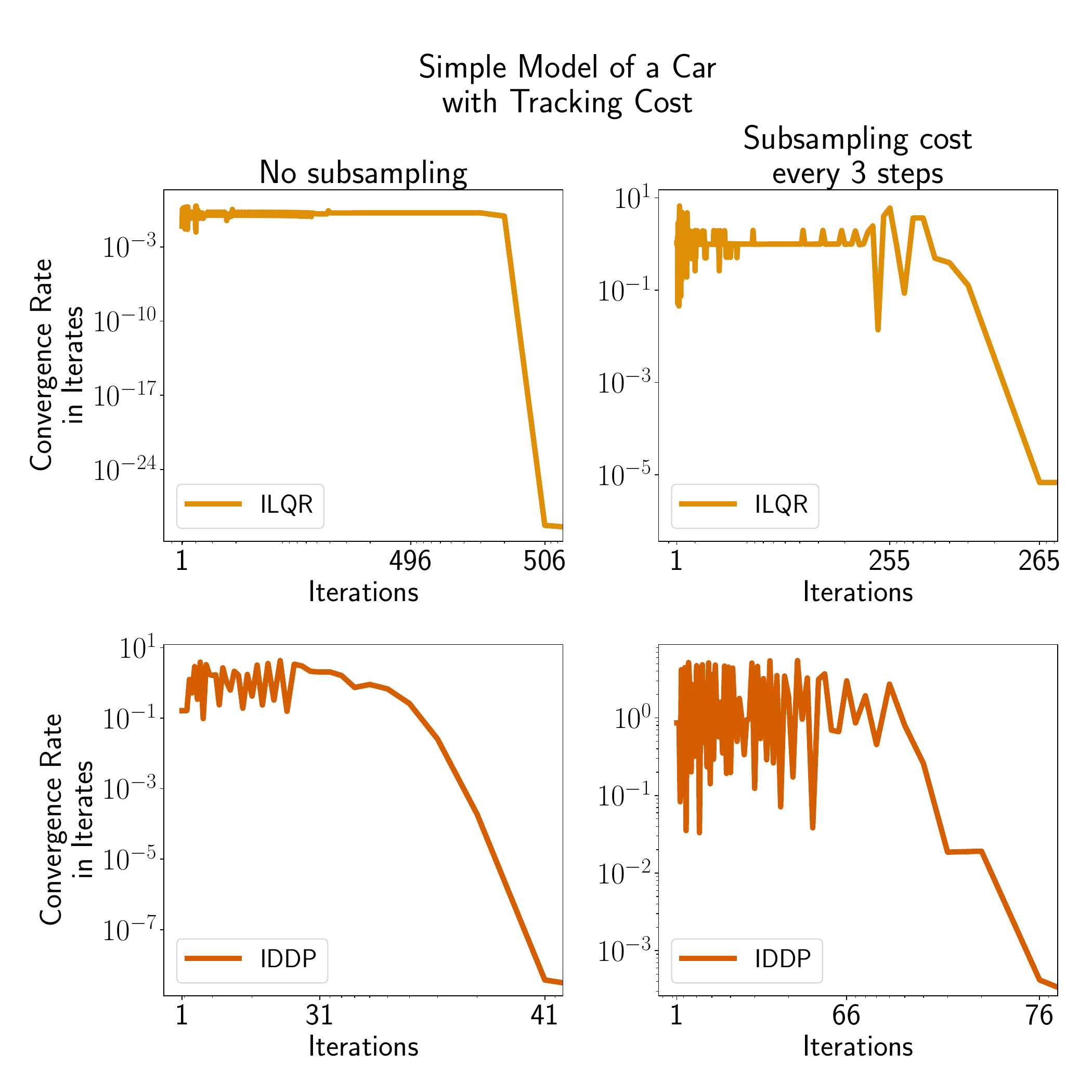}
    \caption{ Convergence rate in iterates, $\kappa^{(k)} =
      \|\ctrls^{(k+1)}-\ctrls^{(k)}\|_2/\|\ctrls^{(k)} - \ctrls^{(k-1)}\|_2$,
      along iterations of ILQR and IDDP algorithms for the simple model of a car
      with or without subsampling the costs. For each algorithm and each setting
      we plot the convergence rate up to the final iterate before the algorithm
      stopped and use a log scale x-axis to zoom on the final iterates.
      \label{fig:iter_rates}
    }
  \end{center}
\end{figure}

\begin{figure}
  \begin{center}
    \includegraphics[width=0.7\linewidth]{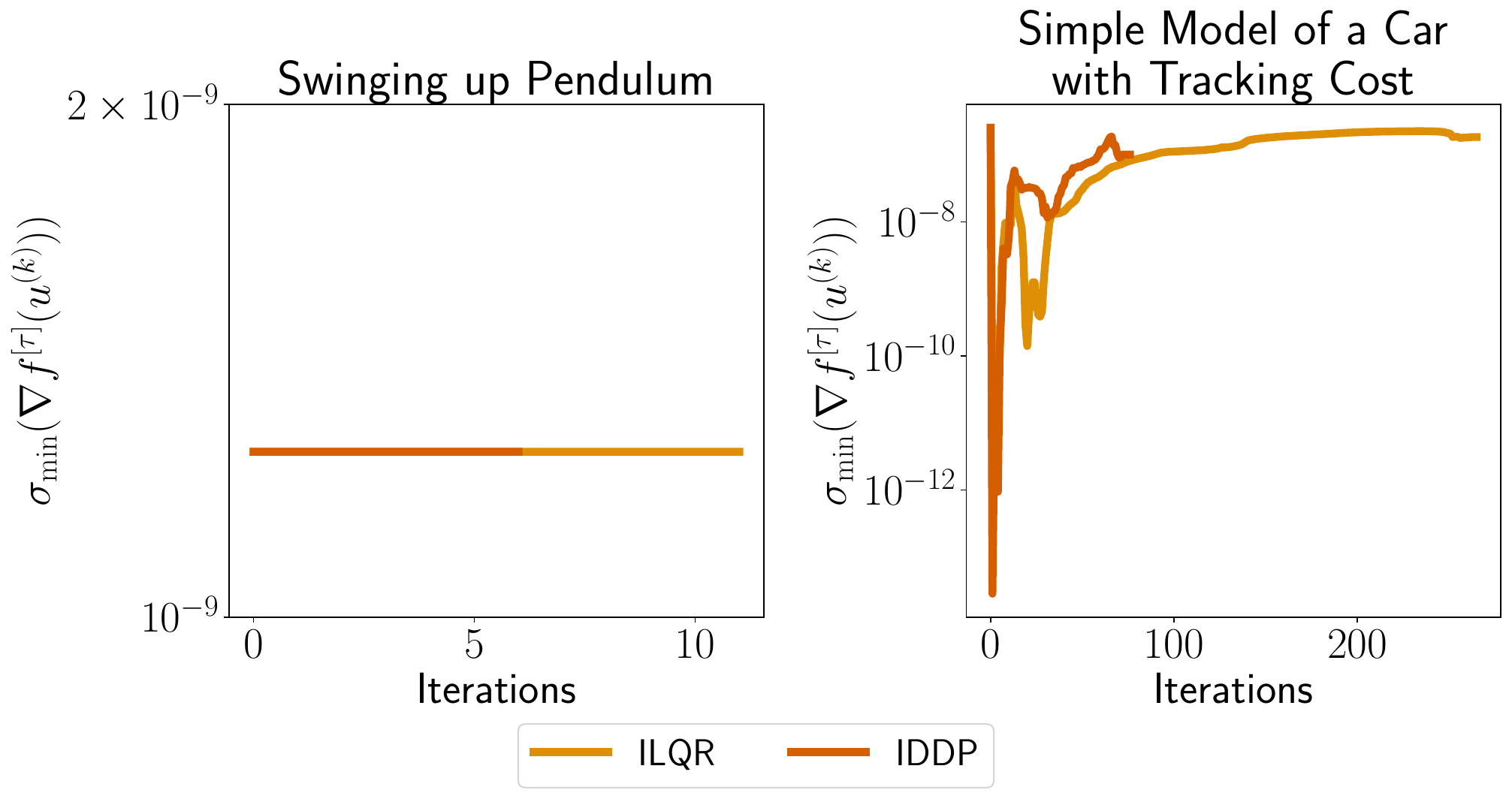}
    \caption{Minimal singular value of the transpose Jacobian of the control in
      $\horizon$ steps of the discrete dynamics along the iterations of ILQR and
      IDDP algorithms. We consider discrete dynamics of a pendulum or of a
      simple model of a car stemming from the control in $2$ and $3$ steps
      respectively of the original discretization of the dynamics.
      \label{fig:min_sev} }
  \end{center}
\end{figure}

\paragraph{Instantaneous rate of convergence}
The theoretical findings of Section~\ref{sec:cvg} outline a priori three phases of
convergence, sublinear, linear and quadratic. Convergence rates of ILQR and IDDP
can be assessed through convergence rates in function values $\rho^{(k)} =
(c^{(k+1)} - c^*)/(c^{(k)}-c^*)$ for $c^*$ the minimal cost as done in
Appendix~\ref{app:exp_sup}, or by considering convergence in iterates through
$\kappa^{(k)} = \|\ctrls^{(k+1)}-\ctrls^{(k)}\|_2/\|\ctrls^{(k)} -
\ctrls^{(k-1)}\|_2$ as done in Figure~\ref{fig:iter_rates}.

For the simple model of a car, in Figure~\ref{fig:iter_rates}, we observe that the
convergence rate in iterations of these algorithms remain close to $1$ for many
iterations (the x-axis in Figure~\ref{fig:iter_rates} is in reverted log-scale).
This rate suddenly drops close to convergence akin to a local quadratic local
convergence. This shows that the main difficulty of the problem arises for a
long first phase of slow convergence.

\paragraph{Surjectivity of the Jacobian}

The sufficient condition for convergence~\eqref{eq:global_conv_cond} to global
minima can be assessed by computing the minimal singular value
$\sigma_{\min}(\nabla \traj(\ctrls^{(k)}))$ of the transpose Jacobian of the
control of $\horizon$ steps of the discrete dynamics. In Figure~\ref{fig:min_sev},
we plot this minimal singular value along the iterations of the ILQR and IDDP
algorithms. We consider discrete dynamics defined as the control in $k=2$ and
$k=3$ steps of the discretization of the continuous dynamics of, respectively,
the pendulum and the simple model of a car. Considering discrete dynamics in
multiple steps amount to the subsampling of the costs presented in previous
experiments. 

We observe in Figure~\ref{fig:min_sev} that $\sigma_{\min}(\nabla
\traj(\ctrls^{(k)}))$ is small yet bounded away from $0$ along the iterations.
This result concurs with the convergence to global minimal costs of these
algorithms observed in the right panels of Figure~\ref{fig:pend_subsampled} and
Figure~\ref{fig:car_subsampled}.

\section{Related Work}\label{sec:related_work}
\paragraph{Nonlinear control approaches} 
Nonlinear control problems of the
form~\eqref{eq:discrete_pb_ctrl_cost} stem from the discretization of generic
optimal control problems in continuous time of the form
\begin{align}
  \min_{\state(\cdot), \ctrl(\cdot)} \quad
  & \int_0^T h(x(t), u(T)) + h_T(x(T)) \label{eq:cont_ctrl_pb}\\
  \mbox{subject to} \quad  & \dot x(t) = f(x(t), u(t)), \quad x(0) = \bar x_0. \nonumber
\end{align}
Continuous optimal control problems of the form~\eqref{eq:cont_ctrl_pb} can be
tackled in various ways~\citep{diehl2006fast}. One can approach the problem from
a \emph{dynamic programming} perspective to derive the Hamilton-Jacobi-Bellman
equation, a partial differential equation in state
space~\citep{lions1982generalized}. Alternatively, one can derive necessary
optimality conditions for~\eqref{eq:cont_ctrl_pb} to derive a boundary value
problem. Such a method is referred to as an \emph{indirect method} and amounts
to an ``optimize then discretize'' approach~\citep{farshidian2017efficient}.
Finally, problem~\eqref{eq:cont_ctrl_pb} can be tackled by \emph{direct methods}
that consider finite dimensional approximations of the original infinite
dimensional problem~\eqref{eq:cont_ctrl_pb}. Direct methods amount to a
``discretize then optimize'' approach~\citep{diehl2006fast}, they can further be
split into different approaches. First, one may consider a finite representation
of the continuous control $u(t)$ as piecewise constant functions whose values
$q_1, \ldots, q_\tau$ at each piece define the finite number of degrees of
freedom. The problem still involves an ODE in the state variable, $\dot x(t) =
f(x(t), u_{q_{1:\tau}}(t))$, albeit a simpler one. Tackling the problem with
such a partial discretization is referred to as a \emph{single shooting}
method~\citep{diehl2006fast, bock1984multiple}. \emph{Collocation
methods}~\citep{von1993numerical} consider discretizing both the states and
controls, leading to a formulation like~\eqref{eq:discrete_pb_ctrl_cost}, that
can benefit from advanced numerical integration methods. Finally, \emph{multiple
shooting}~\citep{diehl2006fast,bock1984multiple} combines both approaches. The
system is split in multiple windows and for each window a single shooting method
is used. We focus solely on the resulting discrete time nonlinear control
problems~\eqref{eq:discrete_pb_ctrl_cost} and refer the interested reader to,
e.g.,~\citet[Section 8.5]{rawlings2017model} for an overview of the approaches
mentioned above.

\paragraph{Nonlinear control with local approximations and iterative
refinements} One of the first approaches for nonlinear discrete time control
problems~\eqref{eq:discrete_pb_ctrl_cost} appear to be the Differential Dynamic
Programming (DDP) methods developed by~\citet{mayne1966second,
jacobson1970differential, mayne1975first}. Its principle is to apply a dynamic
programming procedure to the nonlinear system. The associated Bellman equation
is approximately solved by considering its quadratic approximation around the
current trajectory. A set of policies is computed along this process and applied
to the original dynamics as if the true solutions of the Bellman equations were
found. A modern account is provided in the companion
paper~\citep{roulet2021techreport} for reference; see
also~\citet{liao1992advantages}. Numerous variants of DDP have been developed to
account for constraints or noise in the dynamics~\citep{li2004iterative,
tassa2007receding, tassa2014control, giftthaler2018family}. Among those, IDDP,
a.k.a. iLQR, can be seen to follow the same principle as DDP except that
\emph{linear-quadratic} approximations \`a la Gauss-Newton are used in place of
the quadratic approximations of the Bellman equation akin to Newton's method.

DDP approaches differ from the implementation of classical
optimization algorithms such as a Newton, quasi-Newton or Gauss-Newton method
for discrete nonlinear control problems. \citet{bock1983recent, bock1984multiple} 
first presented such approaches referred to as \emph{direct multiple shooting}.
Detailed and efficient implementations of Newton's method exploiting the
dynamical structure of the problem were presented
by~\citet{de1988differential,dunn1989efficient}. A linear algebraic viewpoint on
these implementations was presented by~\citet{wright1990solution,
wright1991partitioned}, that enabled the use of fast linear solvers exploiting
the structure of nonlinear control problems~\citep{wright1991structured,
jerez2011condensed, rao1998application}. In particular,
\citet{wright1991partitioned} presents alternative resolutions of the
linear quadratic subproblem using a ``Riccati-like'' recursion that slightly
differs from the resolution by dynamic programming presented here. 
\citet{wright1991partitioned} further developed parallel implementations of
algorithms solving the LQR problems. We do not delve into the specific
implementations of the oracles used in ILQR or IDDP and rather focus on the
global behavior of the algorithms.

This viewpoint was further generalized
to handle nonlinear inequalities in model predictive
control~\citep{diehl2009efficient} or even generic graphs of
computations~\citep{srinivasan2015graphical}. The ILQR algorithm can be seen as
an instance of direct multiple shooting, namely, an instance of a generalized
Gauss-Newton method~\citep{sideris2005efficient} which uses
\emph{linear-quadratic} approximations of the problem decomposed along the
dynamics.

Detailed implementations of the DDP (quadratic approximation of Bellman
equation), the IDDP (linear-quadratic approximation of Bellman equation),
Newton (quadratic approximation of the objective) and the ILQR (linear-quadratic
approximation of the objective) approaches are presented in the companion
paper~\citep{roulet2021techreport} to highlight their common points and
differences. 

The decomposition of the problem at several scales by means of 
some quadratic approximations have also been developed and 
studied by~\citet{messerer2021survey, frasch2015parallel, 
verschueren2016exploiting, houska2013quadratically}. 

\paragraph{Convergence analysis of Gauss-Newton methods}
Regularized Gauss-Newton methods, a.k.a. Levenberg-Marquardt
methods~\citep{more1978levenberg}, have been extensively
studied~\citep{yamashita2001rate, fan2005quadratic, dan2002convergence,
zhao2016global, bergou2020convergence}. Global convergence to stationary points
at a polynomial rate is established by, e.g.,~\citet[Theorem
3.1]{bergou2020convergence}. The results may be extended, provided
that the non-linear mappings have surjective Jacobians~\citep[Corollary
2.1]{ueda2010global}. Our approach improves on previous results with polynomial
rates and our complexity bounds provide explicit dependencies on the initial gap
and the region of quadratic convergence. We also depart from previous results
using error bounds, such as the ones of~\citet[Assumption
4.2]{bergou2020convergence} and \citet[Eq. (1.6)]{yamashita2001rate}, in that
our assumption on surjective Jacobians is stronger than an error bound. 

Closer to our approach is the work of~\citet{nesterov2007modified} where the
assumption of surjective Jacobians is used to provide global convergence
guarantees of a \emph{modified} Gauss-Newton method also known as the
prox-linear method~\citep{drusvyatskiy2019efficiency} for nonlinear fitting.
\citet{nesterov2007modified} argues in favor of least un-squared norms methods,
as opposed to least squared norms methods, by reasoning in terms of condition
numbers irrespective of local subroutine computational complexity. In contrast,
we consider twice differentiable costs, for which we build a quadratic model,
leading to \emph{generalized} Gauss-Newton methods. In nonlinear control,
generalized Gauss-Newton oracles can be implemented efficiently by exploiting
the dynamical structure of the problem, while modified Gauss-Newton method
oracles may require a computationally expensive line-search.
\citet{messerer2021survey} considered also convergence of generalized
Gauss-Newton methods. However, \citet{messerer2021survey} analyzes such
algorithms without regularization, nor linesearch or trust-region techniques,
resulting in possibly divergent algorithms or only local convergence guarantees.
By adding a regularization scheme, we are able to ensure global convergence, and
to provide practical guidance on the choice of regularization
(Algorithm~\ref{algo:RGGN_linesearch}). \citet{baumgartner2023local} also
considered the local convergence properties of ILQR, IDDP to determine that they
share the same linear convergence rate locally. We consider more general
convergence properties towards stationary points, or global minima given
additional assumptions. Finally, our results are quantitative, relating the
region of quadratic convergence to the smallest singular value of the transposed
Jacobian. 

As mentioned earlier, a Newton's method could just as well be implemented to
exploit the dynamical structure of the problem~\citep{dunn1989efficient}.
Several caveats lend still in favor of a Gauss-Newton method. First, a Newton's
method (or a DDP approach) requires computing and storing the second order
information associated to the dynamics at the intermediate states, although the
storage issue can be mitigated by an adequate implementation in a differentiable
programming framework~\citep{nganga2021accelerating, roulet2021techreport}.
Second, Newton's method does not compute a priori descent directions if
the Hessian is not positive definite. Hessian modifications~\citep[Section
3.4]{nocedal2006numerical} may be necessary to ensure a descent direction such
that a linesearch can be used.
On the other hand, for generic functions, Newton's method is known to converge
locally at a quadratic rate~\citep{nesterov2018lectures}, which is a priori not
true for a generalized Gauss-Newton method. Our analysis shows that in some
nonlinear control problems generalized Gauss-Newton methods can converge with
such a local quadratic rate, just as observed empirically. Our analysis stems in
fact from considering a generalized Gauss-Newton method as an approximate Newton
method in the space of the trajectories which enable us to recover the fast
local rate of convergence of Newton's method by appropriately controlling the
approximation error.

\paragraph{Convergence analysis of differentiable dynamic programming methods}

Previous work mainly focused on local convergence
guarantees~\citep{mayne1975first,murray1984differential,liao1991convergence} or
convergence guarantees towards controls satisfying first-order necessary
optimality conditions~\citep{polak2011role}. The local quadratic convergence
analysis of DDP is based on viewing DDP as an approximate Newton
method~\citep{de1988differential, di2019newton}. An alternative proof of local
quadratic convergence~\citep{liao1991convergence} and an approach based on the
method of strong variations~\citep{mayne1975first} are also worth mentioning.
Previous work~\citep{de1988differential, di2019newton} considers additional
costs on the control variables and assumes that the Hessian of the overall
objective~\eqref{eq:discrete_pb} is invertible; see~\cite[Theorem
4.1]{de1988differential} or~\cite[Assumption 2.2]{di2019newton}. In contrast to
previous work, we do not consider additional costs on the control variable, and
we consider the IDDP algorithm which uses linear-quadratic approximations
developed by~\citet{tassa2012synthesis} and extended
by~\citet{giftthaler2018family}. The IDDP algorithm benefits from a smaller
per-iteration cost compared to DDP, as IDDP does not require computing
intermediate second-order information associated to the dynamics.

\paragraph{Sufficient conditions for convergence to global minima}
Discrete time nonlinear control problems of the form~\eqref{eq:discrete_pb} stem
from the time discretization of continuous time problems. Necessary optimality
conditions for the continuous time control problems are characterized by
Pontryagin's maximum principle~\citep{pontryagin1961mathematical}. However,
these optimality conditions cannot be used for the discretized problems since
Pontryagin variations in finite dimensional space do not
exist~\citep{polak2011role}.  Necessary optimality conditions can be derived
from the Karush-Kuhn-Tucker conditions for problem~\eqref{eq:discrete_pb}, which
are  equivalent to  first order optimality conditions of the objective in terms
of control variables. Sufficient optimality conditions for the continuous time
nonlinear control problem were also derived by~\citet{mangasarian1966sufficient,
arrow1968control, kamien1971sufficient}. We translate these conditions for the
discrete time nonlinear control problem in Appendix~\ref{app:opt_cond}.
Unfortunately, such conditions require convexity assumptions of implicitly
defined functions that seem difficult to verify in practice. We argue in
Section~\ref{sec:suff_cond} that our assumption~\eqref{eq:global_conv_cond} can be
verified on simple instances. 

Our assumption is based on analyzing the gradient dominating property of the
objective of problem~\eqref{eq:discrete_pb} in terms of the properties of the
dynamic. The gradient dominating property was introduced
by~\citet{polyak1964some, lojasiewicz1963topological} as a sufficient condition
to ensure convergence of gradient descent to global minima. Here, we exploit
this property to ensure global and local quadratic convergence to global minima
of a regularized generalized Gauss-Newton algorithm. From a nonlinear control
viewpoint, our assumption translates as the controllability of the discrete
linearized trajectories in one step. In a similar spirit, a controllability
assumption on the discrete linearized trajectories in \emph{several} steps was
considered to analyze the local convergence of MPC controllers
by~\citet[Assumption 2]{na2020superconvergence}
following~\citet{xu2019exponentially}. Compared
to~\citet{xu2019exponentially,na2020superconvergence}, we consider \emph{global}
convergence results to minimizers, which justifies a stronger assumption. In
addition, compared to~\citet{xu2019exponentially,na2020superconvergence},  we
formally relate our condition to feedback linearization schemes well understood
in continuous time~\citep{isidori1995nonlinear, sontag2013mathematical} and
further developed in discrete time by \citet{ jakubczyk1990controllability,
jakubczyk1987feedback, jayaraman1993feedback, aranda1996linearization,
belikov2017global}. In particular, we exploit the existence of a feedback
linearization scheme by considering a multi-rate sampling scheme to ensure our
sufficient condition. Using multi-rate sampling was proposed in the early work
of~\citet{grizzle1988feedback} on discrete time feedback linearization schemes. 

We consider only understanding the performance of two popular algorithms, ILQR
and IDDP. Several variants can be considered. In particular, given the
surjectivity of the Jacobian of the dynamics~\eqref{eq:global_conv_cond}, the
problem may also be rephrased as a feasibility problem and tackled differently.
Namely, the minimizers $\state_t^*$ of the costs $h_t$ could be computed
offline, and the problem would reduce to fit a nonlinear model of the states,
here described by the trajectories given by the dynamics, to the minimizers
$\state_t^*$. Such feasibility problems may be tackled for example by penalty
method as done by~\citet{kim2016s}. However, such penalty methods may dismiss
the dynamical structure of the problem. Moreover, ILQR or IDDP methods can
tackle the original problem at once, rather than deriving a two-stage method
consisting in computing first the minimizers of the costs.

\section{Conclusion}
We have detailed computational complexities of the ILQR and IDDP algorithms for
discrete time nonlinear control problems~(\ref{eq:discrete_pb_ctrl_cost},
\ref{eq:discrete_pb}). Our analysis decomposes at several scales. At the scale
of the whole trajectory, the problem can be summarized as a compositional
objective and analyzed as a Gauss-Newton type algorithm. The trajectories can be
detailed at the scale of the dynamic, which reveals the low computational cost
of the optimization oracles. Finally, the dynamics can further be detailed in
terms of the discretization scheme in order to ensure sufficient conditions for
convergence of the algorithms towards global optima. 

The sufficient conditions for global convergence are restricted to problems
without costs or constraints on the control variables. Moreover, they
may not be applicable in usual scenarios with costs that are not subsampled. As
future work, one may analyze constraints on the control variables while ensuring
a gradient dominating-like property on the objective. Analyzing further the
links between feedback linearization schemes and sufficient conditions for
global optimality may also reveal the impact of the discretization stepsize on
the overall condition number of the problem.

\acks{This work was supported by NSF DMS-1839371, DMS-2134012, CCF-2019844,
CIFAR-LMB, NSF TRIPODS II DMS-2023166 and faculty research awards. This work was
done when Vincent Roulet was at the University of Washington, minor revisions
where done when he was at Google. The authors thank Dmitriy Drusvyatskiy,
Alexander Liniger, Krishna Pillutla and John Thickstun for fruitful discussions
on the paper and their help to develop the numerical experiments. The authors are sincerely grateful to the action editor and the reviewers for their thorough work and the numerous comments that helped us improve on the original manuscript. 
}

\clearpage
\bibliography{refs}
\clearpage

\appendix
\addcontentsline{toc}{section}{Appendix}
\addtocontents{toc}{\protect\setcounter{tocdepth}{-10}}
\part{Appendix}
\parttoc

\section{Index of Constants}\label{app:index}
Table~\ref{tab:index} presents an index of the constants used in the main
results of the paper in Section~\ref{sec:cvg} with their units. We denote the unit
of the control variables, the states and the costs as, respectively, $\ctrl$,
$\state$ and $\cost$ and use $1$ if the constant has no dimension. 

Note that all constants are rooted in assumptions about the dynamic $\dyn$ and
the individual costs $\cost_t$ of problem~\eqref{eq:discrete_pb}. In particular,
constants governing the compositional problem~\eqref{eq:total_cost} defined by
the total cost $\cost$ and the control $\augtraj$ in $\horizon$ steps of $\dyn$
for fixed initial state (see~\eqref{eq:total_cost} and
Def.~\ref{def:traj_func}), are all explicitly given in terms of the constants of
$\dyn$, $\cost_t$. Moreover, note that the constants governing the dynamic
$\dyn$ can be further decomposed by considering the dynamic as the control in
$\ksteps$ of a dynamic as presented in Section~\ref{sec:suff_cond}.

For simplicity, we present only the strongly convex case. For the gradient
dominating case with  exponent $\pl\neq 1/2$ we refer the reader to
Theorem~\ref{thm:global_conv}. For the local convergence, constants $\sigma,
\lip, \smooth, \concord$ can be defined without strong convexity as presented in
Assumption~\ref{asm:self_concord}. 

\begin{table}
	\begin{center}
	\begin{tabular}{c|c|c|c}
		Notation & Definition& Interpretation & Unit \\
		\hline
		$\sigma_\dyn$ & $\inf_{\state, \ctrl } \sigma_{\min} (\nabla_{\ctrl}
		\dyn(\state, \ctrl))$ & Surj. param. of  $\auxctrl \rightarrow \nabla_\ctrl
		\dyn(\state, \ctrl)^\top\auxctrl$& $\state/\ctrl$\\
		$\lipdynstate$ & $\sup_{\ctrl} \lip_{\dyn(\cdot, \ctrl)}$ & Lip. cont. of
		$\dyn(\cdot, \ctrl)$ for any $\ctrl$ & $1$\\
		$\lipdynctrl$ &$\sup_{\state} \lip_{\dyn(\state, \cdot)}$  & Lip. cont.  of
		$\dyn(\state, \cdot)$ for any $\state$ & $\state/\ctrl$\\
		$\smoothdynstate$ &$ \sup_{\ctrl} \lip_{\nabla_\state \dyn(\cdot, \ctrl)}$
		& Bound on $\|\nabla_{\state\state} \dyn^2(\state, \ctrl)\|$  & $1/\state$\\
		$\smoothdynctrl$ &$ \sup_{\state} \lip_{\nabla_\ctrl \dyn(\state, \cdot)}$
		& Bound on $\|\nabla_{\ctrl\ctrl} \dyn^2(\state, \ctrl)\|$  &
		$\state/\ctrl^2$\\
		$\smoothdynstatectrl$ &$ \sup_{\state}\lip_{\nabla_\ctrl \dyn(\cdot,
		\ctrl)}$  & Bound on $\|\nabla_{\state\ctrl} \dyn^2(\state, \ctrl)\|$ &
		$1/\ctrl$\\
				$\sigma_\augtraj, \sigma_{\traj}$ & $\sigma_\dyn/(1+\lipdynstate)$
				&Lower bound on $ \sigma_{\min}(\nabla \traj(\state_0, \ctrls))$  &
				$\state/\ctrl$\\
		$\lip_\augtraj, \lip_\traj$ &  $\lipdynctrl S$ & Lip. cont.  of
		$\traj(\state_0, \ctrl)$ & $\state/\ctrl$\\
		$\smooth_\augtraj, \smooth_\traj $ & {\footnotesize  $\smoothdynstate
		(\lipdynctrl S)^2 {+} 2 \smoothdynstatectrl\lipdynctrl S {+} \smoothdynctrl
		S $} & Lip. cont.  of $\nabla_{\ctrls}\traj(\state_0, \ctrl)$ &
		$\state/\ctrl^2$ \\
		$S$ & {\small $\sum_{t=0}^{\horizon-1}(\lipdynstate)^t$} & Auxiliary
		constant & $1$\\
		$\strgcvx_\cost$ & $\inf_\state \sigma_{\min}(\nabla^2\cost_t(\state))$ &
		Strong convexity param. of the costs & $\cost/\state^2$\\
		$\smooth_\cost$ & $\sup_\state \sigma_{\max}(\nabla^2\cost_t(\state))$ &
		Lip. cont.  of gradients of the costs & $\cost/\state^2$\\
		$\smoothess_\cost$ & $\lip_{\nabla^2\cost_t}$ & Lip. cont. of  Hessians  of
		the costs & $\cost/\state^3$ \\
		$\condnb_\augtraj$ & $\lip_\augtraj/\sigma_\augtraj$ & Cond. nb of $\nabla
		\augtraj(\ctrls)$ & $1$ \\
		$\condnb_\cost$ & $\smooth_\cost/\strgcvx_\cost$ & Cond. nb of the costs &
		$1$ \\
		$\concord$, $\localconcord$ & $\smoothess_\cost/(2\strgcvx_\cost^{3/2})$ &
		Self-concordance of the costs & $1/\sqrt{\cost}$\\
		$\scaling$ & $\smooth_\augtraj/(\sigma_\augtraj^2\sqrt{\strgcvx_\cost})$ &
		Scaling param. for $g$ & $1/\sqrt{\cost}$\\
		$\newsimp$ & { $\smoothess_\cost \lip_\augtraj^2/(3 \smooth_\augtraj
		\smooth_\cost)$ } & Cond. nb for global conv.  of ILQR& $1$ \\
		$\simp$ & { $4\condnb_\augtraj^2 \condnb_\cost ( \newsimp +1)$ } & Cond. nb
		for global conv. of ILQR& $1$ \\
		$\lip$ & $\sqrt{\smooth_\cost} \lip_\augtraj$ & Lip. cont. of $g$ w.r.t. $h$
		in Asm.~\ref{asm:conv} &  $\sqrt{\cost}/\ctrl$\\
		$\smooth$ & $\sqrt{\smooth_\cost} \smooth_\augtraj$& Lip. cont. of grad. of
		$g$  w.r.t. $h$ in Asm.~\ref{asm:conv} & $\sqrt{\cost}/\ctrl^2$ \\
		$\sigma$ & $\sqrt{\strgcvx_\cost} \sigma_\augtraj$ & Surj. param. of
		$\augtraj$ w.r.t. $\cost$ & $\sqrt{\cost}/\ctrl$\\
		$\localcondnb$ & $\lip/\sigma = \sqrt{\rho_\cost} \rho_\augtraj $ & Cond. nb
		of $\augtraj$ w.r.t. $\cost$ & $1$ \\
		$\localscaling$ & $\smooth/\sigma^2 = \sqrt{\rho_\cost} \scaling$ & Scaling
		param. of $\augtraj$ w.r.t. $\cost$ & $1/\sqrt{\cost}$\\
		$\ddpbound$ & See Corollary~\ref{cor:approx_ilqrs}& \parbox{160pt}{Relative
	bound btw DDP \& LQR: \\ \hspace*{10pt} $\frac{\|\DDP_\reg(\obj)(\ctrls) {-}
	\LQR_{\reg}(\obj)(\ctrls)\|_2}{\|\LQR_\reg(\obj)(\ctrls)\|_2^2} {\leq}
	\ddpbound$} & $1/\ctrl$ \\
		$\condnbddp$ & $\lip_\augtraj\ddpbound/\smooth_\augtraj$ & Factor of
		smoothness for IDDP &  $1$
	\end{tabular}
\caption{Index of constants used in the paper. \label{tab:index}}
	\end{center}
\end{table}

\section{Optimality Conditions}\label{app:opt_cond}
\subsection{Necessary Optimality Conditions}
We recall necessary optimality conditions for nonlinear control problems in
continuous and discrete time to underline their discrepancies. The problem we
consider in continuous time is
\begin{align}\label{eq:contpb_app}
\min_{\substack{\state \in \mathcal{C}^1([0, 1], \reals^\dimstate)\\ \ctrl \in \mathcal{C}([0, 1], \reals^\dimctrl)}} &   \int_0^1 \cost(\state(t), \ctrl(t),  t)dt + \cost(\state(1), 1)\\
\mbox{subject to} \quad&  \dot \state(t) = \contdyn(\state(t), \ctrl(t), t), \quad \mbox{for} \ t \in [0, 1] \quad \state(0) = \initstate, \nonumber
\end{align}
where $\mathcal{C}([0, 1], \reals^d)$ and $\mathcal{C}^1([0, 1], \reals^d)$
denote the set of continuous and continuously differentiable functions from $[0,
1]$ onto $\reals^d$ respectively, and we assume $\contdyn$ and $\cost$ to be
continuously differentiable. By using an Euler discretization scheme with
discretization stepsize $\Delta = 1/\horizon$, we get the discrete time control
problem
\begin{align}
	\min_{\substack{\state_0, \ldots, \state_\horizon \in \reals^\dimstate \\ \ctrl_0 \ldots, \ctrl_{\horizon-1} \in \reals^\dimctrl}}
& \sum_{t=0}^{\horizon-1} \cost_t(\state_t, \ctrl_t)  + \cost_\horizon(\state_\horizon) \label{eq:discretized_pb}\\
\mbox{subject to} \quad & \state_{t+1} =  \state_t + \contdyn_t(\state_t, \ctrl_t), \quad \mbox{for} \ t \in \{0, \ldots, \horizon-1\},  \quad \state_0 = \initstate, \nonumber
\end{align}
where $\state_t = \state(\Delta t)$, $\ctrl_t = \ctrl(\Delta t)$, $\cost_t =
\Delta \cost(\cdot, \cdot, \Delta t)$, $\cost_\horizon = \cost(\cdot, 1)$, $
\contdyn_t =\Delta  \contdyn (\cdot, \cdot, \Delta t)$. Compared to
problem~\eqref{eq:discrete_pb_ctrl_cost}, we have $\state_t +
\contdyn_t(\state_t, \ctrl_t) = \dyn_t(\state_t, \ctrl_t)$. 

\paragraph{Continuous time necessary optimality condition}
Necessary optimality conditions for the continuous time control problem are
known as  Pontryagin's maximum principle, recalled below.
See~\citet{arutyunov2004simple} for a recent proof and~\citet{lewis2006maximum}
for a comprehensive overview. 
\begin{theorem}[Pontryagin's maximum
	principle~\citep{pontryagin1961mathematical}] \label{thm:pontryagin} 
  Define the Hamiltonian associated with problem~\eqref{eq:contpb_app} as
	\[
	\ham(\state(t), \ctrl(t), \lambda(t), t) = \lambda(t)^\top \contdyn(\state(t), \ctrl(t), t) - \cost(\state(t),\ctrl(t),  t).
	\]
	A  trajectory $\state \in \mathcal{C}^1([0, 1], \reals^\dimstate)$ and a
	control function $\ctrl \in \mathcal{C}([0,1], \reals^\dimctrl)$ are optimal
	if there exists  $\lambda \in \mathcal{C}^1([0, 1], \reals^\dimstate)$ such
	that 
	\begin{flalign}
\quad \dot \state(t) & = \nabla_{\lambda(t)} \ham(\state(t), \ctrl(t),
\lambda(t), t) \hspace{11pt} \mbox{for all} \ t\in [0, 1], &  \label{eq:cont_cond1} \tag{C1}\\
\mbox{with} \
\state(0) & = \initstate & \nonumber \\ 
\quad \dot \lambda(t) & = -\nabla_{\state(t)} \ham(\state(t), \ctrl(t),
\lambda(t), t) \ \mbox{for all} \ t\in [0, 1],  & \label{eq:cont_cond2} \tag{C2} \\
\mbox{with} \ \lambda(1) & =
-\nabla_{\state(1)} \cost(\state(1), 1)& \nonumber \\
\quad \ham(\state(t), \ctrl(t), \lambda(t), t) & = \max_{\ctrl\in
\reals^\dimctrl} \ham(\state(t), \ctrl, \lambda(t), t)\hspace{17.5pt}  \mbox{for
all} \ t\in [0, 1].& \label{eq:cont_cond3} \tag{C3}
	\end{flalign}
\end{theorem} 

\paragraph{Discrete time necessary optimality conditions}
In comparison, necessary optimality conditions for the discretized
problem~\eqref{eq:discretized_pb} are given by considering the
Karush-Kuhn-Tucker conditions of the problem, or equivalently by considering a
sequence of controls such that the gradient of the objective is
null~\citep{bertsekas1997nonlinear}.
\begin{lemma}\label{lem:discrete_time_necessary_app} Define the Hamiltonian
	associated with problem~\eqref{eq:discretized_pb} as
\[
		\ham_t(\state_t, \ctrl_t, \costate_{t+1}) =  \costate_{t+1}^\top \contdyn_t(\state_t, \ctrl_t) -\cost_t(\state_t, \ctrl_t)
\]
	A trajectory $\state_0, \ldots, \state_\horizon \in \reals^{\dimstate}$ and  a
	sequence of controls $\ctrl_0,\ldots, \ctrl_{\horizon-1} \in
	\reals^{\dimctrl}$ are optimal if there exists $\lambda_1, \ldots,
	\lambda_{\horizon} \in \reals^{\dimstate}$ such that
	\begin{flalign}
\quad \state_{t+1} - \state_t  &= \nabla_{\lambda_{t+1}} 	\ham_t(\state_t,
\ctrl_t, \costate_{t+1}) \quad \mbox{for all}\  t\in \{0, \ldots, \horizon-1\},
\ \mbox{with} \ \state_0 = \initstate \tag{D1} \label{eq:disc_cond1} & \\
\quad \lambda_{t+1} - \lambda_t  & = - \nabla_{\state_t} \ham_t(\state_t,
\ctrl_t, \costate_{t+1}) \quad  \mbox{for all} \ t\in \{1, \ldots, \horizon-1\},
\  \mbox{with} \ \lambda_\horizon = -\nabla \cost_{\horizon} (\state_\horizon)
\tag{D2} \label{eq:disc_cond2} & \\
\quad 0 & =  \nabla_{\ctrl_t}\ham_t(\state_t, \ctrl_t, \costate_{t+1})
\hspace{17pt} \mbox{for all} \ t\in \{0, \ldots, \horizon-1\}. \tag{D3}
\label{eq:disc_cond3} &
	\end{flalign}
\end{lemma}
\begin{proof}
	Necessary optimality conditions are given by considering stationary points of
	the Lagrangian~\citep{bertsekas1997nonlinear}. The Lagrangian of
	problem~\eqref{eq:discretized_pb} is given for $\costates = (\costate_1;
	\ldots; \costate_{\horizon})^\top$, $\states =(\state_1; \ldots;
	\state_\horizon)$, $\ctrls=(\ctrl_0; \ldots; \ctrl_{\horizon-1})$ as, for
	$\state_0 = \bar \state_0$ fixed,  
	\begin{align*}
		L(\states, \ctrls, \costates) & = \sum_{t=0}^{\horizon-1} \cost_t(\state_t, \ctrl_t) + \sum_{t=0}^{\horizon-1} \costate_{t+1}^\top (\state_{t+1} - \state_t - \contdyn_t(\state_t, \ctrl_t)) + \cost_\horizon(\state_\horizon) \\ 
		& = \sum_{t=0}^{\horizon-1} \cost_t(\state_t, \ctrl_t) + \sum_{t=1}^{\horizon-1} \left(\state_t^\top (\lambda_t - \lambda_{t+1}) -\lambda_{t+1}^\top  \contdyn_t(\state_t, \ctrl_t) \right) \\
            & \quad  + \cost_\horizon(\state_\horizon) + \lambda_\horizon^\top \state_\horizon  - \lambda_1^\top (\state_0 + \contdyn_0(\state_0, \ctrl_0)).
	\end{align*}
	We have then, for $t\in \{0, \ldots, \horizon-1\} $,
	\begin{align*}
		\nabla_{\costate_{t+1}} L(\states, \ctrls, \costates)  = 0 \iff \state_{t+1} - \state_t & = \contdyn_t(\state_t, \ctrl_t) \\
        & = \nabla_{\costate_{t+1}} \ham_t(\state_t, \ctrl_t, \costate_{t+1}),
        \end{align*}
        \begin{align*}
		\nabla_{\ctrl_t} L(\states, \ctrls, \costates)  = 0 \iff 0 & = -\nabla_{\ctrl_t}\contdyn_t(\state_t, \ctrl_t) \lambda_{t+1} + \nabla_{\ctrl_t} \cost_t(\state_t, \ctrl_t) \\
        & = -\nabla_{\ctrl_t} \ham_t(\state_t, \ctrl_t, \costate_{t+1}),
	\end{align*} 
	We have, for $t\in \{1, \ldots, \horizon-1\} $,
	\begin{align*}
		\nabla_{\state_t} L(\states, \ctrls, \costates)  = 0 \iff \costate_{t+1} - \costate_t & = -\nabla_{\state_t}\contdyn_t(\state_t, \ctrl_t) \lambda_{t+1} + \nabla_{\state_t} \cost_t(\state_t, \ctrl_t) \\
        & = -\nabla_{\state_t} \ham_t(\state_t, \ctrl_t, \costate_{t+1}), 
	\end{align*}
	Finally, for $t=\horizon$, we have $\nabla_{\state_\horizon} L(\states, \ctrls, \costates)
	= 0 \iff \nabla \cost_\horizon(\state_\horizon) + \lambda_\horizon = 0$. 
\end{proof}

\paragraph{Common points and discrepancies between continuous and discrete time}
The first two necessary optimality conditions~\eqref{eq:disc_cond1}
and~\eqref{eq:disc_cond2} for the discretized problem correspond to the
discretizations of the first two necessary optimality
conditions~\eqref{eq:cont_cond1} and~\eqref{eq:cont_cond2} for the continuous
time problem. The third condition differs since, in discrete time, the control
variables only need  to be stationary points of the Hamiltonian. One may wonder
whether condition~\eqref{eq:disc_cond3} could be replaced by a stronger
necessary optimality condition of the form
\begin{equation}
	\ctrl_t \in \argmax_{\ctrl \in \reals^\dimctrl}
  \ham_t(\state_t, \ctrl_t, \costate_{t+1}). \tag{D4} \label{eq:disc_cond3prim}
\end{equation}
If the Hamiltonian is convex w.r.t. to the control variable, i.e.,
$\ham_t(\state_t, \cdot, \costate_{t+1})$ is concave. If, e.g., the costs
$\cost_t(\state_t, \cdot)$ are convex and if the dynamics are affine input of
the form $\contdyn_t(\state_t, \ctrl_t) = a_t(\state_t) + B_t(\state_t)\ctrl_t$,
then  condition~\eqref{eq:disc_cond3} is equivalent to
condition~\eqref{eq:disc_cond3prim}. However, generally,
condition~\eqref{eq:disc_cond3prim} is not a necessary optimality condition for
the discrete-time control problem as shown in the
counter-example~\ref{exm:discrete_vs_continuous}. 
\begin{example}\label{exm:discrete_vs_continuous}
	Consider the continuous time control problem  
	\begin{align*}
		\min_{\state(t), \ctrl(t) \in \mathcal{C}([0, 1], \reals)}  &  \int_0^1 (a \state(t)^2 - \ctrl(t)^2)dt + a \state(1)^2 \\
		\mbox{subject to}  \quad& \dot \state(t) = \ctrl(t), \quad x(0) = 0,
	\end{align*}
	for some $a>0$ and the associated discrete time control problem, for an Euler
	scheme with discretization $\Delta = 1/\horizon$, 
	\begin{align*}
		\min_{\substack{\state_0, \ldots, \state_\horizon \in \reals \\ 
		\ctrl_0, \ldots, \ctrl_{\horizon-1} \in \reals} } &   \sum_{t=0}^{\horizon-1}  \Delta (a \state_t^2 - \ctrl_t^2) + a \state_\horizon^2\\
		\mbox{subject to} \quad & \state_{t+1} = \state_t + \Delta \ctrl_t, \quad \state_0 = 0.
	\end{align*}
	
	The Hamiltonians in continuous time, $\ham(\state(t), \ctrl, \costate(t)) =
	\costate(t)^\top \ctrl + \ctrl^2 - a\state(t)^2$, and in discrete time,
	$\ham_t(\state_t, \ctrl_t, \costate_{t+1}) = \Delta \costate_{t+1}^\top \ctrl
	+ \ctrl^2 -a\state_t^2$, are both strongly convex in $\ctrl$ such that neither
	condition~\eqref{eq:cont_cond3} nor~\eqref{eq:disc_cond3prim} can be satisfied. 
	
	According to Theorem~\ref{thm:pontryagin}, this means that the continuous time
	control problem has no solution. This can be verified by expressing the
	continuous time control problem uniquely in terms of the trajectory $x(t)$ as 
	\[
	\min_{x(t): x(0)=0}  \left\{C(x) =  \int_0^1 (a \state(t)^2 - \dot x(t)^2)dt + a \state(1)^2\right\}.
	\]
	By considering functions of the form $x_k(t) = \exp(t^k) -1$, we observe that
	the corresponding costs are unbounded below, namely, $C(x_k) \leq 2a (\exp(1)
	-1)^2 - k^2/(2k-1) \underset{k \rightarrow + \infty}{\rightarrow} -\infty$
	which shows that the problem is unbounded below and has no minimizer. 
	
	On the other hand, the discrete time control problem  can be expressed in
	terms of the control variables as 
	\[
	\min_{\ctrls \in \reals^{\horizon\dimctrl}} a\Delta^2 \ctrls^\top D^{-\top} J D^{-1} \ctrls - \Delta \|\ctrls\|_2^2,
	\]
	where $J = \diag(\Delta, \ldots, \Delta, 1)$, $D = \idm -
	\sum_{t=1}^{\horizon-1} e_{t+1}e_t^\top$. We have, using that $\Delta<1$ for
	the first inequality, $\ctrls^\top D^{-\top} J D^{-1} \ctrls \geq \Delta
	\|D^{-1} \ctrls\|_2^2 \geq \Delta \sigma_{\min}(D^{-1})^2 \|\ctrls\|_2^2  =
	\Delta\|\ctrls\|_2^2 /\|D\|_2^2 \geq \Delta\|\ctrls\|_2^2 /4$. Hence, for any
	$a$ such that $a\Delta^2/4>1$, the above problem is strongly convex and has a
	unique solution. Yet, if condition~\eqref{eq:disc_cond3prim} was necessary the
	discrete control problem should not have a solution since
	condition~\eqref{eq:disc_cond3prim} cannot be satisfied. 
\end{example}

\paragraph{Alternative derivation}
Necessary optimality conditions for the discretized
problem~\eqref{eq:discrete_pb_ctrl_cost} can be derived from
Lemma~\ref{lem:discrete_time_necessary_app} using the correspondence $\state_t +
\contdyn_t(\state_t, \ctrl_t) = \dyn_t(\state_t, \ctrl_t)$. We can also derive
the necessary optimality conditions simply by considering a sequence of control
variables $\ctrls = (\ctrl_0; \ldots; \ctrl_{\horizon-1})$ that minimize the
objective $\obj$ defined in~\eqref{eq:obj_full}.

Namely, the gradient of the objective $\obj$ on $\ctrls = (\ctrl_0; \ldots;
\ctrl_{\horizon-1})$ can be obtained by gradient back-propagation as follows.
First the states corresponding to the control variables are computed in a forward pass 
\[
  \state_{t+1} = \dyn_t(\state_t, \ctrl_t),
  \quad \mbox{for} \ t \in \{0, \ldots, \horizon-1\}
\]
starting from $\state_0 = \initstate$. Then the gradients $\nabla \obj(\ctrls) =
(g_0;\ldots; g_{\horizon-1})$ are computed in a backward pass as
\begin{align*}
  \lambda_\horizon & = \nabla \cost_\horizon(\state_\horizon), \\
  \lambda_t & = 
  \nabla_{\state_t} \dyn_t(\state_t, \ctrl_t)^\top \lambda_{t+1} 
  + \nabla_{\state_t} \cost_t(\state_t, \ctrl_t),
  \quad \mbox{for} \ t \in \{0, \ldots, \horizon-1\}, \\
  g_t & = \nabla_{\ctrl_t} \dyn_t(\state_t, \ctrl_t)^\top \lambda_{t+1}
  + \nabla_{\ctrl_t} \cost_t(\state_t, \ctrl_t),
  \quad \mbox{for} \ t \in \{0, \ldots, \horizon-1\}.
\end{align*}
One easily verifies then that having $g_t=0$ for all $t \in \{0, \ldots,
\horizon-1\}$ correspond to the optimality conditions presented in
Lemma~\ref{lem:discrete_time_necessary_app} with the correspondence $\state_t +
\contdyn_t(\state_t, \ctrl_t) = \dyn_t(\state_t, \ctrl_t)$.

\subsection{Sufficient Optimality Conditions}
Sufficient optimality conditions can also be derived following sufficient
optimality conditions in continuous time presented
by~\cite{mangasarian1966sufficient, arrow1968control, kamien1971sufficient}. We
start by rewriting problem~\eqref{eq:discrete_pb_ctrl_cost} as
\begin{align}
	\min_{\substack{\state_0,\ldots, \state_\horizon \in \reals^\dimstate \\ \delta_0, \ldots, \delta_{\horizon-1} \in \reals^\dimstate }}  & \sum_{t=0}^{\horizon-1} \intercost_t(\state_t,\delta_t)  + \finalcost(\state_\horizon),  \label{eq:bolza_pb_discrete} \quad \mbox{where} \ 	\intercost_t(\state_t, \delta_t) =  \inf_{\substack{\ctrl\in \reals^\dimctrl\\  \delta_t = \dyn(\state_t, \ctrl_t) - \state_t}}   \cost_t(\state_t, \ctrl)\\
	\mbox{subject to}  \quad & \delta_t = \state_{t+1} - \state_t, \    \state_0 =  \initstate. \nonumber
\end{align}
Sufficient conditions can be expressed through the true Hamiltonian, presented
by~\cite{clarke1979optimal}, and defined as the convex conjugate of
$\intercost_t(\state_t, \cdot)$, i.e., for $\state_t, \costate_{t+1} \in
\reals^\dimstate$,
\begin{align*}
   \trueham_t(\state_t, \costate_{t+1}) & = \sup_{\delta\in \reals^\dimstate} \costate_{t+1}^\top \delta - \intercost_t(\state_t, \delta) \\
   & = \sup_{\ctrl \in \reals^\dimctrl} \costate_{t+1}^\top (\dyn(\state_t, \ctrl_t) - \state_t) - \cost_t(\state_t, \ctrl)  \\
   & = \sup_{\ctrl \in \reals^\dimctrl} \ham_t(\state_t, \ctrl, \costate_{t+1}), 
\end{align*}
where 
\[
		\ham_t(\state_t, \ctrl_t, \costate_{t+1}) =  \costate_{t+1}^\top (\dyn_t(\state_t, \ctrl_t) - \state_t) -\cost_t(\state_t, \ctrl_t)
\]
is the Hamiltonian associated with problem~\eqref{eq:discrete_pb_ctrl_cost}.
\begin{theorem}\label{thm:suff_cond_bolza} Assume that $\intercost_t$ defined
	in~\eqref{eq:bolza_pb_discrete} is such that $\intercost_t(\state_t, \cdot)$
	is convex for any $\state_t$ and $\cost_\horizon$ is convex. If there exist
	$\state_0^*, \ldots, \state_{\horizon}^*$ and $\costate_1^*, \ldots,
	\costate_\horizon^*$ such that $\trueham_t(\cdot, \costate_{t+1}^*)$ is
	concave and
	\begin{align}
		\costate_t^* - \costate_{t+1}^* & \in \partial_{\state_t} \trueham_t(\state_t^*, \costate_{t+1}^*)   \quad  \hspace{9pt} \mbox{for} \ t \in \{1, \ldots, \horizon-1\}, \hspace{19pt}	\costate_\horizon^* = \nabla \finalcost(\state_\horizon^*) \label{eq:cond_bolza_1}\\
		\state_{t+1}^* - \state_t^* &  \in  \partial_{\costate_{t+1}} \trueham_t(\state_t^*, \costate_{t+1}^*) \quad \mbox{for} \ t \in \{0, \ldots, \horizon-1\}, \qquad \state_0^* =  \initstate,  \label{eq:cond_bolza_2}		
	\end{align}
	then $\state_0^*, \ldots, \state_{\horizon}^*$ is an optimal trajectory
	for~\eqref{eq:bolza_pb_discrete}. 
  Conditions~\eqref{eq:cond_bolza_1} and \eqref{eq:cond_bolza_2} amount to the existence of $\ctrl_t^* \in
\argmax_{\ctrl \in \reals^\dimctrl} \costate_{t+1}^\top(\dyn(\state_t, \ctrl_t)
- \state_t) -\cost_t(\state_t, \ctrl) , \auxctrl_t^* \in \argmax_{\auxctrl \in
\reals^\dimctrl} \costate_{t+1}^\top\contdyn_t(\state_t, \auxctrl) -
\cost_t(\state_t, \auxctrl) $ such that 
\begin{align*}
	\costate_t^* - \costate_{t+1}^* & =  \nabla_{\state_t} \contdyn_t(\state_t^*, \auxctrl_t^*) \costate_{t+1}^* - \nabla_{\state_t} \cost_t(\state_t^*, \auxctrl_t^*), \qquad
		\state_{t+1}^* - \state_t^*  = \contdyn_t(\state_t^*, \ctrl_t^*).
\end{align*}
\end{theorem}
\begin{proof}
	Since   $\intercost_t(\state_t, \cdot)$ is convex for any $\state_t$,
	problem~\eqref{eq:bolza_pb_discrete} can be rewritten
	\begin{align}\label{eq:bolza}
	&	\min_{\substack{\state_1,\ldots, \state_\horizon \in \reals^\dimstate \\ \state_0 = \hat \state_0}} \sup_{\costate_1, \ldots, \costate_\horizon \in \reals^\dimstate} \quad  \sum_{t=0}^{\horizon-1} \left({\costate_{t+1}}^\top(\state_{t+1} - \state_t) - \trueham_t(\state_t, \costate_{t+1})\right) + \finalcost(\state_\horizon).
	\end{align}
The above problem can be written as $\min_{\states \in
\reals^{\horizon\dimstate}} \sup_{\costates \in \reals^{\horizon\dimstate}}
c(\states, \costates)$ with $c(\states, \cdot)$ concave for any $\states$. The
assumptions amount to consider $\states^*, \costates^*$ such that (i) $0 \in
\partial_{\costates^*}  c(\states^*, \costates^*)$, (ii) $c(\cdot, \costates^*)$
convex and $0 \in \partial_{\states^*} c(\states^*, \costates^*)$. Then for any
$\states \in \reals^{\horizon \dimstate}$, 
\[
\sup_{\costates \in \reals^{\horizon\dimstate}} c(\states, \costates) \geq c(\states,  \costates^*) \stackrel{(ii)}{\geq} c(\states^*, \costates^*) \stackrel{(i)}{=} \sup_{\costates\in \reals^{\horizon\dimstate}} c(\states^*, \costates).
\]
Hence, $\states^* \in \argmin_{\states \in \reals^{\horizon \dimstate}}
\sup_{\costates \in \reals^{\horizon\dimstate}} c(\states, \costates)$, that is,
$\state_0^*, \ldots, \state_{\horizon}^*$ is an optimal trajectory.
\end{proof}
Theorem~\ref{thm:suff_cond_bolza} provides generic sufficient optimality
conditions for problem of the form~\eqref{eq:discrete_pb_ctrl_cost} inspired by
the continuous time viewpoint. However, as noted by~\citet{polak2011role},
optimality conditions in continuous time may not be informative for discrete
time counterparts. This is illustrated here by the difficulty to verify
convexity of $\intercost_t(\state_t, \cdot)$ or the concavity of
$\trueham_t(\cdot, \costate_{t+1}^*)$.

\section{Generic Convergence Results}\label{app:gen_cvg}
In this section, we present the convergence analysis for generic problems of the
form~\eqref{eq:discrete_pb_ctrl_cost}, recalled below. 
\begin{align}
	\min_{\substack{\ctrl_0,\ldots, \ctrl_{\horizon-1} \in \reals^\dimctrl\\ \state_0, \ldots, \state_\horizon \in \reals^\dimstate}} \quad 
	& \sum_{t=0}^{\horizon-1} \cost_t(\state_t, \ctrl_t) + \cost_\horizon(\state_\horizon)\label{eq:discrete_pb_ctrl_cost_app}\\
	\mbox{subject to} \quad & 
	\state_{t+1} = \dyn_t(\state_t, \ctrl_t) \quad \mbox{for} \ t \in \{0, \ldots, \horizon-1\}, 
	\qquad \state_0 = \initstate.\nonumber
\end{align}
We decompose the problem in a composition by defining first the control of
$\horizon$ discrete dynamics below.
\begin{definition}\label{def:traj_func_time_varying} We define the control of
  $\horizon$ discrete time dynamics $(\dyn_t:\reals^\dimstate \times
  \reals^\dimctrl \rightarrow \reals^\dimstate)_{t=0}^{\horizon-1}$ as the
  function $\traj: \reals^\dimstate \times \reals^{\horizon\dimctrl} \rightarrow
  \reals^{\horizon \dimstate}$, which, given an initial point $\state_0 \in
  \reals^{\dimstate}$ and a sequence of controls $\ctrls =
  (\ctrl_0;\ldots;\ctrl_{\horizon-1})\in \reals^{\horizon\dimctrl}$, outputs the
  corresponding trajectory $\state_1, \ldots, \state_\horizon$, i.e., 
	\begin{align}
		\label{eq:traj_time_varying}
		\traj(\state_0, \ctrls) & = ( \state_1;\ldots;\state_\horizon) \\
		\mbox{s.t.} \quad \state_{t+1} &= \dyn_t(\state_t, \ctrl_t) \quad \mbox{for} \ t \in \{0,\ldots, \horizon-1\}. \nonumber
	\end{align}
\end{definition}
We consider then the cost of a sequence of control variables $\ctrls = (\ctrl_0;
\ldots; \ctrl_{\horizon-1}) \in \reals^{\horizon \dimctrl}$ and associated
trajectory $\states = (\state_1; \ldots; \state_\horizon) \in \reals^{\horizon
\dimstate}$, as
\begin{align*}
  \fullcost(\states, \ctrls) & = \sum_{t=0}^{\horizon-1} \cost_t(\state_t, \ctrl_t) + \cost_\horizon(\state_\horizon).
\end{align*}
Problem~\eqref{eq:discrete_pb_ctrl_cost_app} amounts then to a compositional
problem of the form
\begin{equation}\label{eq:comp_gen}
  \min_{\ctrls \in \reals^{\horizon \dimctrl}}
  \fullobj(\ctrls),
  \ \mbox{for} \
  \fullobj(\ctrls) = \fullcost(\fullaugtraj(\ctrls)), 
  \
  \fullaugtraj(\ctrls) = (\traj(\ctrls, \initstate), \ctrls)
\end{equation}
We make the following regular smoothness assumptions.
\begin{assumption}\label{asm:gen_cvg} We consider all costs $\cost_t$ to be
  $\lip_{\cost}$ Lipschitz continuous, with $\smooth_{\cost}$ Lipschitz
  continuous gradients and $\smoothess_{\cost}$ Lipschitz continuous Hessians,
  then the cost function $\fullcost$ is $\lip_{\cost}$
  Lipschitz continuous, with $\smooth_{\cost}$ Lipschitz
  continuous gradients and $\smoothess_{\cost}$ Lipschitz
  continuous Hessians.

  We consider the dynamics to be Lipschitz-continuous with Lipschitz-continuous
  Jacobians such that the control $\traj$ of these dynamics is
  $\lip_{\traj}$ Lipschitz continuous with $\smooth_{\traj}$ Lipschitz
  continuous Jacobians as detailed in
  Lemma~\ref{lem:smooth_traj_from_dyn_time_varying}. The augmented function
  $\fullaugtraj$ is then $\lip_{\fullaugtraj}= \sqrt{\lip_{\traj}^2 + 1}$
  Lipschitz continuous with $\smooth_{\fullaugtraj}=\sqrt{\smooth_{\traj}^2 +
  1}$ Lipschitz continuous Jacobians.
\end{assumption}

\textcolor{BrickRed}{ \textbf{Note:} The notations $\fullcost, \fullobj,
\fullaugtraj$ used in Appendix~\ref{app:gen_cvg} for problems of the
form~\eqref{eq:discrete_pb_ctrl_cost_app} pertain only to Appendix~\ref{app:gen_cvg},
Lemma~\ref{lem:bounded_policies_gen_cvg} and
Corollary~\ref{cor:approx_ilqrs_gen}. In particular, the definition of
$\augtraj$ and its smoothness properties differ here than from the main text
(see~\eqref{eq:total_cost}).}

\subsection{Generic Convergence Results for ILQR}

As explained in the main text for the problem without control costs
(Section~\ref{sec:cvg}), the ILQR algorithm amounts to linearizing $\fullaugtraj$
and taking a quadratic approximation of $\fullcost$ such that, provided that the
minimum exists,
\begin{align}
  \LQR_\reg(\obj)(\ctrls) 
  & = \argmin_{\auxctrls \in \reals^{\horizon\dimctrl}} 
  \qua_{\fullcost}^{\fullaugtraj(\ctrls)}(\lin_{\fullaugtraj}^{\ctrls}(\auxctrls)) 
  + \frac{\reg}{2} \|\auxctrls\|_2^2\nonumber \\
  & = -(\nabla \fullaugtraj(\ctrls)\nabla^2 \fullcost(\fullaugtraj(\ctrls)) \nabla \fullaugtraj(\ctrls)^\top +\reg \idm)^{-1}  \nabla \fullaugtraj(\ctrls)\nabla \fullcost(\fullaugtraj(\ctrls)),\label{eq:rggn_oracle_app}
\end{align}
where $\lin_\fullaugtraj^{\ctrls}$ and $\qua_\fullcost^{\fullaugtraj(\ctrls)}$
are the linear and quadratic expansions of, respectively, the control in
$\horizon$ steps around $\ctrls$ and the total costs around
$\fullaugtraj(\ctrls)$. In other words the ILQR algorithm is a generalized
Gauss-Newton algorithm that exploits the compositional structure of the problem.
Lemma~\ref{lem:gen_stat_cvg_rggn} below presents then the convergence to
stationary point of the ILQR algorithm from the lens of a generalized
Gauss-Newton algorithm. Lemma~\ref{lem:local_gen_cvg_rggn} presents local
convergence guarantees.

\begin{lemma}\label{lem:gen_stat_cvg_rggn}
  Under assumption~\ref{asm:gen_cvg}, provided that the regularization $\reg$
  satisfies 
  \[\nu
  \geq 
  \max\left\{
    2\lip_{\fullaugtraj}^2 \smooth_\fullcost,
    \frac{\smooth_{\fullaugtraj} \lip_{\fullcost}}{2}
    \gamma\left(\frac{\smooth_\fullaugtraj\lip_\fullcost}{4\smooth_\fullcost \lip_\fullaugtraj(1+\beta)}\right)
  \right\},
  \]
  for $\gamma(x) = 1 + \sqrt{1+ 1/x}$ and $\beta = \smoothess_\fullcost \lip_\fullaugtraj^2/(3 \smooth_\fullaugtraj \smooth_\fullcost)$,
  the iterations of the \ref{eq:ilqr_algo} algorithm satisfy
  \[
    \min_{k\in \{0, \ldots, K\}} \|\nabla \fullobj(\ctrls^{(k)})\|_2 \leq \sqrt{\frac{2(\lip_{\fullaugtraj}^2\smooth_{\fullcost} + \nu)\left(\fullobj(\ctrls^{(0)}) - \min_{\ctrls \in \reals^{\horizon \dimctrl}} \fullobj(\ctrls)\right)}{K+1}},
  \]
  for $\fullobj, \fullcost, \fullaugtraj$ defining the objective in~\eqref{eq:comp_gen}.
\end{lemma}
\begin{proof}
  Using Lemma~\ref{lem:bound_approx_self_concord} adapted to
  Assumption~\ref{asm:gen_cvg}, we have that for any $\ctrls, \auxctrls \in
  \reals^{\horizon\dimctrl}$,  
  \begin{align*}
    (\fullcost\circ \fullaugtraj)(\ctrls + \auxctrls)
    & \leq (\fullcost \circ \fullaugtraj)(\ctrls) + 
    \qua_{\fullcost}^{\fullaugtraj(\ctrls)}\circ \lin_{\fullaugtraj}^\ctrls(\auxctrls) 
    + \frac{a_1 + a_2 \|\auxctrls\|_2}{2} \|\auxctrls\|_2^2,
  \end{align*}
  for 
  $
  a_1 = \smooth_{\fullaugtraj} \lip_{\fullcost}, a_2 =
  \smoothess_{\fullcost}\lip_{\fullaugtraj}^3/3 +
  \smooth_{\fullaugtraj}\smooth_{\fullcost} \lip_{\fullaugtraj}.
  $ 
  For $\reg > \lip_{\fullaugtraj}^2 \smooth_{\fullcost}$ the minimizer
  in~\eqref{eq:rggn_oracle_app} is uniquely defined. The oracle $\auxctrls =
  \LQR_\reg(\obj)(\ctrls)$ satisfies then $\|\auxctrls\|_2 \leq 
  \lip_{\fullaugtraj} \lip_{\fullcost}/\reg$. For 
  $\reg 
  \geq 
  a_1 (1+ \sqrt{1+ 4
  a_2 \lip_{\fullaugtraj} \lip_{\fullcost}/a_1^2})/2$, 
  we have $a_1 +
  a_2 \lip_{\fullaugtraj} \lip_{\fullcost}/\reg \leq \reg$. Expanding $a_1, a_2$, the condition $\reg \geq a_1 (1+ \sqrt{1+ 4
  a_2 \lip_{\fullaugtraj} \lip_{\fullcost}/a_1^2})/2$ reads
  \[
  \nu \geq
    \frac{\smooth_{\fullaugtraj} \lip_{\fullcost}}{2}
    \gamma\left(\frac{\smooth_\fullaugtraj\lip_\fullcost}{4\smooth_\fullcost \lip_\fullaugtraj(1+\beta)}\right)
  \]
  for $\gamma(x) = 1 + \sqrt{1+ 1/x}$ and $\beta = \smoothess_\fullcost \lip_\fullaugtraj^2/(3 \smooth_\fullaugtraj \smooth_\fullcost)$.
  Then, for
  $\auxctrls = \LQR_\reg(\obj)(\ctrls)$, we have
  \begin{align*}
    (\fullcost\circ \fullaugtraj)(\ctrls + \auxctrls)
    & \leq (\fullcost \circ \fullaugtraj)(\ctrls) + 
    \qua_{\fullcost}^{\fullaugtraj(\ctrls)}\circ \lin_{\fullaugtraj}^\ctrls(\auxctrls) 
    + \frac{\reg}{2} \|\auxctrls\|_2^2 \\
    & = (\fullcost \circ \fullaugtraj)(\ctrls) 
    - \frac{1}{2} \nabla (\fullcost \circ \fullaugtraj)(\ctrls)^\top
    (\nabla \fullaugtraj(\ctrls)\nabla^2 \fullcost(\fullaugtraj(\ctrls)) \nabla \fullaugtraj(\ctrls)^\top + \reg \idm)^{-1}
    \nabla (\fullcost \circ \fullaugtraj)(\ctrls) \\
    & \leq (\fullcost \circ \fullaugtraj)(\ctrls) - \frac{1}{2(\lip_{\fullaugtraj}^2\smooth_{\fullcost} + \nu)}\|\nabla (\fullcost \circ \fullaugtraj)(\ctrls)\|_2^2.
  \end{align*}
  We have then in terms of the ILQR iterations,
  \begin{align*}
    \frac{1}{2(\lip_{\fullaugtraj}^2\smooth_{\fullcost} + \nu)}\|\nabla (\fullcost \circ \fullaugtraj)(\ctrls^{(k)})\|_2^2 \leq 
    (\fullcost \circ
  \fullaugtraj)(\ctrls^{(k)}) - (\fullcost \circ
  \fullaugtraj)(\ctrls^{(k+1)}).
  \end{align*}
  Summing over $k=0, \ldots, K-1$ and taking the minimum on the left-hand side gives the result.
\end{proof}

\begin{lemma}\label{lem:local_gen_cvg_rggn}
  Consider Assumption~\ref{asm:gen_cvg}. Assume in addition that the dynamics are twice differentiable with Lipschitz-continuous Hessian such that $\fullobj = \fullcost \circ \fullaugtraj$ has
  $\smoothess_{\fullobj}$-Lipschitz continuous Hessians.
  Consider $\ctrls^{(k)}$ to be close to a minimum $\ctrls^*$ of $\fullobj = \fullcost \circ \fullaugtraj$ satisfying $\mu_{\fullobj} = \lambda_{\min}(\nabla^2 \fullobj(\ctrls^*)) > 0$. If  $\|\ctrls^{(k)} - \ctrls^*\|_2 \leq \strgcvx_\fullobj/\smoothess_\fullobj$, and the regularization satisfies 
  \[
    \reg \geq \kappa_\fullobj \max\{5\smooth_{\fullcost}\lip_{\fullaugtraj}^2 , 8 \smooth_{\fullobj}\},
  \]
  for $\kappa_\fullobj = \smooth_\fullobj/\strgcvx_\fullobj$,
  then the iterations of the \ref{eq:ilqr_algo} algorithm converge linearly to $\ctrls^*$ as 
  \[
    \|\ctrls^{(k+1)} -\ctrls^*\|_2
    \leq  \left(1 - \frac{\strgcvx_\fullobj}{16\reg}\right)\|\ctrls^{(k)}-\ctrls^*\|_2.
  \]
\end{lemma}
\begin{proof}
  Denote $R=\|\ctrls^{(k)} - \ctrls^*\|_2$ and $H = \int_{0}^{1} \nabla^2 \fullobj(\ctrls^* + t(\ctrls^{(k)} -\ctrls^*))dt$ such that 
  $\nabla \fullobj(\ctrls^{(k)})
  = \nabla \fullobj(\ctrls^{(k)}) - \nabla \fullobj(\ctrls^*)
  = H(\ctrls^{(k)} - \ctrls^*)$.
  Note that $(\mu_{\fullobj}/2) \idm \leq (\mu_{\fullobj} - \smoothess_{\fullobj}R/2)\idm \preceq H \preceq \smooth_{\fullobj}\idm$, where $\smooth_{\fullobj} \leq \smooth_{\fullcost} \lip_{\fullaugtraj}^2 + \smooth_{\fullaugtraj} \lip_{\fullcost}$ is the Lipschitz continuity parameter of $\fullobj = \fullcost \circ \fullaugtraj$.

  Denote $P = \nabla \fullaugtraj(\ctrls)\nabla^2 \fullcost(\fullaugtraj(\ctrls)) \nabla \fullaugtraj(\ctrls)^\top +\reg \idm$.
  We have 
  \begin{align}
    \|\ctrls^{(k+1)} -\ctrls^*\|_2^2 
    & \leq \|\ctrls^{(k+1)} - \ctrls^{(k)}\|_2^2 
    + (\ctrls^{(k+1)} - \ctrls^{(k)})^\top (\ctrls^{(k)} - \ctrls^*)
    + \|\ctrls^{(k)} - \ctrls^*\|_2^2 \nonumber \\
    & \leq \|P^{-1}H(\ctrls^{(k)} - \ctrls^*)\|_2^2
    - (\ctrls^{(k)} - \ctrls^*)^\top H P^{-1} (\ctrls^{(k)} - \ctrls^*)
    + \|\ctrls^{(k)} - \ctrls^*\|_2^2 \nonumber \\
    & \leq (1+ \smooth_{\fullobj}^2\reg^{-2})\|\ctrls^{(k)} - \ctrls^*\|_2^2 
    - (\ctrls^{(k)} - \ctrls^*)^\top H P^{-1} (\ctrls^{(k)} - \ctrls^*). \label{eq:local_cvg_gen_expansion}
  \end{align}
  Denote $C = \nabla \fullaugtraj(\ctrls)\nabla^2 \fullcost(\fullaugtraj(\ctrls)) \nabla \fullaugtraj(\ctrls)^\top$. For $\reg$ such that $\|C\|_2/\reg <1$, we have
  \begin{align*}
    P^{-1} = \reg^{-1}(\idm + \reg^{-1}C)^{-1} 
    = \reg^{-1} \idm + \reg^{-1} \sum_{j=1}^{+\infty} \reg^{-j} C^j 
  \end{align*}
  Denoting $G = \sum_{j=1}^{+\infty} \reg^{-j} C^j$, for $\|C\|_2/\reg \leq (\kappa^{-1}/4)/(1+\kappa^{-1}/4)$ with $\kappa = \smooth_{\fullobj}/\strgcvx_{\fullobj} \geq 1$,  we have
  $\|G\| \leq \kappa^{-1}/4$.
  We then have
  \begin{align*}
    -(\ctrls^{(k)} - \ctrls^*)^\top H P^{-1} (\ctrls^{(k)} - \ctrls^*) 
    & = -\reg^{-1} (\ctrls^{(k)} - \ctrls^*)^\top H (\ctrls^{(k)} \\
    & \quad - \ctrls^*) 
    -\reg^{-1} (\ctrls^{(k)} - \ctrls^*)^\top GH (\ctrls^{(k)} - \ctrls^*) \\
    & \leq - \frac{\reg^{-1} \mu_{\fullobj}}{2}\|\ctrls^{(k)} - \ctrls^*\|_2^2 
    + \frac{\reg^{-1} \smooth_{\fullobj}\kappa^{-1}}{4}\|\ctrls^{(k)} - \ctrls^*\|_2^2 \\
    & = - \frac{\reg^{-1} \mu_{\fullobj}}{4} \|\ctrls^{(k)} - \ctrls^*\|_2^2.
  \end{align*}
  Plugging the above equation into~\eqref{eq:local_cvg_gen_expansion}, we have that if $\reg$ satisfies in addition $\reg \geq 8 \smooth_{\fullobj}\kappa_{\fullobj}$, the iterates of the ILQR algorithm converge linearly to $\ctrls^*$ as
  \begin{align*}
    \|\ctrls^{(k+1)} -\ctrls^*\|_2^2 
    & \leq (1 + \smooth_{\fullobj}^2 \reg^{-2} - \reg^{-1} \mu_{\fullobj}/4)
    \|\ctrls^{(k)} -\ctrls^*\|_2^2 
    \leq \left(1 - \frac{\strgcvx_\cost}{8\reg}\right)\|\ctrls^{(k)}-\ctrls^*\|_2^2.
  \end{align*}
\end{proof}

\subsection{Generic Convergence Results for IDDP}
We analyze the convergence of the IDDP through the lens of the ILQR algorithm.
As these two algorithms differ simply by the roll-out procedure,
Lemmas~\ref{lem:approx_ilqrs} and~\ref{lem:bounded_policies_gen_cvg}, summarized in Corollary~\ref{cor:approx_ilqrs_gen}, show that their oracles differ by at most
\[
  \|\auxxctrls - \auxctrls\|_2  \leq \xi \|\auxctrls\|_2^2
\]
for $\xi$ independent of $\reg$ provided that $\reg$ is sufficiently large.

We can then show the convergence of IDDP to stationary points in Lemma~\ref{lem:iddp_stat_cvg}, as well as its local convergence behavior in Lemma~\ref{lem:iddp_local_gen_cvg}.
\begin{lemma}\label{lem:iddp_stat_cvg}
  Under Assumption~\ref{asm:gen_cvg}, provided that the regularization $\reg$ is larger than $2\lip_\fullaugtraj^2 \smooth_\fullcost$ the oracle $\auxctrls=\LQR_\reg(\fullobj)(\ctrls)$ returned by the \ref{eq:ilqr_algo} algorithm and the oracle $\auxxctrls=\LQR_\reg(\fullobj)(\ctrls)$ returned by the \ref{eq:iddp_algo} algorithm differ as 
  \[
    \|\auxctrls - \auxxctrls\|_2 \leq \xi \|\auxctrls\|_2^2,
  \] 
  for $\xi$ a constant independent of $\reg$.
  Moreover, if the regularization $\reg$ satisfies 
  \[
    \reg \geq 
    \max\left\{
      2\lip_\fullaugtraj^2 \smooth_\fullcost,
    \frac{\smooth_{\fullaugtraj} \lip_{\fullcost}}{2}
    \gamma\left(\frac{\smooth_\fullaugtraj\lip_\fullcost}{4\smooth_\fullcost \lip_\fullaugtraj(1+\beta + \beta')}\right)
    \right\}
  \]
  for $\gamma(x) = 1 + \sqrt{1+ 1/x}$ and $\beta = \smoothess_\fullcost \lip_\fullaugtraj^2/(3 \smooth_\fullaugtraj \smooth_\fullcost)$, $\beta' = 2\lip_\cost \xi /(\smooth_\fullcost\smooth_\fullaugtraj)$,
  the iterations of the \ref{eq:iddp_algo} algorithm satisfy
  \[
    \min_{k\in \{0, \ldots, K\}} \|\nabla \fullobj(\ctrls^{(k)})\|_2 \leq \sqrt{\frac{2(\lip_{\fullaugtraj}^2\smooth_{\fullcost} + \nu)\left(\fullobj(\ctrls^{(0)}) - \min_{\ctrls \in \reals^{\horizon \dimctrl}} \fullobj(\ctrls)\right)}{K+1}},
  \]
  for $\fullobj, \fullcost, \fullaugtraj$ defining the objective in~\eqref{eq:comp_gen}.
\end{lemma}
\begin{proof}
  To show the convergence of the \ref{eq:iddp_algo}, we consider selecting $\reg$ such that 
  \[
  \fullobj(\ctrls + \auxxctrls) \leq 
  \fullobj(\ctrls) 
  + 
  \qua_{\fullcost}^{\fullaugtraj(\ctrls)}
  \lin_{\fullaugtraj}^{\ctrls}(\auxctrls) 
  + \frac{\reg}{2}\|\auxctrls\|_2^2
  \]
  for $\auxxctrls = \DDP_\reg(\fullobj)(\ctrls)$ and $\auxctrls = \LQR_\reg(\fullobj)(\ctrls)$. 
  Using that costs and dynamics are Lipschitz continuous, we have 
  \[
    |\fullobj(\ctrls + \auxxctrls) - \fullobj(\ctrls + \auxctrls)| 
    \leq 
    \lip_{\fullcost}\lip_{\fullaugtraj}
    \|\auxxctrls - \auxctrls\|_2.
  \]
  On the other hand, by Corollary~\ref{cor:approx_ilqrs_gen} for $\reg \geq 2\smooth_\fullcost\lip_\fullaugtraj^2$, there exists a constant $\xi$ independent of $\reg$ such that $\|\auxxctrls - \auxctrls\|_2\leq \xi \|\auxctrls\|_2^2$ and so
  \[
    |\fullobj(\ctrls + \auxxctrls) - \fullobj(\ctrls + \auxctrls)| 
    \leq 
    \lip_{\fullcost}\lip_{\fullaugtraj}\xi
    \|\auxctrls\|_2^2.
  \]
  Now using Lemma~\ref{lem:bound_approx_self_concord} adapted to
  Assumption~\ref{asm:gen_cvg}, we have that for any $\ctrls, \auxctrls \in
  \reals^{\horizon\dimctrl}$,  
  \begin{align*}
    \fullobj(\ctrls + \auxctrls)
    & \leq \fullobj(\ctrls) + 
    \qua_{\fullcost}^{\fullaugtraj(\ctrls)}\circ \lin_{\fullaugtraj}^\ctrls(\auxctrls) 
    + \frac{a_1 + a_2 \|\auxctrls\|_2}{2} \|\auxctrls\|_2^2,
  \end{align*}
  for 
  $
  a_1 = \smooth_{\fullaugtraj} \lip_{\fullcost}, a_2 =
  \smoothess_{\fullcost}\lip_{\fullaugtraj}^3/3 +
  \smooth_{\fullaugtraj}\smooth_{\fullcost} \lip_{\fullaugtraj}.
  $ 
  Hence, we have 
  \[
    \fullobj(\ctrls + \auxxctrls) \leq 
    \fullobj(\ctrls) 
    + 
    \qua_{\fullcost}^{\fullaugtraj(\ctrls)}
    \lin_{\fullaugtraj}^{\ctrls}(\auxctrls) 
    + \frac{a_1 + a_3 \|\auxctrls\|_2}{2}\|\auxctrls\|_2^2  
  \]
  for $a_3 = a_2 + 2\lip_\fullcost\lip_\fullaugtraj \xi$. Selecting $\reg \geq a_1 (1+ \sqrt{1+ 4
  a_3 \lip_{\fullaugtraj} \lip_{\fullcost}/a_1^2})/2$, that is, 
  \[
  \reg \geq 
    \frac{\smooth_{\fullaugtraj} \lip_{\fullcost}}{2}
    \gamma\left(\frac{\smooth_\fullaugtraj\lip_\fullcost}{4\smooth_\fullcost \lip_\fullaugtraj(1+\beta + \beta')}\right)
  \]
  for $\gamma, \beta$ defined as in Lemma~\ref{lem:gen_stat_cvg_rggn} and $\beta' = 2\lip_\cost \xi /(\smooth_\fullcost\smooth_\fullaugtraj)$
  ensures that $a_1 + a_3 \|\auxctrls\|_2\leq \reg$. So we get that 
  \[
    \fullobj(\ctrls + \auxxctrls) \leq 
  \fullobj(\ctrls) 
  + 
  \qua_{\fullcost}^{\fullaugtraj(\ctrls)}
  \lin_{\fullaugtraj}^{\ctrls}(\auxctrls) 
  + \frac{\reg}{2}\|\auxctrls\|_2^2.
  \] 
  The rest of the proof follows exactly the proof of Lemma~\ref{lem:gen_stat_cvg_rggn}.
\end{proof}

\begin{lemma}\label{lem:iddp_local_gen_cvg}
  Consider Assumption~\ref{asm:gen_cvg}. Assume in addition that the dynamics are twice differentiable with Lipschitz-continuous Hessian such that $\fullobj = \fullcost \circ \fullaugtraj$ has
  $\smoothess_{\fullobj}$-Lipschitz continuous Hessians.
  Consider $\ctrls^{(k)}$ to be close to a minimum $\ctrls^*$ of $\fullobj = \fullcost \circ \fullaugtraj$ satisfying $\mu_{\fullobj} = \lambda_{\min}(\nabla^2 \fullobj(\ctrls^*)) > 0$. If  $\|\ctrls^{(k)} - \ctrls^*\|_2 \leq \strgcvx_\fullobj/\smoothess_\fullobj$, and the regularization satisfies 
  \[
    \reg \geq  \max\{
      2 \lip_\fullaugtraj^2 \smooth_\fullcost, 5\kappa_\fullobj\smooth_{\fullcost}\lip_{\fullaugtraj}^2 ,
      8 \kappa_\fullobj\smooth_{\fullobj},
      32 \xi \frac{\smooth_\fullobj^2}{\smoothess_\fullobj}
      \},
  \]
  for $\kappa_\fullobj = \smooth_\fullobj/\strgcvx_\fullobj$,
  then the iterations of the \ref{eq:iddp_algo} algorithm converge linearly to $\ctrls^*$ as 
  \[
    \|\ctrls^{(k+1)} -\ctrls^*\|_2
    \leq  \left(1 - \frac{\strgcvx_\fullobj}{32\reg}\right)\|\ctrls^{(k)}-\ctrls^*\|_2.
  \]
\end{lemma}
\begin{proof}
  Given the $k$\textsuperscript{th} iteration $\ctrls^{(k)}$ of the \ref{eq:iddp_algo}, denote 
  \[
    \ctrls^{(k+1)}_{\mathrm{IDDP}} = \ctrls^{(k)} 
    + \DDP(\fullobj)(\ctrls^{(k)}), 
    \quad 
    \ctrls^{(k+1)}_{\mathrm{ILQR}} =
    \ctrls^{(k)} 
    + \LQR(\fullobj)(\ctrls^{(k)}), 
  \] 
  the next iteration if the $\LQR$ or the $\DDP$ oracles are used respectively.
  We have using Lemma~\ref{lem:gen_stat_cvg_rggn} and Lemma~\ref{lem:approx_ilqrs},
  \begin{align*}
    \|\ctrls^{(k+1)}_{\mathrm{IDDP}} - \ctrls^*\|_2
    & \leq \|\ctrls^{(k+1)}_{\mathrm{ILQR}} - \ctrls^*\|_2
    + \xi \|\LQR(\fullobj)(\ctrls^{(k)})\|_2^2 \\
    & \leq \left(
      1
      - \frac{\strgcvx_\fullobj}{16\reg}
      + \xi \frac{\smooth_\fullobj^2}{\reg^2}\|\ctrls^{(k)}- \ctrls^*\|_2
    \right)\|\ctrls^{(k)}- \ctrls^*\|_2.
  \end{align*}
  The result follows by using that  $\|\ctrls^{(k)}- \ctrls^*\|_2 \leq \strgcvx_\fullobj/\smoothess_\fullobj$ and taking
  \[
  \reg \geq \frac{32\xi \smooth_\fullobj}{\smoothess_\fullobj}.
  \]
\end{proof}

\section{Conditioning Analysis}\label{app:cond_analysis}
\subsection{Smoothness Estimations}\label{ssec:smoothness}
To derive simple bounds on the Lipschitz-continuity constants of the trajectory
function $\traj$, we present first a compact formulation of the first and second
order information of $\traj$ with respect to the first and second order
information of the dynamics $(\dyn_t)_{t=0}^{\horizon-1}$ in
Lemma~\ref{lem:grad_hess_detailed}. We require the following tensor notations in
this subsection.

A tensor $\mathcal{A} = (a_{i,j,k})_{1\leq i\leq d, 1\leq j\leq p, 1\leq k\leq
n} \in \reals^{d \times p \times n}$ is represented as a list of matrices
$\mathcal{A} = (A_1,\ldots, A_n)$ where $A_k = (a_{i,j,k})_{1 \leq i \leq d, 1
\leq j \leq p} \in \reals^{d\times p}$ for $ k\in \{1,\ldots n\}$. Given
$\mathcal{A}  \in \reals^{d\times p\times n}$ and  $P \in \reals^{d \times d'},
Q \in \reals^{p \times p'}, R \in \reals^{n \times n'}$, we denote
\[
\mathcal{A}[P, Q, R] = \left(\sum_{k=1}^{n} R_{k,1}P^\top A_k Q,  \ldots,  \sum_{k=1}^{n} R_{k,n'}P^\top A_k Q \right) \in \reals^{d'\times p'\times n'}.
\]
For $\mathcal{A}_0 \in \reals^{d_0\times p_0\times n_0}$, $P \in \reals^{d_0
\times d_1}, Q \in \reals^{p_0 \times p_1}, R \in \reals^{n_0 \times n_1}$
denote $\mathcal{A}_1= \mathcal{A}_0[P, Q, R] \in \reals^{d_1\times p_1\times
n_1}$. Then, for $S \in \reals^{d_1\times d_2}, T \in \reals^{p_1 \times p_2}, U
\in \reals^{ n_1\times n_2}$, we have 
\[
\mathcal{A}_1[S, T, U] = \mathcal{A}_0[PS, QT, RU] \in \reals^{d_2\times p_2\times n_2}.
\]
If $P, Q$ or $ R$ are identity matrices, we use the symbol ``$\: \cdot\: $'' in
place of the identity matrix. For example, we denote $\mathcal{A}[P, Q, \idm_n]
= \mathcal{A}[P,Q, \cdot] = \left(P^\top A_1 Q,  \ldots,  P^\top A_n Q \right)$.
If $P, Q$ or $R$ are vectors we consider the flattened object. In particular, for
$x\in \reals^d, y\in \reals^p$, we denote
\[
\mathcal{A}[x, y, \cdot] =  \left(\begin{matrix}
	x^\top A_1 y,\ldots, x^\top A_ny
\end{matrix}
\right)^\top\in \reals^n,
\]
rather than having $\mathcal{A}[x, y, \cdot] \in \reals^{1 \times 1\times n}$.
Similarly, for $z\in \reals^n$, we denote
\[
\mathcal{A}[\cdot, \cdot, z] = \sum_{k=1}^n z_kA_k \in \reals^{d\times p}.
\]
We denote $\|a\|_2$ the Euclidean norm for $a \in \reals^d$, $\|A\|_{2, 2}$ the
spectral norm of a matrix $A\in \reals^{d\times p}$, and we define the norm of a
tensor $\mathcal{A}$ induced by the Euclidean norm  as
$
\|\mathcal{A}\|_{2, 2, 2} = \sup_{x \neq 0, y \neq 0, z \neq 0} \mathcal{A}[x,y,z] /(\|x\|_2 \|y\|_2 \|z\|_2).
$

We refer to the third-order tensor agglomerating all second derivatives
$\partial_{x_i x_j}^2 f_k(x)$ of a vector function $f: \reals^a \rightarrow
\reals^b$ at a point $x \in \reals^a$ as simply the Hessian of $f$ at $x \in
\reals^a$.
\begin{lemma}\label{lem:grad_hess_detailed}
	Consider the control  $\traj$ of $\horizon$ dynamics $(\dyn_t)_{t=0}^{\horizon-1}$ as defined in Def.~\ref{def:traj_func_time_varying} and an initial point $\state_0 \in \reals^\dimstate$.
	For $\states = (\state_1;\ldots; \state_\horizon)$ and $\ctrls=(\ctrl_0; \ldots;\ctrl_{\horizon-1})$, define
	\[
	\concatdyn(\states, \ctrls) = (\dyn_0(\state_0, \ctrl_0); \ldots;\dyn_{\horizon-1}(\state_{\horizon-1}, \ctrl_{\horizon-1})),
	\]
  such that $\states = \traj(\state_0, \ctrls)$ is the unique solution of the
	implicit equation $\states = \concatdyn(\states, \ctrls)$.
  The transpose Jacobian of the control $\traj$ of the dynamics
	$(\dyn_t)_{t=0}^{\horizon-1}$  on $\ctrls\in \reals^{\horizon\dimctrl}$ can be
	written
	\[
	\nabla_\ctrls \traj (\state_0, \ctrls) = \nabla_{\ctrls} \concatdyn(\states, \ctrls) (\idm - \nabla_\states \concatdyn(\states, \ctrls))^{-1}. 
	\]
	The Hessian of  the control  $\traj$ of the dynamics $(\dyn_t)_{t=0}^{\horizon-1}$  on $\ctrls\in  \reals^{\horizon\dimctrl}$ can be written
	\begin{align*}
		\nabla^2_{\ctrls\ctrls} \traj(\state_0,\ctrls)  & = 
		\nabla^2_{\states\states}\concatdyn(\states, \ctrls)[N , N,  M ] 
		+ \nabla^2_{\ctrls\ctrls} \concatdyn(\states, \ctrls)[\cdot, \cdot, M ]  \\ 
		& \quad + \nabla^2_{\states\ctrls}\concatdyn(\states, \ctrls)[N,  \cdot, M ]
		+ \nabla^2_{\ctrls\states}\concatdyn(\states, \ctrls)[\cdot, N,  M ],
	\end{align*}
	where $M =  (\idm - \nabla_\states \concatdyn(\states, \ctrls))^{-1}$ and $N=  \nabla_\ctrls \traj (\state_0, \ctrls) ^\top $.
\end{lemma}
\begin{proof}
	Denote simply, for $\ctrls \in \reals^{\horizon \dimctrl}$,  $\auxdyn(\ctrls) = \traj(\state_0, \ctrls)$ with $\state_0$ a fixed initial state. By definition, the function $\auxdyn$ can be decomposed, for $\ctrls\in \reals^{\horizon\dimctrl}$, as $\auxdyn(\ctrls) = (\auxdyn_1(\ctrls); \ldots;\auxdyn_\horizon(\ctrls))$, such that
	\begin{equation}\label{eq:detailed_traj_func}
		\auxdyn_{t+1}(\ctrls)  = \dyn_t(\auxdyn_{t}(\ctrls), E_t^\top \ctrls) \quad \mbox{for} \ t \in \{0, \ldots, \horizon-1\},
	\end{equation}
	with $\auxdyn_{0}(\ctrls) = \state_0$ and for $t \in \{0, \ldots, \horizon-1\}$,  $E_t = e_t \otimes \idm_{\dimctrl}$ is such that $E_t^\top \ctrls = \ctrl_t$, with $e_t$ the $t+1$\textsuperscript{th} canonical vector in $\reals^{\horizon}$, $\otimes$ the Kronecker product and $\idm_{\dimctrl} \in \reals^{\dimctrl\times\dimctrl}$ the identity matrix. By taking the derivative of \eqref{eq:detailed_traj_func}, we get,
	denoting $\state_t = \auxdyn_{t}(\ctrls)$ for $t\in \{0, \ldots, \horizon\}$  and
	using that $E_t^\top \ctrls= \ctrl_t$, 
	\begin{equation}\nonumber
		\nabla\auxdyn_{t+1}(\ctrls)  = \nabla \auxdyn_{t}(\ctrls) \nabla_{\state_t}\dyn_t(\state_t, \ctrl_t) + E_t \nabla_{\ctrl_t} \dyn_t(\state_t,\ctrl_t) \quad \mbox{for} \  t\in \{0, \ldots, \horizon-1\}.
	\end{equation}
	So, for $\auxctrls= (\auxctrl_0;\ldots;\auxctrl_{\horizon-1}) \in \reals^{\horizon\dimctrl}$, denoting $\nabla \auxdyn(\ctrls)^\top \auxctrls = (\auxstate_1;\ldots;\auxstate_\horizon)$ s.t. $\nabla \auxdyn_{t}(\ctrls)^\top \auxctrls = \auxstate_t$ for $t\in \{1, \ldots, \horizon\}$, 
	we have, with $\auxstate_0=0$,
	\begin{equation}\label{eq:detailed_grad_traj_func_simp}
		\auxstate_{t+1} = \nabla_{\state_t}\dyn_t(\state_t, \ctrl_t)^\top \auxstate_t + \nabla_{\ctrl_t} \dyn_t(\state_t,\ctrl_t)^\top \auxctrl_t \quad \mbox{for} \  t\in \{0, \ldots, \horizon-1\}.
	\end{equation}
	Denoting $\auxstates = (\auxstate_1;\ldots;\auxstate_{\horizon})$, we have then 
	\[
	(\idm-A)\auxstates = B\auxctrls, \quad \mbox{i.e.},  \quad \nabla \auxdyn(\ctrls)^\top\auxctrls = (\idm-A)^{-1} B\auxctrls,
	\]
	where $A = \sum_{t=1}^{\horizon-1} e_te_{t+1}^\top \otimes A_t$ with $A_t = \nabla_{\state_t} \dyn_t(\state_t, \ctrl_t) ^\top $ for $t \in \{1, \ldots, \horizon-1\}$ and $B = \sum_{t=1}^{\horizon} e_te_t^\top \otimes B_{t-1}$ with $B_t = \nabla_{\ctrl_t} \dyn_t(\state_t, \ctrl_t)^\top$ for $t\in \{0, \ldots, \horizon-1\}$, i.e.
	\begin{align*}
		A = \begin{pmatrix}
			0 & \ldots &  &\ldots & 0 \\
			A_1 & \ddots &  &  & \vdots \\
			0 & \ddots& &  &  \\
			\vdots & \ddots & \ddots & \ddots & \vdots \\
			0 & \ldots & 0 & A_{\horizon-1} & 0 
		\end{pmatrix}, \quad 
    B = \begin{pmatrix}
			B_0 & 0 & \ldots & 0 \\
			0 & \ddots & \ddots & \vdots \\
			\vdots & \ddots & \ddots & 0 \\
			0 & \ldots & 0 & B_{\horizon-1}
		\end{pmatrix}.
	\end{align*}
	By definition of  $\concatdyn$ in the claim,  one easily check that $A = \nabla_\states \concatdyn(\states, \ctrls)^\top$ and $B = \nabla_\ctrls \concatdyn(\states, \ctrls)^\top$. Therefore, we get 
	\[
	\nabla_{\ctrls} \traj(\state_0, \ctrls)	= \nabla \auxdyn(\ctrls) = \nabla_\ctrls \concatdyn(\states, \ctrls) (\idm- \nabla_\states \concatdyn(\states, \ctrls))^{-1}.
	\]	
	For the Hessian, note that for $g:\reals^d \rightarrow \reals^p$, $f:\reals^p \rightarrow \reals$, $x\in \reals^d$,  we have  
	$
	\nabla^2 (f\circ g)(x) = \nabla g(x)\nabla^2f(x)\nabla g(x)^\top + \nabla^2 g(x) [\cdot, \cdot, \nabla f(x)] \in \reals^{d\times d}.
	$
	If $f :\reals^p \rightarrow \reals^n$, we have 
	$
	\nabla^2 (f\circ g)(x) = \nabla^2 f(x)[\nabla g(x)^\top, \nabla g(x)^\top, \cdot]+ \nabla^2 g(x) [\cdot, \cdot, \nabla f(x)]\in \reals^{d\times d \times n}.
	$
	Applying this on $\dyn_t\circ g_t$ for $g_t(\ctrls) = (\auxdyn_{t}(\ctrls), E_t^\top\ctrls)$, we get from~\eqref{eq:detailed_traj_func}, using that $\nabla g_t(\ctrls)= (\nabla \auxdyn_{t}(\ctrls), E_t)$,
	\begin{align} \nonumber
		\nabla^2\auxdyn_{t+1}(\ctrls) & = 
		\nabla^2\auxdyn_{t}(\ctrls)[\cdot, \cdot, \nabla_{\state_t} \dyn_t(\state_t, \ctrl_t)]
		\\ 
		& 	\quad + \nabla^2_{\state_t\state_t} \dyn_t(\state_t, \ctrl_t)[\nabla \auxdyn_{t}(\ctrls)^\top, \nabla \auxdyn_{t}(\ctrls)^\top, \cdot]+ \nabla^2_{\ctrl_t\ctrl_t}\dyn_t(\state_t, \ctrl_t)[E_t^\top , E_t^\top, \cdot]  \nonumber\\
		&\quad  +	\nabla^2_{\state_t\ctrl_t}\dyn_t(\state_t, \ctrl_t)[\nabla \auxdyn_{t}(\ctrls)^\top, E_t^\top , \cdot] 
		+	\nabla^2_{\ctrl_t\state_t}\dyn_t(\state_t, \ctrl_t)[E_t^\top , \nabla \auxdyn_{t}(\ctrls)^\top, \cdot],  \nonumber
	\end{align}
	for $t\in \{0, \ldots, \horizon-1\}$, with $\nabla^2\auxdyn_0(\ctrls) = 0$.
 Therefore, for $\auxctrls= (\auxctrl_0;\ldots;\auxctrl_{\horizon-1}), \auxxctrls = (\auxxctrl_0;\ldots;\auxxctrl_{\horizon-1}) \in \reals^{\horizon\dimctrl}$, $\dualvars = (\dualvar_1;\ldots;\dualvar_{\horizon}) \in \reals^{\horizon \dimstate}$, we get
	\begin{align}\nonumber
		\nabla^2\auxdyn(\ctrls)[\auxctrls, \auxxctrls, \dualvars] &= \sum_{t=0}^{\horizon-1}\nabla^2\auxdyn_{t+1}(\ctrls)[\auxctrls, \auxxctrls, \dualvar_{t+1}]\\
		& = \sum_{t=0}^{\horizon-1} \Big( \nabla^2_{\state_t\state_t}\dyn_t(\state_t, \ctrl_t)[\auxstate_{t}, \auxxstate_{t}, \costate_{t+1}] 
		+ \nabla^2_{\ctrl_t\ctrl_t}\dyn_t(\state_t, \ctrl_t)[\auxctrl_t, \auxxctrl_t, \costate_{t+1} ]\label{eq:detailed_hessian_traj}\\
		& \hspace{35pt} + \nabla^2_{\state_t\ctrl_t}\dyn_t(\state_t, \ctrl_t)[\auxstate_t,\auxxctrl_t, \costate_{t+1} ] + \nabla^2_{\ctrl_t\state_t}\dyn_t(\state_t, \ctrl_t)[\auxctrl_t,\auxxstate_t, \costate_{t+1} ]
		\Big),  \nonumber
	\end{align}
	where $\auxstates = (\auxstate_1;\ldots;\auxstate_\horizon) = \nabla \auxdyn(\ctrls)^\top\auxctrls$,  $\auxxstates = (\auxxstate_1;\ldots;\auxxstate_\horizon) = \nabla \auxdyn(\ctrls)^\top\auxxctrls$, with $\auxstate_0 = \auxxstate_0=0$ and  $\costates = (\costate_1;\ldots;\costate_\horizon) \in \reals^{\horizon\dimstate}$ is defined by
	\begin{align*}
		\costate_{t} & =  \nabla_{\state_t} \dyn_{t}(\state_{t}, \ctrl_{t}) \costate_{t+1} + \dualvar_t\qquad \mbox{for} \ t\in\{1,\ldots, \horizon-1\}, \quad \costate_\horizon  =\dualvar_\horizon.
	\end{align*}
	On the other hand, denoting $\concatdyn_t(\states, \ctrls) = \dyn_t(\state_t, \ctrl_t)$ for $t\in \{0, \ldots, \horizon-1\}$, the Hessian of $\concatdyn$ with respect to the variables $\ctrls$ can be decomposed as
	\[
	\nabla^2_{\ctrls\ctrls}\concatdyn(\states,\ctrls)[\auxctrls, \auxxctrls, \costates] = \sum_{t=0}^{\horizon-1} \nabla^2_{\ctrls\ctrls} \concatdyn_t(\states, \ctrls)[\auxctrls, \auxxctrls, \costate_{t+1}] = \sum_{t=0}^{\horizon-1} \nabla^2_{\ctrl_t\ctrl_t} \dyn_t(\state_t, \ctrl_t)[\auxctrl_t, \auxxctrl_t, \costate_{t+1}].
	\]
	The Hessian of $\concatdyn$ with respect to the variable $\states$ can be decomposed as 
	\begin{align*}
		\nabla^2_{\states\states}\concatdyn(\states, \ctrls) [\auxstates, \auxxstates, \costates] & = \sum_{t=0}^{\horizon-1} \nabla^2_{\states\states} \concatdyn_t(\states,\ctrls)[\auxstates, \auxxstates, \costate_{t+1}] =
		\sum_{t=1}^{\horizon-1} \nabla^2_{\state_t\state_t} \dyn_t(\state_t,\ctrl_t)[\auxstate_t, \auxxstate_t, \costate_{t+1}].
	\end{align*}
	A similar decomposition can be done for $\nabla^2_{\states\ctrls}\concatdyn(\states, \ctrls)$. From~\eqref{eq:detailed_hessian_traj}, we then get 
	\begin{align*}
		\nabla^2\auxdyn(\ctrls)[\auxctrls, \auxxctrls, \dualvars]  & 
		= \nabla^2_{\states\states} \concatdyn(\states, \ctrls)[\auxstates, \auxxstates, \costates] 
		+  \nabla^2_{\ctrls\ctrls}\concatdyn(\states,\ctrls)[\auxctrls, \auxxctrls, \costates]  \\
		& + \nabla^2_{\states\ctrls}\concatdyn(\states,\ctrls)[\auxstates, \auxxctrls, \costates] 
		+ \nabla^2_{\ctrls\states} \concatdyn(\states, \ctrls)[\auxctrls, \auxxstates, \costates].
	\end{align*}
	Finally, by noting that 
 \begin{align*}
     \auxstates & = (\nabla_\ctrls \concatdyn(\states, \ctrls)(\idm -\nabla_\states \concatdyn(\states, \ctrls))^{-1} )^\top \auxctrls, \\
     \auxxstates & =(\nabla_\ctrls \concatdyn(\states, \ctrls)(\idm -\nabla_\states \concatdyn(\states, \ctrls))^{-1} )^\top \auxxctrls \\
     \costates & = (\idm -\nabla_\states \concatdyn(\states, \ctrls))^{-1} \dualvars
 \end{align*}
 the claim is shown.
\end{proof}

Lemma~\ref{lem:grad_hess_detailed} can be used to get estimates on the smoothness properties of the control of $\horizon$ dynamics given the smoothness properties of each individual dynamics. 
\begin{lemma}\label{lem:smooth_traj_from_dyn_time_varying}
	If $\horizon$  dynamics $(\dyn_t)_{t=0}^{\horizon-1}$ are Lipschitz continuous with Lipschitz continuous Jacobians, then the function $\ctrls\rightarrow \traj(\state_0, \ctrls)$, with $\traj$ the control of the $\horizon$ dynamics $(\dyn_t)_{t=0}^{\horizon-1}$, is $\lip_\traj$-Lipschitz continuous and has $\smooth_\traj$-Lipschitz continuous Jacobians with 
	\begin{equation}\label{eq:smooth_traj_from_dyn_time_varying}
		\lip_{\traj} \leq \lipdynctrl S, \qquad \smooth_{\traj} \leq 
		S(\smoothdynstate \lip_{\traj}^2 + 2 \smoothdynstatectrl\lip_{\traj} + \smoothdynctrl),
	\end{equation}
	where
	$\lip_{\dyn_t}^\ctrl {=} \sup_{\state, \ctrl } \|\nabla_\ctrl \dyn(\state, \ctrl)\|_{2, 2}$, 
	$\lip_{\dyn_t}^\state {=} \sup_{\state, \ctrl } \|\nabla_\state \dyn(\state, \ctrl)\|_{2, 2}$, 
	$\smooth_{\dyn_t}^{\state\state} {=} \sup_{\state, \ctrl } \|\nabla_{\state\state}^2 \dyn(\state, \ctrl)\|_{2, 2, 2}$, 
	$\smooth_{\dyn_t}^{\ctrl\ctrl} = \sup_{\state, \ctrl } \|\nabla_{\ctrl\ctrl}^2 \dyn(\state, \ctrl)\|_{2, 2, 2}$, 
	$\smooth_{\dyn_t}^{\state\ctrl} = \sup_{\state, \ctrl } \|\nabla_{\state\ctrl}^2 \dyn(\state, \ctrl)\|_{2, 2, 2}$, $S {=} \sum_{t=0}^{\horizon-1}(\lipdynstate)^t$,  and we drop the index $t$ to denote the maximum over all dynamics such as $\lip_{\dyn}^\state = \max_{t\in \{0, \ldots, \horizon-1\}} \lip_{\dyn_t}^\state$.
\end{lemma}
\begin{proof}
	The Lipschitz continuity constant of $ \ctrls\rightarrow \traj(\state_0, \ctrls)$ and its Jacobians can be estimated by upper bounding  the norm of the Jacobians and the Hessians.
	With the notations of Lemma~\ref{lem:grad_hess_detailed}, $\nabla_\states \concatdyn(\states, \ctrls)$ is nilpotent of degree $\horizon$ since it can be written $\nabla_\states \concatdyn(\states, \ctrls)=  \sum_{t=1}^{\horizon-1} e_{t+1}e_t^\top \otimes \nabla_{\state_t} \dyn_t(\state_t, \ctrl_t)$ and $(A\otimes B) (C\otimes D) = (AC \otimes BD)$. Hence, we have
	\[
	(\idm - \nabla_\states \concatdyn(\states, \ctrls))^{-1} = \sum_{t=0}^{\horizon-1} \nabla_\states \concatdyn(\states, \ctrls)^t.
	\]
	The Lipschitz continuity constant of $\traj$ is then estimated by
	\[
	\|\nabla_\ctrls \traj(\state_0, \ctrls)\|_{2, 2} \leq \|\nabla_\ctrls\concatdyn(\states, \ctrls)\|_{2, 2} \|(\idm - \nabla_\states \concatdyn(\states, \ctrls))^{-1}\|_{2, 2}\leq
	\lipdynctrl \sum_{t=0}^{\horizon-1} (\lipdynstate)^t.
	\]
	As shown in Lemma~\ref{lem:grad_hess_detailed}, the Hessian of $ \ctrls\rightarrow \traj(\state_0, \ctrls)$ can be decomposed as
	\begin{align*}
		\nabla^2_{\ctrls\ctrls} \traj(\state_0,\ctrls)  & 
		= \nabla^2_{\states\states}\concatdyn(\states, \ctrls)[N , N,  M ] 
		+ \nabla^2_{\ctrls\ctrls} \concatdyn(\states, \ctrls)[\cdot, \cdot, M ]  \\
		& + \nabla^2_{\states\ctrls}\concatdyn(\states, \ctrls)[N,  \cdot, M ]
		+ \nabla^2_{\ctrls\states}\concatdyn(\states, \ctrls)[\cdot, N,  M ],
	\end{align*}
	where $M =  (\idm - \nabla_\states \concatdyn(\states, \ctrls))^{-1}$ and $N=  \nabla_\ctrls \traj(\initstate, \ctrls)^\top$. Given the structure of  $\concatdyn$, bounds on the Hessians are  $\|\nabla^2_{ab}\concatdyn(\states, \ctrls)\|_{2, 2, 2}\leq \smooth_\dyn^{ab}$ for $a, b \in \{\states, \ctrls\}$, where $\|\mathcal{A} \|_{2, 2, 2}$ is the norm of a tensor $\mathcal{A}$ w.r.t. the Euclidean norm as defined in the notations.  Note that for a given tensor $\mathcal{A} \in \reals^{d\times p\times n}$ and  $P,Q,R$ of  appropriate sizes, we have $\|\mathcal{A}[P,Q,R]\|_{2, 2, 2} \leq  \|\mathcal{A}\|_{2, 2, 2} \|P\|_{2, 2}\|Q\|_{2, 2}\|R\|_{2, 2}$. We then get
	\begin{align*}
	   ||\nabla^2_{\ctrls\ctrls} \traj(\state_0,\ctrls)||_{2, 2, 2} 
            \leq \smoothdynstate \|N\|_{2, 2}^2 \|M\|_{2, 2} 
            + \smoothdynctrl\|M\|_{2, 2} 
            + 2\smoothdynstatectrl \|M\|_{2, 2} \|N\|_{2, 2},
	\end{align*}
	where for twice differentiable functions we used that $\smoothdynstatectrl = \smooth_\dyn^{\ctrl\state}$.
\end{proof}

\subsection{Time-varying Dynamics Case} Lemma~\ref{lem:injectivity_time_varying}  presents a simple extension
of Lemma~\ref{lem:injectivity} for time-varying dynamics. Note that provided that
condition~\eqref{eq:injectivity_time_varying} is satisfied, the analysis of the
ILQR and IDDP algorithms remain essentially unchanged, up to different
constants. 

\begin{lemma}\label{lem:injectivity_time_varying} Consider the control of
	$\horizon$ discrete time dynamics $(\dyn_t:\reals^\dimstate \times
	\reals^\dimctrl \rightarrow \reals^\dimstate)_{t=0}^{\horizon-1}$ as defined
	in Definition~\ref{def:traj_func_time_varying}.
If the dynamics $\dyn_t$ are Lipschitz continuous and satisfy 
\begin{equation}\label{eq:injectivity_time_varying} 
	\forall \state, \ctrl \in \reals^\dimstate \times \reals^\dimctrl, \quad 	\sigma_{\min} (\nabla_\ctrl \dyn_t(\state, \ctrl))\geq \sigma_{\dyn_t} >0,
\end{equation}
then the control $\traj$ of these dynamics satisfy for all $t\in \{0, \ldots,
\horizon-1\}$, 
\begin{equation}\nonumber
	\forall \state_0, \ctrls \in \reals^\dimstate \times \reals^{\horizon\dimctrl}, \quad 	\sigma_{\min} (\nabla_\ctrl \traj(\state_0, \ctrls))\geq
\sigma_\traj := \frac{ \min_{t\in \{0, \ldots, \horizon-1\}} \sigma_{\dyn_t}}{1+ \max_{t\in \{0, \ldots, \horizon-1\}} \lip_{\dyn_t}^\state} >0,
\end{equation}
where $\lip_{\dyn_t}^\state = \sup_{\ctrl \in \reals^\dimctrl}
\lip_{\dyn_t(\cdot, \ctrl)}$ is the maximal Lipschitz-continuity constant of the
functions $\dyn_t(\cdot, \ctrl)$ for any $\ctrl \in \reals^{\dimctrl}$. 
\end{lemma}
\begin{proof}
	With the notations of the proof of Lemma~\ref{lem:injectivity} we have 
	\begin{align*}
		\sigma_{\min} (\nabla_\ctrls \traj(\state_0, \ctrls)) \geq \frac{\sigma_{\min}(\nabla_\ctrls \concatdyn(\states, \ctrls))}{\sigma_{\max}( \idm - \nabla_\states \concatdyn(\states, \ctrls))},
	\end{align*}
where here $ \concatdyn(\states, \ctrls) {=} (\dyn_0(\state_0, \ctrl_0) ;
\ldots; \dyn_{\horizon-1}(\state_{\horizon-1}, \ctrl_{\horizon-1}))$. Noting
that  $\sigma_{\min}(\nabla_\ctrls \concatdyn(\states, \ctrls)) \geq \min_{t\in
\{0, \ldots, \horizon-1\}} \sigma_{\dyn_t}$ and  $\sigma_{\max}( \idm -
\nabla_\states \concatdyn(\states, \ctrls)) \leq  1+ \max_{t\in \{0, \ldots,
\horizon-1\}} \lip_{\dyn_t}^\state$ concludes the proof.
\end{proof}

\section{Linearization by Static Feedback and Brunovsky's Form}\label{app:static_lin}
We briefly recall here the rationale behind the parameterization of a  system in
Brunovsky's form and the theory underlying the existence of static feedback
linearization in continuous time, see, e.g.~\citet{isidori1995nonlinear,
busawon2009algorithms} for more details on these subjects
and~\citet{aranda1996linearization} for an analysis of  feedback linearization
in discrete time. 

\subsection{Brunovsky's Form}
We start by understanding the relevance of the parameterization in Brunovsky's
form for discrete time linear systems of the form
\begin{equation}\label{eq:brunowsky_discrete_linear}
\state_{t+1} = A\state_t + B\ctrl_t \quad \mbox{for} \ t =0, 1,\ldots
\end{equation}
for $\state_t\in \reals^\dimstate$, $\ctrl_t \in \reals^\dimctrl$ with
$\dimctrl=1$ for ease of presentation, where $A \in \reals^{\dimstate \times
\dimstate}$, $B\in \reals^{\dimstate \times 1}$ and we denote $\dimstate=n$ for
more readability. An important property that can be investigated for such system
is its controllability, i.e., whether, from any initial state $\state_0$, we can
reach any state $x^*$ after a sufficient number of steps of the discrete
dynamical system and appropriate control variables. This question can be
answered by examining the controllability matrix $C=[B, AB, \ldots A^{n-1}B]$
associated to the system~\eqref{eq:brunowsky_discrete_linear}. If $C$ has full
row rank, i.e., $\operatorname{rank}(C) = n$, then the system is controllable in
at most $n$ steps, as observed from standard linear algebra considerations. For
a controllable system~\eqref{eq:brunowsky_discrete_linear}, we can investigate
whether there exists a reparameterization of the system in variables
$\auxstate_t = M\state_t, \auxctrl_t = N\ctrl_t + P\auxstate_t$, in which the
notion of controllability is transparent in the reparameterized system
$\auxstate_{t+1}= F\auxstate_t + G\auxctrl_t$, for $F, G$ defined appropriately
from $A, B, M, N, P$. One ideal reparameterization is given by Brunovsky's form, 
\[
\begin{matrix*}[l]
	\auxstate_{t+1}^{(1)} &= \auxstate_{t}^{(2)} \\
	\auxstate_{t+1}^{(2)} &= \auxstate_{t}^{(3)} \\
& \hspace{3pt} \vdots \\
	\auxstate_{t+1}^{(n-1)} &= \auxstate_t^{(n)}\\
	\auxstate_{t+1}^{(n)} & = \auxctrl_t
\end{matrix*} \quad\mbox{that is}\quad  \auxstate_{t+1} = D\auxstate_t + E \auxctrl_t
\]
for $D = \sum_{i=1}^{n-1} e_ie_{i+1}^\top$ the upper-shift matrix in $\reals^n$
and $e_i$ the $i$\textsuperscript{th} canonical vector in $\reals^n$ with $E =
e_n$. In this reparameterized system of equations, after $n$ steps of the linear
system we naturally have that $\auxstate_n^{(i)} = \auxctrl_{i-1}$ for $i \in
\{1, \ldots, n\}$, that is, the operator that, at $n$ control variables
associates the state of the system after $n$ steps is just the identity
operator, which clearly satisfies the definition of controllability.  

To get such a parameterization, consider state variables of the form
$\auxstate_t = M\state_t$  for $M$ invertible. The resulting linear system has
the form $\auxstate_{t+1} = MAM^{-1} \auxstate_t + M B \ctrl_t$. We then need to
choose $M$ such that $MAM^{-1} = D + EJ$ for some $J\in \reals^{1\times n}$ and
$FB =E$ such that by defining $\auxctrl_t = EJ \auxstate_t  + E\ctrl_t $, we get
that $\auxstate_{t+1} = D\auxstate_t + E\auxctrl_t$. Such invertible matrix $M$
can be computed in closed form from the expressions of $A$, $B$, $D$ and $E$ as
$CK^{-1}$ for $C$ the controllability matrix associated to the pair $(A, B)$
defining the linear system~\eqref{eq:brunowsky_discrete_linear} and $K$ the
controllability matrix associated to the pair $D, E$ defining the linear system
in Brunovsky's form. This is essentially the approach taken in
Lemma~\ref{lem:ode}.

\subsection{Static Feedback Linearization for Continuous Time Systems.}
Consider a continuous dynamical system of the form
\begin{equation}\label{eq:cont_brunovsky}
\dot \state = f(\state) + g(\state)\ctrl
\end{equation}
for $f:\reals^\dimstate \rightarrow \reals^\dimstate$ and $g:\reals^\dimstate
\rightarrow \reals^\dimstate$ with $\ctrl \in \reals$, and we denote here
$\dimstate = n$ for more readability. Static feedback linearization schemes aim
to find a reparameterization of this system around an initial state $\state_0$
such that the system is linear and controllable in the reparameterized variables
under suitable assumptions on $f$ and $g$. The point of departure of the
analysis of conditions for the existence of a static feedback linearization
scheme~\citep[Section 4]{isidori1995nonlinear} is to consider a function $h$
which defines the output of the system~\eqref{eq:cont_brunovsky} as $\auxxstate
= h(\state)$. We may then analyze the influence of the control on this output
through the derivatives of $\auxxstate$. Namely, we have that $\frac{\partial
\auxxstate}{\partial t} = \frac{\partial h}{\partial \state} \frac{\partial
\state}{\partial t} = \frac{\partial h}{\partial x} (f(\state) + g(\state)\ctrl)
= L_fh(\state) + L_g h(\state) \ctrl$, where we defined the derivative of
$h:\reals^n \rightarrow \reals$ along $f:\reals^n \rightarrow\reals^n$ as $L_f
h(x) = \sum_{i=1}^n \frac{\partial h}{\partial x_i}(x) f_i(x)$ for $f_i(x)$ the
$i$\textsuperscript{th} coordinate of $f(x)$. If $ L_g h(\state)=0$ in a
neighborhood of the initial state $\state_0$, i.e., the derivative of the output
function $h$ along $g$ is zero, then the control has no effect on the first
derivative of the output for $t$ small enough, i.e.,  $\frac{\partial
\auxxstate}{\partial t}(t)  = L_fh(\state(t))$. Analyzing the second derivative
of the output, we have $\frac{\partial^2 \auxxstate}{\partial t^2} = L_f^2
h(\state) + L_gL_fh(\state)\ctrl$, where $L_gL_f h(x)= \sum_{i=1}^n
\frac{\partial L_f h}{\partial x_i}(x) g_i(x)$ and $L_f^2 h(\state)= L_fL_f
h(\state)$. If $L_gL_fh(\state) = 0$ in a neighborhood of the initial state,
then the control variable has no effect on the second derivative of the output
for $t$ small enough. Continuing this way, if for any $k\in \{0, \ldots, n
-1\}$, $L_gL_f^kh(\state) = 0$ around $\state_0$ and $L_gL_f^n h(\state_0) \neq
0$  then the derivatives of the output satisfy $\frac{\partial^k \auxxstate
}{\partial t^k} = L_f^k h(t)$ for $t$ small enough and $\frac{\partial^n
\auxxstate}{\partial t^n}(0) = L_f^n h(\state_0) +L_gL_f^{n-1} h(\state_0)
u(0)$. In other words, under the aforementioned conditions, the output of the
system can be seen as a dynamical system driven by its $n$\textsuperscript{th}
derivative. Given an output function $h$ satisfying the aforementioned
conditions, we can then consider the reparameterization $\auxstate_i =
L_f^{i-1}h(\state)$ (which corresponds to consider a system whose coordinates
are defined by the $i$\textsuperscript{th} derivative of the output) and define
$\auxctrl = L_f^n h(\state) +L_gL_f^{n-1} h(\state)\ctrl$  such that the
reparameterized system takes the form
\[
\begin{matrix*}[l]
	\dot \auxstate_1  &= \auxstate_2  \\
	\dot \auxstate_2 & = \auxstate_3 \\
	& \hspace{3pt} \vdots \\
		\dot \auxstate_{n-1} & = \auxstate_n \\
	\dot \auxstate_n &=  L_f^n h(\state) +L_gL_f^{n-1} h(\state)\ctrl = \auxctrl.
\end{matrix*}
\]
We recognize here again a parameterization in Brunovsky's form, here for the
continuous time system considered. Existence of an output function satisfying
the aforementioned assumptions and such that the reparameterization is a
diffeomorphism around the initial point can be verified by considering  the
involutivity and regularity of the vector field defined by repeated Lie brackets
of the function $f$ on the function $g$, see, e.g.~\citet[Lemma 4.2.2]{isidori1995nonlinear}.

\subsection{Reparameterization in Brunovsky Form}
Lemma~\ref{lem:ode} shows how a discrete time system driven by its
$k$\textsuperscript{th} derivative can be expressed in Brunovsky's
form~\eqref{eq:brunovsky}~\citep{brunovsky1970classification}.

\begin{lemma}\label{lem:ode} Consider the Euler discretization of a single-input
	continuous-time  system driven by its $\dimstate$\textsuperscript{th}
	derivative as presented in~\eqref{eq:ode_single_input}. If $|\partial_\auxctrl
	\contdyn(\auxstate,  \auxctrl)|>0$ for all $\auxstate \in \reals^\dimstate,
	\auxctrl \in \reals$ then the dynamical system~\eqref{eq:ode_single_input} can
	be linearized by static  feedback into the canonical
	form~\eqref{eq:brunovsky}.
\end{lemma}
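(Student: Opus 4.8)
The plan is to write the linearizing transformations down explicitly through a discrete-time relative-degree construction and then verify the three requirements of Definition~\ref{def:lin_feedback}: that $\diffeostate$ is a diffeomorphism of $\reals^\dimstate$, that $\diffeoctrl(\auxstate,\cdot)$ is a diffeomorphism of $\reals$, and that the reparameterized system is exactly the Brunovsky chain~\eqref{eq:brunovsky}.

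First I would rewrite~\eqref{eq:ode_single_input} compactly as $\microdyn(\auxstate,\auxctrl) = (\idm + \Delta D)\auxstate + \Delta\,\contdyn(\auxstate,\auxctrl)\,e_\dimstate$, with $D$ the upper shift matrix and $e_\dimstate$ the last canonical vector; the structural point is that only the last coordinate carries the control, i.e.\ $\microdyn^{(i)}(\auxstate,\auxctrl) = \auxstate^{(i)} + \Delta\auxstate^{(i+1)}$ does not depend on $\auxctrl$ for $i < \dimstate$. Then I would set $\diffeostate_1(\auxstate) = \auxstate^{(1)}$ and define recursively $\diffeostate_{j+1} = \diffeostate_j \circ \microdyn(\cdot,0)$ for $j = 1,\dots,\dimstate-1$, together with $\diffeoctrl(\auxstate,\auxctrl) = \diffeostate_\dimstate(\microdyn(\auxstate,\auxctrl))$. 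A short induction (Pascal's rule) gives the closed form $\diffeostate_j(\auxstate) = \sum_{i=1}^{j}\binom{j-1}{i-1}\Delta^{i-1}\auxstate^{(i)}$, so in particular $\diffeostate_j$ depends only on $\auxstate^{(1)},\dots,\auxstate^{(j)}$; for $j \le \dimstate-1$ this makes the dummy control in the recursion immaterial (the relative-degree-$\dimstate$ property), so $\diffeostate_1,\dots,\diffeostate_\dimstate$, and hence $\diffeostate = (\diffeostate_1;\dots;\diffeostate_\dimstate)$, are genuine functions of $\auxstate$, while $\diffeostate_\dimstate\circ\microdyn$ does involve $\auxctrl$.

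Next I would read off the properties. The closed form shows $\diffeostate$ is a linear map whose matrix is lower triangular with diagonal entries $\Delta^{j-1} > 0$, hence invertible, hence a global diffeomorphism of $\reals^\dimstate$. Substituting into $\diffeoctrl = \diffeostate_\dimstate\circ\microdyn$, every contribution except the one coming from the last coordinate of $\microdyn$ is control-free, and that coordinate contributes $\Delta^{\dimstate}\contdyn(\auxstate,\auxctrl)$; therefore $\partial_\auxctrl\diffeoctrl(\auxstate,\auxctrl) = \Delta^{\dimstate}\,\partial_\auxctrl\contdyn(\auxstate,\auxctrl)$, which is nonzero for all $(\auxstate,\auxctrl)$ by hypothesis and, being continuous, has constant sign; so $\diffeoctrl(\auxstate,\cdot)$ is a strictly monotone $C^1$ map with nowhere-vanishing derivative, i.e.\ a diffeomorphism onto an open interval. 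Finally, by construction $\diffeostate(\microdyn(\auxstate,\auxctrl)) = (\diffeostate_2(\auxstate);\dots;\diffeostate_\dimstate(\auxstate);\diffeoctrl(\auxstate,\auxctrl)) = D\,\diffeostate(\auxstate) + \diffeoctrl(\auxstate,\auxctrl)\,e_\dimstate$, so in the coordinates $\auxxstate_t = \diffeostate(\auxstate_t)$, $\auxxctrl_t = \diffeoctrl(\auxstate_t,\auxctrl_t)$ the system becomes $\auxxstate_{t+1} = D\,\auxxstate_t + \auxxctrl_t\,e_\dimstate$, which is precisely~\eqref{eq:brunovsky}.

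There is no deep obstacle here; the real work is the bookkeeping behind the relative-degree/triangularity claims that keep $\diffeostate_1,\dots,\diffeostate_\dimstate$ control-free and make $\diffeostate$ a diffeomorphism. The one point that deserves a word is that the pointwise hypothesis $|\partial_\auxctrl\contdyn| > 0$ only yields a diffeomorphism of $\reals$ onto its image rather than onto all of $\reals$; this suffices for everything downstream, since Theorem~\ref{thm:suff_cond} and its consequences use only the invertibility of $\nabla_\auxctrl\diffeoctrl$, and a uniform lower bound $|\partial_\auxctrl\contdyn| \ge c > 0$ (together with coercivity of $\contdyn(\auxstate,\cdot)$) would additionally deliver the quantitative estimate $\sigma_{\min}(\nabla_\auxctrl\diffeoctrl) \ge \Delta^{\dimstate}c$ feeding the quantitative part of Theorem~\ref{thm:suff_cond}.
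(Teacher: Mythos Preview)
Your proof is correct and reaches the same linear change of coordinates as the paper, but arrives at it by a different route. The paper writes the system as $\auxstate_{t+1}=A\auxstate_t+\Delta\contdyn(\auxstate_t,\auxctrl_t)e$ with $A=\idm+\Delta D$ and argues purely algebraically that $A$ is similar to $D+ec^\top$ via the Pascal-matrix conjugation $Q=P_{\dimstate}\diag((\Delta^{i-\dimstate})_{i=1}^{\dimstate})$, then sets $\diffeostate(\auxstate)=Q\auxstate$ and $\diffeoctrl(\auxstate,\auxctrl)=c^\top Q\auxstate+\Delta\contdyn(\auxstate,\auxctrl)$. You instead take the standard relative-degree construction: pick the flat output $\diffeostate_1(\auxstate)=\auxstate^{(1)}$, iterate it along the dynamics, and stack the iterates. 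Your closed form $\diffeostate_j(\auxstate)=\sum_{i=1}^{j}\binom{j-1}{i-1}\Delta^{i-1}\auxstate^{(i)}$ shows that your $\diffeostate$ is exactly the paper's $Q$ multiplied by the scalar $\Delta^{\dimstate-1}$, so the two transforms are the same up to an immaterial global rescaling; correspondingly your $\partial_\auxctrl\diffeoctrl=\Delta^{\dimstate}\partial_\auxctrl\contdyn$ and the paper's $\partial_\auxctrl\diffeoctrl=\Delta\,\partial_\auxctrl\contdyn$ differ by the same factor. Your derivation has the advantage of making the triangular structure and the control-independence of $\diffeostate_1,\dots,\diffeostate_{\dimstate}$ transparent (and of generalising verbatim to nonlinear chains), whereas the paper's has the advantage of producing the similarity $BQ=QA$ and the companion vector $c$ in one stroke. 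Your explicit remark about $\diffeoctrl(\auxstate,\cdot)$ being a diffeomorphism onto its image rather than necessarily onto $\reals$ is a nuance the paper glosses over; as you note, only invertibility of $\nabla_\auxctrl\diffeoctrl$ is used downstream.
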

\begin{proof}
	Denoting $A= \idm + \Delta D$, with $D$ the upper-shift matrix in
	$\reals^{\dimstate}$, the original dynamical
	system~\eqref{eq:ode_single_input} can be written as $\auxstate_{t+1} = A
	\auxstate_t + \Delta\contdyn(\auxstate_t,  \auxctrl_t)e$, with $e=
	e_\dimstate$ the $\dimstate$\textsuperscript{th} canonical vector in
	$\reals^\dimstate$. It suffices to note that the matrix $A$ is similar to  a
	matrix of the form $B= D + e c^\top$ for some vector $c$. Namely, denoting
	$P_{\dimstate} =(p_{\dimstate, 1}, \ldots, p_{\dimstate, \dimstate})^\top$ the
	$\dimstate$\textsuperscript{th} lower triangular Pascal matrix defined by rows
	$p_{\dimstate, i} = (\binom{i-1}{j-1})_{j=1}^{\dimstate}$ with the convention
	$\binom{i}{j} = 0$ if $i <j$ and $Q = P_\dimstate
	\diag((\Delta^{i-\dimstate})_{i=1}^\dimstate)$, we get that  $BQ = Q A $ for
	$B= D + e c^\top$ with  $c = ((-1)^{\dimstate
	-i}\binom{\dimstate}{i-1})_{i=1}^\dimstate$. 
	
	Hence, by considering the change of variable $\auxxstate_t =
	\diffeostate(\auxstate_t) =Q\auxstate_t$, we get that 
	\[
	\auxxstate_{t+1} = B\auxxstate_t  +\Delta \contdyn(\auxstate_t, \auxctrl_t) Q e 
	= D \auxxstate_t +  c^\top \auxxstate_t e + \Delta \contdyn(\auxstate_t, \auxctrl_t)  e,
	\]
	 using that $Qe = e$. By defining $\auxxctrl_t = \diffeoctrl(\auxstate_t,
	\auxctrl_t) = c^\top Q\auxstate_t + \Delta \contdyn(\auxstate_t, \auxctrl_t) $
	we get the desired form~\eqref{eq:brunovsky}. The transformation
	$\diffeostate$ is a diffeomorphism since $Q$ is invertible. The
	transformations $\diffeoctrl(\auxstate_t, \cdot)$ are also diffeomorphisms
	since $|\partial_\auxctrl \contdyn(\auxstate,  \auxctrl)|>0$ for all
	$\auxstate \in \reals^\dimstate, \auxctrl \in \reals$. 
\end{proof}

\section{Convergence Analysis of ILQR}\label{app:lemmas}
\subsection{Global Convergence Analysis}

In the statement of Theorem~\ref{thm:global_conv}, we used
Lemma~\ref{lem:plcst_decomp} to relate the constants associated to gradient
dominance properties of the costs on the sates to the constant associated to the
gradient dominance property of the cost on the trajectory with, in
Theorem~\ref{thm:global_conv}, compared to Lemma~\ref{lem:plcst_decomp}, we used
$\plcst_t = \plcst$ for all $t$ such that $\plcst_\cost =
\|\boldsymbol{\plcst}^{-1}\|_q^{-1} = \plcst \horizon^{-q} =
\plcst/\horizon^{2\pl/(2\pl-1)}$.
\begin{lemma}\label{lem:plcst_decomp} Let $\cost_1, \ldots, \cost_\horizon$ be
	differentiable functions from $\reals^\dimstate \rightarrow \reals$ such that 
	\[
	\|\nabla \cost_t(\state_t)\|_2 \geq \plcst_t^\pl(\cost_t(\state_t) - \cost_t^*)^\pl \quad \mbox{for} \ t \in \{1, \ldots, \horizon\},
	\]
	for some constants $\plcst_t \geq 0 $, $\pl\in [1/2, 1)$. The function $\cost:
	\states = (\state_1;\ldots;\state_\horizon) \rightarrow \sum_{t=1}^{\horizon}
	\cost_t(\state_t)$ satisfies
	\[
	\|	\nabla \cost(\states) \|_2 \geq \plcst_\cost^\pl (\cost(\states) - \cost^*)^\pl \quad \mbox{for} \ \plcst_\cost = \|\boldsymbol{\plcst}^{-1}\|_q^{-1},
	\]
	for $q = 2\pl/(2\pl-1)$ and $\boldsymbol{\plcst}^{-1} =
	(\plcst_1^{-1},\ldots,\plcst_\horizon^{-1})^{\top}$ with
	$\|\boldsymbol{\plcst}^{-1}\|_{+\infty}^{-1} = \min_{t\in \{1, \ldots,
	\horizon\}} \plcst_t$ if $\pl=1/2$.
\end{lemma}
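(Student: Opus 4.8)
The plan is to exploit the fully separable structure of $\cost$. Since $\cost(\states)=\sum_{t=1}^\horizon \cost_t(\state_t)$ with the variable blocks $\state_t$ disjoint, I would first record that $\cost^*=\sum_{t=1}^\horizon \cost_t^*$ and that $\nabla\cost(\states)=(\nabla\cost_1(\state_1);\ldots;\nabla\cost_\horizon(\state_\horizon))$, hence $\|\nabla\cost(\states)\|_2^2=\sum_{t=1}^\horizon\|\nabla\cost_t(\state_t)\|_2^2$. Writing $\delta_t=\cost_t(\state_t)-\cost_t^*\ge 0$, the per-block hypothesis squares to $\|\nabla\cost_t(\state_t)\|_2^2\ge \plcst_t^{2\pl}\delta_t^{2\pl}$, so that (after taking square roots at the end) the whole statement reduces to the elementary inequality
\[
\sum_{t=1}^\horizon \plcst_t^{2\pl}\delta_t^{2\pl}\;\ge\;\|\boldsymbol{\plcst}^{-1}\|_q^{-2\pl}\Big(\sum_{t=1}^\horizon \delta_t\Big)^{2\pl},
\qquad q=\tfrac{2\pl}{2\pl-1},
\]
valid for all nonnegative reals $\delta_1,\ldots,\delta_\horizon$ (with the stated convention at $\pl=1/2$).

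For $1/2<\pl<1$ I would obtain this from a single application of H\"older's inequality: write $\delta_t=(\plcst_t\delta_t)\cdot\plcst_t^{-1}$ and apply H\"older with the conjugate exponents $2\pl$ and $q$ (note $\tfrac{1}{2\pl}+\tfrac{1}{q}=1$), which gives
\[
\sum_{t=1}^\horizon \delta_t\;\le\;\Big(\sum_{t=1}^\horizon \plcst_t^{2\pl}\delta_t^{2\pl}\Big)^{1/(2\pl)}\Big(\sum_{t=1}^\horizon\plcst_t^{-q}\Big)^{1/q}\;=\;\Big(\sum_{t=1}^\horizon \plcst_t^{2\pl}\delta_t^{2\pl}\Big)^{1/(2\pl)}\,\|\boldsymbol{\plcst}^{-1}\|_q .
\]
Raising to the power $2\pl$ and rearranging yields the displayed inequality, and substituting back and taking square roots gives $\|\nabla\cost(\states)\|_2\ge \plcst_\cost^{\pl}(\cost(\states)-\cost^*)^{\pl}$ with $\plcst_\cost=\|\boldsymbol{\plcst}^{-1}\|_q^{-1}$. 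The boundary case $\pl=1/2$ (where $q=+\infty$) I would treat directly: there the hypothesis reads $\|\nabla\cost_t(\state_t)\|_2^2\ge\plcst_t\delta_t$, so $\|\nabla\cost(\states)\|_2^2=\sum_t\|\nabla\cost_t(\state_t)\|_2^2\ge\sum_t\plcst_t\delta_t\ge(\min_t\plcst_t)\sum_t\delta_t$, which is the claim with $\plcst_\cost=\min_t\plcst_t=\|\boldsymbol{\plcst}^{-1}\|_\infty^{-1}$. If some $\plcst_t=0$, then $\|\boldsymbol{\plcst}^{-1}\|_q=+\infty$, $\plcst_\cost=0$, and the inequality is vacuous.

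I do not expect a genuine obstacle: the block-diagonal structure decouples the time indices, after which what remains is the standard H\"older argument. The only point requiring care is the exponent bookkeeping --- verifying that the conjugate of $2\pl$ is exactly $q=2\pl/(2\pl-1)$ and that H\"older is applied in the direction that places $\big(\sum_t\delta_t\big)^{2\pl}$ on the smaller side --- together with making the $\pl=1/2$ limit and the degenerate $\plcst_t=0$ cases explicit so that all of the conventions appearing in the statement are covered.
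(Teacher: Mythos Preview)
Your proposal is correct and follows essentially the same route as the paper: both arguments use the separable structure to reduce to $\sum_t \plcst_t^{2\pl}\delta_t^{2\pl}\ge \|\boldsymbol{\plcst}^{-1}\|_q^{-2\pl}\big(\sum_t\delta_t\big)^{2\pl}$ and then obtain this from H\"older's inequality with exponents $2\pl$ and $q=2\pl/(2\pl-1)$ applied to the factorization $\delta_t=(\plcst_t\delta_t)\cdot\plcst_t^{-1}$. Your treatment is in fact slightly more careful than the paper's, since you handle the $\pl=1/2$ and $\plcst_t=0$ edge cases explicitly.
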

\begin{proof}
	Denoting for simplicity $\delta_t = \cost_t(\state_t)- \cost^*$, we have 
	\begin{align*}
		\|\nabla \cost(\states) \|_2^2 & =  \sum_{t=1}^\horizon \|\nabla \cost_t(\state_t)\|_2^2 \geq  \sum_{t=1}^{\horizon} (\plcst_t \delta_t )^{2\pl}  = \| \boldsymbol{\plcst} \odot \boldsymbol{\delta} \|_{2\pl}^{2\pl} \geq \frac{1}{\|\boldsymbol{\plcst}^{-1} \|_q^{2\pl}} (\boldsymbol{\delta}^\top \ones)^{2\pl},
	\end{align*}
	for $q = 2\pl/(2\pl-1)$, where $\boldsymbol{\plcst} =
	(\plcst_1,\ldots,\plcst_{\horizon})^\top$, $\boldsymbol{\delta} = (\delta_1,
	\ldots, \delta_{\horizon})^{\top}$, $\odot$ denotes the element-wise product,
	and we used H\"older's inequality $\|x\|_p\|y\|_q \leq |x^\top y|$ for
	$p=2\pl$, $q=p/(p-1) =2\pl/(2\pl-1)$,   $x = \boldsymbol{\plcst} \odot
	\boldsymbol{\delta}$ and $y = \boldsymbol{\plcst}^{-1}$. Plugging the values
	of $\delta$ in the inequality above,  we get 
	\[
	\|\nabla \cost(\states) \|_2 \geq \|\boldsymbol{\plcst}^{-1}\|_q^{-\pl} \left(\sum_{t=1}^{\horizon} \cost_t(\state_t) - \cost_t^*\right)^{\pl} = \plcst_\cost^\pl (\cost(\states) - \cost^*)^\pl,
	\]
	where we used that, since $\cost$ is decomposable in the variables $\state_t$,
	$\cost^* = \sum_{t=1}^{\horizon} \cost_t^*$.
\end{proof} 

Lemma~\ref{lem:bound_approx_self_concord} states that a linear quadratic
approximation of the compositional objective in~\eqref{eq:total_cost}
approximates the objective up to a cubic error. 
\begin{lemma}\label{lem:bound_approx_self_concord} Given
	Assumption~\ref{asm:conv}, we have, for problem~\eqref{eq:total_cost},
	\[
	|(\cost \circ \augtraj)(\ctrls + \auxctrls) 
	{-} ( \cost \circ \augtraj)(\ctrls) 
	{-} \qua_{\cost}^{\augtraj(\ctrls)}\circ \lin_\augtraj^\ctrls(\auxctrls) |  {\leq}  \frac{\smooth_\augtraj \|\nabla \cost(\augtraj(\ctrls))\|_2 {+}  (\smoothess_\cost \lip_{\augtraj}^3/3 {+} \smooth_\augtraj\smooth_\cost \lip_\augtraj)\|\auxctrls\|_2}{2}\|\auxctrls\|_2^2.
	\]
\end{lemma}
\begin{proof}
We have for any $\ctrls, \auxctrls \in \reals^{\horizon\dimctrl}$,
	\begin{align*}
		|\cost(\augtraj(\ctrls {+} \auxctrls)) {-} \cost (\augtraj(\ctrls) ) {-} \qua_\cost^{\augtraj(\ctrls)}(\lin_\augtraj^\ctrls(\auxctrls))| & \leq |\cost(\augtraj(\ctrls {+} \auxctrls)) {-} \cost (\augtraj(\ctrls) ) {-} \qua_\cost^{\augtraj(\ctrls)}(\augtraj(\ctrls{+}\auxctrls){-} \augtraj(\ctrls))| \\
		& \quad + |\qua_\cost^{\augtraj(\ctrls)}(\augtraj(\ctrls{+}\auxctrls) {-} \augtraj(\ctrls) ) {-}\qua_\cost^{\augtraj(\ctrls)}(\lin_\augtraj^\ctrls(\auxctrls))|.
	\end{align*}
	On one hand, we have, by Taylor-Lagrange inequality, 
	\begin{align*}
		|\cost(\augtraj(\ctrls {+} \auxctrls)) {-} \cost (\augtraj(\ctrls) ) {-} \qua_\cost^{\augtraj(\ctrls)}(\augtraj(\ctrls{+}\auxctrls) {-} \augtraj(\ctrls))| 
		& \leq \frac{\smoothess_\cost}{6}\|\augtraj(\ctrls + \auxctrls) {-} \augtraj(\ctrls)\|^3_{2} 
		\leq \frac{\smoothess_\cost \lip_\augtraj^3}{6}\|\auxctrls\|_2^3.
	\end{align*}
	On the other hand, we have, 
	\begin{align*}
		|\qua_\cost^{\augtraj(\ctrls)}(\augtraj(\ctrls{+}\auxctrls){-}\augtraj(\ctrls)) {{-}}\qua_\cost^{\augtraj(\ctrls)}(\lin_\augtraj^\ctrls(\auxctrls))| & = \Big| (\augtraj(\ctrls{+} \auxctrls) {{-}} \augtraj(\ctrls){{-}} \nabla\augtraj(\ctrls)^\top \auxctrls)^\top \nabla \cost(\augtraj(\ctrls)) \\
		& \hspace{-95pt}  {+} \frac{1}{2}(\augtraj(\ctrls{+} \auxctrls) {{-}} \augtraj(\ctrls){{-}} \nabla\augtraj(\ctrls)^\top \auxctrls)^\top \nabla^2 \cost(\augtraj(\ctrls)) (\augtraj(\ctrls{+} \auxctrls) {{-}} \augtraj(\ctrls) {+}  \nabla\augtraj(\ctrls)^\top \auxctrls) \Big| \\
		& \leq \frac{\smooth_\augtraj \|\nabla \cost (\augtraj(\ctrls))\|_{2}}{2}\|\auxctrls\|^2_2 + \frac{\smooth_\cost \smooth_\augtraj\lip_\augtraj }{2}\|\auxctrls\|_2^3.
	\end{align*}
\end{proof}

\subsection{Local Convergence Analysis}
Lemma~\ref{lem:bound_ggn_oracle_self_concord} provides a bound on the oracle
returned by an ILQR method in terms of the constants introduced in
Theorem~\ref{thm:conv_ggn}.

\begin{lemma}\label{lem:bound_ggn_oracle_self_concord} Given
	Assumption~\ref{asm:self_concord} on problem~\eqref{eq:total_cost}, we have
	for any $\ctrls \in \reals^{\horizon\dimctrl}$,  $\reg\geq 0$, 
	\[
	\|\LQR_\reg(\obj)(\ctrls)\|_2 \leq \frac{\lip }{\lip \sigma  + \reg} \|\nabla \cost(\augtraj(\ctrls))\|_{\augtraj(\ctrls)}^*.
	\]
\end{lemma}
\begin{proof}
	For $\ctrls\in \reals^{\horizon\dimctrl}$, $\reg\geq 0$,  denoting $\nabla^2
	\cost(\augtraj(\ctrls)) = \hess$, $\nabla \augtraj(\ctrls) = \grad$, we have 
	\[
	\LQR_\reg(\obj)(\ctrls)  = - \grad\hess^{1/2}
	(\hess^{1/2}\grad^\top \grad\hess^{1/2}  + \reg\idm)^{-1}
	 \hess^{-1/2}\nabla \cost(\augtraj(\ctrls)).
	\]
	Recall that by definition of $\sigma$ and $\lip$, we have 
	$
	\sigma \leq \sigma_{\min}(\grad\hess^{1/2}),   \sigma_{\max}(\grad\hess^{1/2}) \leq \lip.
	$
	By considering the singular value decomposition of $\grad\hess^{1/2}$, we then
	have
	\begin{align*}
		\|\grad\hess^{1/2}(\hess^{1/2}\grad^\top \grad\hess^{1/2}  + \reg\idm)^{-1}\|_{2} \leq 
		\max_{x \in [\sigma, \lip]} \frac{x}{\reg + x^2} &= \begin{cases}
			\frac{\sigma }{\sigma^2  +\reg} &\mbox{if}\  \reg \leq \sigma^{2}  \\
			\frac{1}{2\sqrt{\reg}} & \mbox{if} \ \sigma^{2} \leq \reg \leq \lip^{2} \\
			\frac{\lip}{\lip^2 +\reg } & \mbox{if} \ \reg \geq \lip^{2}
		\end{cases}.
	\end{align*}
By analyzing each case, we get the claimed  inequality.
\end{proof}

Lemma~\ref{lem:smooth_self_concord} provides a bound on the differences of
gradients of a self-concordant function. It replaces the classical bound we can
have for Lipschitz continuous gradients.
\begin{lemma}\label{lem:smooth_self_concord} For a
	$\localconcord$-self-concordant  strictly convex function
	$\cost$~\cite[Definition 5.1.1]{nesterov2018lectures} and $y, x$ such that
	$\|y-x\|_x<1/\localconcord$, we have,  
	\[
	\|\nabla \cost(y) -\nabla \cost(x)\|_x^*\leq \frac{1}{1-\localconcord\|y-x\|_x} \|y-x\|_x.
	\]
\end{lemma}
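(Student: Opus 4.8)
The plan is to reduce the inequality to a one-dimensional integral along the segment $[x,y]$ and then feed in the standard Hessian-comparison estimate for self-concordant functions. First I would set $d = y-x$ and $r = \|y-x\|_x = \sqrt{d^\top\nabla^2\cost(x)d}$, so that $r < 1/\localconcord$ by hypothesis, and write the gradient gap as $\nabla\cost(y)-\nabla\cost(x) = \int_0^1 \nabla^2\cost(x+td)\,d\,dt$. Applying the triangle inequality for the \emph{fixed} dual norm $\|\cdot\|_x^*$ then gives $\|\nabla\cost(y)-\nabla\cost(x)\|_x^* \le \int_0^1 \|\nabla^2\cost(x+td)\,d\|_x^*\,dt$, so the whole statement reduces to controlling a single integrand, all measured in the geometry at the base point $x$.

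For the integrand I would note that along the segment $\|(x+td)-x\|_x = tr < 1/\localconcord$, so the self-concordance of $\cost$ in the normalization of Hypothesis~\ref{asm:self_concord} yields the one-sided Hessian bound $\nabla^2\cost(x+td) \preceq (1-\localconcord tr)^{-2}\,\nabla^2\cost(x)$, which is exactly the estimate already used elsewhere in the paper (\cite[Corollary 5.1.5, Theorem 5.1.7]{nesterov2018lectures}). Writing $\|\nabla^2\cost(x+td)\,d\|_x^* = \|\nabla^2\cost(x)^{-1/2}\nabla^2\cost(x+td)\nabla^2\cost(x)^{-1/2}\cdot\nabla^2\cost(x)^{1/2}d\|_2$ and using submultiplicativity of the operator norm together with this bound (the middle matrix is positive semidefinite, so its operator norm is its top eigenvalue, at most $(1-\localconcord tr)^{-2}$), the integrand is at most $r\,(1-\localconcord tr)^{-2}$.

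The last step is the elementary computation $\int_0^1 r\,(1-\localconcord tr)^{-2}\,dt = \localconcord^{-1}\big((1-\localconcord r)^{-1}-1\big) = r/(1-\localconcord r)$, which is the claimed bound since $r = \|y-x\|_x$. I do not expect a genuine obstacle here: the only thing requiring care is to keep the base point of the local norm fixed at $x$ throughout — that is precisely what lets each Hessian factor in the integrand be dominated by $\nabla^2\cost(x)$ via the self-concordance comparison — and to match the normalization constant $\localconcord$ of Hypothesis~\ref{asm:self_concord} to the self-concordance convention of \cite{nesterov2018lectures}.
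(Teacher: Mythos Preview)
Your proof is correct and follows essentially the same route as the paper: write $\nabla\cost(y)-\nabla\cost(x)$ as an integral of Hessians applied to $y-x$, pass to the operator norm of $\nabla^2\cost(x)^{-1/2}\nabla^2\cost(x+td)\nabla^2\cost(x)^{-1/2}$, and invoke the self-concordant Hessian comparison at the fixed base point $x$. The only cosmetic difference is that the paper cites the already-integrated bound $J=\int_0^1\nabla^2\cost(x+td)\,dt\preceq(1-\localconcord r)^{-1}\nabla^2\cost(x)$ from \cite[Corollary~5.1.5]{nesterov2018lectures}, whereas you use the pointwise bound $(1-\localconcord tr)^{-2}$ and carry out the integration by hand; the two are equivalent.
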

\begin{proof}
	Denote $J = \int_0^1 \nabla ^2 \cost(x+ t (y-x)) dt$ and $\hess = \nabla^2
	\cost(x)$,  we have
$
		\|\nabla \cost(y) -\nabla \cost(x) \|_x^* = \|J(y-x)\|_x^* = \|\hess^{-1/2}J\hess^{-1/2}\|_2\|y-x\|_x.
$
	Now $\hess^{-1/2}J\hess^{-1/2}\succeq 0$ since $\cost$ is strictly convex and
	by~\cite[Corollary 5.1.5]{nesterov2018lectures}, we have $J \preceq \nabla^2
	\cost(x)/( 1- \localconcord\|y-x\|_x )$ hence $\|\hess^{-1/2}J\hess^{-1/2}\|_2
	\leq 1/(1- \localconcord\|y-x\|_x )$.
\end{proof}

\subsection{Total Complexity Bound}
Lemma~\ref{lem:reg_strg_cvx} refines the regularization choice of
Theorem~\ref{thm:global_conv} by exploiting an additional assumption of strong
convexity of the costs. 
\begin{lemma}\label{lem:reg_strg_cvx} Consider $\cost$ to be
	$\strgcvx_\cost$-strongly convex and Assumption~\ref{asm:conv} to be
	satisfied. Condition~\eqref{eq:suff_cond_descent} is satisfied by choosing a
	regularization
	\[
	\reg \geq 	\reg(\ctrls)  =\left(1+\frac{\simp}{2(1 + \scaling \|\nabla \cost(\augtraj(\ctrls))\|_2/(\sqrt{\strgcvx_\cost} \condnb_\augtraj))}\right)\smooth_\augtraj \|\nabla \cost(\augtraj(\ctrls))\|_2,
	\]
	for 	$\condnb_\cost = \smooth_\cost/\plcst_\cost$, 	$\condnb_\augtraj =
	\lip_{\augtraj}/\sigma_{\augtraj}$, 	$\scaling =
	\smooth_\augtraj/(\sigma_\augtraj^2\sqrt{\plcst_\cost})$, $\simp =
	4\condnb_\augtraj^2\condnb_\cost (\newsimp+1)$,
	$\newsimp=\smoothess_\cost\lip_{\augtraj}^2/(3\smooth_\cost\smooth_\augtraj)$.
\end{lemma}
\begin{proof}
	Let $\ctrls \in \reals^{\horizon\dimctrl}$,  $\grad =  \nabla \augtraj(\ctrls)
	$, $\hess = \nabla^2 \cost(\augtraj(\ctrls))$. We have using that $G^\top G \succeq \sigma_\augtraj^2 \idm$, i.e., $G^\top G$ invertible, 
	\begin{align*}
		\LQR_\reg(\obj)(\ctrls) & = - \grad (\grad^\top \grad)^{-1}(\hess + \reg (\grad^\top \grad)^{-1})^{-1} \nabla \cost(\augtraj(\ctrls))\\
		&  = - \grad (\grad^\top \grad)^{-1/2}((\grad^\top \grad)^{1/2}\hess (\grad^\top \grad)^{1/2}+ \reg \idm)^{-1}(\grad^\top \grad)^{1/2} \nabla \cost(\augtraj(\ctrls)) .
	\end{align*}
	By bounding each formulation, using $\|\grad (\grad^\top \grad)^{-1}\| \leq 1/\sigma_\augtraj$ for the first formulation, and, $\|\grad (\grad^\top \grad)^{-1/2}\|_2 \leq 1$, $(\grad^\top \grad)^{1/2}\hess (\grad^\top \grad)^{1/2} \succeq \strgcvx_\cost \sigma_\augtraj^2$ for the second formulation,
	\begin{align*}
		\|\LQR_\reg(\obj)(\ctrls)\|_2 /\|\nabla \cost(\augtraj(\ctrls)) \|_2 
		& \leq \min\{ \lip_\augtraj^2/(\strgcvx_\cost \sigma_{\augtraj}\lip_\augtraj^2 + \reg \sigma_{\augtraj}), \lip_\augtraj/(\reg + \strgcvx_\cost \sigma_{\augtraj}^2)\}\\
		& \leq 2\lip_\augtraj/(\reg(1+ \sigma_{\augtraj}/\lip_{\augtraj}) + \strgcvx_\cost \sigma_\augtraj (\sigma_{\augtraj} +\lip_{\augtraj})) \\
		& \leq 2\lip_{\augtraj}/(\reg + \strgcvx_\cost \sigma_{\augtraj} \lip_{\augtraj}),
	\end{align*}
	where we used that $\min\{a, b\} \leq 2/(1/a+1/b)$. Hence,
	condition~\eqref{eq:suff_cond_descent} is satisfied if $\reg$ satisfies
	$
	a_1 + a_2/(a_3 + \reg) \leq \reg
	$
	with $a_1 = \smooth_\augtraj \|\nabla \cost(\augtraj(\ctrls))\|_2$, $a_2 =
	2a_0 \lip_\augtraj \|\nabla \cost(\augtraj(\ctrls))\|_2$, $a_3
	=\sigma_\augtraj\lip_\augtraj \strgcvx_\cost$, $a_0 = \smoothess_\cost
	\lip_{\augtraj}^3/3 {+} \smooth_\augtraj\smooth_\cost \lip_\augtraj$. Hence,
	condition~\eqref{eq:suff_cond_descent} is satisfied for
	$
	\reg \geq \reg_0 = (a_1 - a_3 + (a_1+a_3)\sqrt{1+ 4a_2(a_1+a_3)^{-2}})/{2}.
	$
	Since $\sqrt{1+2x} \leq 1 + x $,  we have $\reg_0 \leq a_1 + a_2/(a_1+a_3)$, so
	it suffices to take a regularization larger than or equal to 
	\begin{align*}
		\reg(\ctrls) & =  \smooth_\augtraj \|\nabla \cost(\augtraj(\ctrls))\|_2 +  \frac{2\lip_{\augtraj}^2(\smoothess_\cost \lip_{\augtraj}^2/3 {+} \smooth_\augtraj\smooth_\cost )\|\nabla \cost(\augtraj(\ctrls))\|_2 }{\smooth_\augtraj\|\nabla \cost(\augtraj(\ctrls))\|_2 + \sigma_\augtraj\lip_\augtraj \strgcvx_\cost }  .
	\end{align*}
\end{proof}

Lemma~\ref{lem:detailed_cplxity_strgcvx} details the computations of the
complexity bounds of the ILQR algorithm in the case of strongly convex costs,
used in~\eqref{eq:global_strg_cvx_iter} before taking into account the local
quadratic convergence.
\begin{lemma}\label{lem:detailed_cplxity_strgcvx} Consider the notations and
	assumptions of Theorem~\ref{thm:global_local_conv}. The number of  iterations
	of the ILQR  algorithm with regularizations  
	\[
	\reg_k  =\left(1+\frac{\simp}{2(1 + \scaling \|\nabla \cost(\augtraj(\ctrls^{(k)}))\|_2/(\sqrt{\strgcvx_\cost} \condnb_\augtraj))}\right)\smooth_\augtraj \|\nabla \cost(\augtraj(\ctrls^{(k)}))\|_2,
	\]
	needed to reach an accuracy $\varepsilon$ is at most
	\[
	k \leq 	2\condnb_\cost \ln\left(\frac{\delta_0}{\varepsilon}\right) 
	+ 4\scaling \left(\sqrt{\delta_0} - \sqrt{\varepsilon }\right) 
	+ 2 \simp \ln\left(\frac{\scaling\sqrt{\delta_0} + \condnb_\augtraj}{\scaling\sqrt{\varepsilon} + \condnb_\augtraj}\right),
	\]
	where $\condnb_\cost = \smooth_\cost/\plcst_\cost$, $\condnb_\augtraj =
	\lip_{\augtraj}/\sigma_{\augtraj}$, $\scaling =
	\smooth_\augtraj/(\sigma_\augtraj^2\sqrt{\plcst_\cost})$, $\concord =
	\smoothess_\cost/(2\plcst_\cost^{3/2})$, $\simp =
	4\condnb_\augtraj^2\condnb_\cost (\newsimp+1)$,
	$\newsimp=\smoothess_\cost\lip_{\augtraj}^2/(3\smooth_\cost\smooth_\augtraj)$
\end{lemma}
\begin{proof}
	Let $\var \in \reals^{{\horizon\dimctrl}}$ and $\auxctrls =
	\LQR_{\reg(\var)}(\obj)(\var)$ for 
	\[
	\reg(\var)  =\left(1+\frac{\simp}{2(1 + \scaling \|\nabla \cost(\augtraj(\var))\|_2/(\sqrt{\strgcvx_\cost} \condnb_\augtraj))}\right)\smooth_\augtraj \|\nabla \cost(\augtraj(\var))\|_2 .
	\]
	As shown in Lemma~\ref{lem:reg_strg_cvx}, the chosen regularization ensures
	the sufficient decrease~\eqref{eq:suff_cond_descent}. As in~\eqref{eq:decrease_iter}, in the proof of Theorem~\ref{thm:global_conv},
	we get that 
	\[
	\obj(\var+\auxctrls) - \obj(\var) \leq - \frac{1}{2}\frac{\sigma_\augtraj^2}{\sigma_\augtraj^2 \smooth_\cost + \reg(\var)} \|\nabla \cost(\augtraj(\var))\|_2^2=-\frac{b_1 x^3 + b_2x^2}{b_3 x^2 + b_4 x + 1},
	\]
	where $x = \|\nabla \cost(\augtraj(\var))\|_2$, $b_1 =
	\smooth_\augtraj/(2\lip_\augtraj \strgcvx_\cost \smooth_\cost
	\sigma_\augtraj)$, $b_2 = 1/(2\smooth_\cost)$, $b_3 = \smooth_\augtraj^2/(
	\sigma_\augtraj^3 \lip_\augtraj \strgcvx_\cost\smooth_\cost)$, $b_4 =
	\smooth_\augtraj/(\sigma_\augtraj\lip_\augtraj \strgcvx_\cost) +
	\smooth_\augtraj/(\sigma_\augtraj^2\smooth_\cost) + 2a_0
	/(\sigma_\augtraj^3\strgcvx_\cost \smooth_\cost)$.  The function $f_1(x) =
	(b_1 x^3 + b_2x^2)/(b_3 x^2 + b_4 x + 1)$ is increasing and since $\cost$ is
	strongly convex, we have that $\|\nabla \cost(\augtraj(\var))\|_2^2 \geq
	\strgcvx_\cost (\cost(\augtraj(\var))- \cost^*) = \strgcvx_\cost \delta$ for
	$\delta = \obj(\var) - \obj^*$.  Hence,  as in the proof of
	Theorem~\ref{thm:global_conv},  we get that the total number of iterations to
	reach an accuracy $\varepsilon$ is at most $k \leq f_2(\delta_0) -
	f_2(\varepsilon)$ where 
	\[
	f'_2(\delta) = \frac{1}{f_1(\sqrt{\strgcvx_\cost\delta})} = \frac{1+c_1 \delta^{1/2} + c_2 \delta}{c_3 \delta + c_4\delta^{3/2}},
	\]
	where $c_1 = \scaling(\condnb_\augtraj^{-1} +2\condnb_\augtraj +
	\condnb_\cost^{-1}) + 4\condnb_\augtraj^3\concord /(3\condnb_\cost)$, $c_2 =
	\scaling^2/(\condnb_\augtraj\condnb_\cost)$, $c_3 = 1/(2\condnb_\cost)$, $c_4
	= \scaling/(2\condnb_\augtraj\condnb_\cost)$. By standard integration, we have
	that an antiderivative of $f'_2$ is 
	\begin{align*}
		f_2(x) & = \frac{\ln(\delta)}{c_3} + \frac{2c_2}{c_4} \sqrt{\delta} - 2\frac{(c_2c_3^2 -c_4c_1c_3 + c_4^2)}{c_3c_4^2}\ln(c_4\sqrt{\delta} + c_3)\\
		& = 2 \condnb_\cost \ln(\delta) + 4 \scaling\sqrt{\delta} + 8\condnb_\augtraj^2 ( \condnb_\cost + 2 \condnb_\augtraj^2 \concord/(3\scaling) ) \ln(\scaling\sqrt{\delta}/(2\condnb_\cost\condnb_\augtraj) + 1/(2\condnb_\cost)).
	\end{align*}
	The result follows.
\end{proof}

We present below the proof of Corollary~\ref{cor:line_search} that ensures the
validity of the line-search procedure presented in
Algorithm~\ref{algo:RGGN_linesearch}.
\linesearch*
\begin{proof}
	Define for $\ctrls \in \reals^{\horizon\dimctrl}$, 
	\[
	\bar \reg(\ctrls) = \left(1+\frac{\simp}{2(1 + \scaling \sqrt{\obj(\ctrls) - \min_{\auxctrls \in \reals^{\horizon\dimctrl}} \obj(\auxctrls)} / \condnb_\augtraj))}\right)\smooth_\augtraj
	\]
	Since $\cost$ is strongly convex, we have that $\|\nabla
	\cost(\augtraj(\ctrls))\|_2 \geq \sqrt{\strgcvx_\cost}(\cost(\augtraj(\ctrls))
	-\min_{\auxstates \in \reals^{\horizon\dimstate}} \cost(\auxstates)) =
	\sqrt{\strgcvx_\cost}(\obj(\ctrls) -\min_{\auxctrls \in
	\reals^{\horizon\dimstate}} \obj(\auxctrls)) $, where we recall that
	$\min_{\auxstates \in \reals^{\horizon\dimstate}} \cost(\auxstates) =
	\min_{\auxctrls \in \reals^{\horizon\dimstate}} \obj(\auxctrls)$ as shown in
	Theorem~\ref{thm:global_conv}. Hence, we have that $\bar \reg(\ctrls) \|\nabla
	\cost(\augtraj(\ctrls)) \|_2 \geq \reg(\ctrls) $ for $\reg(\ctrls)$ defined in
	Lemma~\ref{lem:reg_strg_cvx}. Therefore, by Lemma~\ref{lem:reg_strg_cvx},  the
	line-search procedure of Algorithm~\ref{algo:RGGN_linesearch} at the
	$k$\textsuperscript{th} iteration necessarily terminates with a scaled
	regularization 
	$
	\bar \nu_k \leq 2 \bar \reg(\ctrls_k) 
	$
	since we chose $\bar \reg_{-1} \leq \bar \reg(\ctrls_0)$ and since $\bar
	\reg(\ctrls_k)$ necessarily increases over the iterations as $\obj(\ctrls_k)$
	decreases when  condition~\eqref{eq:suff_cond_descent} is satisfied.
	
	Moreover, since $\bar \reg(\ctrls)$ is upper bounded by $(1+
	\simp/2)\smooth_\augtraj $ the total number of calls to oracles made by the
	line-search inner loop to satisfy the decrease condition after $k$ iterations
	is at most 
	\[
	k + \left\lceil\log_2\left(\frac{(1+\simp/2) \smooth_\augtraj}{\bar \reg_{-1}}\right)\right\rceil.
	\]
	
	Since the line-search ensures the decrease
	condition~\eqref{eq:suff_cond_descent},  we have, as in
	Theorem~\ref{thm:global_local_conv} that for $\reg_k = \bar \reg_k \|\nabla
	\cost(\augtraj(\ctrls_k))\|_2$, 
	\begin{align*}
		\obj(\ctrls_{k+1}) - \obj(\ctrls_k) 
		& \leq - \frac{1}{2}\frac{\sigma_\augtraj^2}{\sigma_\augtraj^2 \smooth_\cost + \reg_k} \|\nabla \cost(\augtraj(\ctrls))\|_2^2 \\
		& \leq  - \frac{1}{2}\frac{\sigma_\augtraj^2}{\sigma_\augtraj^2 \smooth_\cost + 2\bar \reg(\ctrls_k) \|\nabla \cost(\augtraj(\ctrls))\|_2}\|\nabla \cost(\augtraj(\ctrls))\|_2^2 \\
		& \leq - \- \frac{1}{4}\frac{\sigma_\augtraj^2}{\sigma_\augtraj^2 \smooth_\cost + \bar \reg(\ctrls_k) \|\nabla \cost(\augtraj(\ctrls))\|_2}\|\nabla \cost(\augtraj(\ctrls))\|_2^2.
	\end{align*}
	The rest of the proof of Lemma~\ref{lem:detailed_cplxity_strgcvx} follows, and
	we get that the number of iterations of Algorithm~\ref{algo:RGGN_linesearch} to
	reach an accuracy $\varepsilon$ is at most $2 k(\delta_0, \varepsilon)$ for
	$k(\delta_0, \varepsilon)$ defined as in Theorem~\ref{thm:global_local_conv}.
	
	For the quadratic convergence rate, we have, with the notations of the proof
	of Theorem~\ref{thm:global_local_conv}, that 
	\[
	\reg_k /\lambda_\cost(\augtraj(\ctrls_k))
	\leq 2 \bar \reg(\ctrls_k) \|\nabla \cost(\augtraj(\ctrls_k))\|_2^2/\lambda_\cost(\augtraj(\ctrls_k)) 
	\leq 2 \sqrt{\smooth_\cost} (\smooth_\augtraj  + 2 \lip_{\augtraj}( \smoothess_\cost \lip_{\augtraj}^2/3 {+} \smooth_\augtraj\smooth_\cost )  /(\sigma_\augtraj  \strgcvx_\cost)).\] 
	The rest of the proof follows with a slightly modified quadratic convergence
	gap.
\end{proof}

\section{Convergence Analysis of IDDP}\label{app:conv_iddp}
The ILQR and IDDP algorithms differ only by the rolling-out phase. The former
uses the linearized dynamics, while the latter uses the original shifted
dynamics. We formalize the roll-out phase in Definition~\ref{def:roll_out}. 
\begin{definition}\label{def:roll_out} 
  We define the roll-out of $\horizon$ policies $\pi_t: \reals^\dimstate
  \rightarrow \reals^\dimctrl$ along $\horizon$ dynamics $\phi_t:
  \reals^{\dimstate} \times \reals^{\dimctrl} \rightarrow \reals^\dimstate$ from
  $\state_0$ as 
  \begin{align*}
    \rollout:\state_0, (\phi_t)_{t=0}^{\horizon-1}, (\pi_t)_{t=0}^{\horizon-1}
    \mapsto \ &  (\ctrl_0, \ldots, \ctrl_{\horizon-1}) \\
    \mbox{s.t.}  \ & 
      \ctrl_t =\pi_t(\state_t), 
      \quad \state_{t+1} = \phi_t(\state_t, \ctrl_t),
      \ \mbox{for} \
      t \in \{0, \ldots, \horizon-1\}. 
  \end{align*}
\end{definition}
With the notations of Definition~\ref{def:roll_out}, denoting 
\begin{align*}
  \Phi(\states, \ctrls) 
  & = (\phi_0(\state_0, \ctrl_0); \ldots; \phi_{\horizon-1}(\state_{\horizon-1}, \ctrl_{\horizon-1})) \\
  \pi(\states) 
  & = (\pi(\state_0); \ldots; \pi(\state_{\horizon-1}))
\end{align*}
for $\states = (\state_1; \ldots; \state_\horizon), \ctrls = (\ctrl_0; \ldots;
\ctrl_{\horizon-1})$, the trajectory $\states = \phi^{[\horizon]}(\state_0, \ctrls)$ associated to $\ctrls$ is the unique solution of $\states = \Phi(\states, \ctrls)$
and the roll-out is the unique solution of 
\begin{equation}\label{eq:rollout_implicit_eq}
  \ctrls = \pi(\states), \quad \ \states = \Phi(\states, \ctrls).
\end{equation}

Given a trajectory $(\state_1; \ldots; \state_\horizon) = \traj(\state_0,
\ctrls)$ computed from $\ctrls = (\ctrl_0; \ldots; \ctrl_{\horizon-1})$, and
$\horizon$ policies $(\pi_t)_{t=0}^{\horizon-1}$ computed in the backward pass
of Algorithm~\ref{algo:lqr_ddp}, the ILQR and IDDP algorithms can be expressed as 
\begin{align}
  \LQR_\reg(\obj)(\ctrls) 
  & = \rollout(0, (\lin_t)_{t=0}^{\horizon-1}, (\pi_t)_{t=0}^{\horizon-1}) \nonumber \\
  \mbox{for} \ 
  \lin_t(\auxstate_t, \auxctrl_t) 
  & = \lin_{\dyn_t}^{\state_t, \ctrl_t}(\auxstate_t, \auxctrl_t)
    = \nabla_{\state_t} \dyn_t(\state_t, \ctrl_t)^\top \auxstate_t
      + \nabla_{\ctrl_t} \dyn_t(\state_t, \ctrl_t)^\top \auxctrl_t \label{eq:lin_dyn_roll}\\
  \DDP_\reg(\obj)(\ctrls) 
  & = \rollout(0, (\delta_t)_{t=0}^{\horizon-1}, (\pi_t)_{t=0}^{\horizon-1}) \nonumber\\
  \mbox{for} \ 
  \delta_t(\auxstate_t, \auxctrl_t) 
  & = \delta_{\dyn_t}^{\state_t, \ctrl_t}(\auxstate_t, \auxctrl_t)
    = \dyn_t(\state_t + \auxstate_t, \ctrl_t + \auxctrl_t) - \dyn_t(\state_t, \ctrl_t) \label{eq:shift_dyn_roll}
\end{align}
To analyze the convergence of the IDDP algorithm, we consider how close it is
from the ILQR algorithm. This can be traced as measuring how the roll-out phase
differ between using $\lin_{\dyn_t}^{\state_t, \ctrl_t}(\auxstate_t,
\auxctrl_t)$ or $\delta_{\dyn_t}^{\state_t, \ctrl_t}(\auxstate_t, \auxctrl_t)$
as done in Lemma~\ref{lem:approx_ilqrs}.
\begin{lemma}\label{lem:approx_ilqrs} Given $\horizon$ discrete dynamics
  $\dyn_t: \reals^{\dimstate} \times \reals^{\dimctrl} \rightarrow
  \reals^\dimstate$ and $\horizon$ policies $\pi_t: \reals^\dimstate \rightarrow
  \reals^\dimctrl$ for $t=0, \ldots, \horizon-1$, denote 
  \begin{align*}
    \auxctrls = \rollout(0, (\lin_t)_{t=0}^{\horizon-1}, (\pi_t)_{t=0}^{\horizon-1}),
    \qquad 
    \auxxctrls = \rollout(0, (\delta_t)_{t=0}^{\horizon-1}, (\pi_t)_{t=0}^{\horizon-1}),
  \end{align*}
  for $\lin_t$, $\delta_t$ defined as in~\eqref{eq:lin_dyn_roll}
  and~\eqref{eq:shift_dyn_roll} from $(\state_1; \ldots; \state_\horizon) =
  \traj(\state_0, \ctrls)$ and $\ctrls = (\ctrl_0; \ldots; \ctrl_{\horizon-1})$.
  Suppose that the policies are affine of the form are $\pi_t(\state_t) =
  K_t\state_t + k_t$, and that all dynamics are Lipschitz continuous with
  Lipschitz-continuous Jacobians. Then the directions $\auxctrls$ and
  $\auxxctrls$ differ by
  \[
    \|\auxxctrls - \auxctrls\|_2  \leq \eta(K) \|\auxctrls\|_2^2
  \]
  for $\eta(K)$ an increasing function of $\|K\|_2$ detailed in the proof.
\end{lemma}
\begin{proof}
  In this proof, we ignore the dependency w.r.t. $\initstate$ and denote simply
  $\traj(\ctrls) = \traj(\initstate, \ctrls)$. Similarly, we denote
  $\ell^{[\horizon]}(\auxctrls) = \ell^{[\horizon]}(0, \auxctrls)$ and
  $\delta^{[\horizon]}(\auxxctrls) = \delta^{[\horizon]}(0, \auxxctrls)$ the
  trajectories associated to the linearized and shifted dynamics starting from
  $0$. For $\auxstates=(\auxstate_1;\ldots; \auxstate_\horizon)$, denote
  $\pi(\auxstates) = (\pi_0(0); \pi_1(\auxstate_1);
  \ldots;\pi_{\horizon-1}(\auxstate_{\horizon-1}))$. Denoting $K =
  \sum_{t=2}^{\horizon} e_te_{t-1}^\top \otimes K_{t-1} \in \reals^{\horizon
  \dimctrl \times \horizon \dimstate}$, $k = (k_0; \ldots; k_{\horizon-1}) \in
  \reals^{\horizon\dimstate}$ we have that $\pi(\auxctrls) = K\auxctrls + \polcst$ with 
  \[
    \begin{pmatrix}
			0 & \ldots &  &\ldots & 0 \\
			K_1 & \ddots &  &  & \vdots \\
			0 & \ddots& &  &  \\
			\vdots & \ddots & \ddots & \ddots & \vdots \\
			0 & \ldots & 0 & K_{\horizon-1} & 0 
		\end{pmatrix}.
  \]
  The roll-outs are defined as the solutions of
  \[
    \auxctrls = \pi(\lin^{[\horizon]}(\auxctrls)),
    \quad
    \auxxctrls = \pi(\delta^{[\horizon]}(\auxxctrls))
  \]
  From Lemma~\ref{lem:grad_hess_detailed}, the linearized trajectories can be
  expressed as 
  \[
    \lin^{[\horizon]}(\auxctrls) =  (\idm-A)^{-1}B\auxctrls
  \]
  for $B = \nabla_\ctrls F(\states, \ctrls)^\top$, $A = \nabla_\states F(\states,
  \ctrls)^\top$, with $F(\states, \ctrls) = (\dyn_0(\state_0, \ctrl_0); \ldots;
  \dyn_{\horizon-1}(\state_{\horizon-1}, \ctrl_{\horizon-1}))$ for $\states =
  (\state_1; \ldots; \state_\horizon)$, $\ctrls = (\ctrl_0; \ldots;
  \ctrl_{\horizon-1})$. 
  We have that $\auxctrls$ satisfy
  \[
    \auxctrls = K (\idm - A)^{-1}B\auxctrls + \polcst.
  \]
  Note that $A + BK = \sum_{t=2}^\horizon e_t e_{t-1}^\top \otimes (A_{t-1}+B_{t-1}
  K_{t-1})$ for $B_t = \nabla_{\ctrl_t} \dyn_t(\state_t,
  \ctrl_t)^\top, A_t = \nabla_{\state_t} \dyn_t(\state_t, \ctrl_t)^\top$, that is
  \[
    B K + A = \begin{pmatrix}
    0 & \ldots &  &\ldots & 0 \\
    U_1 & \ddots &  &  & \vdots \\
    0 & \ddots& &  &  \\
    \vdots & \ddots & \ddots & \ddots & \vdots \\
    0 & \ldots & 0 & U_{\horizon-1} & 0 
  \end{pmatrix},
  \]
  for $U_t = A_t + B_tK_t$.
  So $\idm - A - B K$ is invertible by solving an autoregressive problem. We
  then have that $(\idm - K(\idm -A)^{-1} B)(\idm + K(\idm -A -BK)^{-1}B) =
  \idm$ such that the solution of $\auxctrls =
  \pi(\lin^{[\horizon]}(\auxctrls))$ is 
  \[
    \auxctrls = \polcst  + K(\idm -A - BK)^{-1} B \polcst. 
  \]
  For $\auxxctrls=(\auxxctrl_0;\ldots;\auxxctrl_{\horizon-1})$, denote $\auxxstates=(\auxxstate_1; \ldots
  ;\auxxstate_\horizon) = \delta^{[\horizon]}(\auxxctrls)$ s.t.
  $\auxxstate_{t{+}1} {=} \dyn_t(\state_t {+} \auxxstate_t, \ctrl_t {+}
  \auxxctrl_t) {-} \dyn_t(\state_t, \ctrl_t)  $ for $t \in \{0, \ldots,
  \horizon{-}1\}$, with $\auxxstate_0 {=} 0$. By the mean value theorem, for all
  $t \in \{0, \ldots, \horizon{-}1\}$, there exists $\zeta_{t, 1}, \ldots,
  \zeta_{t, \dimstate} \in \reals^\dimstate$, $\eta_{t, 1}, \ldots, \eta_{t,
  \dimstate} \in \reals^\dimctrl$ s.t. for all $i \in \{1, \ldots, \dimstate\}$,
  denoting $f_i$ the i\textsuperscript{th} coordinate of $\dyn$, we have
	\begin{align*}
		\dyn_i(\state_t + \auxxstate_t, \ctrl_t + \auxxctrl_t)  - \dyn_i(\state_t, \ctrl_t + \auxxctrl_t) 
		& = \nabla_{\state_t + \zeta_{t, i}} \dyn_i(\state_t + \zeta_{t, i}, \ctrl_t{+}\auxxctrl_t)^\top \auxxstate_t\\
		\dyn_i(\state_t, \ctrl_t {+} \auxxctrl_t)  - \dyn_i(\state_t, \ctrl_t ) 
	& 	= \nabla_{\ctrl_t{+}\eta_{t, i}} \dyn_i(\state_t, \ctrl_t{+}\eta_{t, i})^\top \auxxstate_t,
	\end{align*}
	with $\|\zeta_{t, i}\|_2 \leq \|\auxxstate_t\|_2$ and $\|\eta_{t, i}\|_2 \leq
	\|\auxxctrl_t\|_2$. We can then write the dynamics of $\auxxstate_t$ as  
	\begin{gather*}
		\auxxstate_{t+1} = C_t \auxxstate_t + D_t \auxxctrl_t \quad \mbox{for}\  t \in \{0, \ldots, \horizon-1\}\\
		C_t = \sum_{i=1}^\dimstate e_i \otimes \nabla_{\state_t {+} \zeta_{t, i}} \dyn_i(\state_t {+} \zeta_{t, i}, \ctrl_t{+}\auxxctrl_t)^\top \qquad  	
		D_t = \sum_{i=1}^\dimstate e_i \otimes \nabla_{\ctrl_t{+}\eta_{t, i}} \dyn_i(\state_t, \ctrl_t{+}\eta_{t, i})^\top.
	\end{gather*}
	Denoting  
	$C = \sum_{t=2}^{\horizon} e_te_{t-1} \otimes C_{t-1}$,  
	$D= \sum_{t=1}^\horizon e_te_t^\top \otimes D_{t{-}1}$, we get that
	$\delta^{[\horizon]}(\ctrls) = (\idm -C)^{-1} D \auxctrls$. Since $\auxxctrls =
	\pi(\delta^{[\horizon]}(\ctrls))$, we get that $\auxxctrls$
	satisfies
  \[
    \auxxctrls = K (\idm -C)^{-1} D \auxxctrls + \polcst.
  \]
  The solution of this system can be found as before as 
  \[
    \auxxctrls = \polcst + K(\idm -C -DK)^{-1} D\polcst.
  \]
  We then have 
  \begin{align*}
    \|\auxxctrls - \auxctrls\|_2 
    & \leq \|K\|_2 \|(\idm -C -DK)^{-1} D - (\idm -A - BK)^{-1} B\|_2 \|\polcst\|_2
  \end{align*}
  Then, we decompose the middle term as
  \begin{align*}
    & (\idm -C -DK)^{-1} D - (\idm -A - BK)^{-1} B   \\
    & = ((\idm -C -DK)^{-1} - (\idm -A - BK)^{-1}) D 
    - (\idm -A - BK)^{-1}(B - D) \\
    & = (\idm -C -DK)^{-1}(C-A + (D-B)K)(\idm -A - BK)^{-1}D  \\
    & \quad - (\idm -A - BK)^{-1}(B - D).
  \end{align*}
  We have 
  $(\idm -A - BK)^{-1} 
  = \sum_{t=0}^{\horizon-1} (A+BK)^t$ 
  since $(A+BK)^\horizon =0$. 
  So we get
  \[
  \|(\idm - A - BK)^{-1}\|_2 
  \leq \sum_{t=0}^{\horizon-1}\|A+BK\|_2^t 
  \leq 
  \sum_{t=0}^{\horizon-1}(\lip_{\dyn}^\state + \lip_{\dyn}^\ctrl \|K\|_2)^t 
  \coloneqq S_1(K),
  \] 
  for $\lip_{\dyn}^\state , \lip_{\dyn}^\ctrl $ defined as in Lemma~\ref{lem:smooth_traj_from_dyn_time_varying}.
  Similarly, we have 
  \[
    \|(\idm -C -DK)^{-1}\|_2 \leq 
    \sum_{t=0}^{\horizon-1}
    (\dimstate \lip_{\dyn}^\state + \dimstate\lip_{\dyn}^\ctrl \|K\|_2)^t 
    \coloneqq S_2(K).
  \]
  Using the block structure of the matrices, we have, using that $\|\eta_{t, i}
  \| \leq \|\auxxctrl_t\|_2$, $\|\zeta_{t, i} \| \leq \|\auxxstate_t\|_2$,
  \begin{align*}
    \|B - D\|_2 & \leq \dimstate \smooth_\dyn^{\ctrl\ctrl} \|\auxxctrls\|_2 \\
    \|A-C\|_2 & \leq 
    \dimstate (
      \smooth_\dyn^{\state\state} \|\auxxstates\|_2
    + \smooth_\dyn^{\state\ctrl} \|\auxxctrls\|_2
    ), \\
    \|D\|_2 & \leq \dimstate \lip_{\dyn}^\ctrl,
  \end{align*}
  for $\smooth_\dyn^{\ctrl\ctrl}, \smooth_\dyn^{\state\state}, \smooth_\dyn^{\state\ctrl}$
  defined as in Lemma~\ref{lem:smooth_traj_from_dyn_time_varying}.
  In addition, we have that $\|\auxxstates\|_2\leq \lip_{\traj}\|\auxxctrls\|_2$,
  where $\lip_{\traj}$ is the Lipschitz-constant of $\traj$ computed in
  Lemma~\ref{lem:smooth_traj_from_dyn_time_varying}.

  So in total we get that 
  \begin{align*}
    \|\auxxctrls - \auxctrls\|_2 
    & \leq \|K\|_2 
    (\dimstate^2 S_1(K)S_2(K) \lip_\dyn^\ctrl(
      \smooth_\dyn^{\state\state} \lip_\traj 
      + \smooth_\dyn^{\state\ctrl} 
      + \smooth_\dyn^{\ctrl\ctrl}\|K\|_2
      ) 
    + \dimstate S_1(K)\smooth_{\dyn}^{\ctrl \ctrl})\|\auxxctrls\|_2 \|\polcst\|_2\\
    & \coloneqq \eta_1(K) \|\auxxctrls\|_2 \|\polcst\|_2
  \end{align*}
  Now since $\auxxctrls = \polcst + K(\idm -C -DK)^{-1} D\polcst$ and $\polcst = \auxctrls - K
  (\idm - A)^{-1}B\auxctrls$, we have
  \begin{align*}
    \|\auxxctrls\|_2 & \leq (1+\dimstate\|K\|_2S_2(K)\lip_\dyn^\ctrl)\|\polcst\|_2, \\
    \|\polcst\|_2 & \leq (1+\|K\|_2\lip_\traj)\|\auxctrls\|_2.
  \end{align*}
  Hence, we get 
  \begin{align*}
    \|\auxxctrls - \auxctrls\|_2 
    & \leq \eta_1(K) 
    (1+\dimstate\|K\|_2S_2(K)\lip_\dyn^\ctrl)(1+\|K\|_2\lip_\traj)^2\|\auxctrls\|_2^2 \\
    & \coloneqq \eta(K) \|\auxctrls\|_2^2.
  \end{align*}
\end{proof}

It remains to bound the Lipschitz continuity constant of the policies derived in
the backward pass of the ILQR and IDDP algorithms. In the general case, i.e.,
problem~\eqref{eq:discrete_pb_ctrl_cost},
Lemma~\ref{lem:bounded_policies_gen_cvg} shows that the policies are Lipschitz
continuous with a Lipschitz continuity parameter independent of $\reg$ provided
that $\reg$ is sufficiently large.
For the restricted problem~\eqref{eq:discrete_pb}, the policies are Lipschitz continuous with a Lipschitz continuous parameter independent of $\reg$ unconditionally, as shown in Lemma~\ref{lem:bounded_policies_restricted_case}.
\begin{lemma}\label{lem:bounded_policies_gen_cvg} Consider
  problem~\eqref{eq:discrete_pb_ctrl_cost} with dynamics and costs Lipschitz
  continuous with Lipschitz-continuous Jacobians as in
  Assumption~\ref{asm:gen_cvg}. For any $\nu \geq 2
  \lip_\fullaugtraj\smooth_\fullcost$, the policies $\pi_t:\auxstate_t \mapsto
  K_t\auxstate_t + k_t$ computed in Algorithm~\ref{algo:lqr_ddp} are
  well-defined and Lipschitz continuous with $\|K_t\|_2\leq c$ for $c$
  independent of $\reg$.
\end{lemma}
\begin{proof}
  For $t\in \{0, \ldots, \horizon-1\}$, denote $\dyn^{[t:\horizon]}(\state_t, \ctrls_{[t:\horizon-1]}) = (\state_{t+1}; \ldots, \state_\horizon)$ the control of the dynamics $\dyn_t, \ldots, \dyn_{\horizon-1}$ starting from $\state_t$ with control variables $\ctrls_{[t:\horizon-1]} = (\ctrl_t; \ldots;\ctrl_{\horizon-1})$. For $t=0$, denoting $[0:\horizon] = [\horizon]$, we retrieve Definition~\ref{def:traj_func_time_varying}. 
  Define similarly 
  $\fullaugtraj^{[t:\horizon]}(\ctrls_{[t:\horizon-1]}) 
  = (\dyn^{[t:\horizon]}(\state_t, \ctrls_{[t:\horizon-1]}), \ctrls_{[t:\horizon-1]})$ 
  and 
  $\fullcost^{[t:\horizon]}(\state_{[t+1;\horizon]}, \ctrls_{[t:\horizon-1]}) 
  = \sum_{s=t}^{\horizon-1} \cost_s(\state_s, \ctrl_s) 
  + \cost_\horizon(\state_\horizon)$. 
  The $t$\textsuperscript{th} policy $\pi_t(\auxstate_t)$ is formally equal to $\auxctrl_t^*(\auxstate_t)$ for 
  \begin{align*}
    \auxctrl_t^*(\auxstate_t), \ldots \auxctrl_{\horizon-1}^*(\auxstate_t) 
    & = \argmin_{\ctrl_t, \ldots, \ctrl_{\horizon-1}}
    \qua_{\fullcost^{[t:\horizon]}}^{\dyn^{[t:\horizon]}(\state_t, \ctrls_{[t:\tau-1]})} 
    (\lin_{\dyn^{[t:\horizon]}}^{\state_t, \ctrls_{t:\tau-1}}(\auxstate_t, \auxctrls_{[t:\tau-1]}), \auxctrls_{[t:\tau-1]}) 
    + \frac{\reg}{2} \|\auxctrls_{[t:\horizon-1]}\|_2^2 \\
    & = (\reg \idm  
    + \nabla \fullaugtraj^{[t:\horizon]}(\ctrls_{[t:\horizon]})
    \nabla^2 \fullcost^{[t:\horizon]}(\fullaugtraj^{[t:\horizon]}(\ctrls_{[t:\horizon]}))
    \nabla \fullaugtraj^{[t:\horizon]}(\ctrls_{[t:\horizon]})^\top
    )^{-1} 
    (A \auxstate_t + a)
  \end{align*}
  for some $A$, $a$ independent of $\reg$.
  For $\nu > \smooth_{\fullcost^{[t:\horizon]}} \lip_{\fullaugtraj^{[t:\horizon]}}^2$ the policies are well-defined. Since $ \smooth_{\fullcost^{[t:\horizon]}} = \smooth_{\fullcost^{[\horizon]}} =\smooth_\fullcost$ and $\lip_{\fullaugtraj^{[t:\horizon]}}^2 \leq \lip_{\fullaugtraj^{[0:\horizon]}}^2 = \lip_\fullaugtraj^2$, the policies are well-defined for any $\nu \geq 2
  \lip_\fullaugtraj\smooth_\fullcost$. Moreover, for any $\nu \geq 2
  \lip_\fullaugtraj\smooth_\fullcost$, 
  \[
  \|(\reg \idm  
  + \nabla \fullaugtraj^{[t:\horizon]}(\ctrls_{[t:\horizon]})
  \nabla^2 \fullcost^{[t:\horizon]}(\fullaugtraj^{[t:\horizon]}(\ctrls_{[t:\horizon]})))^{-1}\|_2
  \leq \frac{1}{
  \lip_\fullaugtraj\smooth_\fullcost}.
  \]
  Hence, the associated policy $\pi_t$ is at most $1/
  (\lip_\fullaugtraj\smooth_\fullcost)$ Lipschitz-continuous.
\end{proof}
\begin{corollary}\label{cor:approx_ilqrs_gen} Consider
  problem~\eqref{eq:discrete_pb_ctrl_cost} with dynamics and costs Lipschitz
  continuous with Lipschitz-continuous Jacobians as in
  Assumption~\ref{asm:gen_cvg}. For any $\nu \geq 2
  \lip_\fullaugtraj\smooth_\fullcost$, there exists a constant $\eta$
  independent of $\reg$, such that the ILQR or IDDP directions
  $\auxctrls=\LQR_\reg(\obj)(\ctrls)$ and $\auxxctrls=\DDP_\reg(\obj)(\ctrls)$
  on any control variables $\ctrls \in \reals^{\horizon \dimctrl}$ differ by
  \[
    \|\auxxctrls - \auxctrls\|_2  \leq \eta\|\auxctrls\|_2^2.
  \]
\end{corollary}
\begin{proof}
  The result follows from Lemma~\ref{lem:approx_ilqrs} and~\ref{lem:bounded_policies_gen_cvg}.
\end{proof}

\begin{lemma}\label{lem:bounded_policies_restricted_case}
  
  Consider Algorithm~\ref{algo:lqr_ddp} applied to problems of the
  form~\eqref{eq:discrete_pb}, that is, such that $R_t = 0, Q_t = 0, q_t = 0$.
  Assume in addition that the costs are strongly convex, the dynamics are
  surjective and both costs and dynamics are smooth as described in
  Assumption~\ref{asm:conv}. The policies $\pi_t: \auxstate_t \rightarrow K_t
  \auxstate_t + k_t$ computed in Algorithm~\ref{algo:lqr_ddp} are always
  well-defined and such that $\|K_t\|_2 \leq c$ for some $c$ independent of
  $\reg$.
\end{lemma}
\begin{proof}
  Consider $K_t, J_t$ defined in Algorithm~\ref{algo:lqr_ddp} for a
	command $\ctrls \in \reals^{\horizon\dimctrl}$, a regularization $\reg>0$ and no control costs ($R_t = 0, Q_t = 0, q_t = 0$).
	By recursion, we have that $J_t$ is positive definite, since 
  \[
    \J_t = \P_t +
	\A_t^\top\J_{t+1}^{1/2}(\idm + \reg^{-1}\J_{t+1}^{1/2}
	B_tB_t^\top\J_{t+1}^{1/2})^{-1}J_{t+1}^{1/2}A_t,
  \]
  and $J_\horizon = P_\horizon$, and $\P_t$ are positive definite. 
  
  In particular, $\J_t \succeq P_t \succeq \strgcvx_\cost\idm$ and for any $t\in \{1, \ldots, \horizon-1\}$,
  \begin{align*}
    \|J_t\|_2 & \leq \smooth_\cost + (\lip_\dyn^\state)^2\|J_{t+1}\|_2 \\
    & \leq \sum_{s=t}^{\horizon} (\prod_{j=t}^{s-1}(\lip_\dyn^\state)^2 )\smooth_\cost,
  \end{align*}
  where here and in the following we use $\lip_\dyn^\state,\lip_\dyn^\ctrl$
  defined in Lemma~\ref{lem:smooth_traj_from_dyn_time_varying}. Therefore, we
  have 
  \[
    \sup_{t\in \{1, \ldots,\horizon\}} \|J_t\|_2 
    \leq \smooth_\cost \sum_{t=1}^\horizon (\lip_\dyn^\state)^{2(s-t)}.
  \]
  On the other hand, we have
  \begin{align*}
    K_t & = - (\reg \idm + B_t^\top J_{t+1}B_t)^{-1}B_t^\top J_{t+1}A_t
  \end{align*}
  The spectral norm of the matrix $(\reg \idm + B_t^\top J_{t+1}B_t)^{-1}B_t^\top$ can be bounded just as in Lemma~\ref{lem:reg_strg_cvx} given the assumptions. Namely, we have, 
  \[
    \|(\reg \idm + B_t^\top J_{t+1}B_t)^{-1}B_t^\top\|_2 \leq \frac{2 \lip_\dyn^\ctrl}{\reg + \strgcvx_\cost \sigma_\dyn \lip_\dyn^\ctrl}.
  \]
  Hence, we have
  \begin{align*}
    \sup_{t\in \{0, \ldots, \horizon-1\}} \|K_t\|_2 
    & \leq  
    \frac{2 \lip_\dyn^\ctrl \lip_\dyn^\state}{\reg + \strgcvx_\cost \sigma_\dyn \lip_\dyn^\ctrl}
    \smooth_\cost \sum_{t=1}^\horizon (\lip_\dyn^\state)^{2(s-t)} \\
    & \leq \frac{2 \lip_\dyn^\state}{\strgcvx_\cost \sigma_\dyn }
    \smooth_\cost \sum_{t=1}^\horizon (\lip_\dyn^\state)^{2(s-t)}.
  \end{align*}

\end{proof}

\begin{corollary}\label{cor:approx_ilqrs} Consider Algorithm~\ref{algo:lqr_ddp}
  applied to problems of the form~\eqref{eq:discrete_pb}, that is, such that
  $R_t = 0, Q_t = 0, q_t = 0$. Assume in addition that the costs are strongly
  convex, the dynamics are surjective and both costs and dynamics are smooth as
  described in Assumption~\ref{asm:conv}. Then there exists a constant $\eta$
  independent of $\reg$, such that the ILQR or IDDP directions
  $\auxctrls=\LQR_\reg(\obj)(\ctrls)$ and $\auxxctrls=\DDP_\reg(\obj)(\ctrls)$
  on any control variables $\ctrls \in \reals^{\horizon \dimctrl}$ differ by
  \[
    \|\auxxctrls - \auxctrls\|_2  \leq \eta\|\auxctrls\|_2^2.
  \]
\end{corollary}
\begin{proof}
  Follows from Lemma~\ref{lem:approx_ilqrs} and~\ref{lem:bounded_policies_restricted_case}.
\end{proof}

\section{Detailed Computations}\label{app:comput}
In this Appendix, we detail some technical computations done in the
paper. 

\subsection{Details on Theorem~\ref{thm:suff_cond}}
\begin{proof}[Details on Eq.~\eqref{eq:decomp_grad_suff_cond}]
Note that $\nabla_{\auxctrls} \Microdyn(\auxstates, \auxctrls) =
\diag((\nabla_{\auxctrl_t} \microdyn(\auxstate_t, \auxctrl_t))_{t=0}^{\ksteps-1}
)$, such that, by definition of $\diag$, 
\begin{align*}
	\nabla_{\auxctrls} \Microdyn(\auxstates, \auxctrls)  &= \sum_{t=1}^{\ksteps} e_te_t^\top \otimes \nabla_{\auxctrl_{t-1}} \microdyn(\auxstate_{t-1}, \auxctrl_{t-1})\\
	&  = \sum_{t=1}^{\ksteps} e_te_t^\top \otimes \partial_{\auxctrl_{t-1}} \diffeoctrl(\auxstate_{t-1}, \auxctrl_{t-1}) e^\top \nabla \diffeostate(\auxstate_{t+1})^{-1}\\ 
	& = \left(\sum_{t=1}^{\ksteps} \partial_{\auxctrl_{t-1}} \diffeoctrl(\auxstate_{t-1}, \auxctrl_{t-1})  e_te_t^\top \otimes 1\right) 
	( \idm \otimes  e^\top ) \left(\sum_{t=1}^{\ksteps} e_te_t^\top \otimes \nabla \diffeostate(\auxstate_{t+1})^{-1}\right)\\
	& =  \diag((\partial_{\auxctrl_t} \diffeoctrl(\auxstate_t, \auxctrl_t) )_{t=0}^{\ksteps-1})
	(\idm \otimes e^\top )
	\diag((\nabla \diffeostate(\auxstate_{t+1})^{-1})_{t=0}^{\ksteps-1}),
\end{align*}
where $k=\dimstate$, and we use that $(A\otimes B ) (C\otimes D) = (AC \otimes
BD)$ for $A, B, C, D$ of appropriate sizes and $1$ is the identity in
$\reals^1$. Similarly, one has that, for $D= \sum_{t=1}^{\ksteps-1}
e_te_{t+1}^\top$ the upper-shift matrix in $\reals^\ksteps =
\reals^{\dimstate}$.
 \begin{align*}
 	\nabla_{\auxstates} \Microdyn(\auxstates, \auxctrls)  &
 	= \sum_{t=1}^{\ksteps-1} e_te_{t+1}^\top \otimes \nabla_{\auxstate_{t}} \microdyn(\auxstate_{t}, \auxctrl_{t})\\
 	& =  \left(\sum_{t=1}^{\ksteps-1} e_te_{t+1}^\top \otimes \idm\right) 
 	\left(\sum_{t=1}^{\ksteps} e_te_t^\top \otimes \nabla_{\auxstate_{t-1}} \microdyn(\auxstate_{t-1}, \auxctrl_{t-1})\right) = (D\otimes \idm)  \diag(\nabla_{\auxstate_t} \microdyn(\auxstate_t, \auxctrl_t)_{t=0}^{\ksteps-1}).
 \end{align*}
 On the other hand, we have
\begin{align*}
	\diag(\nabla_{\auxstate_t} \microdyn(\auxstate_t, \auxctrl_t)_{t=0}^{\ksteps-1})  &
	= \sum_{t=0}^{\ksteps-1} e_{t+1}e_{t+1}^\top \otimes 
	\left(\nabla\diffeostate(\auxstate_t) D^\top  + \nabla_{\auxstate_t} \diffeoctrl(\auxstate_t, \auxctrl_t) e^\top\right)
	\nabla \diffeostate(\auxstate_{t+1})^{-1} \\
	& = \underbrace{\sum_{t=0}^{\ksteps-1} e_{t+1}e_{t+1}^\top \otimes \nabla\diffeostate(\auxstate_t) D^\top 	\nabla \diffeostate(\auxstate_{t+1})^{-1}}_A \\
	& \quad
	+  \underbrace{\sum_{t=0}^{\ksteps-1} e_{t+1}e_{t+1}^\top \otimes \nabla_{\auxstate_t} \diffeoctrl(\auxstate_t, \auxctrl_t) e^\top 	\nabla \diffeostate(\auxstate_{t+1})^{-1}}_B 	\\
A & = \left(\sum_{t=0}^{\ksteps-1} e_{t+1}e_{t+1}^\top \otimes   \nabla\diffeostate(\auxstate_t) \right)
	(\idm \otimes D^\top) 
	\left(\sum_{t=0}^{\ksteps-1} e_{t+1}e_{t+1}^\top \otimes   \nabla\diffeostate(\auxstate_{t+1})^{-1} \right) \\
	& = \diag((\nabla \diffeostate(\auxstate_t))_{t=0}^{\ksteps-1})
	(\idm \otimes D^\top)
	\diag((\nabla \diffeostate(\auxstate_{t+1})^{-1})_{t=0}^{\ksteps-1})  \\
B & = \left(\sum_{t=0}^{\ksteps-1} e_{t+1}e_{t+1}^\top \otimes    \nabla_{\auxstate_t} \diffeoctrl(\auxstate_t, \auxctrl_t)  \right)
	(\idm \otimes e^\top) 
	\left(\sum_{t=0}^{\ksteps-1} e_{t+1}e_{t+1}^\top\otimes   \nabla\diffeostate(\auxstate_{t+1})^{-1} \right)\\
	& = 	\diag((\nabla_{\auxstate_t} \diffeoctrl(\auxstate_t, \auxctrl_t))_{t=0}^{\ksteps-1})
	(\idm \otimes e^\top)
	\diag((\nabla \diffeostate(\auxstate_{t+1})^{-1})_{t=0}^{\ksteps-1}).
\end{align*}
\end{proof}

\begin{proof}[Details on line~(ii) in Eq.~\eqref{eq:final_decomp_suff_cond}]
	 Denote $K_t = \nabla \diffeostate(\auxstate_t)$. Using that $D e_t {=}
	 e_{t-1}$, we have $FA {= } (D\otimes \idm) (\sum_{t=1}^{n} e_te_t^\top
	 \otimes K_{t-1}) = \sum_{t=2}^{n} e_{t-1}e_t^\top \otimes K_{t-1}) =
	 \sum_{t=1}^{n-1} e_te_{t+1}^\top K_t$ and, using that $e_t^\top D =
	 e_{t+1}^\top$,  we have $CF = (\sum_{t=1}^{n} e_te_t^\top \otimes K_{t})
	 (D\otimes \idm) = \sum_{t=1}^{n-1} e_te_{t+1}^\top \otimes K_{t}$. Therefore,
	 we have $FA= CF$, and similarly we can show that $FA^{-1} = C^{-1} F$. 
\end{proof}

\begin{proof}[Details on line~(iii) in Eq.~\eqref{eq:final_decomp_suff_cond}]
	Since $D = \sum_{t=1}^{\ksteps-1} e_te_{t+1}^\top$, we have $D^j =
	\sum_{t=1}^{\ksteps-j} e_te_{t+j}^\top$ hence $D^\ksteps = 0$.  Therefore,
	$F\otimes G = D\otimes D^\top$ is nilpotent of order $\ksteps$. Hence,
	$(\idm-F\otimes G)^{-1} = \sum_{t=0}^{\ksteps-1} D^t\otimes (D^\top)^t$ and
	so, for $F = D\otimes \idm$, we have   $(\idm-F\otimes G)^{-1}F = (
	\sum_{t=0}^{\ksteps-1} D^t\otimes (D^\top)^t) (D\otimes \idm) =
	\sum_{t=1}^{\ksteps-1} D^t\otimes (D^\top)^{t-1}$.
\end{proof}

\begin{proof}[Details on the extension to multi inputs]
	Consider the multi-input case as described in Def.~\ref{def:lin_feedback}. For
	any $\ksteps\geq \orderlin$, $j\in \{1, \ldots, \microdimctrl\}$, $i\in \{1,
	\ldots, \orderlin_j\}$, we have  $\auxxxstate_{\ksteps, j}^{(i)} =
	\auxxctrl_{\ksteps+i-\orderlin_j-1}^{(j)}$. Denote $T =
	\sum_{i=1}^{\ksteps}\sum_{j=1}^{\microdimctrl} e_je_i^\top \otimes
	e_ie_j^\top$ for $e_i, e_j$ canonical vectors of, respectively,
	$\reals^{\ksteps}$ and $\reals^{\microdimctrl}$. For $\auxxctrls =
	(\auxxctrl_0; \ldots;\auxxctrl_{\ksteps-1})$, we have that $\auxxxctrls =
	T\auxxctrls$ reorders the coordinates of $\auxxctrls$ such that $\auxxxctrls =
	(\auxxxctrl_1;\ldots;\auxxxctrl_{\microdimctrl})$ with $\auxxxctrl_j^{(i)} =
	\auxxctrl_{i-1}^{(j)}$ for $i \in \{1, \ldots, \ksteps\}$, $j\in \{1, \ldots,
	\microdimctrl\}$. Hence, we have for any $\ksteps \geq \orderlin$, $j\in \{1,
	\ldots, \microdimctrl\}$, denoting here $e_{i}$ the ith canonical vector in
	$\reals^{\orderlin_j}$, $D_{\orderlin_j}$ the upper-shift matrix in
	$\reals^{\orderlin_j}$, 
	\begin{align*}
	\auxxxstate_{\ksteps, j} & = (D_{\orderlin_j}^{k-1} e_{\orderlin_j}, \ldots, D_{\orderlin_j}e_{\orderlin_j}, e_{\orderlin_j}) \auxxxctrl_j \\
	& = (\underbrace{0_{\orderlin_j}, \ldots, 0_{\orderlin_j}}_{\ksteps-\orderlin_j}, e_1, \ldots, e_{\orderlin_j})\auxxxctrl_j  =  (0_{\orderlin_j}, \ldots, 0_{\orderlin_j}, \idm_{\orderlin_j})\auxxxctrl_j  := C_j \auxxxctrl_j,
	\end{align*}
where  $0_{\orderlin_j}$ is the null vector in $\reals^{\orderlin_j}$ and
$\idm_{\orderlin_j}$ is the identity matrix in $\reals^{\orderlin_j}$. So we get
that 
\begin{align*}
	\auxxstate_{\ksteps} = \left(\sum_{j=1}^{\microdimctrl} e_je_j^\top \otimes C_j\right) \auxxxctrls = \left(\sum_{j=1}^{\microdimctrl} e_je_j^\top \otimes C_j\right) T \auxxctrls,
\end{align*}
i.e., $\auxxstate_{\ksteps} = M \auxxctrls$ with $\sigma_{\min}(M^\top) =1$. 

Consider $\ksteps = \orderlin$ and the notations of the proof of
Theorem~\ref{thm:suff_cond}.  We can write   that $\auxstate_{t+1} =
\diffeostate^{-1}(M \Diffeoctrl(\microtraj(\auxstate_0, \auxctrls),
\auxctrls))$. Hence,
\begin{align*}
	\nabla_\auxctrls \trajin{\ksteps}(\auxstate_0, \auxctrls) & = 
	\left(\nabla_\auxctrls \Diffeoctrl(\auxstates, \auxctrls) 
	{+}  \nabla_\auxctrls \Microdyn(\auxstates, \auxctrls)
	(\idm - \nabla_\auxstates \Microdyn(\auxstates, \auxctrls))^{-1} 
	\nabla_\auxstates \Diffeoctrl(\auxstates, \auxctrls)\right) M^\top
	\nabla \diffeostate(\auxstate_k)^{-1}.
\end{align*}
The discrete time dynamic can be written
\[
\auxstate_{t+1} = \diffeostate^{-1}(J\diffeostate(\auxstate_t) + K\diffeoctrl(\auxstate_t, \auxctrl_t)),
\]
with, denoting $e_{j, i}$ the $i$\textsuperscript{th} canonical vector in
$\reals^{\orderlin_j}$ and $e_{\ell_j}$ the $\ell_j$\textsuperscript{th}
canonical vector in $\reals^{\dimstate}$.
\[
J = \left(\begin{matrix}
	D_{\orderlin_1} & 0 & \ldots & 0 \\
	0 & \ddots & \ddots & \vdots \\
	\vdots & \ddots & \ddots & 0 \\
	0 & \ldots & 0 & D_{\orderlin_\microdimctrl}
\end{matrix}\right), \quad D_{\orderlin_j}= \sum_{i=1}^{\orderlin_j} e_{j, i} e_{j, i+1}^\top, \quad K = \sum_{j=1}^{\microdimctrl} e_{\ell_j} e_j^\top, \quad \ell_j = \sum_{s=1}^{j}\orderlin_s.
\]
Hence, we have for $t \in \{0, \ldots, \horizon-1\}$, 
\begin{align*}
	\nabla_{\auxctrl_t} \microdyn(\auxstate_t, \auxctrl_t) 
	& = \nabla_{\auxctrl_t} \diffeoctrl(\auxstate_t, \auxctrl_t) K^\top \nabla \diffeostate(\auxstate_{t+1})^{-1} \\
	\nabla_{\auxstate_t} \microdyn(\auxstate_t, \auxctrl_t) & = \left(\nabla\diffeostate(\auxstate_t) J^\top  + \nabla_{\auxstate_t} \diffeoctrl(\auxstate_t, \auxctrl_t) K^\top\right)\nabla \diffeostate(\auxstate_{t+1})^{-1}.
\end{align*}
The rest of the proof follows as in the proof of Theorem~\ref{thm:suff_cond} by
redefining  $E = \idm\otimes K^\top$, $G = \idm\otimes J^\top$, $F=D_{\ksteps}
\otimes \idm$ for $D_{\ksteps}$ the upper-shift matrix in $\reals^{\ksteps} =
\reals^{\orderlin}$,  $V {=} \diag((\nabla_{\auxctrl_t} \diffeoctrl(\auxstate_t,
\auxctrl_t) )_{t{=}0}^{\ksteps-1})$. We then get 
\begin{align*}
		\nabla_\auxctrls \trajin{\ksteps}(\auxstate_0, \auxctrls) \nabla \diffeostate(\auxstate_k) & =V\left(\idm -\left(\sum_{i=1}^{\ksteps-1} D_{\ksteps}^i\otimes  K^\top (J^\top)^i\right)A^{-1}Y \right)^{-1}M^\top.
\end{align*}
The result follows for $\ksteps =\orderlin$ and for $\ksteps > \orderlin$ the
same reasoning as in the single input case applies. 
\end{proof}

\subsection{Details on Theorem~\ref{thm:ddp_conv}}
\begin{proof}[Details on Eq.~\eqref{eq:conv_ddp_iter}]
With the notations of Theorem~\ref{thm:global_conv}, we have that 
\[
\delta_{k+1} - \delta_k \leq -\frac{1}{2} \frac{\sigma_\augtraj^2 x^2}{\sigma_{\augtraj}^2 \smooth_\cost + \cst x + \condnb_\cost\condnbddp^2 x^2},
\]
with $x =\|\nabla \cost(\augtraj(\ctrls^{(k)}))\|_2$ and $\delta_k =
\obj(\ctrls^{(k)}) - \obj^*$. The function $f_1: x \rightarrow
\sigma_\augtraj^2 x^2/(2(\sigma_{\augtraj}^2 \smooth_\cost + \cst x +
\condnb_\cost\condnbddp^2 x^2))$ is strictly increasing, so we can follow the
steps of the proof Theorem~\ref{thm:global_conv} and obtain that $f_2(\delta_k)
(\delta_{k+1} - \delta_k) \leq -1$ with 
\[
f'_2(\delta) = \frac{1}{f_1(\sqrt{\strgcvx_\cost \delta})} 
= 2 \condnb_\cost \frac{1}{\delta} + 2 \scaling \cst \frac{1}{\sqrt{\delta}} + 2 \scaling^2 \condnb_\cost\condnbddp^2.
\]
The result follows by integrating $f'_2$ and, as in  the proof
Theorem~\ref{thm:global_conv}, we have that convergence to an accuracy
$\varepsilon$ is ensured after at most $k\leq f_2(\delta_0) - f_2(\varepsilon)$. 
\end{proof}

\section{Additional Numerical Evaluations}\label{app:exp_sup}
\paragraph{Realistic model of a car with tracking cost} 

\begin{figure}[t]
	\begin{center}
		\includegraphics[width=0.7\linewidth]{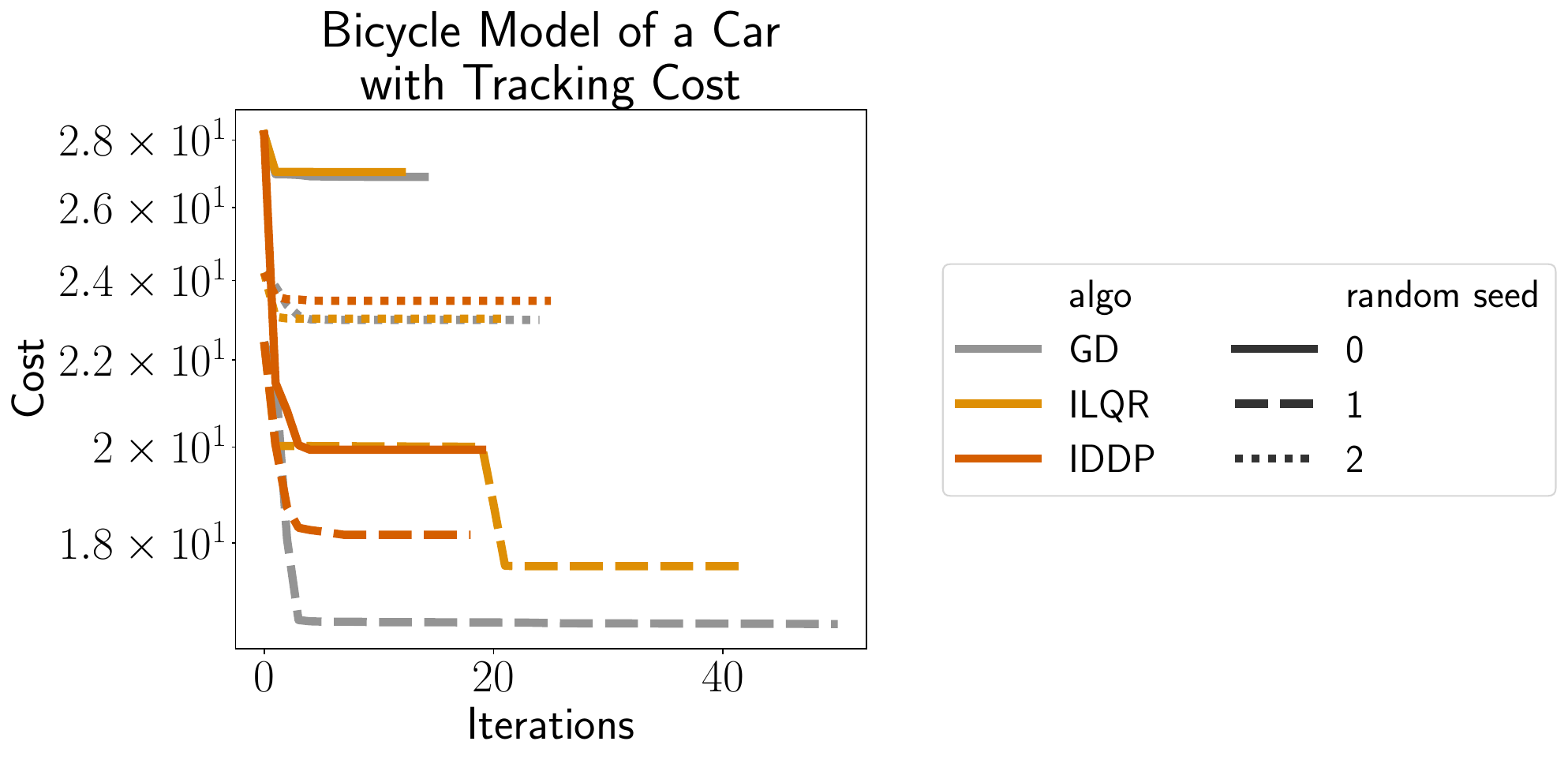}
	\end{center}
	\caption{Convergence of gradient descent (GD), ILQR and IDDP to control a
	bicycle model of a car for a tracking cost. \label{fig:add_conv}}
\end{figure}

On Figure~\ref{fig:add_conv}, we consider the same setting as for the simple
model of a car except that we replace the simple model of the dynamics of a car
by a bicycle model driven by tire forces taken
from~\citet{liniger2015optimization}, also detailed
by~\citet{roulet2021techreport}. We considered a fourth order Runge Kutta
discretization scheme of the continuous dynamics of the bicycle model of the
car. We keep a tracking cost as explained for the simple model of a car in
Section~\ref{sec:exp}. We use a discretization step $\Delta = T/\tau$ for a
total time $T=2$, and a number of discretization steps $\horizon=25$. We use
random initial control sequences $u_t^{(0)} \sim \mathcal{N}(0, \sigma)$ for
$\sigma = 1/\tau = 25$.

In this case, the ILQR and IDDP algorithms do not appear to converge to the same
value across random initial control sequences. This suggests no convergence to
global minima in this example.

Contouring costs and model predictive controllers can circumvent the difficulty
of this task as presented by~\citet{liniger2015optimization,
roulet2021techreport}. However, the bottleneck of, e.g., model predictive
controllers remain an algorithm such as ILQR or IDDP to compute the short term
policies. Understanding the behavior of these algorithms may then help the
design of model predictive controllers.

\paragraph{Convergence rates}
In Figure~\ref{fig:iter_rates_pendulum}, we plot convergence in iterates for the
pendulum example. We retrieve a similar superlinear rate of convergence after
some number of iterations.

In Figure~\ref{fig:function_value_rates}, we also
consider convergence rates in function values, that is, $\rho^{(k)} = (c^{(k)}-
c^*)/(c^{(k-1)}-c^*)$ for $c^{(k)}$ the cost at iterate $k$ and $c^*$ the
minimal cost. To plot this rate, we consider $c^*=0$ when subsampling the costs.

We observe generally a long phase where the convergence rate is close to one,
followed by a sudden phase of superlinear convergence where the rate drops to
$0$. The second phase of convergence outlined in the theory of
Section~\ref{sec:cvg} appears transient. The algorithms appear to mostly show a
phase of sublinear convergence followed by a phase of superlinear convergence.

\begin{figure}
  \begin{center}
    \includegraphics[width=0.7\linewidth]{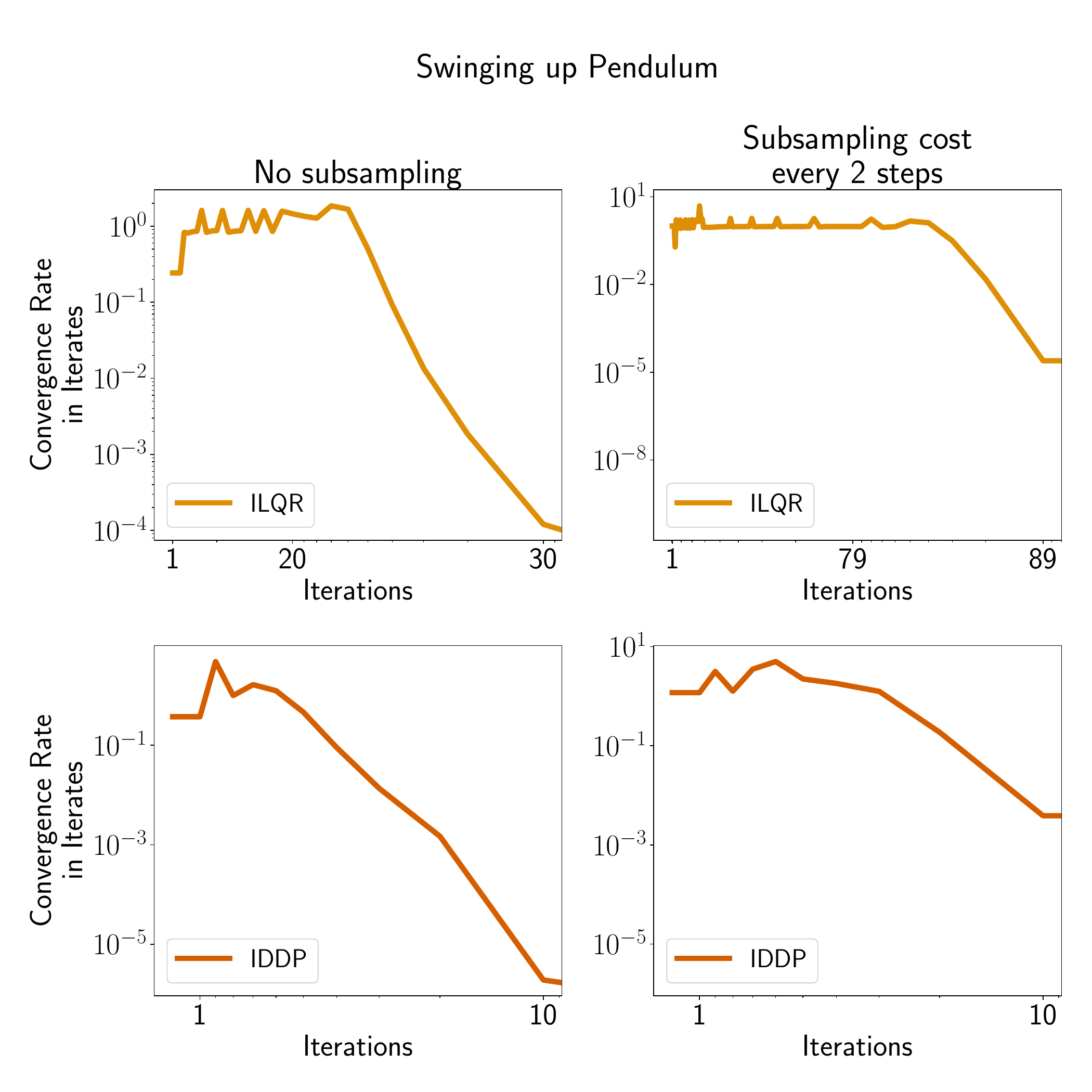}
    \caption{%
      Convergence rate in iterates, $\kappa^{(k)} =
      \|\ctrls^{(k+1)}-\ctrls^{(k)}\|_2/\|\ctrls^{(k)} - \ctrls^{(k-1)}\|_2$,
      along iterations of ILQR and IDDP algorithms for the pendulum example
      with or without subsampling the costs. For each algorithm and each setting
      we plot the convergence rate up to the final iterate before the algorithm
      stopped and use a log scale x-axis to zoom on the final iterates.
    }
    \label{fig:iter_rates_pendulum}
  \end{center}
\end{figure}

\begin{figure}
  \begin{center}
    \includegraphics[width=0.7\linewidth]{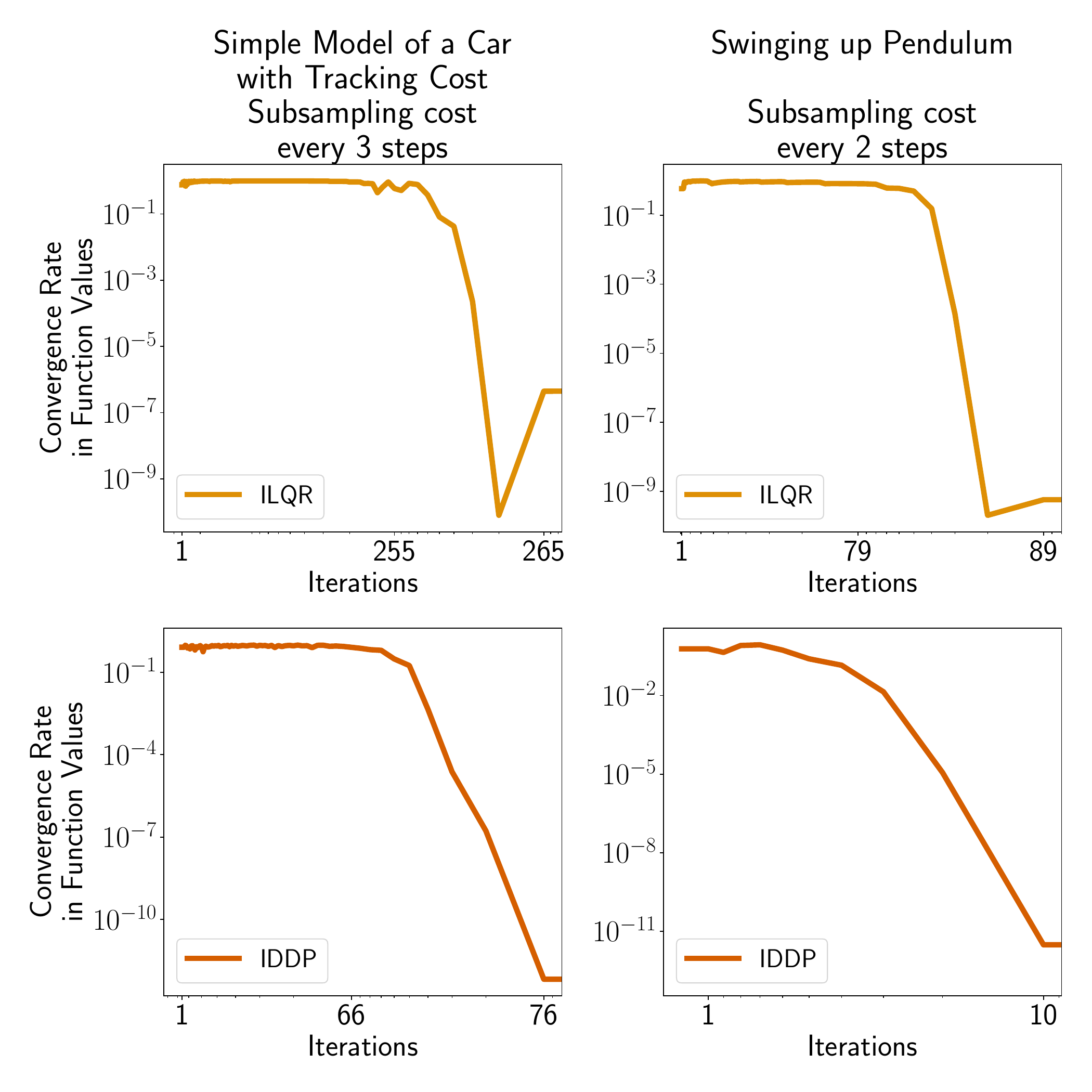}
    \caption{%
      Convergence rate in function values, $\rho^{(k)} = (c^{(k)}- c^*)/(c^{(k-1)}
      - c^*)$, along iterations of ILQR and IDDP algorithms for the pendulum
      example or the simple model of a car with subsampling the costs. The
      minimal cost is set to $c^*=0$. For each algorithm and each setting
      we plot the convergence rate up to the final iterate before the algorithm
      stopped and use a log scale x-axis to zoom on the final iterates.
    }
    \label{fig:function_value_rates}
  \end{center}
\end{figure}


\end{document}